\DeclarePairedDelimiter\floor{\lfloor}{\rfloor}
\newcommand{\verti}[1]{{\left\vert\kern-0.25ex\left\vert\kern-0.25ex\left\vert #1 
    \right\vert\kern-0.25ex\right\vert\kern-0.25ex\right\vert}}
\newtheorem{theo}{Theorem}[section]
\newtheorem{question}{Question}
\newtheorem{lemme}[theo]{Lemma}
\newtheorem{propo}[theo]{Proposition}
\newtheorem{cor}[theo]{Corollary}
\newtheorem{hyp}[theo]{Assumption}
\newtheorem{defi}[theo]{Definition}
\newtheorem{exe}[theo]{Example}
\newtheorem{nb}[theo]{Remark}
\def \bq {\begin{equation}}
\def \eq {\end{equation}}
\def \leq {\leqslant}
\def \geq {\geqslant}
\def \N {\mathbb{N}}
\def \ind {\mathbf{1}}
\def \S {\mathbb{S}}
\numberwithin{equation}{section}
\def\lp {L^1_+}
\def\lm {L^1_-}
\def \d {\mathrm{d}}
\def \D {\mathscr{D}}
\def \C {\mathbb{C}}
\def \Rs {\mathcal{R}}
\def \ml {M_{\lambda}}
\def \R {\mathbb{R}}
\def \M {\mathcal{M}}
\def \G {\bm{G}}
\def \X {\mathbb{X}}
\def \Y {\mathbb{Y}}
\def \l+ {L^1_+}
\def \l- {L^1_-}
\renewcommand{\epsilon}{\varepsilon}
\def \ds {\displaystyle}
\def \l {\lambda}
\def \T {\mathsf{T}}
\def \B {\mathsf{B}}
\def \e {\varepsilon}
\def \H {\mathsf{H}}
\begin{document}
\title[Diffuse boundary conditions]{Convergence Rate To Equilibrium For Collisionless Transport Equations With Diffuse Boundary Operators: A New Tauberian Approach}

 \author{B. Lods}

 \address{Universit\`{a} degli
 Studi di Torino \& Collegio Carlo Alberto, Department of Economics and Statistics, Corso Unione Sovietica, 218/bis, 10134 Torino, Italy.}\email{bertrand.lods@unito.it}

 \author{M. Mokhtar-Kharroubi}

 \address{Universit\'e de Bourgogne-Franche-Comt\'e, Equipe de Math\'ematiques, CNRS UMR 6623, 16, route de Gray, 25030 Besan\c con Cedex, France
}
\email{mustapha.mokhtar-kharroubi@univ-fcomte.fr}

\maketitle
\begin{abstract}
This paper provides a new tauberian approach to the study of quantitative time asymptotics of collisionless transport semigroups with general diffuse boundary operators. We obtain an (almost) optimal algebraic rate of convergence to equilibrium under very general assumptions on the initial datum and the boundary operator. The rate is prescribed by the maximal gain of integrability that the boundary operator is able to induce. The 
  proof relies on a representation of the collisionless transport semigroups by a (kind of) Dyson-Phillips series and on a fine analysis of the trace on the imaginary axis of Laplace transform of remainders (of large order) of this series. Our construction is systematic and is based on various preliminary results of independent interest.

\noindent \textsc{MSC:} primary 82C40; secondary 35F15, 47D06

\noindent \textit{Keywords:} Kinetic equation; Boundary operators; Convergence to equilibrium; Inverse Laplace transform.

\end{abstract}

\tableofcontents
\section{Introduction}

We consider here the time asymptotics for collisionless kinetic  equations of the form
\begin{subequations}\label{1}
\begin{equation}\label{1a}
\partial_{t}f(x,v,t) + v \cdot \nabla_{x}f(x,v,t)=0, \qquad (x,v) \in \Omega \times V, \qquad t \geq 0
\end{equation} 
with initial data
\begin{equation}\label{1c}f(x,v,0)=f_0(x,v), \qquad \qquad (x,v) \in \Omega \times V,\end{equation}
under \emph{diffuse  boundary}
\begin{equation}\label{1b}
f_{|\Gamma_-}=\mathsf{H}(f_{|\Gamma_+}),
\end{equation}\end{subequations}
where $\Omega$ is a bounded open subset of $\R^{d}$ and $V$ is a  given closed subset of $\R^{d}$ (see Assumptions \ref{hypO} for major details), 
$$\Gamma _{\pm }=\left\{ (x,v)\in \partial \Omega \times V;\ \pm
v \cdot n(x)>0\right\}$$
($n(x)$ {being} the outward unit normal at $x\in \partial \Omega$)  and $\mathsf{H}$  {is
a linear 
boundary operator relating the
outgoing and incoming fluxes $f_{\mid \Gamma _{+}}$ and $f_{\mid
\Gamma _{-}}$ and is  bounded   on
the trace spaces
$$L^{1}_{\pm}=L^{1}(\Gamma_{\pm}\,;\,|v\cdot n(x)| \pi(\d  x)\otimes \bm{m}(\d v))=L^{1}(\Gamma_{\pm},\d\mu_{\pm}(x,v))$$
where $\pi$ denotes the Lebesgue surface measure on $\partial \Omega$ and $\bm{m}$ is a Borel measure on the set of velocities (see Assumptions \ref{hypO} hereafter). The
boundary operator }%
$$\H\::\:\lp \rightarrow \lm$$
is nonnegative and stochastic, i.e.
\begin{equation}\label{eq:H1m}
\int_{\Gamma_{-}}\H\psi\,\d\mu_{-}=\int_{\Gamma_{+}}\psi\,\d\mu_{+}, \qquad \forall \psi \in L^{1}(\Gamma_{+},\d\mu_{+})\end{equation}
 so that \eqref{1} is 
governed by a stochastic  $C_{0}$-semigroup $\left(
U_{\H}(t)\right) _{t\geq 0\text{ }}$ on $L^{1}(\Omega \times V\,,\,\d x\otimes %
\bm{m}(\d v))$ with generator $\T_{\H}.$ 

In a previous contribution \cite{LMR}, a systematic study of \eqref{1} for 
 general \emph{partly diffuse boundary
operators} $\H$ have been performed providing  a general theory on the existence of an invariant density and its asymptotic
stability (i.e. convergence to equilibrium), see also earlier one-dimensional
results \cite{MKR}. However, the question of the \emph{rate of convergence to equilibrium} has been left open by our contribution \cite{LMR} and is the main concern of the present paper.

\subsection{Our contribution in a nutshell}\label{sec:nut}

The main question addressed in this paper is then the following:
\begin{question}\label{ques}
Determine  a general class $\mathcal{C} \subset L^1(\Omega\times V)$ of initial datum $f$ and a general rate function $r\::\:\R^{+}\to\R^{+}$ such that
\begin{equation}\label{eq:rate}
\left\|U_{\H}(t)f-\varrho_{f}\Psi_{\H}\right\|_{L^{1}(\Omega\times V)}=\mathbf{O}(r(t)) \qquad \text{ as } \quad t \to 0^{+} \quad \text{ for any } f\in \mathcal{C}\end{equation}
where  $\Psi_{\H}$ is the unique invariant density of $\T_{\H}$ with unit mass (see the subsequent Theorem \ref{theo:LMR}), 
$$\varrho_{f}=\int_{\Omega\times V}f(x,v)\d x \otimes \bm{m}(\d v), \qquad f \in L^1(\Omega\times V)$$ 
and $\lim_{t\to0}r(t)=0.$
\end{question}
We answer this question here by considering only \textit{diffuse} boundary operators for which, typically,
\begin{equation}\label{eq:Hhkernel}
\H\psi(x,v)=\int_{v'\cdot n(x) > 0}\bm{k}(x,v,v')\psi(x,v')\,|v'\cdot n(x)|\bm{m}(\d v'), \qquad (x,v) \in \Gamma_{-}\end{equation}
where,
\begin{equation}\label{eq:normalise}\int_{v \cdot n(x) <0}\bm{k}(x,v,v^{\prime })|v\cdot n(x)|\bm{m}(\d v)=1,\ \quad (x,v')\in \Gamma
_{+}\,.\end{equation}
We do not consider the case where the velocities are bounded away from
zero which deserves a separate analysis, mainly because in this case $\left( U_{\H}(t)\right) _{t\geq 0\text{ }}$ exhibits a
spectral gap and the convergence to equilibrium is exponential \cite{LM-iso}. Let us describe more precisely our mathematical framework and the set of assumptions we adopt throughout the paper. First, the general assumptions on the phase space are  the following

\begin{hyp}\label{hypO} The phase space $\Omega \times V$ is such that
\begin{enumerate} 
\item $\Omega
\subset \R^{d}$ $(d\geq 2)$ is an open and \emph{bounded} subset with $\mathcal{C}^{1}$ boundary $\partial \Omega 
$.
\item  $V \subset \R^{d}$ is the support of a nonnegative Borel measure $\bm{m}$  which is orthogonally invariant (i.e. invariant under the action of the orthogonal group of matrices in $\R^{d}$).
\item $0 \in V$, $\bm{m}(\{0\})=0$ and $\bm{m}\left(V \cap B(0,r)\right) >0$ for any $r >0$ where $B(0,r)=\{v \in \R^{d}\,,\,|v| < r\}.$
\end{enumerate}
We denote by 
$$\X_{0}:=L^{1}(\Omega \times V\,,\,\d x\otimes \bm{m}(\d v))$$
endowed with its usual norm $\|\cdot\|_{\X_{0}}.$ More generally, for any $s \geq0$, we set
$$\X_{s}:=L^{1}(\Omega \times V\,,\,\max(1,|v|^{-s})\d x \otimes \bm{m}(\d v))$$
with norm $\|\cdot\|_{\X_{s}}.$
\end{hyp}
Notice that the above Assumption \textit{(3)} is necessary to ensure that the transport operator $T_{\H}$ has at least the whole imaginary axis in its spectrum (see Theorem \ref{theo:spectTH} for a precise statement).\medskip

With respect to our previous contribution \cite{LMR}, as already mentioned, we do not consider abstract and general boundary operators here but focus our attention on the specific case of a diffuse boundary operator satisfying the following assumption where we define, for $\lambda \in \C$, $\mathrm{Re}\lambda \geq0$ the following bounded operator
\begin{equation*}
\begin{cases}
\mathsf{M}_{\l} \::\:&L^1_- \longrightarrow L^1_+\\
&u \longmapsto
\mathsf{M}_{\l}u(x,v)=e^{-\l\,\tau_{-}(x,v)}u(x-\tau_{-}(x,v)v,v),\:\:\:(x,v) \in \Gamma_+\;;
\end{cases}
\end{equation*}
where $\tau_{-}(x,v):=\inf\{\,s > 0\,;\,x + sv \notin \Omega\}$ for any $(x,v) \in \Gamma_{+}$ (see Section \ref{sec:trav} for more details on the travel time).
\begin{hyp}\label{hypH}
The boundary operator $\H\::\lp \to \lm$ is a bounded and stochastic operator of the form \eqref{eq:Hhkernel} which satisfies the following
\begin{enumerate}[1)]
\item There exists some $n \in \N$  (where $\N$ is the set of nonnegative integers $\N=\{0,1,\ldots\}$) such that
$$\H \in \mathscr{B}(\lp,\Y_{n+1}^{-})$$
where, for any $s \geq 0$, we define
$$\Y_{s}^{\pm}:=\{g \in L^{1}_{\pm}\;;\;\int_{\Gamma_{\pm}}\max(1,|v|^{-s})|g(x,v)|\d\mu_{+}(x,v) < \infty\}.$$
We will set
\begin{equation}\label{eq:HYn}
N_{\H}:=\sup\{k \in \N\;;\;\H \in \mathscr{B}(\lp,\Y_{k+1}^{-})\}.\end{equation}
and assume  that $N_{\H} < \infty$ in all the paper. 
\item The operator $\H\mathsf{M}_{0}\H \in \mathscr{B}(\lp,\lm)$
is weakly compact. 
\item $\mathsf{M}_{0}\H$ is irreducible.
\item There exist $\mathsf{p} \in \N$  and   $C >0$ such that
\begin{equation}\label{eq:power}
\int_{\R}\left\|\left(\mathsf{M}_{\varepsilon+i\eta}\H\right)^{\mathsf{p}}\right\|_{\mathscr{B}(\lp)}\d \eta \leq C \qquad \forall \e \geq0.\end{equation}
\end{enumerate}
\end{hyp}
We will give later in Section \ref{sec:REGU} practical criteria ensuring this set of assumptions to be met resorting notably to our previous contribution \cite{LMR} (see also Section \ref{sec:exain} in this Introduction for some earlier considerations about Assumptions \ref{hypH}). \medskip

By means of a new and robust tauberian approach, we can answer  Question \ref{ques}. The main contribution of this work is summarized in the following 

\begin{theo}\label{theo:maindec}
Under Assumptions \ref{hypO}--\ref{hypH}, for any $f \in \X_{N_{\H}+1}$ there exist a constant $C_{f} > 0$ and 
$$\mathsf{\Theta}_{f} \in \mathscr{C}_{0}(\R,\X_{0}) \cap L^{1}(\R,\X_{0})$$
such that
\begin{equation}\label{eq:rat}
\left\|U_{\H}(t)f-\varrho_{f}\Psi_{H}\right\|_{\X_{0}} \leq \frac{C_{f}}{(1+t)^{N_{\H}}} \bm{\epsilon}(t) \qquad \forall t\geq 0
\end{equation}
where
$$\bm{\epsilon}(t)=\frac{1}{1+t}+\left\|\int_{-\infty}^{\infty}\exp\left(i\eta\,t\right)\mathsf{\Theta}_{f}(\eta)\d \eta\right\|_{\X_{0}} \qquad \forall t\geq0$$
is such that $\lim_{t\to \infty}\bm{\epsilon}(t)=0.$ If we assume moreover that there is $C(\mathsf{p}) > 0$ and $\beta >0$ such that
\begin{equation}\label{eq:decay-power}
\int_{|\eta| >R}\left\|\left(\mathsf{M}_{i\eta}\H\right)^{\mathsf{p}}\right\|_{\mathscr{B}(\lp)} \d \eta \leq \frac{C(\mathsf{p})}{R^{\beta}}, \qquad \forall R >0\end{equation}
then,  there is some positive constant $K >0$ depending only on $\beta$ and $C(\mathsf{p})$ such that 
\begin{equation}\label{eq:estMod}
\left\|\int_{-\infty}^{\infty}\exp\left(i\eta\,t\right)\mathsf{\Theta}_{f}(\eta)\d \eta\right\|_{\X_{0}}\leq K\,\left(\omega_{f}\left(\frac{\pi}{t}\right)\right)^{\frac{\beta}{\beta+1}} \qquad \forall t \geq 1\end{equation}
where $\omega_{f}\::\:\R^{+} \to \R^{+}$ denotes the \emph{minimal modulus of continuity} of the uniformly continuous mapping $\mathsf{\Theta}_{f}$.\end{theo}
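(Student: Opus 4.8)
The plan is to exploit a (Dyson--Phillips type) series representation of the semigroup and a careful inversion of the Laplace transform on the imaginary axis. First I would decompose
\[
U_{\H}(t)f=\varrho_{f}\Psi_{\H}+\mathcal{R}_{n}(t)f
\]
where the remainder $\mathcal{R}_{n}(t)$ collects the terms of the series of order $\ge n$ with $n=N_{\H}$. The key structural input, to be established earlier in the paper, is that the Laplace transform $\lambda\mapsto\widehat{\mathcal{R}_{n}}(\lambda)f$ extends continuously to the closed right half-plane minus the origin, with the gain of integrability encoded in Assumption \ref{hypH}(1) giving $f\in\X_{N_{\H}+1}$ exactly the regularity needed so that $\lambda\mapsto\widehat{\mathcal{R}_{n}}(\lambda)f$ is $N_{\H}$-times differentiable (in the appropriate sense) along the imaginary axis away from $\lambda=0$, while near $\lambda=0$ one isolates the rank-one spectral projection onto $\Psi_{\H}$ (this is where irreducibility, weak compactness and the stochasticity of $\H$ enter, via Theorem \ref{theo:LMR} and the spectral results quoted). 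Subtracting that projection removes the pole at the origin and leaves a function that, after multiplication by the polynomial weight, is integrable; differentiating the inversion formula $N_{\H}$ times produces the factor $t^{-N_{\H}}$ and a residual Fourier integral
\[
t^{N_{\H}}\bigl(U_{\H}(t)f-\varrho_{f}\Psi_{\H}\bigr)=c\int_{-\infty}^{\infty}e^{i\eta t}\,\mathsf{\Theta}_{f}(\eta)\,\d\eta+(\text{lower order}),
\]
which defines $\mathsf{\Theta}_{f}$. The integrability bound \eqref{eq:power} is what guarantees $\mathsf{\Theta}_{f}\in L^{1}(\R,\X_{0})$, so that $\mathsf{\Theta}_{f}\in\mathscr{C}_{0}(\R,\X_{0})\cap L^{1}(\R,\X_{0})$ and, by Riemann--Lebesgue, the Fourier integral tends to $0$; the isolated $\frac{1}{1+t}$ term absorbs the contribution of the origin correction. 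This yields \eqref{eq:rat}.

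For the quantitative refinement \eqref{eq:estMod}, the idea is a classical smoothing/truncation argument applied to the $\X_{0}$-valued function $\mathsf{\Theta}_{f}$. Given $t\ge1$, I would split the Fourier integral at a threshold $R=R(t)$ to be optimized:
\[
\int_{-\infty}^{\infty}e^{i\eta t}\mathsf{\Theta}_{f}(\eta)\d\eta
=\int_{|\eta|\le R}e^{i\eta t}\mathsf{\Theta}_{f}(\eta)\d\eta
+\int_{|\eta|>R}e^{i\eta t}\mathsf{\Theta}_{f}(\eta)\d\eta.
\]
The tail $|\eta|>R$ is controlled directly by the decay hypothesis \eqref{eq:decay-power}: since $\mathsf{\Theta}_{f}$ is built from the $\mathsf{p}$-th power $(\mathsf{M}_{i\eta}\H)^{\mathsf{p}}$ together with bounded factors, one gets $\int_{|\eta|>R}\|\mathsf{\Theta}_{f}(\eta)\|_{\X_{0}}\d\eta\lesssim C(\mathsf{p})\,R^{-\beta}$. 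For the central part one uses the oscillation: write, via the substitution $\eta\mapsto\eta+\pi/t$ (so that $e^{i\pi}=-1$), the standard identity
\[
2\int_{|\eta|\le R}e^{i\eta t}\mathsf{\Theta}_{f}(\eta)\d\eta
=\int_{|\eta|\le R}e^{i\eta t}\bigl(\mathsf{\Theta}_{f}(\eta)-\mathsf{\Theta}_{f}(\eta-\tfrac{\pi}{t})\bigr)\d\eta+(\text{boundary terms of size }\lesssim\omega_{f}(\pi/t)),
\]
so the central part is bounded by $R\,\omega_{f}(\pi/t)$ up to the controlled boundary corrections. Hence
\[
\Bigl\|\int_{-\infty}^{\infty}e^{i\eta t}\mathsf{\Theta}_{f}(\eta)\d\eta\Bigr\|_{\X_{0}}
\lesssim R\,\omega_{f}\!\Bigl(\frac{\pi}{t}\Bigr)+\frac{C(\mathsf{p})}{R^{\beta}},
\]
and optimizing in $R$ (i.e. choosing $R^{\beta+1}\sim C(\mathsf{p})/\omega_{f}(\pi/t)$) gives the exponent $\tfrac{\beta}{\beta+1}$ claimed in \eqref{eq:estMod}, with $K$ depending only on $\beta$ and $C(\mathsf{p})$.

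I expect the main obstacle to be not the Tauberian/Fourier bookkeeping above, which is by now fairly routine once the right quantities are in place, but rather the justification that $\mathsf{\Theta}_{f}$ is genuinely uniformly continuous with a well-defined minimal modulus $\omega_{f}$ and that all the boundary terms generated by the shift $\eta\mapsto\eta-\pi/t$ are uniformly controlled by $\omega_{f}(\pi/t)$ rather than by some coarser norm. This requires knowing that the $N_{\H}$-fold differentiation of $\widehat{\mathcal{R}_{n}}(\lambda)f$ along the imaginary axis, after extracting the rank-one singularity at the origin, produces a genuinely continuous $\X_{0}$-valued function with quantitative modulus — which in turn hinges delicately on the mapping properties in Assumption \ref{hypH}(1) (the exact loss/gain of powers of $|v|$) and on uniform-in-$\e$ versions of \eqref{eq:power}. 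A secondary subtlety is matching the regularity threshold: one must check that the weight $\max(1,|v|^{-(N_{\H}+1)})$ in the definition of $\X_{N_{\H}+1}$ is precisely what makes the $(N_{\H}+1)$-st order remainder term of the series absolutely convergent while the $N_{\H}$-th order differentiated Laplace transform stays in $L^{1}(\R,\X_{0})$; this is the point where the ``maximal gain of integrability'' interpretation of $N_{\H}$ is really used.
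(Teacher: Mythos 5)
Your overall strategy — Dyson--Phillips series splitting, extension of the Laplace transform to the imaginary axis, $N_{\H}$-fold differentiation/integration by parts to produce $t^{-N_{\H}}$, Riemann--Lebesgue for the $\mathbf{o}(1)$ correction, and a shift-plus-truncation optimization to get the exponent $\tfrac{\beta}{\beta+1}$ — is the same as the paper's, so the plan is sound. Two points, however, are genuine gaps rather than mere imprecision.

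First, the choice $n=N_{\H}$ for the splitting point is not enough, and this is not a cosmetic issue. The correct threshold is $n\ge 2^{N_{\H}}\mathsf{p}$ (with $\mathsf{p}$ from \eqref{eq:power}). The reason is that the integrability of $\mathsf{\Theta}_{f}=\frac{\d^{N_{\H}}}{\d\eta^{N_{\H}}}\mathsf{\Psi}_{n}(\eta)f$ over $\R$ is \emph{not} a consequence of \eqref{eq:power} alone, as you assert. One must differentiate the Neumann series $\sum_{m}\sum_{r}(\mathsf{M}_{i\eta}\H)^{m\mathsf{p}+r+n}$ term by term, and the $j$-th derivative of $(\mathsf{M}_{i\eta}\H)^{N}$ is controlled by a Leibniz-type bound of the form
\[
\left\|\tfrac{\d^{j}}{\d\eta^{j}}(\mathsf{M}_{i\eta}\H)^{N}\right\|_{\mathscr{B}(\lp)}
\lesssim (N+1)^{j}\left\|(\mathsf{M}_{i\eta}\H)^{\lfloor N/2^{j}\rfloor}\right\|_{\mathscr{B}(\lp)}.
\]
For the right-hand side to still have at least one full power $(\mathsf{M}_{i\eta}\H)^{\mathsf{p}}$ extractable (so that the tail of the geometric series in $m$ converges and the remaining factor sits in $L^{1}(\R)$ by \eqref{eq:power}), one needs $\lfloor n/2^{N_{\H}}\rfloor\ge \mathsf{p}$, whence $n\ge 2^{N_{\H}}\mathsf{p}$. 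With $n=N_{\H}$ this mechanism breaks down entirely; the differentiated series does not manifestly produce an integrable function. Your proposal treats the $L^{1}$ membership of $\mathsf{\Theta}_{f}$ as a direct consequence of \eqref{eq:power}, but this is precisely where the hard work lies.

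Second, your opening decomposition $U_{\H}(t)f=\varrho_{f}\Psi_{\H}+\mathcal{R}_{n}(t)f$ is structurally wrong as written: the finite sum $\sum_{k=0}^{n}\bm{U}_{k}(t)f$ does \emph{not} converge to $\varrho_{f}\Psi_{\H}$ — it decays to zero (in fact like $t^{-(N_{\H}+1)}$, which is what supplies the $\tfrac{1}{1+t}$ term in $\bm{\epsilon}(t)$). The correct reduction is to assume $\varrho_{f}=0$ without loss of generality (so that $\varrho_{f}\Psi_{\H}=0$), and then the invariant density never appears as a separate summand; it is absent because the boundary function $\mathsf{\Psi}_{n}(\eta)f$ constructed for zero-mean $f$ already has the apparent pole at $\eta=0$ removed (via $\mathsf{P}(0)\mathsf{G}_{0}f=0$). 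Your description of "subtracting the rank-one projection" hints at the right phenomenon but misidentifies where and how it enters: nothing is subtracted from the integrand; the zero-mean condition makes the singularity disappear at the level of $\Rs(1,\mathsf{M}_{\lambda}\H)\mathsf{G}_{\lambda}f$. Apart from these two points, the shift argument in the quantitative part is essentially the paper's (the paper shifts the whole integral and then truncates, which avoids boundary terms, but your order of operations yields the same optimal $R\sim(\;C(\mathsf{p})/\omega_{f}(\pi/t)\;)^{1/(\beta+1)}$).
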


Our main Theorem provides therefore an \emph{explicit rate of convergence} of the type
\begin{equation}\label{eq:OT}
\left\|U_{\H}(t)f-\varrho_{f}\Psi_{H}\right\|_{\X_{0}} =\mathbf{O}(t^{-N_{\H}}).\end{equation}
It is important to point out that this rate of convergence is therefore prescribed by the maximal gain of integrability $\H$ is able to provide (corresponding to the parameter $N_{\H}$) and therefore is governed by the action of the boundary operator $\H$ on small velocities. This important feature of collisionless transport equation is fully exploited in the companion paper \cite{LM-iso} where, in the case of velocity bounded away from zero, the rate of convergence turns out to be exponential.

The above explicit rate \eqref{eq:OT} can actually be strengthened into the \emph{semi-explicit}
$$\left\|U_{\H}(t)f-\varrho_{f}\Psi_{H}\right\|_{\X_{0}} =\mathbf{o}(t^{-N_{\H}}).$$
We wish to insist here on the fact that the rate of convergence given in \eqref{eq:rat} is \emph{optimal} in the sense that the error function $\bm{\epsilon}(t)$ is the \emph{exact} correction to the rate $\left(1+t\right)^{-N_{\H}}$. Of course, this correction is only \emph{semi-explicit} and to derive a more explicit rate, one needs to precisely determinate the rate of convergence to zero (granted by Riemann-Lebesgue Theorem) of the Fourier transform
$$t \geq 0 \longmapsto \int_{-\infty}^{\infty}\exp\left(i\eta\,t\right)\mathsf{\Theta}_{f}(\eta)\d \eta \in \X_{0}$$
where $\mathsf{\Theta}_{f}(\cdot)$ is defined as the $N_{\H}$-derivative of some suitable boundary function (see \eqref{eq:defTheta} for a precise definition).
The second part of the Theorem, stated as \eqref{eq:estMod} is a first step towards this direction. The additional assumption \eqref{eq:decay-power} is easy to check in practice (see Section \ref{sec:REGU} where actually Assumption \ref{hypH} \textit{4)} is deduced from \eqref{eq:decay-power}). We point out also the following:

\begin{enumerate}[a)]
\item We assumed for simplicity that $N_{\H}$ is finite but, of course, if $N_{\H}=\infty$, then the above result remains valid and the rate of convergence we obtain then is of the type
$$\|U_{\H}(t)f-\varrho_{f}\Psi_{\H}\|_{\X_{0}}=\mathbf{O}\left(\left(1+t\right)^{-k}\right) \qquad \forall f \in \X_{k+1}\,,$$
for any $k \geq 0.$ It is an interesting open problem to determine whether this convergence can be upgraded to some (stretched) exponential convergence if $f \in \bigcap_{k}\X_{k}$ (for instance, if the support of $f$ is away from $0$). 
\item If the decay at infinity of the mapping $\l \mapsto \left(\mathsf{M}_{\l}\H\right)^{\mathsf{p}} \in \mathscr{B}(\lp)$ is such that \eqref{eq:decay-power} is valid for \emph{any} $\beta >0$ large enough (up to change $\mathsf{p}$), then the overall decay of $\bm{\epsilon}(t)$ is as close as desired from the one of 
$$t \mapsto \omega_{f}\left(\frac{\pi}{t}\right).$$ 
This is the case for instance when $\H$ is associated to the Maxwellian boundary condition of Example \ref{exe:maxw} hereafter.
\item Providing explicit estimates of the modulus of continuity $\omega_{f}$ is an open problem. Some reasonable conjecture about this is given at the end of the paper (see Section \ref{sec:conj}).  
\end{enumerate}

The existence and uniqueness of the equilibrium density $\Psi_{\H}$ as well as some qualitative convergence result has been obtained in a systematic way in \cite{LMR}. Namely, under Assumption \ref{hypH}, one can deduce directly the following from  \cite[Theorem 6.5 and Section 7]{LMR} and \cite{LMK-arxiv}:
\begin{theo}\label{theo:LMR} Under Assumption \ref{hypH}, the operator $(\T_{\H},\D(\T_{\H}))$ defined by
\begin{equation*}\begin{split}
\D(\T_{\H})&=\left\{f \in \X_{0}\;;\;\,v
\cdot \nabla_x \psi \in \X_{0}\;;\;f_{\vert\Gamma_{\pm}} \in L^{1}_{\pm}\,\;\;\,\H\,f_{\vert \Gamma_{+}}=f_{\vert\Gamma_{-}}\right\},\\ 
\T_{\H}f&=-v\cdot \nabla_{x}f, \qquad f \in \D(\T_{\H})\end{split}\end{equation*}
 is the generator of a \emph{stochastic} $C_{0}$-semigroup  $(U_{\mathsf{H}}(t))_{t\geq 0}$. Moreover, $(U_{\mathsf{H}}(t))_{t\geq 0}$ is irreducible and has a unique invariant density ${\Psi}_{\mathsf{H}} \in \D(\mathsf{T}_{\mathsf{H}})$ with 
$${\Psi}_{\mathsf{H}}(x,v) >0 \qquad \text{ for a. e. } (x,v) \in \Omega \times \R^{d}, \qquad \|{\Psi}_{\mathsf{H}}\|_{\X_{0}}=1$$
and $\mathrm{Ker}(\mathsf{T}_{\mathsf{H}})=\mathrm{Span}({\Psi}_{\mathsf{H}}).$ Moreover, 
\begin{equation}\label{eq:ergodic}
\lim_{t \to \infty}\left\|U_{\mathsf{H}}(t)f-\mathbb{P}f\right\|_{\X_{0}}=0, \qquad  \forall f \in \X_{0}\end{equation}
 where $\mathbb{P}$ denotes the ergodic projection
$$\mathbb{P}f=\varrho_{f}\,{\Psi}_{\mathsf{H}}, \qquad \text{ with } \quad\varrho_{f}=\displaystyle\int_{\Omega\times\R^{d}}f(x,v)\d x \otimes \bm{m}(\d v), \qquad f \in \X_{0}.$$ 
\end{theo}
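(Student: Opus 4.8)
The plan is not to argue from scratch but to verify that Assumptions \ref{hypO}--\ref{hypH} place us within the scope of the general theory of \cite{LMR} (completed, for the convergence statement, by \cite{LMK-arxiv}), and then to read off the four assertions: generation and stochasticity, irreducibility, existence/uniqueness/positivity of the invariant density, and convergence to equilibrium.

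\emph{Generation and stochasticity.} I would first recall that, since $\H\colon\lp\to\lm$ is bounded with $\|\H\|=1$ and satisfies the conservation identity \eqref{eq:H1m}, the classical well-posedness theory for the free transport operator under conservative boundary conditions (as recalled in \cite{LMR}) gives that $(\T_\H,\D(\T_\H))$ generates a positive contraction $C_0$-semigroup, with resolvent, for $\mathrm{Re}\,\lambda>0$,
\[
(\lambda-\T_\H)^{-1}=\mathsf{G}_\lambda+\mathsf{C}_\lambda\,\H\,(I-\mathsf{M}_\lambda\H)^{-1}\mathsf{B}_\lambda ,
\]
where $\mathsf{G}_\lambda,\mathsf{B}_\lambda,\mathsf{C}_\lambda$ are the interior, outgoing-trace and lifting operators attached to the absorbing transport problem, and where the Neumann series for $(I-\mathsf{M}_\lambda\H)^{-1}$ converges because $\|\mathsf{M}_\lambda\H\|_{\mathscr B(\lp)}\leq\|\mathsf{M}_\lambda\|_{\mathscr B(\lm,\lp)}<1$ as soon as $\mathrm{Re}\,\lambda>0$. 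Stochasticity, i.e. $\|U_\H(t)f\|_{\X_0}=\|f\|_{\X_0}$ for $f\geq 0$, would then follow by combining \eqref{eq:H1m} with the Green (boundary flux) identity for $v\cdot\nabla_x$.

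\emph{Irreducibility and the invariant density.} Next I would use Assumption \ref{hypH}\,3) (irreducibility of $\mathsf{M}_0\H$), which transfers through the above resolvent formula into irreducibility of $(U_\H(t))_{t\geq0}$, and Assumption \ref{hypH}\,2) (weak compactness of $\H\mathsf{M}_0\H$), which is precisely the compactness input of \cite[Theorem 6.5]{LMR}: together with stochasticity and irreducibility it forces the stochastic irreducible operator $\mathsf{M}_0\H$ on $\lp$ to possess a unique invariant density $\psi_+>0$ (the unique fixed point of $\mathsf{M}_0\H$ in the positive cone, up to normalization). Finally Assumption \ref{hypH}\,1) — which in particular yields $\H\in\mathscr B(\lp,\Y_1^-)$, i.e. finiteness of the first travel-time moment — is what makes the boundary-to-interior lifting $\mathsf{C}_0$ send $\psi_+$ to a genuine element $\Psi_\H\in\X_0\cap\D(\T_\H)$; after normalization, $\|\Psi_\H\|_{\X_0}=1$, $\Psi_\H>0$ a.e., and $\mathrm{Ker}\,\T_\H=\mathrm{Span}(\Psi_\H)$.

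\emph{Convergence to equilibrium.} Positivity, irreducibility and existence of an invariant density already give mean ergodicity $\tfrac1T\int_0^T U_\H(t)f\,dt\to\varrho_f\Psi_\H$; to upgrade this to the genuine limit \eqref{eq:ergodic} I would invoke the diffuse structure \eqref{eq:Hhkernel}: after a bounded number of reflections the kernel $\bm k$ produces a nonzero integral (velocity-smoothing) component, so that $(U_\H(t))_{t\geq0}$ is a partially integral stochastic semigroup, and the asymptotic stability theory for such semigroups — in the form used in \cite[Section 7]{LMR} and \cite{LMK-arxiv} — then yields $\lim_{t\to\infty}\|U_\H(t)f-\varrho_f\Psi_\H\|_{\X_0}=0$ for every $f\in\X_0$. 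The only genuinely non-routine points in this scheme are the verification that Assumption \ref{hypH} really meets the (more abstract) hypotheses of \cite[Theorem 6.5]{LMR} — in particular the matching between the integrability gain of $\H$ and the weighted trace spaces $\Y_s^{\pm}$, which is what guarantees that $\mathsf{C}_0\psi_+$ is an $\X_0$-density — and that the partial-integral property does hold; I expect the former to be the step demanding the most care.
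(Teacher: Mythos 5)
Your proposal matches the paper's treatment: the paper does not re-prove this theorem but simply records it as a direct consequence of \cite[Theorem~6.5 and Section~7]{LMR} together with \cite{LMK-arxiv}, and your sketch is an accurate unpacking of what those references deliver (generation and stochasticity from conservativity of $\H$ via the resolvent series; irreducibility and the fixed point $\psi_+$ of $\mathsf{M}_0\H$ from Assumptions~\ref{hypH}\,\textit{2)}--\textit{3)}; membership $\Psi_\H=\mathsf{\Xi}_0\H\psi_+\in\X_0$ from the integrability gain in Assumption~\ref{hypH}\,\textit{1)}, cf.~Remark~\ref{nb:PsiXn}). The only nuance worth recording is about the last point: you attribute \eqref{eq:ergodic} to the asymptotic-stability theory for partially integral stochastic semigroups (the route of \cite[Section~7]{LMR}), but the paper explicitly notes that this approach gave only partial results and that the full strong convergence for \emph{all} $f\in\X_0$ under Assumptions~\ref{hypO}--\ref{hypH} was obtained in \cite{LMK-arxiv} via a Tauberian argument based on Ingham's theorem; since you also cite \cite{LMK-arxiv} the conclusion stands, but the load-bearing reference for \eqref{eq:ergodic} is the Ingham-based one.
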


Besides partial results in \cite[Section 7]{LMR}, the strong convergence \eqref{eq:ergodic} has been obtained  recently in our previous (unpublished) contribution \cite{LMK-arxiv}. In that paper, we first proved a general qualitative (without rate) convergence to equilibrium for \eqref{1} under Assumptions \ref{hypO}--\ref{hypH} via a Tauberian argument using Ingham's theorem. Moreover, we also addressed Question \ref{ques} and notably derived suboptimal rates of convergence to
equilibrium for solutions to \eqref{1} under mild assumptions on the initial datum $f$ thanks to recent quantified versions of Ingham's theorem \cite{chill}. Typically, with respect to Theorem \ref{theo:maindec}, the rate obtained in \cite{LMK-arxiv} were of the form
$$\left\|U_\H(t)f-\varrho_f\Psi_\H\right\|_{\X_0}=\mathbf{O}\left(t^{-\frac{N_{\H}}{2}}\right)$$
for $f \in \X_{N_{\H}+1}.$ 
The present paper is a significant improvement of the results of \cite{LMK-arxiv} which do not longer use quantified versions of Ingham's theorem and strenghten in an almost optimal way the rate of convergence. We anticipate already that the tool which allows us to get rid of Ingham's theorem is the use of a suitable representation of the solutions to \eqref{1} combined with a tauberian approach. As far as we know, our construction is new and appears here for the first time.

\subsection{Related literature} Besides its fundamental role in the study of the Boltzmann equation with boundary conditions \cite{EGKM,guo03,briant}, the mathematical interest towards relaxation to equilibrium for collisionless equations is
relatively recent in kinetic theory starting maybe with numerical evidences obtained in \cite{aoki1}. A precise description of the relevance of the question as well as very interesting results have been obtained then in \cite{aoki}.  We mention also the important contributions \cite{kuo,liu} which obtain seemingly optimal rate of convergence when the spatial domain is a ball. For transport equations in a slab geometry, a Tauberian approach based upon Ingham's theorem has been introduced in \cite{mmkseifert}. Such an approach was then generalized to more general geometry (in higher dimension) and improved in our aforementioned unpublished manuscript \cite{LMK-arxiv}. The two very recent works \cite{bernou1,bernou2} provide (nearly optimal) convergence rate for general domains $\Omega$ in a $L^{1}$-setting. All these works are dealing with partially diffuse boundary operator of Maxwell-type for which
\begin{multline}\label{eq:maxweBC}
\H\varphi(x,v)=\alpha (x)\varphi (x,v-(v\cdot n(x))n(x)) \\
+\frac{(1-\alpha
(x))}{\gamma(x)}\M_{\theta(x)}(v)\int_{v'\cdot n(x) >0}\varphi (x,v^{\prime })|v'\cdot n(x)|
\bm{m}(\d v^{\prime })\end{multline}
where, as above $\M_{\theta(x)}$ is a Maxwellian distribution given (see example ) for which the temperature $\theta(x)$ depends (continuously) on $x \in \partial\Omega$ and $\gamma(x)$ is a normalization factor ensuring $\H$ to be stochastic. A nearly optimal rate of convergence for the boundary condition \eqref{eq:maxweBC} in dimension $d=2,3$ has been obtained recently in \cite{bernou1} thanks to a clever use of Harris's subgeometrical convergence theorem for Markov processes. A related probabilistic approach, based on coupling, has been addressed in \cite{bernou2} in dimension $d\geq2$ whenever $\theta(x)=\theta$ is constant and in both these works, the rate of convergence is nearly optimal and given by
$$\mathbf{O}\left(\frac{\left(\log(1+t)\right)^{d+1}}{t^{d}}\right) \quad \text{ as } t \to\infty.$$
For such a model, with the notations of our Theorem \ref{theo:maindec}, 
$$N_{\H}=d-1$$
which suggests that our optimal correction $\bm{\epsilon}(t)$ is at least of the order $\bm{\epsilon}(t)= \frac{\left(\log(1+t)\right)^{d+1}}{t} $. Let us finally mention the  very recent contribution \cite{kim} which closely follows the approach of \cite{bernou1} and provides a $L^{1}-L^{\infty}$ framework for solutions to \eqref{1} with exponential moments and obtain a rate of convergence similar to that of \cite{bernou1} in the case of \emph{diffuse boundary operator} of Maxwell-type as considered here.

Let us point out here that, even if the rate obtained in \cite{bernou1,bernou2,kim} are slightly better than the one obtained here, our contribution is not really comparable to those 
\begin{enumerate}
\item First, we deal here with different kind of boundary conditions (in any dimension $d \geq 1$) and, even if we restrict ourselves to diffuse boundary condition, the structure of the kernel $\bm{k}(x,v,v')$ is much more general than the Maxwellian case \eqref{eq:maxweBC}. Notice in particular that our assumptions on the boundary operator $\H$ are relatively easy to check and quite general. On this aspect, our result can be seen as a \emph{systematic treatment of Question \ref{ques} for general diffusive boundary conditions}. Let us mention here that, even though the approach of \cite{bernou1,bernou2} is robust enough to be applied to more general boundary conditions than the Maxwellian one, such an approach requires the construction of some pointwise lower bounds for the solution to \eqref{1} which would differ from  one boundary operator to another and, as such, some specific work has to be done for each given boundary condition.
\item Second,  the mathematical tools used in the present paper  are completely new and different from those of the associated literature. In particular, since the analysis of \cite{bernou1,bernou2} is based upon a clever modification of Harris's convergence theorem, it is specifically tailored to deal with the $L^1$ (or measure) functional framework \footnote{or more generally, to abstract state spaces where the norm is additive on the positive cone}. Our approach on the contrary, though it uses in some places some specific features of the $L^1$ setting, is definitely robust enough to be applied to a more general functional framework. In particular, it can be suitably modified to be applied  (under suitable \emph{ad hoc} assumptions on the boundary operator $\H$) to some (weighted) $L^2$ or $L^{\infty}$ settings which are particularly relevant for the study of the linearized Boltzmann equation with Maxwell-like boundary condition. 
\item Our theory fully exploits the structure of equation (1.1), but we point out that our construction can be adapted to the study of collisional  equations (see \cite{LM-torus} for neutron transport equation on the torus or \cite{LM-L1} for the spatially homogeneous linear Boltzmann equation with soft potentials)  and we believe that it is virtually adaptable to other perturbative contexts using  the representation of solution to linear collisional kinetic equation as a Dyson-Phillips series.
\end{enumerate}

We insist here again on the fact that the rate derived in \eqref{eq:rat} is \emph{exact}. Even if we derived, up to now, only suboptimal explicit convergence rate, the only restriction to get an optimal rate lies in the difficulty we encountered in estimating accurately the decay to zero of the mapping $t \geq0 \mapsto \int_{-\infty}^{\infty}\exp\left(i\eta\,t\right)\mathsf{\Theta}_{f}(\eta)\d\eta \in\X_{0}$.

\subsection{Practical examples}\label{sec:exain}

A few remarks are in order about our set of Assumptions:
\begin{itemize}
\item First, we gave in our previous contribution \cite[Theorem 5.1]{LMR} a precise definition of a general class of boundary operator for which $\H\mathsf{M}_{0}\H$ is weakly-compact. This class of operators was defined in \cite{LMR} as the class of \emph{regular diffuse boundary} operators and we will simply say here that $\H$ is diffuse. \item Moreover, a practical criterion ensuring the above property \textit{3)} to occur is also given in \cite{LMR}. In practice, as observed already, the typical operator we have in mind is given by \eqref{eq:Hhkernel}.
Under some  \emph{strong positivity} assumption on $\bm{k}(\cdot,\cdot,\cdot)$, one can show that $\mathsf{M}_{0}\H$ is irreducible (see \cite[Section 4]{LMR}).
\item We believe that Assumption \textit{4)} is met for \emph{any} regular diffuse boundary operators. We have been able to prove the result with $\ell = 2$ for a slightly more restrictive class of boundary operators (see Proposition \ref{lem:norm2}) whenever $\bm{m}(\d v)$ is absolutely continuous with respect to the Lebesgue measure on $\R^{d}$.
\end{itemize}

 We refer to Section \ref{sec:REGU}  for more details on this set of assumptions and only provide here a brief list of examples covered by those assumptions and which are particularly relevant as models of boundary interactions in the kinetic theory of gas (see \cite{CIP}).
\begin{exe}[\textit{\textbf{Generalized Maxwell-type}}]\label{exe:gener} The most typical example corresponds to a generalized Maxwell-type diffuse operator for which
$$\bm{k}(x,v,v')=\gamma^{-1}(x)\bm{G}(x,v)$$
where $\G\::\:\partial \Omega \times V \to \R^{+}$ is a measurable and nonnegative mapping such that 
\begin{enumerate}[($i$)]
\item $\G(x,\cdot)$ is radially symmetric for $\pi$-almost every $x \in \partial \Omega$; 
\item $\G(\cdot,v) \in L^{\infty}(\partial \Omega)$ for almost every $v \in V$;
\item The mapping $x \in \partial \Omega \mapsto \gamma(x)$ is continuous and \emph{bounded away from zero} where
\begin{equation}\label{eq:gamm}
\gamma(x):=\int_{\Gamma_{-}(x)}\G(x,v)|v \cdot n(x)| \bm{m}(\d v) \qquad \forall x \in \partial\Omega,\end{equation}
i.e. there exist $\gamma_{0} >0$ such that $\gamma(x) \geq \gamma_{0}$ for $\pi$-almost every $x \in \partial\Omega.$
\end{enumerate}
\end{exe}
\begin{exe}[\textit{\textbf{Maxwell diffuse boundary condition}}]\label{exe:maxw} A particularly relevant example is a special case of the previous one for which, $\bm{m}(\d v)=\d v$ and $\G$ is a given Maxwellian with temperature $\theta(x)$, i.e.
$$\G(x,v)=\mathcal{M}_{\theta(x)}(v), \qquad \mathcal{M}_{\theta}(v)=(2\pi\theta)^{-d/2}\exp\left(-\frac{|v|^{2}}{2\theta}\right), \qquad x \in \partial \Omega, \:\:v \in \R^{d}.$$ 
Then, 
$$\gamma(x)=\bm{\kappa}_{d}\sqrt{\theta(x)}\int_{\R^{d}}|w|\M_{1}(w)\d w, \qquad x \in \partial\Omega$$ 
for some positive constant $\bm{\kappa}_{d}$ depending only on the dimension. The above assumption \textit{$(iii)$} asserts that the temperature mapping $x \in \partial\Omega \mapsto \theta(x)$ is bounded away from zero and continuous.
\end{exe}

 \subsection{Method of proof} \label{sec:method} For the sake of clarity and in
order to help the reading of the paper, we give here an idea of the main
steps of the proof of Theorem \ref{theo:maindec}. The main ideas behind the proof can be summarized in the next three steps:
 \begin{enumerate}[Step 1)]
 \item We exploit an explicit representation of the semigroup $\left(U_{\H}(t)\right)_{t\geq0}$  obtained recently in \cite{luisa}. With this representation, similar to the Dyson-Phillips series for additive perturbative semigroup  theory, the semigroup is expressed as a suitable strongly convergence series
 \begin{equation}\label{eq:series}
 U_{\H}(t)f=\sum_{k=0}^{\infty}\bm{U}_{k}(t)f, \qquad f \in \X_{0}, \quad t \geq 0\end{equation} where the family of operators $\{\bm{U}_{k}(t)\}_{k\geq0}$ is defined inductively. Typically, $\left(\bm{U}_{0}(t)\right)_{t\geq0}$ denotes the semigroup generated by $\T_{0}$ (corresponding to absorbing boundary condition $\H \equiv 0$) whereas, for any $k \in \N$, $\bm{U}_{k}(t)f$ denotes the solution to \eqref{1} after having experienced $k$ rebounds with the boundary. 
 \item Precise estimates of the decay of each terms $\bm{U}_{k}(t)f$ (for given $k \in \N$) are obtained by suitably investigating the influence of the boundary operator for small and large velocities. We in particular show that, if $f \in \X_{N_{\H}+1}$ then 
 $$\left\|\sum_{k=0}^{n}\bm{U}_{k}(t)f\right\|_{\X_{0}}=\mathbf{O}(t^{-N_{\H}-1}) \qquad \text{ as } t \to \infty$$
 for any $n \in \N$.
 See Lemma \ref{lem:decayUn} and Proposition \ref{prop:Snt} for more precise statements.
 \item We these two first points, to investigate the decay of $U_{\H}(t)(\mathbb{I-P})$, we only need to understand that of some suitable remainder of the series \eqref{eq:series}, say
 $$\bm{S}_{n}(t)f=\sum_{k=n+1}^{\infty}\bm{U}_{k}(t)f.$$
 Notice that investigating $\bm{S}_{n}(t)\left(\mathbb{I-P}\right)f$ amounts to study carefully $\bm{S}_{n}(t)f$ for some function $f$ with zero mean, i.e. such that
 $$\varrho_{f}:=\int_{\Omega\times V}f(x,v)\d x \otimes \bm{m}(\d v)=0.$$
This is the most technical part of the paper. It requires a careful study of the spectral properties of $\T_{\H}$ and $\mathsf{M}_{0}\H$ and some tools from Fourier-Laplace analysis. More precisely, while the two previous points can be seen as a \emph{semigroup approach} to Question \ref{ques}, this third step is rather a \emph{resolvent approach} since we deduce the properties of the remainder
 $$\sum_{k=n+1}^{\infty}\bm{U}_{k}(t)f$$
 from the careful study of its Laplace transform which is related to the resolvent of $\T_{\H}.$ This is 
 \end{enumerate}
Let us describe with more details this third step. The resolvent of $\T_{\H}$ can be written, for $\l \in \C_{+}$ as
$$\Rs(\l,\T_{\H})f=\Rs(\l,\T_{0})f+\sum_{k=0}^{\infty}\mathsf{\Xi}_{\l}\H\left(\mathsf{M}_{\l}\H\right)^{k}\mathsf{G}_{\l}f$$
for some suitable operators $\mathsf{\Xi}_{\l},\mathsf{M}_{\l}$ and $\mathsf{G}_{\l}$ described in Section \ref{sec:oper}. Then, for a given $n \in \N$, we can show that the remainder $\bm{S}_{n}(t)$  admits, as a Laplace transform,
$$\int_{0}^{\infty}\exp\left(-\l t\right)\bm{S}_{n}(t)f\d t=\sum_{k=n}^{\infty}\mathsf{\Xi}_{\l}\H\left(\mathsf{M}_{\l}\H\right)^{k}\mathsf{G}_{\l}f=\Upsilon(\l)f, \qquad \mathrm{Re}\l >0.$$
Using properties of the inverse Laplace transform \cite{arendt}, we can describe then entirely $\bm{S}_{n}(t)f$ in terms of $\Upsilon_{n}(\l)f$, namely
$$\bm{S}_{n}(t)f=\frac{\exp(\e t)}{2\pi}\lim_{\ell\to\infty} \int_{-\ell}^{\ell}\exp\left(i\eta t\right)\Upsilon_{n}(\e+i\eta)f\d\eta, \qquad \forall f \in \X_{0}$$
for any $t >0$, $\e >0.$ Of course, to hope deducing a decay of $\bm{S}_{n}(t)f$ for large $t$, the positive exponential is a dramatic obstacle. This enforces to deduce a second representation formula for $\bm{S}_{n}(t)f$ where the inverse Laplace transform is derived \emph{on the imaginary axis}, i.e. for $\l=i\eta,$ $\eta \in\R.$ 

A first mathematical difficulty occurs here since $\Upsilon_{n}(\e+i\eta)f$ is not even defined for $\e=0.$ We need therefore to build, for suitable class of functions $f$, the \emph{boundary trace} of $\Upsilon_{n}(\l)f$ along the imaginary axis. This is the most technical part of the present work which will result in the following
\begin{theo}\label{theo:introtr} Let $f \in \X_{N_{\H}+1}$ be such that
\begin{equation}\label{eq:0mean}
\varrho_{f}=\int_{\Omega\times V}f(x,v)\d x \otimes \bm{m}(\d v)=0.\end{equation}
Then, for any $n\geq0$ the limit
$$\lim_{\e\to0^{+}}\Upsilon_{n}(\e+i\eta)f,$$
exists in $\mathscr{C}_{0}^{N_{\H}}(\R,\X_{0})$. Its limit is denoted $\mathsf{\Psi}_{n}(\eta)f$. Moreover, for $n$ large enough (explicit), the mappings
$$\eta \in \R \longmapsto \dfrac{\d^{k}}{\d\eta^{k}}\mathsf{\Psi}_{n}(\eta)f \in \X_{0}, \qquad 0 \leq k \leq N_{\H}$$
are integrable.
\medskip
\end{theo}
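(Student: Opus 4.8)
The plan is to establish Theorem \ref{theo:introtr} by exploiting the factored structure $\Upsilon_{n}(\l)f=\sum_{k\geq n}\mathsf{\Xi}_{\l}\H\left(\mathsf{M}_{\l}\H\right)^{k}\mathsf{G}_{\l}f$ and reducing everything to the analysis of a \emph{scalar-valued boundary function} on $\Gamma_{+}$. First I would isolate the core object: writing $\mathsf{M}_{0}\H=:\mathsf{K}$ and noting that, on the imaginary axis, $\mathsf{M}_{i\eta}\H$ is (formally) $\mathsf{K}$ twisted by the phase $e^{-i\eta\tau_{-}}$, the series $\sum_{k\geq n}\left(\mathsf{M}_{\l}\H\right)^{k}$ behaves like a Neumann series for $\bigl(\mathbb{I}-\mathsf{M}_{\l}\H\bigr)^{-1}$ minus its first $n$ terms. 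The key point is that, by Assumption \ref{hypH}~4) (the estimate \eqref{eq:power}), $\mathsf{p}$ iterations of $\mathsf{M}_{\e+i\eta}\H$ are integrable in $\eta$ \emph{uniformly in} $\e\geq0$; combined with weak compactness of $\H\mathsf{M}_{0}\H$ (Assumption \ref{hypH}~2)) and the spectral information on $\mathsf{T}_{\H}$ and $\mathsf{M}_{0}\H$ coming from Theorem \ref{theo:LMR} (which forces $1$ to be the only point of the spectrum of $\mathsf{K}$ on the unit circle, a simple eigenvalue with eigenfunction corresponding to $\Psi_{\H}$), the resolvent $\bigl(\mathbb{I}-\mathsf{M}_{i\eta}\H\bigr)^{-1}$ extends continuously to $\eta\neq0$ and the singularity at $\eta=0$ is \emph{exactly compensated} by the zero-mean hypothesis \eqref{eq:0mean}, since $\mathsf{G}_{i\eta}f$ lands (as $\eta\to0$) in the complement of the dominant eigendirection precisely when $\varrho_{f}=0$.

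The next step is to upgrade continuity to $\mathscr{C}^{N_{\H}}$ regularity in $\eta$. Here the gain-of-integrability Assumption \ref{hypH}~1), encoded in $N_{\H}$, is decisive: $\H$ maps $\lp$ into $\Y_{N_{\H}+1}^{-}$, and on that weighted space the multiplication by $\tau_{-}(x,v)$ — whose size is controlled by $1/|v|$ up to the diameter of $\Omega$ — is bounded with $N_{\H}$ spare powers. Consequently each $\eta$-derivative $\partial_{\eta}$ of $\mathsf{M}_{i\eta}$ brings down a factor $-i\tau_{-}$, and one can differentiate the (truncated) Neumann series term-by-term up to order $N_{\H}$ provided $f\in\X_{N_{\H}+1}$; the $\mathscr{C}_{0}$ part of $\mathscr{C}_{0}^{N_{\H}}(\R,\X_{0})$ (decay of the derivatives as $|\eta|\to\infty$) follows from the Riemann–Lebesgue-type decay that \eqref{eq:power} provides for the $\mathsf{p}$-fold iterate. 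I would carry this out by proving a Leibniz-type bound of the shape $\bigl\|\partial_{\eta}^{k}\bigl(\mathsf{M}_{\l}\H\bigr)^{j}\bigr\|\lesssim (1+j)^{k}\,\bigl\|\bigl(\mathsf{M}_{\l}\H\bigr)^{j}\bigr\|_{\mathscr{B}(\lp,\text{weighted})}$, which is summable over $j\geq n$ once $n$ is large compared to $\mathsf{p}$ and $k\leq N_{\H}$.

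For the integrability statement — that $\eta\mapsto\partial_{\eta}^{k}\mathsf{\Psi}_{n}(\eta)f\in L^{1}(\R,\X_{0})$ for $n$ large and $0\leq k\leq N_{\H}$ — I would split $\R=\{|\eta|\leq1\}\cup\{|\eta|>1\}$. On the compact piece, $L^{1}$-integrability is immediate from the $\mathscr{C}^{N_{\H}}$ regularity just obtained (the singularity at $\eta=0$ having been removed by \eqref{eq:0mean}). On $\{|\eta|>1\}$, one writes $\sum_{k\geq n}=\bigl(\mathsf{M}_{\l}\H\bigr)^{n-\mathsf{p}}\cdot\bigl(\mathsf{M}_{\l}\H\bigr)^{\mathsf{p}}\cdot\sum_{k\geq0}\bigl(\mathsf{M}_{\l}\H\bigr)^{k}$, so one gets one \emph{integrable} factor $\bigl(\mathsf{M}_{i\eta}\H\bigr)^{\mathsf{p}}$ from \eqref{eq:power}, the remaining factors being bounded uniformly in $\eta$ (using $\|\mathsf{M}_{i\eta}\H\|_{\mathscr{B}(\lp)}\leq1$ and the uniform invertibility of $\mathbb{I}-\mathsf{M}_{i\eta}\H$ away from $\eta=0$); the $\eta$-derivatives, as noted, cost only polynomial factors in the summation index that are absorbed by choosing $n-\mathsf{p}$ large enough and are killed by the decay of the $\mathsf{p}$-fold iterate. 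Finally one checks that $\mathsf{\Xi}_{\l}$ and $\mathsf{G}_{\l}$ and their $\eta$-derivatives are bounded $\X_{N_{\H}+1}\to L^{1}$-type operators with at worst polynomial growth in $\eta$, which is controlled by taking $n$ yet larger.

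The main obstacle, I expect, is Step two at the boundary value $\eta=0$: one must show that the \emph{only} obstruction to continuity of $\bigl(\mathbb{I}-\mathsf{M}_{i\eta}\H\bigr)^{-1}$ as $\eta\to0$ is the rank-one pole along $\Psi_{\H}$, and that the zero-mean condition \eqref{eq:0mean} propagates correctly through $\mathsf{G}_{i\eta}$ to annihilate it — this requires a careful Laurent expansion of the resolvent of $\mathsf{M}_{i\eta}\H$ near its algebraically simple eigenvalue $1$, using the irreducibility from Assumption \ref{hypH}~3) and Theorem \ref{theo:LMR} to guarantee simplicity, together with a quantitative lower bound on $\operatorname{dist}\bigl(1,\sigma(\mathsf{M}_{i\eta}\H)\setminus\{1\}\bigr)$ that is uniform for $|\eta|\leq1$. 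The weak-compactness hypothesis \ref{hypH}~2) is what makes $\mathsf{K}^{2}$ (hence a tail of the series) act like a compact operator, so that this spectral picture is stable under the perturbation $\eta\mapsto e^{-i\eta\tau_{-}}$; making the compensation of the pole \emph{exactly cancel} rather than merely improve the singularity is the delicate bookkeeping at the heart of the proof.
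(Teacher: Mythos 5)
Your plan faithfully reproduces the paper's treatment of the large-$\eta$ regime and the integrability split, and correctly identifies that the $\eta=0$ singularity must be killed by the zero-mean hypothesis through a Laurent-type expansion of the dominant eigenprojection. But the route you propose for the $\mathscr{C}^{N_\H}$ regularity has a genuine gap, and it is precisely the one the paper is careful to route around.

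You propose to differentiate the truncated Neumann series $\sum_{j\geq n}\mathsf{\Xi}_{\l}\H(\mathsf{M}_{\l}\H)^{j}\mathsf{G}_{\l}f$ term by term and to argue summability from a Leibniz bound of the form $\|\partial_\eta^k(\mathsf{M}_\l\H)^j\|\lesssim (1+j)^k\|(\mathsf{M}_\l\H)^j\|$. Summability of the differentiated series requires $\|(\mathsf{M}_{i\eta}\H)^j\|$ to decay in $j$ \emph{uniformly near $\eta=0$}, which it does not: splitting with the spectral projection $\mathsf{P}(i\eta)$ of Proposition~\ref{prop:eigenMLH} gives a piece $\nu(i\eta)^j\mathsf{P}(i\eta)$ with $\nu(i\eta)\to 1$, so the decay degenerates as $\eta\to 0$. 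The zero-mean condition $\varrho_f=0$ makes the \emph{summed} resolvent $\Rs(1,\mathsf{M}_{i\eta}\H)\mathsf{G}_{i\eta}f$ finite at $\eta=0$ because $\mathsf{P}(0)\mathsf{G}_0 f=0$ cancels the simple pole of $\tfrac{1}{1-\nu(\l)}$ (this is Lemma~\ref{lem:convZk}); but once you differentiate $\tfrac{1}{1-\nu(\l)}\mathsf{P}(\l)\mathsf{G}_{\l}f$ in $\eta$, the denominator becomes $(1-\nu(\l))^2$, and $\varrho_f=0$ only gives first-order vanishing of the numerator, not second order. So each $\eta$-derivative worsens the apparent pole, and the one-shot cancellation you invoke cannot repair it. Your remark about a "careful Laurent expansion" points in the right direction for \emph{continuity}, but it does not by itself give $N_\H$ derivatives.

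The paper resolves this by abandoning term-by-term differentiation of the series entirely in a neighbourhood of $\eta=0$. The key move (Eq.~\eqref{eq:dif}) is the identity $\Upsilon_n(\l)=\Rs(\l,\T_\H)-\Rs(\l,\T_0)-s_n(\l)$, where $s_n$ is a \emph{finite} sum (trivially $\mathscr{C}^{N_\H}$ by the weighted-space estimates, Prop.~\ref{prop:snl}), $\Rs(\l,\T_0)$ is explicit as a Laplace transform (Prop.~\ref{prop:convRsT0}), and $\Rs(\l,\T_\H)$ is differentiated using $\partial_\eta^k\Rs(\l,\T_\H)=(-i)^k k!\,\Rs(\l,\T_\H)^{k+1}$. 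Crucially, $\Rs(\l,\T_\H)$ preserves the zero-mean subspace $\X_j^0$ (Prop.~\ref{propo:regul}), so the cancellation that handles the $\eta=0$ pole is automatically re-inserted at every iterate; this is what lets Prop.~\ref{lem:UPS} push the boundary trace through $k+1$ powers without the pole reappearing. That structural fact — the resolvent identity plus invariance of $\X_j^0$ under $\Rs(i\eta,\T_\H)$ — is the missing ingredient in your plan and is what makes the higher regularity at $\eta=0$ go through. Your treatment of the integrability statement via $|\eta|\lessgtr R$ and the $(\mathsf{M}_{i\eta}\H)^{\mathsf{p}}$ factor is, on the other hand, essentially the paper's Lemma~\ref{prop:reguPsif} and is fine.
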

Here above, for any Banach space $(X,\|\cdot\|_{X})$ and any $k \in \N$, we set
\begin{multline*}
\mathscr{C}_{0}^k(\R,X)=\left\{h\::\:\R \mapsto X\;;\;\text{of class $\mathscr{C}^{k}$  over $\R$ }\right.\\
\left.\text{and such that } \lim_{|\eta|\to\infty}\left\|\frac{\d^j}{\d\eta^j}h(\eta)\right\|_{X}=0 \qquad \forall j \leq k\right\}\end{multline*}
and we endow $\mathscr{C}_{0}^k(\R,\X)$ with the norm $\|h\|_{L^{\infty}(X)}:=\sup_{\eta}\max_{0\leq j \leq k}\|\frac{\d^j}{\d\eta^j}h(\eta)\|_{X}$ which makes it a Banach space.
\begin{nb}
We point out here that we cannot expect a \emph{higher regularity} for the mapping $\mathsf{\Psi}_{n}(\eta)f$; the mapping $\Upsilon_{n}(\e+i\eta)f$ is not differentiable $N_{\H}+1$ times because $\mathsf{M}_{\e+i\eta}\H$ is differentiable $k$-times if and only if $\H \in \mathscr{B}(\lp,\Y_{k+1}^{-})$. Therefore, by definition of $N_{\H}$, $\eta \mapsto \mathsf{M}_{\e+i\eta}\H \in \mathscr{B}(\lp,\X_{0})$ is differentiable exactly $N_{\H}$ times and so is $\eta \mapsto  \Upsilon_{n}(\e+i\eta)f \in \X_{0}.$
\end{nb}

With this boundary function, one can prove the second representation formula (see Theorem \ref{theo:represent2})
$$\bm{S}_{n}(t)f=\frac{1}{2\pi}\int_{-\infty}^{\infty}\exp\left(i\eta\,t\right)\mathsf{\Psi}_{n}(\eta)f\,\d\eta\,, \qquad \forall t\geq0$$
for any $f \in \X_{N_{\H}+1}$ satisfying \eqref{eq:0mean}.

Recall that the condition $\varrho_{f}=0$ is equivalent to the condition $\mathbb{P}f=0$, i.e. the above is somehow a representation formula for $\bm{S}_{n}(t)(\mathbb{I-P})f$. With this last representation formula, one easily get convinced that, if one can prove that the mapping
$$\eta \in \R \longmapsto \mathsf{\Psi}_{n}(\eta)f \in \X_{0}$$
belongs to $\mathscr{C}^{N_{\H}}_{0}(\R)$, then by integration by parts, we can expect
$$\bm{S}_{n}(t)f=\left(\frac{i}{t}\right)^{N_{\H}}\int_{-\infty}^{\infty}\exp\left(i\eta\,t\right)\dfrac{\d^{N_{\H}}}{\d \eta^{N_{\H}}}\mathsf{\Psi}_{n}(\eta)f\dfrac{\d\eta}{2\pi}, \qquad t \geq0.$$
This provides at least a decay like 
$\bm{S}_{n}(t)f=\mathbf{O}(t^{-N_{\H}}).$ This decay is actually strengthened into 
$$\bm{S}_{n}(t)f=\mathbf{o}(t^{-N_{\H}})$$ by a simple use of Riemann-Lebesgue Theorem using that the mapping
$$\eta \in \R \longmapsto \dfrac{\d^{N_{\H}}}{\d \eta^{N_{\H}}}\mathsf{\Psi}_{n}(\eta)f$$
is \emph{integrable} over $\R$. One sees here that this is exactly the contents of Theorem \ref{theo:maindec} where clearly 
\begin{equation}\label{eq:defTheta}\mathsf{\Theta}_{f}(\eta)=\dfrac{\d^{N_{\H}}}{\d \eta^{N_{\H}}}\mathsf{\Psi}_{n}(\eta)f \end{equation} for a suitably large choice of the parameter $n$ (related to $\mathsf{p}$ in \eqref{eq:power}). We point out here that we cannot expect a \emph{higher regularity} for the mapping $\mathsf{\Psi}_{n}(\eta)f$.

Let us comment very briefly on the technical points behind the proof of Theorem \ref{theo:introtr}. To study $\Upsilon_{n}(\l)$ along the imaginary axis, we observe first that
$$r_{\sigma }\left(\mathsf{M}_{\lambda}\H\right) <1 \qquad \text{Re}\l >0 \qquad \text{ but $1$  is a (simple) eigenvalue of } \mathsf{M}_{0}\H.$$
It turns out that $\lambda \mapsto \mathsf{M_{\lambda }H} \in \mathscr{B}(\lp)$ extends to the
imaginary axis with
$$r_{\sigma }\left(\mathsf{M_{i\eta }H}\right) <1 \qquad (\eta \neq 0);$$
(see Proposition \ref{prop:Meps}). It follows that 
$$
\lim_{\varepsilon \rightarrow 0_{+}}\Upsilon_{n}(\e+i\eta)f\text{ \ exists}\ \
(\eta \neq 0)
$$
and the convergence is locally uniform in $\eta \neq 0$.
Because $r_{\sigma }\left(\mathsf{M_{0}H}\right) =1$, the treatment of the case $%
\eta =0$  is very involved and the various technical results of Section \ref{sec:near0}  are
devoted to this delicate point. In particular, by exploiting the fact that
near $\lambda =0$, the eigenvalue of $\mathsf{M_{\lambda }H}$ of maximum modulus is
algebraically simple (converging to $1$ as $\lambda \rightarrow 0$, see Proposition \ref{prop:eigenMLH}), and
analyzing the corresponding spectral projection, we can  show the \emph{existence of the boundary function} for all the various operators involved in $\Upsilon_{n}(\e+i\eta)f$ under the assumption \eqref{eq:0mean} (see Lemma \ref{lem:convZk}) where the convergence is uniform with respect to $\eta \in\R$. 

To prove then the smoothness of the boundary function $\mathsf{\Psi}_{n}(\cdot)f$, we need an alternative representation of $\Upsilon_{n}(\l)$ since it appears out of reach to compute the derivatives of $\sum_{k=n}^{\infty}\left(\mathsf{M}_{\l}\H\right)^{k}\mathsf{G}_{\l}f$ at $\l=0$ still because of the fact that $r_{\sigma}(\mathsf{M}_{0}\H)=1.$ It is easy to see that
$$\Upsilon_{n}(\l)=\Rs(\l,T_{\H})-\Rs(\l,\T_{0})-\sum_{k=0}^{n}\mathsf{\Xi}_{\l}\left(\mathsf{M}_{\l}\H\right)^{k}\mathsf{G}_{\l}$$
and the smoothness is then obtained  through the well-known identity for derivatives of the resolvent
$$\dfrac{\d^{k}}{\d\l^{k}}\Rs(\l,\T_{\H})=(-1)^{k}\,k!\,\Rs(\l,\T_{H})^{k+1}, \qquad \mathrm{Re}\l >0.$$ 
Finally, the last technical point is to show that the $N_\H $-th derivative of the boundary
function is integrable. This point is the crucial one where Assumption \ref{hypH} \textit{(4)} is fully exploited. See Theorem \ref{theo:represent2}  for details. All these results yield then to the decay rate \eqref{eq:rat}. Then, the more precise investigation of the correction $\bm{\varepsilon}(t)$ and its link with the modulus of continuity of $\mathsf{\Theta}_f$ is deduced by a simple adaptation of considerations linking the fractional regularity of functions  with the decay of its Fourier coefficients \cite{grafakos}. This gives then \eqref{eq:estMod} under the additional assumption \eqref{eq:decay-power}.
\medskip

As mentioned earlier, proving that Assumption \ref{hypH} \textit{4)} is met for a large class of diffuse boundary operators is a highly technical task and we devote Section \ref{sec:REGU}  to this.  The results of Section \ref{sec:REGU} are also related to a general change of variable formula transferring integrals in velocities into integrals over $\partial\Omega$ which has its own interest. It clarifies several computations scattered in the literature \cite{EGKM,guo03} and will be a fundamental tool for the analysis in the companion paper \cite{LM-iso}.

\subsection{Organization of the paper} In Section \ref{sec:func}, we introduce the functional setting and notations used in the rest of the paper and recall several known results mainly from our previous contribution \cite{LMR}. The representation of the semigroup $\left(U_{\H}(t)\right)_{t\geq0}$ is discussed in Section \ref{sec:DP} together with the decay rate of the iterates $\bm{U}_k(t)$, settling the above Steps 1) and 2) of our approach. Section \ref{sec:fineR} is devoted to the fine analysis of the resolvent $\Rs(1,\mathsf{M}_{\l}\H)$ which is well-defined for $\mathrm{Re}\l >0$ but needs to be carefully extended to the imaginary axis $\l=i\eta$, $\eta \in\R.$ This section is the most technical one of the paper and we have to deal separately with the case $\eta \neq 0$ and $\eta=0.$S uch an extension is a cornerstone in the construction of the boundary function $\lim_{\e \to 0^{+}}\Upsilon_n(\e+i\eta,\T_{\H})f$ (for suitable $f$) which is performed in Section \ref{sec:near0}. In this Section, we also establish the suitable representation formula for $\bm{S}_{n}(t)$ thanks to the inverse Laplace transform (Section \ref{sec:repr}) and we provide the full proof of Theorem \ref{theo:maindec}. In Section \ref{sec:REGU}, we provide practical criteria ensuring Assumptions \ref{hypH} to be met. It contains several results we believe to be of independent interest. We also illustrate how the examples described here above in Section \ref{sec:exain} are covered by our results.

\subsection*{Acknowledgements} BL gratefully acknowledges the financial
support from the Italian Ministry of Education, University and
Research (MIUR), ``Dipartimenti di Eccellenza'' grant 2018-2022. Part of this research was performed while the second author was visiting the ``Laboratoire de Math\'ematiques CNRS UMR 6623'' at Universit\'e de  Franche-Comt\'e in February 2020. He wishes to express his gratitude for the financial support and warm hospitality offered by this Institution.

  \section{Reminders of known results}\label{sec:func}

\subsection{Functional setting}\label{sec:functional}
We introduce the partial Sobolev space 
$$W_1=\{\psi \in \X_{0}\,;\,v
\cdot \nabla_x \psi \in \X_{0}\}.$$ It is known \cite{ces1,ces2}
that any $\psi \in W_1$ admits traces $\psi_{|\Gamma_{\pm}}$ on
$\Gamma_{\pm}$ such that 
$$\psi_{|\Gamma_{\pm}} \in
L^1_{\mathrm{loc}}(\Gamma_{\pm}\,;\,\d \mu_{\pm}(x,v))$$ where
$$\d \mu_{\pm}(x,v)=|v \cdot n(x)|\pi(\d x) \otimes \bm{m}(\d v),$$
denotes the "natural" measure on $\Gamma_{\pm}.$ Notice that, since $\d\mu_{+}$ and $\d\mu_{-}$ share the same expression, we will often simply denote it by 
$$\d \mu(x,v)=|v \cdot n(x)|\pi(\d x) \otimes \bm{m}(\d v),$$
the fact that it acts on $\Gamma_{-}$ or $\Gamma_{+}$ being clear from the context.  Note that
$$\partial
\Omega \times V:=\Gamma_- \cup \Gamma_+ \cup \Gamma_0,$$ where
$$\Gamma_0:=\{(x,v) \in \partial \Omega \times V\,;\,v \cdot
n(x)=0\}.$$
We introduce the set
$$W=\left\{\psi \in W_1\,;\,\psi_{|\Gamma_{\pm}} \in L^1_{{\pm}}\right\}.$$
One can show \cite{ces1,ces2} that 
$$W=\left\{\psi \in
W_1\,;\,\psi_{|\Gamma_+} \in \lp\right\} =\left\{\psi \in
W_1\,;\,\psi_{|\Gamma_-} \in \lm\right\}.$$ Then, the \textit{trace
operators} $\mathsf{B}^{\pm}$:
\begin{equation*}\begin{cases}
\mathsf{B}^{\pm}: \:W_1 \subset &\X_{0} \to L^1_{\mathrm{loc}}(\Gamma_{\pm}\,;\,\d \mu_{\pm})\\
&\psi \longmapsto \mathsf{B}^{\pm}\psi=\psi_{|\Gamma_{\pm}},
\end{cases}\end{equation*}
are such that $\mathsf{B}^{\pm}(W)\subseteq L^1_{\pm}$. Let us define the
{\it maximal transport operator } $\mathsf{T}_{\mathrm{max}}$  as follows:
\begin{equation*}\begin{cases} \mathsf{T}_{\mathrm{max}} :\:\D(\mathsf{T}_{\mathrm{max}}) \subset &\X_{0} \to \X_{0}\\
&\psi \longmapsto \mathsf{T}_{\mathrm{max}}\psi(x,v)=-v \cdot \nabla_x
\psi(x,v),
\end{cases}\end{equation*}
with domain $\D(\mathsf{T}_{\mathrm{max}})=W_1.$
Now, for any \textit{
bounded boundary operator} $\mathsf{H} \in\mathscr{B}(L^1_+,L^1_-)$, define
$\mathsf{T}_{\mathsf{H}}$ as
$$\mathsf{T}_{\mathsf{H}}\varphi=\mathsf{T}_{\mathrm{max}}\varphi \qquad \text{ for any }
\varphi \in \D(\mathsf{T}_{\mathsf{H}}),$$ where 
$$\D(\mathsf{T}_{\mathsf{H}})=\{\psi \in
W\,;\,\psi_{|\Gamma_-}=\mathsf{H}(\psi_{|\Gamma_+})\}.$$ In particular, the
transport operator with absorbing conditions (i.e. corresponding
to $\mathsf{H}=0$) will be denoted by $\T_0$. 

\begin{defi} For any $s \geq 0$, we define the function spaces
$$\Y^{\pm}_{s}=L^{1}(\Gamma_{\pm}\,,\max(1,|v|^{-s}) \d\mu_{\pm})$$
with the norm
$$\|u\|_{\Y^{\pm}_{s}}=\int_{\Gamma_{\pm}}|u(x,v)|\,\max(1,|v|^{-s}) \d\mu_{\pm}(x,v).$$
In the same way, for any $s \geq 0$, we introduce 
$$\X_{s}=L^{1}(\Omega \times V\,,\max(1,|v|^{-s})\d x\otimes \bm{m}(\d v))$$
with norm $\|f\|_{\X_{s}}:=\|\,\max(1,|v|^{-s})f\|_{\X_{0}},$ $f \in \X_{s}.$
\end{defi} 
\begin{nb}\label{nb:varpi} Of course, for any $s\geq0$, $\Y^{\pm}_{s}$ is continuously and densely embedded in $L^{1}_{\pm}.$ In the same way, $\X_{s}$ is continuously and densely embedded in $\X_{0}$. Introduce, for any $s \in \N$, the function
$$\varpi_{s}(v)=\max(1,|v|^{-s}), \qquad v \in V.$$
One will identify, without ambiguity, $\varpi_{s}$ with the multiplication operator acting on $L^{1}_{\pm}$ or on $\X_{0}$, e.g.
\begin{equation*}
\begin{cases}
\varpi_{s} \::\:&\X_{0} \longrightarrow \X_{0}\\
&f \longmapsto 
\varpi_{s}f(x,v)=\varpi_{s}(v)f,\:\:\:(x,v) \in \Omega \times V.
\end{cases}
\end{equation*}
Then, one sees that 
$$\Y^{\pm}_{s}=\{f \in L^{1}_{\pm}\;;\;\varpi_{s}f \in L^{1}_{\pm}\}, \qquad \X_{s}=\{f \in \X_{0}\;;\;\varpi_{s}f \in \X_{0}\}.$$
\end{nb}

\subsection{Travel time and integration formula} \label{sec:trav}
Let us now introduce the \textit{travel time} of particles in $\Omega$ (with the notations of \cite{mjm1}),
defined as:
\begin{defi}\label{tempsdevol}
For any $(x,v) \in \overline{\Omega} \times V,$ define
\begin{equation*}
t_{\pm}(x,v)=\inf\{\,s > 0\,;\,x\pm sv \notin \Omega\}.
\end{equation*}
To avoid confusion, we will set $\tau_{\pm}(x,v):=t_{\pm}(x,v)$  if $(x,v) \in
\partial \Omega \times V.$
\end{defi}
 
With the notations of \cite{guo03}, $t_{-}$ is the \emph{backward exit time} $t_{\mathbf{b}}$.
From a heuristic perspective, $t_{-}(x,v)$ is the time needed by a
particle having the position $x \in \Omega$ and the velocity $-v
\in V$ to reach the boundary $\partial\Omega$. One can prove
\cite[Lemma 1.5]{voigt} that $t_{\pm}(\cdot,\cdot)$ is measurable on
$\Omega \times V$. Moreover $\tau_{\pm}(x,v)=0 \text{ for any } (x,v)
\in \Gamma_{\pm}$ whereas $\tau_{\mp}(x,v)> 0$ on $\Gamma_{\pm}.$ It holds
$$(x,v) \in \Gamma_{\pm} \Longleftrightarrow \exists y \in \Omega \quad \text{ with } \quad t_{\pm}(y,v) < \infty \quad \text{ and }\quad x=y\pm t_{\pm}(y,v)v.$$
In that case, $\tau_{\mp}(x,v)=t_{+}(y,v)+t_{-}(y,v).$  Notice also that,
\begin{equation}\label{eq:scale}
t_{\pm}(x,v)|v|=t_{\pm}\left(x,\omega\right), \qquad \forall (x,v) \in \overline{\Omega} \times V, \:v \neq 0, \:\omega=|v|^{-1}\,v \in \mathbb{S}^{d-1}.\end{equation}

We have the following integration formulae from \cite{mjm1}.
\begin{propo} For any $h \in \X_{0}$, it holds
\begin{equation}\label{10.47}
\int_{\Omega \times V}h(x,v)\d x \otimes \bm{m}(\d v)
=\int_{\Gamma_\pm}\d\mu_{\pm}(z,v)\int_0^{\tau_{\mp}(z,v)}h\left(z\mp\,sv,v\right)\d s,
\end{equation}
and for any $\psi \in L^1(\Gamma_-,\d\mu_{-})$,
\begin{equation}\label{10.51}
\int_{\Gamma_-}\psi(z,v)\d\mu_{-}(z,v)=\int_{\Gamma_+}\psi(x-\tau_{-}(x,v)v,v)\d\mu_{+}(x,v).\end{equation}
\end{propo}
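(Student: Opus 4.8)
The plan is to prove the first identity \eqref{10.47}, which is purely geometric (``integration along the characteristic lines of the free transport''), and then to obtain the boundary change of variables \eqref{10.51} as a direct consequence of it. For \eqref{10.47} I would first reduce to $h\geq 0$ (write $h=h_{+}-h_{-}$ and use monotone convergence on an exhaustion of $\Omega\times V$ by sets of finite measure), and, since $\bm m(\{0\})=0$, integrate only over $v\neq 0$. Writing $v=|v|\,\omega$ with $\omega=|v|^{-1}v\in\mathbb{S}^{d-1}$ and splitting $\R^{d}=\omega^{\perp}\oplus\R\omega$, Tonelli's theorem reduces the matter, for each fixed $v$, to the one-dimensional slices
$$\int_{\Omega}h(x,v)\,\d x=\int_{\omega^{\perp}}\d w\int_{\R}\ind_{\Omega}(w+\sigma\omega)\,h(w+\sigma\omega,v)\,\d\sigma ,$$
so that everything comes down to understanding the open bounded sets $I_{w}:=\{\sigma\in\R:\ w+\sigma\omega\in\Omega\}$.

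For a.e.\ $w$, $I_{w}$ is a countable disjoint union of intervals $(a_{j},b_{j})$, and the key geometric fact is that, thanks to the $\mathcal{C}^{1}$ regularity of $\partial\Omega$, for $\bm m$-a.e.\ $v$ and a.e.\ $w$ each ``exit'' endpoint $z_{j}:=w+b_{j}\omega$ belongs to $\Gamma_{+}$ and each ``entrance'' endpoint $w+a_{j}\omega$ belongs to $\Gamma_{-}$, the grazing set $\Gamma_{0}$ being negligible in this slicing. On $(a_{j},b_{j})$ I would substitute $\sigma=b_{j}-s|v|$; the scaling relation \eqref{eq:scale} gives $b_{j}-a_{j}=|v|\,\tau_{-}(z_{j},v)$, whence
$$\int_{a_{j}}^{b_{j}}h(w+\sigma\omega,v)\,\d\sigma=|v|\int_{0}^{\tau_{-}(z_{j},v)}h(z_{j}-sv,v)\,\d s .$$
Summing over $j$ and integrating in $w$, it remains to identify $|v|\int_{\omega^{\perp}}(\cdots)\,\d w$ with the integral over $\Gamma_{+}$: where $\partial\Omega$ is locally a $\mathcal{C}^{1}$ graph over $\omega^{\perp}$, the orthogonal projection $z\mapsto w$ has Jacobian $|\omega\cdot n(z)|$, so that $|v|\,\d w=|v|\,|\omega\cdot n(z)|\,\pi(\d z)=|v\cdot n(z)|\,\pi(\d z)$ and summing over $j$ amounts to summing over the local sheets of $\{z\in\partial\Omega:\ \omega\cdot n(z)>0\}$. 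Reinstating $\int_{V}\bm m(\d v)$ and applying Tonelli once more gives \eqref{10.47} with the upper sign; the lower-sign version is obtained verbatim after substituting $\sigma=a_{j}+s|v|$ and using the endpoint $w+a_{j}\omega\in\Gamma_{-}$.

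To derive \eqref{10.51} from \eqref{10.47}, I would fix $\psi\in L^{1}(\Gamma_{-},\d\mu_{-})$ and introduce
$$h(x,v):=\frac{\psi\bigl(x-t_{-}(x,v)v,\,v\bigr)}{t_{+}(x,v)+t_{-}(x,v)},\qquad (x,v)\in\Omega\times V,\ v\neq 0 .$$
Along any characteristic $s\mapsto z+sv$ issued from $z\in\Gamma_{-}$ one has $x-t_{-}(x,v)v=z$ and $t_{+}(x,v)+t_{-}(x,v)=\tau_{+}(z,v)$, both independent of $s$, so \eqref{10.47} with the lower sign (applied to $|h|$) shows $h\in\X_{0}$ with $\|h\|_{\X_{0}}=\|\psi\|_{L^{1}(\Gamma_{-},\d\mu_{-})}$, and then (applied to $h$) that $\int_{\Omega\times V}h\,\d x\otimes\bm m(\d v)=\int_{\Gamma_{-}}\psi\,\d\mu_{-}$. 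On the other hand, along any characteristic $s\mapsto x_{0}-sv$ issued from $x_{0}\in\Gamma_{+}$ one has $x-t_{-}(x,v)v=x_{0}-\tau_{-}(x_{0},v)v$ and $t_{+}(x,v)+t_{-}(x,v)=\tau_{-}(x_{0},v)$, both independent of $s$, so \eqref{10.47} with the upper sign gives $\int_{\Omega\times V}h\,\d x\otimes\bm m(\d v)=\int_{\Gamma_{+}}\psi\bigl(x_{0}-\tau_{-}(x_{0},v)v,\,v\bigr)\,\d\mu_{+}(x_{0},v)$. Comparing the two right-hand sides yields \eqref{10.51}.

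The only genuinely delicate point is the geometric measure theory behind \eqref{10.47}: that for $\bm m$-a.e.\ velocity, a.e.\ line parallel to $v$ meets $\partial\Omega$ transversally, so that the slicing map $(w,j)\mapsto$ (endpoints) is a.e.\ a bijection onto $\Gamma_{\pm}$ transporting $|v|\,\d w$ onto $\d\mu_{\pm}$. This is exactly where the $\mathcal{C}^{1}$ regularity of $\partial\Omega$ and the negligibility of the grazing set $\Gamma_{0}$ in the slicing are used; together with the measurability and a.e.\ finiteness of the travel times recalled in Section \ref{sec:trav}, these facts are classical (see \cite{ces1,ces2,voigt,mjm1}), and once they are granted the remainder of the argument is just Tonelli and one-dimensional changes of variables.
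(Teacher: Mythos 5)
The paper does not prove this proposition: it is stated as a recalled result and attributed to \cite{mjm1}, so there is no proof in the text to compare against. Your blind reconstruction is correct and is the standard disintegration argument for such formulas. The reduction to $h\ge 0$, the splitting $\R^{d}=\omega^{\perp}\oplus\R\omega$ with Tonelli, the decomposition of $I_{w}$ into open components, the substitution $\sigma=b_{j}-s|v|$ together with the scaling $b_{j}-a_{j}=|v|\,\tau_{-}(z_{j},v)$ (which is exactly \eqref{eq:scale}), and the identification $|v|\,\d w=|v\cdot n(z)|\,\pi(\d z)$ via the Jacobian of the orthogonal projection onto $\omega^{\perp}$ are all right, and you correctly isolate the only genuinely nontrivial point, namely that for $\bm m$-a.e.\ $v$ and a.e.\ $w\in\omega^{\perp}$ the endpoints of the slicing avoid $\Gamma_{0}$ and parametrize $\Gamma_{\pm}$, which is the classical transversality fact from \cite{ces1,ces2,voigt,mjm1}. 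Your derivation of \eqref{10.51} from \eqref{10.47} by inserting the auxiliary function
$h(x,v)=\psi\bigl(x-t_{-}(x,v)v,v\bigr)/\bigl(t_{+}(x,v)+t_{-}(x,v)\bigr)$
and evaluating $\int_{\Omega\times V}h$ once with each sign is a clean trick: since $h$ is constant along characteristics and $t_{+}+t_{-}$ equals the total travel time $\tau_{\mp}$ of the relevant endpoint, both inner integrals in \eqref{10.47} telescope exactly, and the $|h|$-computation along $\Gamma_{-}$ simultaneously furnishes $h\in\X_{0}$. One could also obtain \eqref{10.51} more directly by applying the Jacobian identity $\d w=|\omega\cdot n(z)|\,\pi(\d z)$ on $\Gamma_{-}$ and on $\Gamma_{+}$ and matching the $j$-th entrance point with the $j$-th exit point along each slice, but your route through \eqref{10.47} is tidier and avoids re-invoking the slicing argument.
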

\begin{nb} Notice that, because $\mu_{-}(\Gamma_{0})=\mu_{+}(\Gamma_{0})=0$, we can extend the above identity \eqref{10.51} as follows: for any $\psi \in L^{1}(\Gamma_{-} \cup \Gamma_{0},\d\mu_{-})$ it holds
\begin{equation}\label{10.52}
\int_{\Gamma_{-}\cup \Gamma_{0}}\psi(z,v)\d\mu_{-}(z,v)=\int_{\Gamma_{+}\cup\Gamma_{0} }\psi(x-\tau_{-}(x,v)v,v)\d\mu_{+}(x,v).\end{equation}
\end{nb}

\subsection{Decay of the semigroup assiociated to  absorbing boundary conditions}\label{sec:absor}

We end this section with a decay property of the semigroup $\left(U_{0}(t)\right)_{t\geq0}$ on the hierarchy of spaces $\X_{k}$ $(k \in \N)$:
\begin{lemme}\label{lem:decayU0} 
Given $k \in \N$ and $f \in \X_{k}$, one has
$$\left\|U_{0}(t)f\right\|_{\X_{0}} \leq \frac{D^{k}}{t^{k}}\|f\|_{\X_{k}}, \qquad \forall t >0.$$
\end{lemme}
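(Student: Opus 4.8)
The plan is to use the explicit characteristic representation of the absorbing semigroup: for $f \in \X_{0}$,
$$U_{0}(t)f(x,v)=f(x-tv,v)\,\mathbf{1}_{\{t<t_{-}(x,v)\}},\qquad (x,v)\in\Omega\times V,\ t>0,$$
which follows from the method of characteristics, the trajectory issued from $x$ in direction $-v$ either remaining in $\Omega$ up to time $t$, or hitting $\partial\Omega$ where the incoming flux vanishes. Hence
$$\|U_{0}(t)f\|_{\X_{0}}=\int_{\Omega\times V}|f(x-tv,v)|\,\mathbf{1}_{\{t<t_{-}(x,v)\}}\,\d x\otimes\bm{m}(\d v).$$

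First I would rewrite this integral by means of the boundary integration formula \eqref{10.47} (the $\Gamma_{+}$ version): writing $x=z-sv$ with $z\in\Gamma_{+}$ and $s\in(0,\tau_{-}(z,v))$, one has $t_{-}(z-sv,v)=\tau_{-}(z,v)-s$, so the characteristic condition $t<t_{-}(x,v)$ becomes $s+t<\tau_{-}(z,v)$. After the substitution $u=s+t$ and bounding $\int_{t}^{\tau_{-}(z,v)}\le\int_{0}^{\tau_{-}(z,v)}$, this yields
$$\|U_{0}(t)f\|_{\X_{0}}\le\int_{\Gamma_{+}}\mathbf{1}_{\{\tau_{-}(z,v)>t\}}\left(\int_{0}^{\tau_{-}(z,v)}|f(z-uv,v)|\,\d u\right)\d\mu_{+}(z,v).$$
(Alternatively one could simply perform the change of variables $y=x-tv$ at fixed $v$ and argue directly on the set $\{y\in\Omega:\ y+sv\in\Omega\ \forall s\in[0,t]\}$; the outcome is the same.)

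The key observation is then geometric: the event $\{\tau_{-}(z,v)>t\}$ forces a cap on $|v|$. Indeed, by the scaling identity \eqref{eq:scale} one has $\tau_{-}(z,v)\,|v|=\tau_{-}(z,v/|v|)\le\mathrm{diam}(\Omega)=:D$, since a trajectory travelling at unit speed inside the bounded set $\Omega$ cannot survive longer than $D$; hence $\mathbf{1}_{\{\tau_{-}(z,v)>t\}}\le\mathbf{1}_{\{|v|<D/t\}}$. On the set $\{|v|<D/t\}$ one checks elementarily that $1\le(D/t)^{k}\max(1,|v|^{-k})=(D/t)^{k}\varpi_{k}(v)$ (distinguish the cases $D/t\ge1$ and $D/t<1$). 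Plugging this in and applying \eqref{10.47} backwards to $h(x,v)=\varpi_{k}(v)|f(x,v)|$ (legitimate since $\varpi_{k}$ depends on $v$ only, so it pulls out of the $u$-integral) gives
$$\|U_{0}(t)f\|_{\X_{0}}\le\left(\frac{D}{t}\right)^{k}\int_{\Gamma_{+}}\varpi_{k}(v)\left(\int_{0}^{\tau_{-}(z,v)}|f(z-uv,v)|\,\d u\right)\d\mu_{+}(z,v)=\left(\frac{D}{t}\right)^{k}\|f\|_{\X_{k}},$$
which is the assertion with constant $D=\mathrm{diam}(\Omega)$.

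\textbf{Main obstacle.} There is no genuinely hard step; the only points requiring attention are the identification $t_{-}(z-sv,v)=\tau_{-}(z,v)-s$ along characteristics through $\Gamma_{+}$ (or the equivalent direct change of variables) and the elementary pointwise inequality $\mathbf{1}_{\{|v|<D/t\}}\le(D/t)^{k}\varpi_{k}(v)$. The whole conceptual content lies in the single remark that confinement of a speed-$|v|$ trajectory inside $\Omega$ for a time $t$ caps the speed by $\mathrm{diam}(\Omega)/t$, which is precisely the mechanism converting spatial boundedness into polynomial time decay on the scale $\X_{k}\to\X_{0}$.
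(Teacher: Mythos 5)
Your proof is correct, and the key mechanism is exactly the one the paper uses: a particle that stays in $\Omega$ up to time $t$ must have speed at most $D/t$, which converts the weight $\varpi_{k}(v)$ into the factor $(D/t)^{k}$. The execution differs slightly in bookkeeping. The paper never passes through the boundary: it extracts $|v|\le D/t$ directly from $x-tv\in\Omega$ (with $x\in\Omega$), then performs a change of variables in velocity, $v\mapsto y=x-tv$, via the image measure $\bm{m}_{\#,(t,x)}$, and undoes it afterwards — this is precisely the alternative you mention parenthetically. You instead route the computation through the integration formula \eqref{10.47} on $\Gamma_{+}$, using $t_{-}(z-sv,v)=\tau_{-}(z,v)-s$ and the scaling $\tau_{-}(z,v)|v|\le D$ to cap the speed. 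Both routes are valid and give the same constant; your version has the minor advantage of landing directly on $\|f\|_{\X_{k}}$ with the weight $\varpi_{k}=\max(1,|v|^{-k})$ (via the elementary inequality $\mathbf{1}_{\{|v|<D/t\}}\le(D/t)^{k}\varpi_{k}(v)$, which is fine in both cases $D/t\ge 1$ and $D/t<1$), whereas the paper works with $|v|^{-k}|f|$ and bounds its $\X_{0}$-norm by $\|f\|_{\X_{k}}$ at the end. No gap in either step.
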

\begin{proof} Let $f \in \X_{k}$ and $t >0$ be fixed. For simplicity, we introduce $g(x,v)=|v|^{-k}|f(x,v)|$, $(x,v) \in \Omega\times V$ and denote by $\widetilde{g}$ the extension by zero of $g$ to $\R^{d}\times V.$ One has then
$$\|U_{0}(t)f\|_{\X_{0}}=\int_{\Omega\times V}|v|^{k}g(x-tv,v)\ind_{t < t_{-}(x,v)}\d x \bm{m}(\d v)$$ 
For a given $x \in \Omega$, one has
\begin{equation*} 
\int_{V}|v|^{k}g(x-tv,v)\ind_{t <t_{-}(x,v)}\bm{m}(\d v)\leq \int_{V}|v|^{k}\widetilde{g}(x-tv,v)\bm{m}(\d v).\end{equation*}
Recalling that $t >0,x \in \Omega$ are fixed, we denote by $\bm{m}_{\#,(t,x)}(\d y)$ the image measure of $\bm{m}(\d v)$ through the transform  $v \in V \mapsto y=x-tv \in \R^{d}$ and deduce
\begin{equation*}\begin{split}
\int_{V}|v|^{k}g(x-tv,v)\ind_{t < t_{-}(x,v)}\bm{m}(\d v)&\leq
\int_{\R^{d}}\left(\frac{|x-y|}{t}\right)^{k}\widetilde{g}\left(y,\frac{x-y}{t}\right)\bm{m}_{\#,(t,x)}(\d y)\\
&\leq \frac{D^{k}}{t^{k}}\int_{\R^{d}}\widetilde{g}\left(y,\frac{x-y}{t}\right)\bm{m}_{\#,(t,x)}(\d y).\end{split}\end{equation*}
Therefore,
$$\|U_{0}(t)f\|_{\X_{0}}\leq \frac{D^{k}}{t^{k}}\int_{\R^{d}}\d x \int_{\R^{d}}\widetilde{g}\left(y,\frac{x-y}{t}\right) \bm{m}_{\#,(t,x)}(\d y).$$
By definition of $\bm{m}_{\#,(t,x)}(\d y)$, we can performe back the change of variable $y\mapsto v=\frac{x-y}{t}$ to get
$$\|U_{0}(t)f\|_{\X_{0}} \leq \frac{D^{k}}{t^{k}}\int_{V}\bm{m}(\d v)\int_{\R^{d}} \widetilde{g}(x-tv,v)\d x.$$
Now, given $v \in V$, we perform the change of variable $x \in \R^{d}\mapsto z=x-tv \in \R^{d}$ to get 
$$\|U_{0}(t)f\|_{\X_{0}}\leq \frac{D^{k}}{t^{k}}\int_{\R^{d}\times V}g(z,v)\d z \bm{m}(\d v)=\frac{D^{k}}{t^{k}}\|g\|_{\X_{0}}$$
which gives the result.
\end{proof}
We complement the above with the following technical property
\begin{lemme}\label{lem:UotInt} For any $k \geq 0$ and $f \in \X_{k+1}$, 
$$\int_{0}^{\infty}\left\|U_{0}(t)f\right\|_{\X_{k}}\d t \leq D\|f\|_{\X_{k+1}} \qquad \text{ and } \qquad \int_{0}^{\infty}t^{k}\left\|U_{0}(t)f\right\|_{\X_{0}}\d t \leq \frac{(2D)^{k+1}}{k+1}\|f\|_{\X_{k+1}}.$$
\end{lemme}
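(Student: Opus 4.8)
The plan is to prove the two integral bounds of Lemma \ref{lem:UotInt} by reducing them, via Fubini's theorem, to the pointwise decay estimate of Lemma \ref{lem:decayU0}, together with an elementary splitting of the time integral at $t=1$ (to handle the non-integrable singularity of $t^{-k}$ near the origin). First I would establish the first inequality. The natural approach is to integrate the explicit representation $U_0(t)f(x,v)=f(x-tv,v)\ind_{t<t_-(x,v)}$ directly. Writing out $\int_0^\infty \|U_0(t)f\|_{\X_k}\,\d t = \int_0^\infty \d t \int_{\Omega\times V}\varpi_k(v)|f(x-tv,v)|\ind_{t<t_-(x,v)}\,\d x\,\bm{m}(\d v)$ and exchanging the order of integration via Tonelli (everything is nonnegative), the inner integral becomes $\int_0^{t_-(x,v)}|f(x-tv,v)|\,\d t$. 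Along the characteristic line $s\mapsto x-sv$, the substitution from the integration formula \eqref{10.47} (applied with $\Gamma_+$, or directly the change of variables $z=x-tv$ on each ray) converts $\int_{\Omega\times V}\varpi_k(v)\,\d x\bm{m}(\d v)\int_0^{t_-(x,v)}|f(x-tv,v)|\,\d t$ into $\int_{\Gamma_+}\d\mu_+(z,v)\,\varpi_k(v)\int_0^{\tau_-(z,v)}\d r\int_0^{r}|f(z-sv,v)|\,\d s$; bounding the inner double integral by $\tau_-(z,v)\int_0^{\tau_-(z,v)}|f(z-sv,v)|\,\d s$ and using the diameter bound $\tau_-(z,v)|v|=\tau_-(z,\omega)\le D$ (so $\tau_-(z,v)\le D/|v|$) produces an extra factor $D\,\varpi_1(v)$, and re-applying \eqref{10.47} in reverse yields $\le D\int_{\Omega\times V}\varpi_{k+1}(v)|f(x,v)|\,\d x\bm{m}(\d v)=D\|f\|_{\X_{k+1}}$, since $\varpi_k\varpi_1\le\varpi_{k+1}$ pointwise (indeed $\max(1,|v|^{-k})\max(1,|v|^{-1})=\max(1,|v|^{-k-1})$ for $k\ge0$). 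One should be slightly careful about whether $D$ is the diameter of $\Omega$ or the constant appearing in Lemma \ref{lem:decayU0}; I would use the same $D$, noting $t_-(x,v)\le D/|v|$ with $D=\operatorname{diam}\Omega$, which is exactly the constant governing Lemma \ref{lem:decayU0}.

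For the second inequality I would split $\int_0^\infty t^k\|U_0(t)f\|_{\X_0}\,\d t = \int_0^1 + \int_1^\infty$. On $[0,1]$, simply bound $t^k\le 1$ and $\|U_0(t)f\|_{\X_0}\le\|f\|_{\X_0}\le\|f\|_{\X_{k+1}}$ — wait, this gives only the crude bound $\|f\|_{\X_0}$, which is fine but not quite matching; better is to use instead the first inequality's machinery directly. Actually the cleanest route for the whole of $[0,\infty)$: again by Tonelli, $\int_0^\infty t^k\|U_0(t)f\|_{\X_0}\,\d t=\int_{\Omega\times V}\d x\bm{m}(\d v)\int_0^{t_-(x,v)}t^k|f(x-tv,v)|\,\d t\le\int_{\Omega\times V}\d x\bm{m}(\d v)\,\frac{t_-(x,v)^{k+1}}{k+1}\sup_{0<t<t_-}|f(x-tv,v)|$ — but $\sup$ is awkward in $L^1$. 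Instead bound $\int_0^{t_-(x,v)}t^k|f(x-tv,v)|\,\d t\le t_-(x,v)^k\int_0^{t_-(x,v)}|f(x-tv,v)|\,\d t$, then via \eqref{10.47} as above this is at most $\int_{\Gamma_+}\d\mu_+(z,v)\,\tau_-(z,v)^k\int_0^{\tau_-(z,v)}|f(z-sv,v)|\,\d s$; inserting $\tau_-(z,v)^k\le D^k\varpi_k(v)$ and, as before, absorbing the length of the $s$-interval $\tau_-(z,v)\le D\varpi_1(v)$ gives a factor $D^{k+1}\varpi_{k+1}(v)$ and hence $\le D^{k+1}\|f\|_{\X_{k+1}}$. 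This already beats the claimed constant $\frac{(2D)^{k+1}}{k+1}$, so the stated inequality follows a fortiori; if one prefers to match the $\frac1{k+1}$ factor exactly one keeps the $\frac{t_-^{k+1}}{k+1}$ form and pays a factor $2^{k+1}$ somewhere (e.g. splitting at $t=t_-/2$), but this is cosmetic.

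The only genuine subtlety — and the step I would treat most carefully — is the change of variables along characteristics: turning $\int_{\Omega\times V}\d x\,\bm{m}(\d v)\int_0^{t_-(x,v)}\Phi(x-tv,v)\,\d t$ into an integral over $\Gamma_+\times\{s\text{-variable}\}$. This is precisely the content of the integration formula \eqref{10.47} (disintegrating $\Omega\times V$ along rays exiting through $\Gamma_+$): one writes $x=z-rv$ with $(z,v)\in\Gamma_+$, $r\in(0,\tau_-(z,v))$, so that $x-tv=z-(r+t)v$ and the region $\{0<t<t_-(x,v)\}=\{0<t<\tau_-(z,v)-r\}$; setting $s=r+t$ one gets the double integral $\int_{\Gamma_+}\d\mu_+(z,v)\int_0^{\tau_-(z,v)}\d r\int_r^{\tau_-(z,v)}\Phi(z-sv,v)\,\d s$, and Tonelli on the $(r,s)$-triangle bounds this by $\int_{\Gamma_+}\d\mu_+(z,v)\,\tau_-(z,v)\int_0^{\tau_-(z,v)}\Phi(z-sv,v)\,\d s$. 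Everything else is an application of Lemma \ref{lem:decayU0}-type scaling ($\tau_-(z,v)|v|\le D$ from \eqref{eq:scale} and $\Omega$ bounded) and monotonicity of the weights $\varpi_k$. I expect no real obstacle beyond bookkeeping of these changes of variables and the weight inequalities; the result is essentially Lemma \ref{lem:decayU0} integrated in $t$, with the boundedness of $\Omega$ supplying the extra power of $\varpi_1$ needed to upgrade $\X_k$ to $\X_{k+1}$.
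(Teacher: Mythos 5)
Your argument is correct and uses exactly the same ingredients as the paper's proof: Fubini together with the integration formula \eqref{10.47} to reduce the time integral to an integral along characteristics, the diameter bound $\tau_{-}(z,v)\leq D/|v|$, and the weight identity $\varpi_{k}\varpi_{1}=\varpi_{k+1}$; the only differences are cosmetic (you parametrize via $\Gamma_{+}$ where the paper uses $\Gamma_{-}$, and you exchange the order of the Fubini and change-of-variable steps). A bonus of your bounding choice $t^{k}\leq t_{-}(x,v)^{k}$ in the second inequality is the sharper constant $D^{k+1}$ in place of $(2D)^{k+1}/(k+1)$.
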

\begin{proof} We prove the result for $k=0$. Assume $f \in \X_{1}$. Using \eqref{10.47} one computes for any $t\geq	0$,
\begin{equation*}\begin{split}
\|U_{0}(t)f\|_{\X_{0}}&=\int_{\Gamma_{-}}\d\mu_{-}(z,v)\int_{0}^{\tau_{+}(z,v)}\left|\left[U_{0}(t)f\right](z+sv,v)\right|\d s\\
&=\int_{\Gamma_{-}}\d\mu_{-}(z,v)\int_{0}^{\tau_{+}(z,v)}\ind_{[0,t_{-}(z+sv,v)]}(t)\left|f(z+(s-t)v,v)\right|\d s
\end{split}\end{equation*}
Since $t_{-}(z+sv,v)=s$ for any $(z,v) \in \Gamma_{-}$, we deduce that
\begin{equation}\label{eq:normUo}
|U_{0}(t)f\|_{\X_{0}}=\int_{\Gamma_{-}}\d\mu_{-}(z,v)\int_{0}^{\tau_{+}(z,v)}\ind_{[0,s]}(t)\left|f(z-(t-s)v,v)\right|\d s.\end{equation}
Integrating this identity and using Fubini's Theorem repeatedly yields
\begin{equation*}\begin{split}
\int_{0}^{\infty}\|U_{0}(t)f\|_{\X_{0}}\d t&=\int_{\Gamma_{-}}\d\mu_{-}(z,v)\int_{0}^{\tau_{+}(z,v)}\d s\int_{0}^{s}\left|f(z-(t-s)v,v)\right|\d t\\
&=\int_{\Gamma_{-}}\d\mu_{-}(z,v)\int_{0}^{\tau_{+}(z,v)}\d s\int_{0}^{s}|f(z-\tau v,v)|\d \tau\\
&=\int_{\Gamma_{-}}\d\mu_{-}(z,v)\int_{0}^{\tau_{+}(z,v)}|f(z-\tau v,v)|\d \tau\int_{t}^{\tau_{-}(z,v)}\d s\end{split}\end{equation*}
i.e.
$$\int_{0}^{\infty}\|U_{0}(t)f\|_{\X_{0}}\d t \leq \int_{\Gamma_{-}}\tau_{+}(z,v)\d\mu_{-}(z,v)\int_{0}^{\tau_{+}(z,v)}|f(z-\tau v,v)|\d \tau.$$
Because $\tau_{+}(z,v) \leq D/|v|$, we see using again \eqref{10.47} that
$$\int_{0}^{\infty}\|U_{0}(t)f\|_{\X_{0}}\d t \leq D \int_{\Omega\times V}|v|^{-1}\,|f(x,v)|\d x\bm{m}(\d v) \leq D\|f\|_{\X_{1}}.$$
This proves the result for $k=0$. Given now $k \geq 1$, because the multiplication operator $\varpi_{k}$ and $U_{0}(t)$ commute $(t\geq0)$, one has
$$\|U_{0}(t)f\|_{\X_{k}}=\|U_{0}(t)\left(\varpi_{k}f\right)\|_{\X_{0}}$$
which allows to apply the result obtained so far to $\varpi_{k}f \in \X_{1}$ and to conclude in the general case. Let us now focus on the second estimate. One sees also from \eqref{eq:normUo} that
\begin{equation*}\begin{split}
\int_{0}^{\infty}t^{k}\|U_{0}(f)f\|_{\X_{0}}\d t&=\int_{0}^{\infty}t^{k}\d t\int_{\Gamma_{-}}\d\mu_{-}(z,v)\int_{0}^{\tau_{+}(z,v)}\ind_{[0,s]}(t)\left|f(z-(t-s)v,v)\right|\d s\\
&=\int_{\Gamma_{-}}\d\mu_{-}(z,v)\int_{0}^{\tau_{+}(z,v)}|f(z-\tau v,v)|\d \tau\int_{\tau}^{\tau_{-}(z,v)}\left(s+\tau\right)^{k}\d s\\
&\leq \frac{1}{k+1}\int_{\Gamma_{-}}\d\mu_{-}(z,v)\int_{0}^{\tau_{+}(z,v)} \left(\tau_{+}(z,v)+\tau\right)^{k+1}|f(z-\tau v,v)|\d \tau\\
&\leq \frac{2^{k+1}}{k+1}\int_{\Gamma_{-}}\tau_{+}(z,v)^{k+1}\d\mu_{-}(z,v)\int_{0}^{\tau_{+}}|f(z-\tau v,v)|\d\tau
\end{split}\end{equation*}

and one concludes as previously since $\tau_{+}(z,v) \leq D/|v|.$ 
\end{proof}
\subsection{About the resolvent of $\mathsf{T}_{\mathsf{H}}$}\label{sec:oper}

For any $\lambda \in \mathbb{C}$ such that $\mathrm{Re}\lambda
> 0$, define
\begin{equation*}\begin{cases}
\mathsf{M}_{\lambda} \::\;L^1_- \longrightarrow L^1_+\\
\\
\mathsf{M}_{\lambda}u(x,v)=u(x-\tau_{-}(x,v)v,v)e^{-\lambda\tau_{-}(x,v)},\:\:\:(x,v) \in \Gamma_+\;\;u \in L^{1}_{-}\,;
\end{cases}\end{equation*}
\begin{equation*}\begin{cases}
\mathsf{G}_{\lambda} \::\:\X_{0} \longrightarrow L^1_+\\
\mathsf{G}_{\lambda}\varphi(x,v)=\displaystyle
\int_0^{\tau_{-}(x,v)}\varphi(x-sv,v)e^{-\lambda s}\d s,\:\:\:(x,v) \in
\Gamma_+\;\,\varphi \in \X_{0}\end{cases}\end{equation*}

\begin{equation*}\begin{cases}
\mathsf{R}_{\lambda} \::\:\X_{0} \longrightarrow \X_{0}\\
\mathsf{R}_{\lambda}\varphi(x,v)=\displaystyle
\int_0^{t_{-}(x,v)}\varphi(x-tv,v)e^{-\lambda t}\d t,\:\:\:(x,v) \in
\Omega\times V;\:\:\varphi \in \X_{0}\end{cases}\end{equation*}

and
\begin{equation*}\begin{cases}
\mathsf{\Xi}_{\lambda} \::\:L^1_- \longrightarrow \X_{0}\\
\\
\mathsf{\Xi}_{\lambda}u(x,v)=u(x-t_{-}(x,v)v,v)e^{-\lambda
t_{-}(x,v)}\ind_{\{t_{-}(x,v) < \infty\}},\:\:\:(x,v) \in \Omega \times V\;\;u \in \lm;\end{cases}\end{equation*}
where $\ind_E$ denotes the characteristic function of the measurable
set $E$. The interest of these operators is related to the resolution of the boundary
value problem:
\begin{equation}\label{BVP1}
\begin{cases}
(\lambda- \mathsf{T}_{\mathrm{max}})f=g,\\
\mathsf{B}^-f=u,
\end{cases}
\end{equation}
where $\lambda > 0$, $g \in \X_{0}$ and $u$ is a given function over
$\Gamma_-.$ Such a boundary value problem, with $u \in \lm$ can be uniquely solved (see \cite[Theorem 2.1]{mjm1})
\begin{theo}\label{Theo4.2} Given  $\lambda >0$, $u \in \lm$ and $g \in \X_{0}$, the function
$$f=\mathsf{R}_{\lambda}g + \mathsf{\Xi}_{\lambda}u$$ is the \textbf{
unique} solution $f \in \D(\mathsf{T}_{\mathrm{max}})$ of the boundary value
problem \eqref{BVP1}. Moreover, $\B^{+}f \in \lp$ and
$$\|\B^{+}f\|_{\lp}+\l\,\|f\|_{\X_{0}} \leq \|u\|_{\lm} + \|g\|_{\X_{0}}.$$
\end{theo}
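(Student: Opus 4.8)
The plan is to construct the solution by integrating the transport equation along characteristics and then to read off the sharp $L^{1}$ bound from a Gauss--Green identity rather than from the triangle inequality, which would only give the estimate up to a factor $2$. First I would check that $f:=\mathsf{R}_{\lambda}g+\mathsf{\Xi}_{\lambda}u$ is well defined, belongs to $\X_{0}$, and has $\mathsf{B}^{+}f\in\lp$. Applying the integration formula \eqref{10.47} (together with Fubini and $e^{-\lambda s}\leq1$) one gets $\lambda\|\mathsf{R}_{\lambda}g\|_{\X_{0}}\leq\|g\|_{\X_{0}}$ and, since $\tau_{-}=0$ on $\Gamma_{-}$, also $\lambda\|\mathsf{\Xi}_{\lambda}u\|_{\X_{0}}\leq\|u\|_{\lm}$; moreover on $\Gamma_{+}$ one has $t_{-}=\tau_{-}$, so the would-be trace of $f$ on $\Gamma_{+}$ is $\mathsf{M}_{\lambda}u+\mathsf{G}_{\lambda}g$, and \eqref{10.47}--\eqref{10.51} give $\|\mathsf{M}_{\lambda}u\|_{\lp}\leq\|u\|_{\lm}$ and $\|\mathsf{G}_{\lambda}g\|_{\lp}\leq\|g\|_{\X_{0}}$, hence $\mathsf{B}^{+}f\in\lp$.

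Next I would verify that $f$ solves \eqref{BVP1}. Using the identity $t_{-}(x-sv,v)=t_{-}(x,v)-s$ for $0<s<t_{-}(x,v)$, a direct computation gives the profile of $f$ along the characteristic through $(x,v)$,
\begin{equation*}
f(x-sv,v)=e^{\lambda s}\Big(\mathsf{\Xi}_{\lambda}u(x,v)+\int_{s}^{t_{-}(x,v)}g(x-\sigma v,v)\,e^{-\lambda\sigma}\,\d\sigma\Big),
\end{equation*}
so that $s\mapsto f(x-sv,v)$ is absolutely continuous with derivative $\lambda f(x-sv,v)-g(x-sv,v)$. This identifies $v\cdot\nabla_{x}f=g-\lambda f\in\X_{0}$ in the sense of distributions, hence $f\in W_{1}=\D(\mathsf{T}_{\mathrm{max}})$ and $(\lambda-\mathsf{T}_{\mathrm{max}})f=g$; the clean way to make this rigorous is to test against smooth compactly supported functions and integrate by parts via \eqref{10.47}. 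Evaluating the profile at $z+\tau v$ with $(z,v)\in\Gamma_{-}$ and letting $\tau\downarrow0$, the integral term tends to $0$ (Lebesgue differentiation) while $\mathsf{\Xi}_{\lambda}u(z+\tau v,v)=u(z,v)e^{-\lambda\tau}\to u(z,v)$; thus $\mathsf{B}^{-}f=u$, and since this trace lies in $L^{1}_{-}$ we get in fact $f\in W$.

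Uniqueness follows from the same characteristic computation applied to a solution $h\in\D(\mathsf{T}_{\mathrm{max}})$ of the homogeneous problem: it gives $h(x,v)=e^{-\lambda t_{-}(x,v)}(\mathsf{B}^{-}h)(x-t_{-}(x,v)v,v)$ for a.e.\ $(x,v)$, which vanishes since $\mathsf{B}^{-}h=0$. For the estimate I would first reduce to $g\geq0$ and $u\geq0$, using that $\mathsf{R}_{\lambda},\mathsf{\Xi}_{\lambda},\mathsf{M}_{\lambda},\mathsf{G}_{\lambda}$ are positivity preserving, so that $|f|\leq\mathsf{R}_{\lambda}|g|+\mathsf{\Xi}_{\lambda}|u|$ and $|\mathsf{B}^{+}f|\leq\mathsf{M}_{\lambda}|u|+\mathsf{G}_{\lambda}|g|$; for nonnegative data $f\geq0$, and integrating $\lambda f+v\cdot\nabla_{x}f=g$ over $\Omega\times V$ together with the Green identity $\int_{\Omega\times V}v\cdot\nabla_{x}f\,\d x\otimes\bm{m}(\d v)=\int_{\Gamma_{+}}f\,\d\mu_{+}-\int_{\Gamma_{-}}f\,\d\mu_{-}$ (valid for $f\in W$) yields exactly $\lambda\|f\|_{\X_{0}}+\|\mathsf{B}^{+}f\|_{\lp}-\|u\|_{\lm}=\|g\|_{\X_{0}}$, i.e.\ the announced identity for nonnegative data and the stated inequality in general.

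The genuine obstacle is not any single computation but the low-regularity functional framework: giving meaning to the trace operators $\mathsf{B}^{\pm}$ on $W_{1}$, justifying the passage to characteristics for a function that is only \emph{directionally} Sobolev, and using the Gauss--Green formula in an $L^{1}$ setting. I would import all of these from the trace theory of Cessenat \cite{ces1,ces2} (equivalently from \cite{voigt,mjm1}); once they are granted, the proof reduces to the elementary characteristic ODE above and repeated use of the change-of-variables formula \eqref{10.47}.
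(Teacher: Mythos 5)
Your proof is correct. Note first that the paper does not actually prove Theorem~\ref{Theo4.2}: it quotes the result from \cite[Theorem 2.1]{mjm1}, so there is no internal proof to compare against. What you have written is a legitimate self-contained proof, and it is essentially the standard characteristics argument, organized cleanly.

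A few observations. Your instinct to bypass the triangle inequality is exactly right: adding the individual bounds $\lambda\|\mathsf{R}_\lambda g\|_{\X_0}\leq\|g\|_{\X_0}$, $\lambda\|\mathsf{\Xi}_\lambda u\|_{\X_0}\leq\|u\|_{\lm}$, $\|\mathsf{G}_\lambda g\|_{\lp}\leq\|g\|_{\X_0}$, $\|\mathsf{M}_\lambda u\|_{\lp}\leq\|u\|_{\lm}$ only gives the estimate with constant $2$, and the sharp constant $1$ genuinely requires the Gauss--Green (or, equivalently, a dissipativity) argument. Reducing to nonnegative data via $|f|\leq\mathsf{R}_\lambda|g|+\mathsf{\Xi}_\lambda|u|$ and $|\mathsf{B}^{+}f|\leq\mathsf{M}_\lambda|u|+\mathsf{G}_\lambda|g|$ so as to turn the bound into the exact identity $\lambda\|\bar f\|_{\X_0}+\|\mathsf{B}^{+}\bar f\|_{\lp}=\|u\|_{\lm}+\|g\|_{\X_0}$ is the correct mechanism; just make sure you observe that $\bar f:=\mathsf{R}_\lambda|g|+\mathsf{\Xi}_\lambda|u|$ is itself in $W$ (its $\Gamma_{+}$-trace has already been shown to be in $\lp$), so the Green formula applies to it. Your characteristic-profile formula and the derivative computation are correct; the step that identifies $\mathsf{B}^{-}f=u$ as $\tau\downarrow0$ is fine, though what you are really using is absolute continuity of the Lebesgue integral along a.e.\ characteristic (guaranteed by Fubini and $g\in\X_0$), rather than the Lebesgue differentiation theorem. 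In the uniqueness step you should say explicitly that $t_{-}(x,v)<\infty$ for a.e.\ $(x,v)\in\Omega\times V$ (which holds because $\Omega$ is bounded and $\bm m(\{0\})=0$), so that the representation $h(x,v)=e^{-\lambda t_{-}(x,v)}\,\mathsf{B}^{-}h(x-t_{-}(x,v)v,v)$ is available a.e.\ and forces $h\equiv0$. Finally, you are right that the only nonelementary inputs are the trace theorems and the $L^{1}$ Green formula on $W$, all of which are in \cite{ces1,ces2,mjm1}; given those, the proof is complete.
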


\begin{nb}\label{nb:lift} Notice that
$\mathsf{\Xi}_{\lambda}$ is a lifting operator which, to a given $u \in
\lm$, associates a function $f=\mathsf{\Xi}_{\lambda}u \in
\D(\mathsf{T}_{\mathrm{max}})$ whose trace on $\Gamma_-$ is exactly $u$.
More precisely,
\begin{equation}\label{propxil1}\T_\mathrm{max}\mathsf{\Xi}_{\lambda}u=\lambda \mathsf{\Xi}_\lambda u, \qquad
\mathsf{B}^-\mathsf{\Xi}_\lambda u=u,\:\:\mathsf{B}^+\mathsf{\Xi}_\lambda u = \mathsf{\ml} u, \qquad  \forall u
\in \lm.
\end{equation}
Moreover, for any $\lambda >0$, one sees with the choice $u=0$ that $\mathsf{R}_{\lambda}$ coincides with $\Rs(\lambda,\mathsf{T}_{0})$. The above theorem also shows that, for any $\lambda
> 0$
\begin{equation}\label{eq:XiLRL}
\|\mathsf{\Xi}_{\lambda}\|_{\mathscr{B}(\lm,\,\X_{0})} \leq 
\lambda^{-1}\,\qquad \text{ and } \quad
\|\mathsf{R}_{\lambda}\|_{\mathscr{B}(\X_{0})} \leq \lambda^{-1}.
\end{equation}
Moreover, one has the obvious estimates
$$\|\mathsf{M}_{\lambda}\|_{\mathscr{B}(\lm,\lp)} \leq 1, \qquad
\|\mathsf{G}_{\lambda}\|_{\mathscr{B}(\X_{0},\lp)} \leq 1
$$
for any $\l >0.$
\end{nb}
We can complement the above result with the following whose proof can be extracted from \cite[Proposition 2.6]{LMR}:
\begin{propo}\phantomsection\label{propo:resolvante}
For any $\l \in  {\C}_{+}$ it holds
\begin{equation}\label{eq:lambdaTH}
\Rs(\lambda,\mathsf{T}_{\mathsf{H}})=\mathsf{R}_{\lambda}+\mathsf{\Xi}_{\lambda}\mathsf{H}\Rs(1,\mathsf{M}_{\lambda}\mathsf{H})\mathsf{G}_{\lambda}
=\Rs(\lambda,\T_{0})+\sum_{n=0}^{\infty}\mathsf{\Xi}_{\lambda}\mathsf{H}\left(\mathsf{M}_{\lambda}\mathsf{H}\right)^{n}\mathsf{G}_{\lambda}
\end{equation}
where the series  converges in $\mathscr{B}(\X_{0})$.
\end{propo}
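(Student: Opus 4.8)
\textbf{Plan of proof for Proposition \ref{propo:resolvante}.}

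The strategy is to first establish the single-rebound identity
\begin{equation*}
\Rs(\lambda,\mathsf{T}_{\mathsf{H}})=\mathsf{R}_{\lambda}+\mathsf{\Xi}_{\lambda}\mathsf{H}\,\Rs(1,\mathsf{M}_{\lambda}\mathsf{H})\mathsf{G}_{\lambda}
\end{equation*}
directly from the resolution of the boundary value problem \eqref{BVP1}, and then expand $\Rs(1,\mathsf{M}_{\lambda}\mathsf{H})$ as a Neumann series. For the first part, fix $\lambda \in \C_{+}$ and $g \in \X_{0}$, and look for $f \in \D(\mathsf{T}_{\mathsf{H}})$ solving $(\lambda-\mathsf{T}_{\mathrm{max}})f=g$ with $f_{|\Gamma_{-}}=\mathsf{H}(f_{|\Gamma_{+}})$. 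By Theorem \ref{Theo4.2}, any solution of $(\lambda-\mathsf{T}_{\mathrm{max}})f=g$ with $\mathsf{B}^{-}f=u$ is of the form $f=\mathsf{R}_{\lambda}g+\mathsf{\Xi}_{\lambda}u$, and taking the $\Gamma_{+}$-trace using \eqref{propxil1} and $\mathsf{B}^{+}\mathsf{R}_{\lambda}g=\mathsf{G}_{\lambda}g$ gives $f_{|\Gamma_{+}}=\mathsf{G}_{\lambda}g+\mathsf{M}_{\lambda}u$. The boundary condition $u=\mathsf{H}f_{|\Gamma_{+}}$ then becomes the fixed-point equation $u=\mathsf{H}\mathsf{G}_{\lambda}g+\mathsf{H}\mathsf{M}_{\lambda}u$ in $L^{1}_{-}$, i.e. $(\mathsf{I}-\mathsf{H}\mathsf{M}_{\lambda})u=\mathsf{H}\mathsf{G}_{\lambda}g$. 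Equivalently, writing $v=\mathsf{M}_{\lambda}u$ and observing $\mathsf{M}_{\lambda}\mathsf{H}$ acts on $L^{1}_{+}$, one gets $u=\mathsf{H}\,\Rs(1,\mathsf{M}_{\lambda}\mathsf{H})\mathsf{G}_{\lambda}g$, provided $1 \in \rho(\mathsf{M}_{\lambda}\mathsf{H})$.

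The point that makes this legitimate is precisely the spectral bound $r_{\sigma}(\mathsf{M}_{\lambda}\mathsf{H}) < 1$ for $\mathrm{Re}\lambda > 0$, which I would obtain as follows. Since $\|\mathsf{M}_{\lambda}\|_{\mathscr{B}(L^{1}_{-},L^{1}_{+})}\leq 1$ (Remark \ref{nb:lift}) and $\|\mathsf{H}\|_{\mathscr{B}(L^{1}_{+},L^{1}_{-})}\leq 1$ by stochasticity \eqref{eq:H1m}, one has $\|\mathsf{M}_{\lambda}\mathsf{H}\|_{\mathscr{B}(L^{1}_{+})}\leq 1$, so at least $r_{\sigma}\leq 1$; the strict inequality for $\mathrm{Re}\lambda>0$ comes from the exponential factor $e^{-\lambda\tau_{-}}$ together with Assumption \ref{hypO}(3), which guarantees a positive lower bound on travel times on a set of positive measure — this is the content alluded to in the text around Proposition \ref{prop:Meps}, and I would invoke that (or \cite[Proposition 2.6]{LMR} directly, as the statement permits). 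Granting $r_{\sigma}(\mathsf{M}_{\lambda}\mathsf{H})<1$, the Neumann series
\begin{equation*}
\Rs(1,\mathsf{M}_{\lambda}\mathsf{H})=\sum_{n=0}^{\infty}(\mathsf{M}_{\lambda}\mathsf{H})^{n}
\end{equation*}
converges in $\mathscr{B}(L^{1}_{+})$, and substituting it into the single-rebound formula yields the series representation. One must still check convergence of the resulting series in $\mathscr{B}(\X_{0})$: this follows since $\mathsf{\Xi}_{\lambda}\mathsf{H}(\mathsf{M}_{\lambda}\mathsf{H})^{n}\mathsf{G}_{\lambda}$ is bounded by $\|\mathsf{\Xi}_{\lambda}\|\cdot\|\mathsf{H}\|\cdot\|(\mathsf{M}_{\lambda}\mathsf{H})^{n}\|\cdot\|\mathsf{G}_{\lambda}\|$ and the middle factor is summable by the spectral radius estimate, while $\|\mathsf{\Xi}_{\lambda}\|\leq(\mathrm{Re}\lambda)^{-1}$ and $\|\mathsf{G}_{\lambda}\|\leq 1$ (the latter bounds holding with $\mathrm{Re}\lambda$ in place of $\lambda$, by the same computation as in Theorem \ref{Theo4.2}).

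Finally I would verify that the $f$ so constructed genuinely lies in $\D(\mathsf{T}_{\mathsf{H}})$ — i.e. that $f\in W$, that its traces are in $L^{1}_{\pm}$, and that the boundary relation holds — which is immediate from Theorem \ref{Theo4.2} (it gives $\mathsf{B}^{+}f\in L^{1}_{+}$, hence $f\in W$) and from the fixed-point equation; and that uniqueness holds, which follows because the homogeneous problem forces $u=\mathsf{H}\mathsf{M}_{\lambda}u$ with $r_{\sigma}(\mathsf{H}\mathsf{M}_{\lambda})=r_{\sigma}(\mathsf{M}_{\lambda}\mathsf{H})<1$, so $u=0$ and then $f=\mathsf{R}_{\lambda}\cdot 0=0$. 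The main obstacle here is not any single estimate but rather the careful bookkeeping of trace operators and the justification that $1\in\rho(\mathsf{M}_{\lambda}\mathsf{H})$ for $\mathrm{Re}\lambda>0$; since the statement explicitly allows me to cite \cite[Proposition 2.6]{LMR}, the cleanest route is to reduce everything to the two displayed identities above and quote that reference for the spectral input, keeping the self-contained part to the algebraic manipulation of \eqref{BVP1}.
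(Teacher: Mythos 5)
Your proof is essentially the standard derivation and, since the paper itself only cites \cite[Proposition 2.6]{LMR} for this statement, it is the proof one would expect: solve the boundary value problem with $u = \mathsf{H}\mathsf{B}^{+}f$, turn it into the fixed-point equation $(\mathsf{I}-\mathsf{H}\mathsf{M}_{\lambda})u = \mathsf{H}\mathsf{G}_{\lambda}g$, use the algebraic identity $\Rs(1,\mathsf{H}\mathsf{M}_{\lambda})\mathsf{H} = \mathsf{H}\Rs(1,\mathsf{M}_{\lambda}\mathsf{H})$ together with $\sigma(\mathsf{H}\mathsf{M}_{\lambda})\setminus\{0\} = \sigma(\mathsf{M}_{\lambda}\mathsf{H})\setminus\{0\}$, and expand the resolvent as a Neumann series. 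The identification $\mathsf{B}^{+}\mathsf{R}_{\lambda} = \mathsf{G}_{\lambda}$, the uniqueness step via $u = \mathsf{H}\mathsf{M}_{\lambda}u$, the use of Gelfand's formula for summability, and the complex-$\lambda$ estimates with $\mathrm{Re}\lambda$ in place of $\lambda$ are all correct.

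One conceptual slip worth flagging: your heuristic justification that $r_{\sigma}(\mathsf{M}_{\lambda}\mathsf{H})<1$ for $\mathrm{Re}\lambda>0$ ``comes from the exponential factor $e^{-\lambda\tau_{-}}$ together with Assumption \ref{hypO}(3)'' inverts the role of that assumption. Assumption \ref{hypO}(3) guarantees arbitrarily small velocities (hence arbitrarily \emph{large} travel times), and its purpose in the paper is the opposite of what you want here: it is what forces $i\R$ to remain in the spectrum of $\mathsf{T}_{\mathsf{H}}$ (Theorem \ref{theo:spectTH}). The input needed for $r_{\sigma}(\mathsf{M}_{\lambda}\mathsf{H})<1$ when $\mathrm{Re}\lambda>0$ is simply that $\tau_{-}>0$ $\mu_{+}$-a.e.\ on $\Gamma_{+}$, which holds because $\Omega$ is open and bounded and has nothing to do with Assumption \ref{hypO}(3); and the pointwise strict inequality $e^{-\mathrm{Re}\lambda\,\tau_{-}}<1$ alone does not give the spectral bound in $L^{1}$ without the compactness/comparison input (Assumption \ref{hypH}(2) and a Marek-type argument, as in Proposition \ref{prop:Meps}) or a generation-theoretic argument. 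Since you explicitly fall back on citing \cite[Proposition 2.6]{LMR} or Proposition \ref{prop:Meps} for this step, the proof survives, but the parenthetical motivation should be removed or replaced with the correct reason.
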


We make the following basic observation which will turn useful in the sequel
\begin{propo}\label{propo:regul} If $f \in \X_{k+1}$, $0\leq k\leq N_{\H}$, then
$$g_{\l}:=\Rs(\l,\T_{\H})f \in \X_{k}, \qquad \forall \l \in \C_{+}.$$
Moreover, if $\varrho_{f}=0$ then $\varrho_{g_{\l}}=0$ for all $\l \in \C_{+}.$
\end{propo}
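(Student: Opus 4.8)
\textbf{Proof plan for Proposition \ref{propo:regul}.}

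The plan is to exploit the explicit series representation of the resolvent provided by Proposition \ref{propo:resolvante}, namely
$$\Rs(\l,\T_{\H})f=\Rs(\l,\T_{0})f+\sum_{n=0}^{\infty}\mathsf{\Xi}_{\l}\H\left(\mathsf{M}_{\l}\H\right)^{n}\mathsf{G}_{\l}f,$$
and to track how each building block $\mathsf{G}_{\l}$, $\H$, $\mathsf{M}_{\l}\H$ and $\mathsf{\Xi}_{\l}$ interacts with the weights $\varpi_{s}(v)=\max(1,|v|^{-s})$. First I would handle the leading term $\Rs(\l,\T_{0})f=\mathsf{R}_{\l}f$: since $\mathsf{R}_{\l}$ commutes with the multiplication operator $\varpi_{k}$ (it only transports along characteristics, not across velocities) and $\|\mathsf{R}_{\l}\|_{\mathscr{B}(\X_{0})}\le\lambda^{-1}$ by \eqref{eq:XiLRL}, one gets $\|\mathsf{R}_{\l}f\|_{\X_{k}}=\|\mathsf{R}_{\l}(\varpi_{k}f)\|_{\X_{0}}\le\lambda^{-1}\|f\|_{\X_{k}}$, so $\mathsf{R}_{\l}f\in\X_{k}\subset\X_{k-1}\subset\cdots$; in particular it lies in $\X_{k}$. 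Wait — one must be slightly careful: we only need $g_{\l}\in\X_{k}$, and since $f\in\X_{k+1}\subset\X_{k}$ this first term is fine.

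For the series part, the key point is the hypothesis $\H\in\mathscr{B}(\lp,\Y^{-}_{k+1})$ for $0\le k\le N_{\H}$ (this is exactly Assumption \ref{hypH} \textit{1)} together with the definition \eqref{eq:HYn} of $N_{\H}$). I would argue termwise: $\mathsf{G}_{\l}f\in\lp$ with norm controlled by $\|f\|_{\X_{0}}$; then $\H(\mathsf{G}_{\l}f)\in\Y^{-}_{k+1}$; then each further application of $\mathsf{M}_{\l}\H$ preserves $\Y^{-}_{k+1}$ — indeed $\mathsf{M}_{\l}$ maps $\Y^{-}_{s}$ into $\Y^{+}_{s}$ boundedly (it is transport along a boundary characteristic composed with multiplication by $e^{-\l\tau_{-}}$, which does not change the velocity variable, hence commutes with $\varpi_{s}$, and $\|\mathsf{M}_{\l}\|\le1$), and $\H$ maps $\lp\supset\Y^{+}_{k+1}$ into $\Y^{-}_{k+1}$. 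So $\mathsf{\Xi}_{\l}\H(\mathsf{M}_{\l}\H)^{n}\mathsf{G}_{\l}f=\mathsf{\Xi}_{\l}w_{n}$ with $w_{n}\in\Y^{-}_{k+1}$. Finally, $\mathsf{\Xi}_{\l}$ carries a trace function in $\Y^{-}_{k+1}$ into $\X_{k}$: writing out $\mathsf{\Xi}_{\l}u(x,v)=u(x-t_{-}(x,v)v,v)e^{-\l t_{-}}\ind_{\{t_{-}<\infty\}}$ and using the integration formula \eqref{10.47} along $\Gamma_{-}$, one computes
$$\|\mathsf{\Xi}_{\l}u\|_{\X_{k}}=\int_{\Gamma_{-}}\d\mu_{-}(z,v)\int_{0}^{\tau_{+}(z,v)}\varpi_{k}(v)|u(z,v)|e^{-\mathrm{Re}\l\,s}\d s\le\int_{\Gamma_{-}}\tau_{+}(z,v)\varpi_{k}(v)|u(z,v)|\d\mu_{-}(z,v),$$
and since $\tau_{+}(z,v)\le D/|v|$ this is $\le D\|u\|_{\Y^{-}_{k+1}}$; more quantitatively $\le D\,\mathrm{Re}\l^{-1}\|u\|_{\Y^{-}_{k+1}}$ if one keeps the $s$-integral. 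Combining these estimates with the geometric decay of $\|(\mathsf{M}_{\l}\H)^{n}\|_{\mathscr{B}(\lp)}$ (which underlies the convergence in Proposition \ref{propo:resolvante}) gives absolute convergence of $\sum_{n}\mathsf{\Xi}_{\l}\H(\mathsf{M}_{\l}\H)^{n}\mathsf{G}_{\l}f$ in $\X_{k}$, hence $g_{\l}\in\X_{k}$.

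For the mass statement, I would simply recall that $\mathbb{I-P}$ annihilates the mass and that $\Rs(\l,\T_{\H})$ maps $\mathrm{Ker}(\mathbb{P})=\{f:\varrho_{f}=0\}$ into itself: since $\mathrm{Span}(\Psi_{\H})=\mathrm{Ker}(\T_{\H})$ (Theorem \ref{theo:LMR}) and the stochasticity of $(U_{\H}(t))_{t\ge0}$ gives $\varrho_{U_{\H}(t)f}=\varrho_{f}$ for all $t$, integrating $\Rs(\l,\T_{\H})f=\int_{0}^{\infty}e^{-\l t}U_{\H}(t)f\,\d t$ against the constant function $1$ yields $\varrho_{g_{\l}}=\int_{0}^{\infty}e^{-\l t}\varrho_{f}\,\d t=\lambda^{-1}\varrho_{f}$, so $\varrho_{f}=0$ forces $\varrho_{g_{\l}}=0$. (Alternatively, one can see directly from the series that $\varrho_{\Rs(\l,\T_{\H})f}=\lambda^{-1}\varrho_{f}$ using the stochasticity relations \eqref{eq:H1m}–\eqref{eq:normalise} and the integration formulae \eqref{10.47}–\eqref{10.51}.) The main obstacle is the bookkeeping in the series step — making sure the weight $\varpi_{k+1}$ absorbed at the $\H$-stage is not degraded by the subsequent operators and that the resulting $\X_{k}$-norm estimates are summable in $n$; everything else is a routine application of the integration formulae and the norm bounds already recorded in Remark \ref{nb:lift}.
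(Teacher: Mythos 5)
Your argument is correct and is in fact more complete than the proof the paper actually gives: the paper's proof consists of a single line addressing only the mass identity --- it integrates the resolvent equation $\l g_\l-\T_\H g_\l=f$ over $\Omega\times V$, uses that $\int_{\Omega\times V}\T_\H g_\l\,\d x\otimes\bm{m}(\d v)=0$ (a consequence of the divergence theorem and the stochasticity \eqref{eq:H1m} of $\H$), and concludes $\l\varrho_{g_\l}=\varrho_f$. Your route via $\Rs(\l,\T_\H)f=\int_0^\infty e^{-\l t}U_\H(t)f\,\d t$ and mass conservation of the semigroup reaches the same identity $\varrho_{g_\l}=\l^{-1}\varrho_f$; the two are mathematically interchangeable, with the paper's being slightly more elementary since it bypasses the semigroup representation entirely.

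For the regularity claim $g_\l\in\X_k$, the paper simply does not spell out an argument; it tacitly relies on the operator bounds recorded in Lemma \ref{lem:motau} and Corollary \ref{cor:boundMX}. Your term-by-term analysis of the series in Proposition \ref{propo:resolvante} is the right way to make this explicit. Two small points worth being careful about. First, for the leading term $\mathsf{R}_\l f$, the estimate from Remark \ref{nb:lift} gives $\|\mathsf{R}_\l\|_{\mathscr{B}(\X_0)}\le(\mathrm{Re}\,\l)^{-1}$ for $\l\in\C_+$ (not $|\l|^{-1}$), which is what you need together with the fact that $\mathsf{R}_\l$ commutes with $\varpi_k$. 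Second, the summability of the series in the $\X_k$-norm does not actually need the geometric decay of $\|(\mathsf{M}_\l\H)^n\|$ at the level of the whole chain in $\Y^-_{k+1}$: it suffices to bound $\|\H(\mathsf{M}_\l\H)^n\mathsf{G}_\l f\|_{\Y^-_{k+1}}\le\|\H\|_{\mathscr{B}(\lp,\Y^-_{k+1})}\|(\mathsf{M}_\l\H)^n\|_{\mathscr{B}(\lp)}\|\mathsf{G}_\l f\|_{\lp}$ and then use $\|\mathsf{M}_\l\H\|_{\mathscr{B}(\lp)}<1$ for $\mathrm{Re}\,\l>0$, exactly as you do; the longer bookkeeping through $\Y^\pm_{k+1}$ is not needed for the $n$-dependence, only for the final application of $\mathsf{\Xi}_\l$. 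With these minor points tightened, your proof is sound and supplies a detail the paper leaves implicit.
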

\begin{proof} Assume that $\varrho_{f}=0$. The equation $\l\,g_{\l}-\T_{\H}g_{\l}=f$ implies, after integration, that
$$\l\,\int_{\Omega\times V}g_{\l}(x,v)\d x \otimes\bm{m}(\d v)=\int_{\Omega \times V}f(x,v)\d x\otimes\bm{m}(\d v)=0.$$
Because $\l \neq 0$, one sees that $\varrho_{g_{\l}}=0.$  
\end{proof}

\subsection{Some auxiliary operators}\label{sec:prelim}

For $\lambda=0$, we can extend the definition of these operators in an obvious way but not all the resulting operators are bounded in their respective spaces. However, we see from  the above integration formula \eqref{10.51}, that 
$$\mathsf{M}_{0} \in \mathscr{B}(\lm,\lp) \qquad \text{ with } \quad \|\mathsf{M}_{0}u\|_{\lp}=\|u\|_{\lm}, \qquad \forall u \in \lm.$$
In the same way, one deduces from   \eqref{10.47} that for any nonnegative $\varphi \in \X_{0}$:
\begin{equation}\begin{split}
\label{Eq:G0}
\int_{\Gamma_{+}}\mathsf{G}_{0}\varphi(x,v)\d\mu_{+}(x,v)&=\int_{\Gamma_{+}}\d\mu_{+}(x,v)\int_{0}^{\tau_{-}(x,v)}\varphi(x-sv,v)\d s\\
&=\int_{\Omega \times V}\varphi(x,v)\d x \otimes \bm{m}(\d v)\end{split}\end{equation}
which proves that 
$$\mathsf{G}_{0} \in \mathscr{B}(\X_{0},\lp) \qquad \text{ with } \quad  \|\mathsf{G}_{0}\varphi\|_{\lp}=\|\varphi\|_{\X_{0}}, \qquad \forall \varphi\in \X_{0}.$$
Notice that, more generally, for any $\eta \in \R$
$$\mathsf{G}_{i\eta} \in \mathscr{B}(\X_{0},\lp), \qquad \mathsf{M}_{i\eta} \in \mathscr{B}(\lm,\lp)$$
with 
$$\|\mathsf{G}_{i\eta}\|_{\mathscr{B}(\X_{0},\lp)} \leq 1,\qquad \|\mathsf{M}_{i\eta}\|_{\mathscr{B}(\lm,\lp)} \leq 1.$$

To be able to provide a rigorous definition of the operators $\mathsf{\Xi}_{0}$ and $\mathsf{R}_{0}$ we need the following
The interest of the above boundary spaces lies in the following (see \cite[Lemma 2.8]{LMR} where \eqref{eq:M0+Y} is proven for $k=1$ but readily extends to $k \in \N$):
\begin{lemme}\phantomsection\label{lem:motau} 
For any $u \in \Y^{-}_{1}$  one has $\mathsf{\Xi}_{0}u \in \X_{0}$ with 
\begin{equation}\label{eq:Xio}
\|\mathsf{\Xi}_{0}u\|_{\X_{0}}=\int_{\Gamma_{-}}u(x,v)\tau_{+}(x,v)\d\mu_{+}(x,v) \leq D\|u\|_{\Y^{-}_{1}}, \qquad \forall u \in \Y^{-}_{1}\end{equation}
where we recall that $D$ is the diameter of $\Omega$.  
Moreover, given $k \geq 1$, if $u \in \Y^{-}_{k}$ then $\mathsf{M}_{0}u \in \Y^{+}_{k}$ and $\mathsf{\Xi}_{0}u \in \X_{k-1}$ with
\begin{equation}\label{eq:M0+Y}
\|\mathsf{M}_{0}u\|_{\Y^{+}_{k}}= \|u\|_{\Y^{-}_{k}} \qquad \text{ and } \quad \|\mathsf{\Xi}_{0}u\|_{\X_{k-1}} \leq D\|u\|_{\Y^{-}_{k}}\end{equation}
If $f \in \X_{1}$ then $\mathsf{G}_{0}f \in \Y^{+}_{1}$  and $\mathsf{R}_{0} f \in \D(\mathsf{T}_{0}) \subset X$ and $\mathsf{T}_{0}\mathsf{R}_{0}f=-f$. \end{lemme}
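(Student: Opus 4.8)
The plan is to deduce every assertion from the two change-of-variables identities \eqref{10.47} and \eqref{10.51}, which transport integrals along characteristics into integrals over $\Gamma_{\pm}$ and back, the only additional input being the elementary estimate $\tau_{+}(z,v)\,|v|\le D$ (a chord of $\Omega$ has length at most $D$; see \eqref{eq:scale}).

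First I would establish \eqref{eq:Xio}. Applying \eqref{10.47} in its $\Gamma_{-}$ form to $h(x,v)=|\mathsf{\Xi}_{0}u(x,v)|$, and using that for $(z,v)\in\Gamma_{-}$ and $0<s<\tau_{+}(z,v)$ one has $z+sv\in\Omega$, $t_{-}(z+sv,v)=s$, hence $\mathsf{\Xi}_{0}u(z+sv,v)=u(z,v)$, the inner $s$-integral merely produces the factor $\tau_{+}(z,v)$, so that $\|\mathsf{\Xi}_{0}u\|_{\X_{0}}=\int_{\Gamma_{-}}|u(z,v)|\,\tau_{+}(z,v)\,\d\mu_{-}(z,v)$; bounding $\tau_{+}(z,v)\le D\,|v|^{-1}\le D\max(1,|v|^{-1})$ gives the claim. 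The weighted bound $\|\mathsf{\Xi}_{0}u\|_{\X_{k-1}}\le D\|u\|_{\Y^{-}_{k}}$ follows identically, now with $h(x,v)=|\mathsf{\Xi}_{0}u(x,v)|\max(1,|v|^{-(k-1)})$, using that $v\mapsto\max(1,|v|^{-s})$ is constant along characteristics together with $|v|^{-1}\max(1,|v|^{-(k-1)})\le\max(1,|v|^{-k})$. The identity $\|\mathsf{M}_{0}u\|_{\Y^{+}_{k}}=\|u\|_{\Y^{-}_{k}}$ is obtained by applying \eqref{10.51} to $\psi(z,v)=|u(z,v)|\max(1,|v|^{-k})$, again because the weight depends on $v$ only. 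Finally, $\mathsf{G}_{0}f\in\Y^{+}_{1}$ for $f\in\X_{1}$: bounding $|\mathsf{G}_{0}f(x,v)|\le\int_{0}^{\tau_{-}(x,v)}|f(x-sv,v)|\,\d s$, multiplying by $\max(1,|v|^{-1})$, inserting the $v$-weight inside the $s$-integral, and using \eqref{10.47} in its $\Gamma_{+}$ form turns the bound into $\|\mathsf{G}_{0}f\|_{\Y^{+}_{1}}\le\|f\|_{\X_{1}}$.

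For the last assertion, the quickest route is to use $\mathsf{R}_{\lambda}=\Rs(\lambda,\mathsf{T}_{0})=\int_{0}^{\infty}e^{-\lambda t}U_{0}(t)\,\d t$ for $\lambda>0$: letting $\lambda\downarrow0$ and invoking Lemma \ref{lem:UotInt} gives $\mathsf{R}_{0}f=\int_{0}^{\infty}U_{0}(t)f\,\d t\in\X_{0}$ with $\|\mathsf{R}_{0}f\|_{\X_{0}}\le D\|f\|_{\X_{1}}$, and passing to the limit in $(\lambda-\mathsf{T}_{0})\mathsf{R}_{\lambda}f=f$, using the closedness of $\mathsf{T}_{0}$, yields $\mathsf{R}_{0}f\in\D(\mathsf{T}_{0})$ and $\mathsf{T}_{0}\mathsf{R}_{0}f=-f$. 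A self-contained alternative is to compute directly along characteristics: from $t_{-}(x+sv,v)=t_{-}(x,v)+s$ one gets $\mathsf{R}_{0}f(x+sv,v)=\mathsf{R}_{0}f(x,v)+\int_{-s}^{0}f(x-rv,v)\,\d r$, so that $v\cdot\nabla_{x}(\mathsf{R}_{0}f)=f$ in the weak sense, hence $\mathsf{R}_{0}f\in W_{1}$; its trace on $\Gamma_{-}$ vanishes since $t_{-}\to0$ there, while its trace on $\Gamma_{+}$ equals $\mathsf{G}_{0}f\in\Y^{+}_{1}\subset L^{1}_{+}$ by the previous step, so $\mathsf{R}_{0}f\in W$ with $\mathsf{B}^{-}\mathsf{R}_{0}f=0$, i.e. $\mathsf{R}_{0}f\in\D(\mathsf{T}_{0})$, and $\mathsf{T}_{0}\mathsf{R}_{0}f=-v\cdot\nabla_{x}(\mathsf{R}_{0}f)=-f$.

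I expect the only genuinely delicate point to be the justification, in the self-contained argument, that the characteristic computation identifies the \emph{weak} directional derivative $v\cdot\nabla_{x}$ and the boundary traces in the sense of \cite{ces1,ces2}; everything else is a mechanical application of \eqref{10.47}--\eqref{10.51} together with $\tau_{+}(z,v)|v|\le D$. This subtlety is entirely bypassed by the resolvent/limiting argument, which only needs the uniform integrability supplied by Lemma \ref{lem:UotInt} and the closedness of $\mathsf{T}_{0}$.
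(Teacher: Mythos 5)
Your proof is correct. The norm identity for $\mathsf{\Xi}_{0}$ and the isometry for $\mathsf{M}_{0}$ are exactly what \eqref{10.47}--\eqref{10.51} are designed to deliver, and the arithmetic with the weight $\varpi_{k}(v)=\max(1,|v|^{-k})$ (namely $|v|^{-1}\varpi_{k-1}(v)\le\varpi_{k}(v)$, which uses nothing beyond the two cases $|v|\gtrless 1$) is carried out correctly. The bound $\|\mathsf{G}_{0}f\|_{\Y^{+}_{1}}\le\|f\|_{\X_{1}}$ by pushing the weight inside the $s$-integral and applying \eqref{10.47} on the $\Gamma_{+}$ side is also right. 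Note that the paper does not actually present a proof of this lemma; it defers to \cite[Lemma~2.8]{LMR} for the case $k=1$ and asserts the extension is routine, so your write-up supplies precisely the computations that are being outsourced, and your approach (change of variables along characteristics plus the chord estimate $\tau_{\pm}(x,v)\,|v|\le D$) is the natural one and coincides with the intended argument.

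For the $\mathsf{R}_{0}$ assertion, both of your routes are valid, and you are right to flag the trade-off. The resolvent route is cleaner because Lemma~\ref{lem:UotInt} gives $\int_{0}^{\infty}\|U_{0}(t)f\|_{\X_{0}}\,\d t\le D\|f\|_{\X_{1}}$ directly, so $\mathsf{R}_{\lambda}f\to\mathsf{R}_{0}f$ in $\X_{0}$ and $\lambda\mathsf{R}_{\lambda}f\to 0$ by uniform boundedness, and closedness of the generator $\T_{0}$ finishes. The characteristic computation works as well; the only point it glosses over is the identification of the pointwise relation $\mathsf{R}_{0}f(x+sv,v)=\mathsf{R}_{0}f(x,v)+\int_{-s}^{0}f(x-rv,v)\,\d r$ with membership in $W_{1}$ and the existence of $\mathscr{C}$essenat-style traces, which in the present paper's framework is available because $\mathsf{R}_{0}f$ is shown a priori to lie in $\X_{0}$ and its candidate traces $\B^{-}\mathsf{R}_{0}f=0$ and $\B^{+}\mathsf{R}_{0}f=\mathsf{G}_{0}f\in\Y^{+}_{1}\subset\lp$ are both in the required $L^{1}$ spaces, so $\mathsf{R}_{0}f\in W$ and hence in $\D(\T_{0})$. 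In short: no gaps, and the resolvent argument is the one to prefer if one wants to avoid re-proving trace lemmas.
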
 

\begin{nb}\label{nb:PsiXn} We wish to emphasise here that, if $\H$ satisfies assumptions \ref{hypH} \textit{1)}, then 
$$\Psi_{\H} \in \X_{n} \qquad \forall n \leq N_{\H}$$
Indeed, recall from \cite[Proposition 4.2]{LMR}, that $\Psi_{\H}=\mathsf{\Xi}_{0}\H\,\bar{\varphi}$ where $\bar{\varphi} \in \lp$ is such that
$$\mathsf{M_{0}H}\bar{\varphi}=\bar{\varphi}.$$
From Assumption \ref{hypH} \textit{1)}, $\H\bar{\varphi} \in \Y_{n+1}^{-}$ and from \eqref{eq:M0+Y}, $\bar{\varphi} \in \Y_{n+1}^{-}$ and $\Psi_{\H} \in \X_{n}.$
\end{nb}

An immediate but fundamental consequence of the above Lemma is the following which will be used repeatedly in the sequel:
\begin{cor}\label{cor:boundMX}
It holds
\begin{equation}\label{eq:boundMX}
\mathsf{M}_{0}\H \in \mathscr{B}(\lp,\Y^{+}_{N_{\H}+1}), \qquad \mathsf{\Xi}_{0}\H \in \mathscr{B}(\lp,\X_{N_{\H}})\end{equation}
In particular, for any $f \in \lp$ and any $\e \geq 0$, the mapping
$$\eta \in \R \longmapsto \mathsf{\Xi}_{\e+i\eta}\H f \in \X_{0}$$
is of class $\mathscr{C}^{N_{\H}}$ with 
$$\left\|\dfrac{\d^{k}}{\d\eta^{k}}\mathsf{\Xi}_{\e+i\eta}\H f\right\|_{\X_{0}} \leq D^{k}\left\|\mathsf{\Xi}_{0}\H\right\|_{\mathscr{B}(\lp,\X_{k})} \|f\|_{\lp} \qquad k \in \{0,\ldots,N_{\H}\}.$$
\end{cor}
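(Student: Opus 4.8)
The plan is to reduce both assertions to the structural Lemma~\ref{lem:motau} together with the explicit pointwise formula for the lifting operator $\mathsf{\Xi}_{\l}$, so that essentially no new idea is required.

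First I would record that the supremum defining $N_{\H}$ in \eqref{eq:HYn} is attained: the set $\{k\in\N\,;\,\H\in\mathscr{B}(\lp,\Y^{-}_{k+1})\}$ is nonempty by Assumption~\ref{hypH}~\textit{1)}, is bounded above since $N_{\H}<\infty$, and is downward closed (if $k$ belongs to it then so does every $j\le k$, because $\Y^{-}_{k+1}\hookrightarrow\Y^{-}_{j+1}$ for $j\le k$); hence $\H\in\mathscr{B}(\lp,\Y^{-}_{N_{\H}+1})$. Applying Lemma~\ref{lem:motau} with $k=N_{\H}+1$ to $u=\H f$ and invoking \eqref{eq:M0+Y} then immediately yields $\mathsf{M}_{0}\H\in\mathscr{B}(\lp,\Y^{+}_{N_{\H}+1})$ and $\mathsf{\Xi}_{0}\H\in\mathscr{B}(\lp,\X_{N_{\H}})$, i.e.\ \eqref{eq:boundMX}. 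Since $\max(1,|v|^{-k})\le\max(1,|v|^{-N_{\H}})$ for $0\le k\le N_{\H}$, one has the contractive embeddings $\X_{N_{\H}}\hookrightarrow\X_{k}$, so in fact $\mathsf{\Xi}_{0}\H\in\mathscr{B}(\lp,\X_{k})$ for every $0\le k\le N_{\H}$, which is what makes the right-hand side of the claimed inequality meaningful.

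For the ``in particular'' part, fix $f\in\lp$ and $\e\ge0$. I would start from the identity $\mathsf{\Xi}_{\e+i\eta}\H f(x,v)=(\H f)(x-t_{-}(x,v)v,v)\,e^{-(\e+i\eta)t_{-}(x,v)}\ind_{\{t_{-}(x,v)<\infty\}}$, which for a.e.\ $(x,v)$ — recall $t_{-}(x,v)<\infty$ for a.e.\ $(x,v)$ since $\Omega$ is bounded and $\bm{m}(\{0\})=0$ — is a $\mathscr{C}^{\infty}$ function of $\eta$ whose $k$-th derivative is $(-i\,t_{-}(x,v))^{k}$ times the same expression. For $0\le k\le N_{\H}$, this $k$-th $\eta$-derivative has modulus bounded, uniformly in $\eta\in\R$, by $t_{-}(x,v)^{k}\,|\mathsf{\Xi}_{0}\H f(x,v)|\le D^{N_{\H}}\varpi_{N_{\H}}(v)\,|\mathsf{\Xi}_{0}\H f(x,v)|$, using $t_{-}(x,v)\le D/|v|$ from \eqref{eq:scale}; by the first part this dominating function lies in $\X_{0}$, so $N_{\H}$ applications of the dominated-convergence differentiation theorem give that $\eta\mapsto\mathsf{\Xi}_{\e+i\eta}\H f\in\X_{0}$ is of class $\mathscr{C}^{N_{\H}}$ with derivatives computed pointwise, each continuous in $\eta$ by dominated convergence. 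Taking $\X_{0}$-norms in this pointwise formula and bounding $t_{-}(x,v)^{k}\le D^{k}|v|^{-k}\le D^{k}\varpi_{k}(v)$ yields
\[
\Bigl\|\tfrac{\d^{k}}{\d\eta^{k}}\mathsf{\Xi}_{\e+i\eta}\H f\Bigr\|_{\X_{0}}\le D^{k}\int_{\Omega\times V}\varpi_{k}(v)\,|\mathsf{\Xi}_{0}\H f(x,v)|\,\d x\,\bm{m}(\d v)=D^{k}\|\mathsf{\Xi}_{0}\H f\|_{\X_{k}}\le D^{k}\,\|\mathsf{\Xi}_{0}\H\|_{\mathscr{B}(\lp,\X_{k})}\,\|f\|_{\lp},
\]
which is the asserted bound.

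There is no serious obstacle here; the only points demanding (routine) care are the attainment of the supremum in \eqref{eq:HYn}, the rigorous interchange of $\d^{k}/\d\eta^{k}$ with $\int_{\Omega\times V}$ — licit precisely because Lemma~\ref{lem:motau} guarantees that the $\eta$-derivatives of the integrand are dominated, uniformly in $\eta$, by the fixed $\X_{0}$-function $D^{N_{\H}}\varpi_{N_{\H}}\,|\mathsf{\Xi}_{0}\H f|$ — and the harmless discrepancy between the weight $|v|^{-k}$ produced by $t_{-}(x,v)^{k}$ and the weight $\varpi_{k}(v)=\max(1,|v|^{-k})$ entering $\|\cdot\|_{\X_{k}}$, controlled by $|v|^{-k}\le\varpi_{k}(v)$.
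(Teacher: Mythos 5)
Your proposal is correct and follows essentially the same route as the paper: both derive \eqref{eq:boundMX} directly from Lemma~\ref{lem:motau} (applied with $k=N_{\H}+1$ to $u=\H f$) and then obtain the $\mathscr{C}^{N_{\H}}$ regularity and the norm bound by differentiating the explicit kernel of $\mathsf{\Xi}_{\e+i\eta}$ under the integral, using $t_{-}(x,v)^{k}\le D^{k}\varpi_{k}(v)$. You supply two small checks the paper leaves implicit — that the supremum in \eqref{eq:HYn} is attained (via downward-closedness of the index set) and the dominated-convergence justification of $\d^{k}/\d\eta^{k}$ commuting with the $\X_{0}$-Bochner integral — but these are routine elaborations of the same argument, not a different method.
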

\begin{proof} The proof is a direct consequence of \eqref{eq:M0+Y} which implies $\mathsf{M}_{0} \in \mathscr{B}(\Y_{k}^{-},\Y^{+}_{k})$ while $\mathsf{\Xi}_{0} \in \mathscr{B}(\Y_{k}^{-},\X_{k-1})$ for any $k \geq	1.$ Since $\H \in \mathscr{B}(\lp,\Y_{N_{\H}+1}^{-})$, we get \eqref{eq:boundMX}. One checks then in a straightforward way that, for any $k \leq N_{\H}$,
$$\dfrac{\d^{k}}{\d\eta^{k}}\mathsf{\Xi}_{\e+i\eta}\H f=\left(-it_{-}\right)^{k}\mathsf{\Xi}_{\e+i\eta}\H f$$
where $\left(-it_{-}\right)^{k}$ is the multiplication operator by the measure mapping $(x,v) \in \Omega\times V \mapsto \left(-it_{-}(x,v)\right)^{k}$. This gives directly the result since $\left|\left(-it_{-}(x,v)\right)^{k}\right| \leq D^{k}\varpi_{k}(v)$ for any $(x,v) \in \Omega\times V$ and 
$$\|\mathsf{\Xi}_{\e+i\eta}\H\|_{\mathscr{B}(\lp,\X_{k})} \leq \|\mathsf{M}_{0}\H\|_{\mathscr{B}(\lp,\X_{k})} \leq \|\mathsf{\Xi}_{0}\H\|_{\mathscr{B}(\lp,\X_{N_{\H}})} \qquad \forall \e \geq0, \:\:\eta \in \R$$ 
as soon as  $k \leq \N_{\H}$.\end{proof}
\begin{nb} As observed in the Introduction, the fact that the maximal gain of integrability for $\H$ is measured by $N_\H$ is what make the above $\mathscr{C}^{N_\H}$ the \emph{maximal regularity} of the mapping $\eta \mapsto \Xi_{\e+i\eta}\H \in \mathscr{B}(\lp,\X_0)$. One may wonder here if some additional assumption like $$\H \in \mathscr{B}(\lp,\Y^-_{N_{\H}+1+\alpha}), \text{ for some } \alpha \in (0,1)$$  would induce some additional fractional derivative that could be exploited.\end{nb}
 One has the following
\begin{lemme}\label{lem:unifconti}
For any $\e \geq 0$, the mapping $\eta \in \R \longmapsto \mathsf{M}_{\e+i\eta}\H \in \mathscr{B}(\lp)$ is uniformly continuous on $\R.$ Consequently,
\begin{equation}\label{eq:powerlim}
\lim_{|\eta|\to\infty}\left\|\left(\mathsf{M}_{\e+i\eta}\H\right)^{\mathsf{p}}\right\|_{\mathscr{B}(\lp)}=0 \qquad \forall \e >0\end{equation}
where $\mathsf{p}$ is defined through \eqref{eq:power}.
\end{lemme}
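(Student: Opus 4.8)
The plan is to prove uniform continuity of $\eta \mapsto \mathsf{M}_{\e+i\eta}\H$ directly from the explicit formula defining $\mathsf{M}_{\l}$, and then obtain the decay \eqref{eq:powerlim} by combining uniform continuity with the integrability bound \eqref{eq:power}. For the first part, fix $\e \geq 0$ and $\psi \in \lp$. For $\eta,\eta' \in \R$ and $(x,v) \in \Gamma_{-}$, the quantity $\mathsf{M}_{\e+i\eta}\H\psi(x,v) - \mathsf{M}_{\e+i\eta'}\H\psi(x,v)$ equals
$$\left(e^{-(\e+i\eta)\tau_{-}(x,v)} - e^{-(\e+i\eta')\tau_{-}(x,v)}\right)\left(\H\psi\right)(x-\tau_{-}(x,v)v,v),$$
so that, using the elementary bound $|e^{-i\eta s}-e^{-i\eta' s}| \leq |\eta-\eta'|\,s$ together with $e^{-\e\tau_{-}} \leq 1$, and then the change of variables \eqref{10.51} (which turns the integral over $\Gamma_{-}$ against $\d\mu_{-}$ into an integral over $\Gamma_{+}$ against $\d\mu_{+}$, picking up the factor $\tau_{-}$ evaluated along the characteristic), one finds
$$\left\|\mathsf{M}_{\e+i\eta}\H\psi - \mathsf{M}_{\e+i\eta'}\H\psi\right\|_{\lm} \leq |\eta-\eta'|\int_{\Gamma_{+}}\tau_{-}(x,v)\,\big|\H\psi(x-\tau_{-}(x,v)v,v)\big|\,\d\mu_{+}(x,v).$$
Since $\tau_{-}(x,v) \leq D/|v|$, the right-hand side is bounded by $D\,|\eta-\eta'|\,\|\H\psi\|_{\Y^{-}_{1}}$, and by Assumption \ref{hypH} \textit{1)} (which gives $\H \in \mathscr{B}(\lp,\Y^{-}_{1})$ since $N_{\H} \geq 0$), this is at most $D\,C_{\H}\,|\eta-\eta'|\,\|\psi\|_{\lp}$ for a suitable constant $C_{\H}$. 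Hence $\|\mathsf{M}_{\e+i\eta}\H - \mathsf{M}_{\e+i\eta'}\H\|_{\mathscr{B}(\lp)} \leq D\,C_{\H}\,|\eta-\eta'|$, which is in fact Lipschitz continuity, uniform in $\e \geq 0$; in particular $\eta \mapsto \mathsf{M}_{\e+i\eta}\H$ is uniformly continuous on $\R$.

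For the consequence \eqref{eq:powerlim}, set $\varphi_{\e}(\eta) := \|(\mathsf{M}_{\e+i\eta}\H)^{\mathsf{p}}\|_{\mathscr{B}(\lp)}$. The map $\eta \mapsto (\mathsf{M}_{\e+i\eta}\H)^{\mathsf{p}}$ is uniformly continuous on $\R$ (a product of $\mathsf{p}$ uniformly continuous, uniformly bounded operator-valued maps is uniformly continuous, using $\|\mathsf{M}_{\e+i\eta}\H\|_{\mathscr{B}(\lp)} \leq \|\H\|_{\mathscr{B}(\lp,\lm)}$ for $\e \geq 0$), so $\varphi_{\e}$ is uniformly continuous on $\R$; moreover, by \eqref{eq:power}, $\varphi_{\e} \in L^{1}(\R)$ for every $\e \geq 0$. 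A bounded uniformly continuous $L^{1}(\R)$ function must tend to $0$ at infinity: if not, there would be $\delta > 0$ and a sequence $|\eta_{j}| \to \infty$ with $\varphi_{\e}(\eta_{j}) \geq \delta$; by uniform continuity there is $r > 0$ with $\varphi_{\e} \geq \delta/2$ on each interval $(\eta_{j}-r,\eta_{j}+r)$, and passing to a subsequence so these intervals are disjoint yields $\int_{\R}\varphi_{\e} = \infty$, a contradiction. This gives $\lim_{|\eta|\to\infty}\varphi_{\e}(\eta) = 0$ for every $\e \geq 0$, hence \eqref{eq:powerlim}. (Note that for $\e = 0$ the integrability of $\varphi_{0}$ holds by \eqref{eq:power} as well, but the statement only claims the limit for $\e > 0$; the same argument applies verbatim in either case.)

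I do not expect any serious obstacle here: the only slightly delicate point is being careful that all estimates are uniform in $\e \geq 0$ (which they are, because $e^{-\e\tau_{-}} \leq 1$ and $\|\mathsf{M}_{\e+i\eta}\|_{\mathscr{B}(\lm,\lp)} \leq 1$ throughout), and invoking the correct change-of-variables formula \eqref{10.51} rather than \eqref{10.47}. The reduction of uniform continuity of the $\mathsf{p}$-th power to that of $\mathsf{M}_{\e+i\eta}\H$ is routine given the uniform operator bound, and the ``bounded uniformly continuous $+$ integrable $\Rightarrow$ vanishing at infinity'' lemma is standard.
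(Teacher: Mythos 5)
Your argument is correct and is essentially the paper's proof: both obtain a Lipschitz estimate for $\eta\mapsto\mathsf{M}_{\e+i\eta}\H$ by factoring the exponential phase difference, bounding it by $\tau_{-}(x,v)\leq D/|v|$, and controlling the result via $\H\in\mathscr{B}(\lp,\Y_{1}^{-})$, after which uniform continuity plus the integrability from \eqref{eq:power} yields \eqref{eq:powerlim}. (You merely spell out the ``uniformly continuous $+$ $L^{1}$ $\Rightarrow$ vanishing at infinity'' step that the paper leaves implicit; also note the small typo in your writeup where $\mathsf{M}_{\l}\H\psi$ is a function on $\Gamma_{+}$ and the resulting norm is in $\lp$, not $\lm$ --- the integrals you actually write are over the correct set and the argument is unaffected.)
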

\begin{proof} Let $\e \geq0$ be fixed. Given $\eta_{1},\eta_{2} \in \R$, one has
$$\mathsf{M}_{\e+i\eta_{1}}\H-\mathsf{M}_{\e+i\eta_{2}}\H=\left(\exp\left(-i\eta_{1}\tau_{-}\right)-\exp\left(-i\eta_{2}\tau_{-}\right)\right)\mathsf{M}_{\e}\H$$
so that, because the mapping $t \mapsto e^{it} \in \C$ is $2$-Lipschitz,
$$\left\|\mathsf{M}_{\e+i\eta_{1}}\H\varphi -\mathsf{M}_{\e+i\eta_{2}}\H\right\|_{\mathscr{B}(\lp)} \leq 2|\eta_{1}-\eta_{2}|\left\|\tau_{-}\mathsf{M}_{\e}\H \right\|_{\mathscr{B}(\lp)} \leq 2|\eta_{1}-\eta_{2}|\left\|\tau_{-}\mathsf{M}_{0}\H\right\|_{\mathscr{B}(\lp)}.$$
Now, because $\H \in \mathscr{B}(\lp,\Y_{1}^{-})$ and $\tau_{-}(x,v) \leq D/|v|$, one sees that 
$$\left\|\tau_{-}\mathsf{M}_{0}\H\right\|_{\mathscr{B}(\lp)} \leq D\left\|\mathsf{M}_{0}\H\right\|_{\mathscr{B}(\lp,\Y_{1}^{+})} \leq D\|\mathsf{M}_{0}\|_{\mathscr{B}(\Y^{-}_{1},\Y^{+}_{1})}\|\H\|_{\mathscr{B}(\lp,\Y^{-}_{1})}.$$
Thus, 
$$\left\|\mathsf{M}_{\e+i\eta_{1}}\H\varphi -\mathsf{M}_{\e+i\eta_{2}}\H\right\|_{\mathscr{B}(\lp)} \leq 2D|\eta_{1}-\eta_{2}|\|\H\|_{\mathscr{B}(\lp,\Y^{-}_{1})}$$
which proves that the mapping $\eta \in \R \mapsto \mathsf{M}_{\e+i\eta}\H \in \mathscr{B}(\lp)$ is uniformly continuous on $\R$. We deduce then \eqref{eq:powerlim} from the uniform continuity and the integrability at infinity ensured by \eqref{eq:power}.
\end{proof}

\section{About Dyson-Phillips representation}\label{sec:DP}

\subsection{First definitions}\label{sec:DP}
{We recall here a useful representation of the  semigroup $U_{\H}(t)$ as a kind of Dyson-Phillips expansion series introduced in \cite{luisa}. We recall the definition of the $C_{0}$-semigroup generated by $\T_{0}:$
$$U_{0}(t)f(x,v)=f(x-tv,v)\ind_{\{t < t_{-}(x,v)\}}, \qquad f \in \X_{0},\quad t \geq 0.$$}
 We begin with the following definition where $\D_{0}=\{f \in \D(\T_{\mathrm{max}})\;;\;\B^{-}f=0=\B^{+}f\}$ and $\bm{U}_{0}(t)=U_{0}(t)$ $(t\geq0)$:
\begin{defi}\label{defi:Uk}
Let $ t \geq 0$, $k \geq 1$ and $f \in \D_{0}$ be given. For $(x,v) \in \overline{\Omega} \times V$ with $t_{-}(x,v) < t$, there exists a unique $y \in \partial\Omega$ with $(y,v) \in \Gamma_{-}$ and a unique $0 < s < \min(t,\tau_{+}(y,v))$ such that $x=y+sv$ 
and then one sets
$$[\bm{U}_{k}(t)f](x,v)=\left[\H\B^{+}\bm{U}_{k-1}(t-s)f\right](y,v),$$
We set $[\bm{U}_{k}(t)f](x,v)=0$ if $t_{-}(x,v) \geq t$ and  $\bm{U}_{k}(0)f=0$.
\end{defi}
\begin{nb} Clearly, for $(x,v) \in \Omega \times V$ with $t_{-}(x,v) < t$, the unique $(y,v) \in \Gamma_{-}$ and $s \in (0,\min(t,\tau_{+}(y,v))$ such that $x=y+sv$ are 
$$y=x-t_{-}(x,v)v, \qquad \quad s=t_{-}(x,v)$$
so that the above definition reads
$$[\bm{U}_{k}(t)f](x,v)=\left[\H(\B^{+}\bm{U}_{k-1}(t-s)f\right](x-sv,v)\bigg\vert_{s=t_{-}(x,v)}.$$
\end{nb}
For a diffuse boundary operator $\H$ the expression of $\bm{U}_{n}(t)$ is fully explicit, namely
\begin{lemme}\label{lem:exprUn} If $\H$ is given by \eqref{eq:Hhkernel}, then, for any $n \in \N$ and any $(x,v) \in \Omega\times V$, it holds
\begin{multline}\label{eq:Untf}
\left[\bm{U}_{n}(t)f\right](x,v)=\\
\int_{\Gamma_{+}(y_{0})}\bm{k}(y_{0},v,v_{0})|v_{0}\cdot n(y_{0})|\bm{m}(\d v_{0})\int_{\Gamma_{+}(y_{1})}\bm{k}(y_{1},v_{0},v_{1})|v_{1}\cdot n(y_{1})|\bm{m}(\d v_{1})\ldots\\
\int_{\Gamma_{+}(y_{n-2})}\bm{k}(y_{n-2},v_{n-3},v_{n-2})\,|v_{n-2}\cdot n(y_{n-2})|\bm{m}(\d v_{n-2})\times\\
\times\int_{A_{t}(x,v,v_{0},\ldots,v_{n-2})}\bm{k}(y_{n-1},v_{n-2},v_{n-1}|\,|v_{n-1}\cdot n(y_{n-1})|\\
\times f\left(y_{n-1}-\left[t-t_{-}(x,v)-\sum_{k=0}^{n-2}\tau_{-}(y_{k},v_{k})\right]v_{n-1},v_{n-1}\right)\,\bm{m}(\d v_{n-1})\,,
\end{multline}
for any $f \in \D_{0}$ where 
$$y_{0}=x-t_{-}(x,v)v, \qquad y_{k+1}=y_{k}-\tau_{-}(y_{k},v_{k})v_{k}, \qquad\, k=0,\ldots,n-2.$$
and,  
\begin{multline*}
A_{t}(x,v,v_{0},\ldots,v_{n-2})\\
=\left\{v_{n-1} \in \Gamma_{+}(y_{n-1})\;;\; \sum_{k=0}^{n-2}\tau_{-}(y_{k},v_{k})<t-t_{-}(x,v)<\sum_{k=0}^{n-1}\tau_{-}(y_{k},v_{k})\right\}.\end{multline*}
\end{lemme}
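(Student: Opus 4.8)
The statement is an explicit unfolding of the inductive Definition~\ref{defi:Uk} in the special case of a kernel boundary operator~\eqref{eq:Hhkernel}, so the natural strategy is induction on $n$, tracking the geometry of successive rebounds. First I would treat the base cases $n=0,1$: for $n=0$ the formula should be read as reducing to $U_0(t)f$ (no integration), and for $n=1$ a direct substitution of~\eqref{eq:Hhkernel} into the formula of Definition~\ref{defi:Uk}, namely $[\bm{U}_{1}(t)f](x,v)=\left[\H\B^{+}\bm{U}_{0}(t-s)f\right](x-sv,v)\big|_{s=t_{-}(x,v)}$, together with $\B^{+}U_0(\sigma)g(y,w)=g(y-\sigma w,w)\ind_{\{\sigma<\tau_+(y,w)\}}$, yields exactly~\eqref{eq:Untf} with a single velocity integral over the set $A_t$ (which at $n=1$ encodes precisely the constraint that the straight-line trajectory from $y_0$ with velocity $v_0$ has not yet exited, i.e. $0<t-t_-(x,v)<\tau_-(y_0,v_0)$). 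This pins down that the only nonzero contribution comes from $t_-(x,v)<t$, consistent with the vanishing clause in the definition.

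\textbf{Inductive step.} Assuming the formula for $\bm{U}_{n-1}$, I would compute $[\bm{U}_{n}(t)f](x,v)=\left[\H\B^{+}\bm{U}_{n-1}(t-s)f\right](y_0,v)\big|_{s=t_-(x,v)}$ with $y_0=x-t_-(x,v)v$. Here the key point is to commute the boundary operator $\H$ (an integral in the incoming velocity $v_0$ against $\bm{k}(y_0,v,v_0)|v_0\cdot n(y_0)|\bm{m}(\d v_0)$) past the trace $\B^{+}$ and the representation of $\bm{U}_{n-1}$. One needs $\B^{+}\bm{U}_{n-1}(\sigma)f$ evaluated at $(y_0,v_0)$: by the structure of Definition~\ref{defi:Uk}, a particle at a boundary point $y_0$ leaving with velocity $v_0$ at ``time remaining'' $\sigma=t-t_-(x,v)$ must first travel the chord of length $\tau_-(y_0,v_0)$ (so one peels off one time increment and one spatial translation $y_1=y_0-\tau_-(y_0,v_0)v_0$), and then what remains is $\bm{U}_{n-2}$-type data started at $y_1$. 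Feeding the inductive hypothesis for $\bm{U}_{n-1}$ (which itself has $n-2$ nested velocity integrals over $v_1,\dots,v_{n-2}$ and a terminal integral over $v_{n-1}$ on the analogous set) produces exactly the $n-1$ outer integrals in~\eqref{eq:Untf}, with the new outermost integral over $v_0$ coming from $\H$, the chain of base points $y_{k+1}=y_k-\tau_-(y_k,v_k)v_k$ extended by one, and the time budget in the indicator and in the argument of $f$ augmented by the term $\tau_-(y_0,v_0)$. Bookkeeping the indicator functions carefully shows they assemble into the single constraint defining $A_t(x,v,v_0,\dots,v_{n-2})$, namely $\sum_{k=0}^{n-2}\tau_-(y_k,v_k)<t-t_-(x,v)<\sum_{k=0}^{n-1}\tau_-(y_k,v_k)$.

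\textbf{Main obstacle.} The routine part is the algebra of substituting~\eqref{eq:Hhkernel}; the delicate part is the measure-theoretic/geometric bookkeeping: verifying that the trace $\B^{+}\bm{U}_{n-1}(\sigma)f$ at an \emph{outgoing} boundary point $(y_0,v_0)\in\Gamma_+$ is correctly obtained by advancing the argument of the $\bm{U}_{n-2}$ representation, that the ``unique $y\in\partial\Omega$, unique $s$'' clause of Definition~\ref{defi:Uk} is compatible with the iterated chord decomposition (using $\tau_{\mp}(x,v)=t_+(y,v)+t_-(y,v)$ and the additivity of exit times along a trajectory from Section~\ref{sec:trav}), and that all the nested indicators telescope into the stated $A_t$ without over- or under-counting the surviving trajectories. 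One must also make sure the hypothesis $f\in\D_0$ (so that $\B^{\pm}f=0$) is used exactly where needed, namely to guarantee that the only contributions are genuine $n$-rebound trajectories and that boundary terms from earlier iterates do not pollute the expansion. Once this combinatorial-geometric accounting is done, the formula~\eqref{eq:Untf} follows, and I would close the induction.
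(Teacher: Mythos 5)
Your proposal is correct and follows essentially the same route as the paper: the paper's proof is ``by direct inspection,'' writing out the $n=1$ and $n=2$ cases explicitly (exactly as you do for the base case) and then observing that the general case follows by induction on the recursion $[\bm{U}_{n}(t)f](x,v)=\left[\H\B^{+}\bm{U}_{n-1}(t-s)f\right](y_0,v)\big\vert_{s=t_-(x,v)}$, which is precisely your inductive step. Your identification of the ``delicate'' points (the trace $\B^{+}$ at an outgoing point $(y_0,v_0)\in\Gamma_+$ simply evaluates the $\bm{U}_{n-1}$ formula there, since $t_-(y_0,v_0)=\tau_-(y_0,v_0)$; the nested indicators telescope into the single constraint defining $A_t$) matches what the paper implicitly relies on.
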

\begin{proof} The proof is by direct inspection. For instance, it is easy to see that, given $f \in \D_{0}$ and $t >t_{-}(x,v)$, one has
\begin{multline*}
\left[\bm{U}_{1}(t)f\right](x,v)=\int_{\Gamma_{+}(y_{0})}\bm{k}(y_{0},v,v_{0})f(y_{0}-(t-t_{-}(x,v))v_{0},v_{0})\\
\times \ind_{\{t_{-}(x,v) < t < t_{-}(x,v)+\tau_{-}(y_{0},v_{0})\}}|v_{0}\cdot n(y_{0})|\bm{m}(\d v_{0}),
\end{multline*}
and
\begin{multline*}
\left[\bm{U}_{2}(t)f\right](x,v)=\int_{\Gamma_{+}(y_{0})}\bm{k}(y_{0},v,v_{0})|v_{0}\cdot n(y_{0})|\bm{m}(\d v_{0})\\
\int_{\Gamma_{+}(y_{1})}\bm{k}(y_{1},v_{0},v_{1})f(y_{1}-(t-t_{-}(x,v)-\tau_{-}(y_{0},v_{0}))v_{1},v_{1})\\
\times\ind_{\{t_{-}(x,v)+\tau_{-}(y_{0},v_{0})<t<t_{-}(x,v)+\tau_{-}(y_{0},v_{0})+\tau_{-}(y_{1},v_{1})\}}|v_{1}\cdot n(y_{1})|\bm{m}(\d v_{1}).
\end{multline*}
The proof for $n \geq 3$ is then easily deduced by induction.\end{proof}
One has then the following proven in \cite{luisa} (see also \cite[Appendix A and Theorem 3.8]{ALMJM}):
\begin{theo}\label{theo:UKT} For any $k \geq 1$, $f \in \D_{0}$ one has $\bm{U}_{k}(t)f \in \X_{0}$ for any $t \geq 0$ with
$$\|\bm{U}_{k}(t)f\|_{\X_{0}} \leq \,\|f\|_{\X_{0}}.$$
In particular, $\bm{U}_{k}(t)$ can be extended to be a bounded linear operator, still denoted $\bm{U}_{k}(t) \in\mathscr{B}(\X_{0})$ with
\begin{equation}\label{eq:normUk}
\|\bm{U}_{k}(t)\|_{\mathscr{B}(\X_{0})} \leq 1\qquad \forall t \geq 0, k \geq 1.\end{equation}
Moreover, the following holds for any $k \geq 1$
\begin{enumerate}
\item $(\bm{U}_{k}(t))_{t \geq 0}$ is a strongly continuous family of $\mathscr{B}(\X_{0})$.
\item For any $f \in \X_{0}$ and any $t,s \geq 0$, it holds
$$\bm{U}_{k}(t+s)f=\sum_{j=0}^{k}\bm{U}_{j}(t)\bm{U}_{k-j}(s)f.$$
\item For any $f \in \D_{0}$, the mapping $t \geq 0 \mapsto \bm{U}_{k}(t)f$ is differentiable with 
$$\dfrac{\d}{\d t}\bm{U}_{k}(t)f=\bm{U}_{k}(t)\T_{\mathrm{max}}f \qquad \forall t \geq 0.$$
\item For any $f \in \D_{0}$, one has $\bm{U}_{k}(t)f \in \D(\T_{\mathrm{max}})$ for all $t \geq 0$ with 
$$\T_{\mathrm{max}}\bm{U}_{k}(t)f=\bm{U}_{k}(t)\T_{\mathrm{max}}f=\bm{U}_{k}(t)\T_{0}f.$$
\item For any $f \in \D_{0}$ and any $t \geq 0$, the traces $\B^{\pm}\bm{U}_{k}(t)f \in L^{1}_{\pm}$ and the mappings $t \geq 0\mapsto \B^{\pm}\bm{U}_{k}(t)f \in L^{1}_{\pm}$ are continuous. Moreover, for all $f \in X$ and $t >0$, one has
$$\B^{\pm} \int_{0}^{t}\bm{U}_{k}(s)f \d s \in L^{1}_{\pm} \qquad \text{ with } \quad \B^{-}\int_{0}^{t}\bm{U}_{k}(s)f\d s=\H\B^{+}\int_{0}^{t}\bm{U}_{k-1}(s)f \d s.$$
\item For any $f \in \D_{0}$, it holds
$$\int_{0}^{t}\|\B^{+}\bm{U}_{k}(s)f\|_{\lp}\d s \leq  \,\int_{0}^{t}\|\B^{+}\bm{U}_{k-1}(s)f\|_{\lp} \d s, \qquad \forall t \geq 0.$$
\item For any $f \in \X_{0}$ and $\lambda >0$, setting 
$$\mathcal{L}_{k}(\l)f=\int_{0}^{\infty}\exp(-\lambda t)\bm{U}_{k}(t)f \d t$$ one has, for $k \geq 1$,
$$\mathcal{L}_{k}(\l)f \in \D(\T_{\mathrm{max}}) \qquad \text{ with } \qquad \T_{\mathrm{max}}\mathcal{L}_{k}(\l)f=\lambda\,\mathcal{L}_{k}(\l)f$$
and  $\B^{\pm}\mathcal{L}_{k}(\l)f \in L^{1}_{\pm}$ with 
$$\B^{-}\mathcal{L}_{k}(\l)f=\H\B^{+}\mathcal{L}_{k-1}(\l)f \qquad \B^{+}\mathcal{L}_{k}(\l)f=(\mathsf{M}_{\lambda}\H)^{k}\mathsf{G}_{\lambda}f.$$
\item For any $f \in \X_{0}$, the series $\sum_{k=0}^{\infty}\bm{U}_{k}(t)f$ is strongly convergent and it holds
$$U_{\H}(t)f=\sum_{k=0}^{\infty}\bm{U}_{k}(t)f$$
\end{enumerate}
\end{theo}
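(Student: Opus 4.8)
I would prove the whole statement by induction on $k\ge0$, starting from $\bm{U}_{0}(t)=U_{0}(t)$, exploiting the recursive Definition~\ref{defi:Uk} together with the change of variables formulae \eqref{10.47}--\eqref{10.51}, which transport integrals over $\Omega\times V$ onto the boundary $\Gamma_{\pm}$ along the characteristic rays $s\mapsto x\mp sv$. All the properties would be established first for $f$ in the dense, $\T_{\mathrm{max}}$-invariant subspace $\D_{0}$ (which contains $C^{\infty}_{c}(\Omega\times(V\setminus\{0\}))$), the $\mathscr{B}(\X_{0})$-bounds then allowing the passage to $\X_{0}$ by density; the only input on the boundary operator is its positivity and its stochasticity \eqref{eq:H1m}, i.e. $\|\H\psi\|_{\lm}\le\|\psi\|_{\lp}$. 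I expect two points to be the real obstacles: the \emph{coupled bootstrap} yielding the contraction bound \eqref{eq:normUk} (one must control $\|\bm{U}_{k}(t)f\|_{\X_{0}}$ by an outgoing-flux integral while simultaneously propagating the bound on that flux, so the two integration formulae and stochasticity are used in tandem), and the \emph{summability} of the Dyson--Phillips series, which is not free since each $\bm{U}_{k}(t)$ is merely a contraction and therefore requires a detour through the Laplace transform.

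\textbf{Step 1: contraction bound and trace estimates.} From Definition~\ref{defi:Uk}, for $t_{-}(x,v)<t$ one has $[\bm{U}_{k}(t)f](x,v)=[\H\B^{+}\bm{U}_{k-1}(t-t_{-}(x,v))f](x-t_{-}(x,v)v,v)$. Restricting this to $\Gamma_{+}$ (where $t_{-}=\tau_{-}$), using \eqref{10.51} and the substitution $s\mapsto s-\tau_{-}$, I get $\int_{0}^{t}\|\B^{+}\bm{U}_{k}(s)f\|_{\lp}\,\d s\le\int_{0}^{t}\|\H\B^{+}\bm{U}_{k-1}(s)f\|_{\lm}\,\d s\le\int_{0}^{t}\|\B^{+}\bm{U}_{k-1}(s)f\|_{\lp}\,\d s$, which is item (6); and since $\int_{0}^{\infty}\|\B^{+}U_{0}(s)f\|_{\lp}\,\d s=\|f\|_{\X_{0}}$ by \eqref{10.47} on $\Gamma_{+}$, induction gives $\int_{0}^{t}\|\B^{+}\bm{U}_{k}(s)f\|_{\lp}\,\d s\le\|f\|_{\X_{0}}$ for every $k$ and $t$. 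Computing $\|\bm{U}_{k}(t)f\|_{\X_{0}}$ with \eqref{10.47} relative to $\Gamma_{-}$, where $t_{-}(z+sv,v)=s$ for $(z,v)\in\Gamma_{-}$, I then obtain $\|\bm{U}_{k}(t)f\|_{\X_{0}}\le\int_{0}^{t}\|\B^{+}\bm{U}_{k-1}(s)f\|_{\lp}\,\d s\le\|f\|_{\X_{0}}$, i.e. \eqref{eq:normUk}; this is what allows the extension of $\bm{U}_{k}(t)$ to $\mathscr{B}(\X_{0})$. Finally, the same formula restricted to $\Gamma_{-}$ reads $\B^{-}\bm{U}_{k}(t)f=\H\B^{+}\bm{U}_{k-1}(t)f$; integrating in $t$ (with $\int_{0}^{t}\bm{U}_{k}(s)f\,\d s\in W$ having trace the integral of the traces) gives the identity of item (5).

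\textbf{Step 2: regularity in $t$, differential identities, convolution rule.} Strong continuity (item (1)) follows from pointwise-in-$t$ continuity of the explicit formula of Lemma~\ref{lem:exprUn} and dominated convergence controlled by \eqref{eq:normUk}. For $f\in\D_{0}$, differentiating in $t$ the formula $[\bm{U}_{k}(t)f](x,v)=[\H\B^{+}\bm{U}_{k-1}(t-t_{-}(x,v))f](x-t_{-}(x,v)v,v)\ind_{\{t_{-}(x,v)<t\}}$ (the $t$-dependence sitting only in the argument $t-t_{-}(x,v)$, no boundary term arising since $\bm{U}_{k-1}(0)=0$) and using the induction hypothesis for $\bm{U}_{k-1}$ yields item (3), $\frac{\d}{\d t}\bm{U}_{k}(t)f=\bm{U}_{k}(t)\T_{\mathrm{max}}f$. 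Since on $\{t_{-}(x,v)<t\}$ the function $\bm{U}_{k}(t)f$ is produced from boundary data by the free flow, it solves the free transport equation in $\Omega$, so $-v\cdot\nabla_{x}\bm{U}_{k}(t)f=\partial_{t}\bm{U}_{k}(t)f\in\X_{0}$, which gives $\bm{U}_{k}(t)f\in\D(\T_{\mathrm{max}})$ and item (4) (recall $\T_{\mathrm{max}}f=\T_{0}f$ on $\D_{0}$). The convolution identity (item (2)) comes by induction on $k$ from Definition~\ref{defi:Uk}, the semigroup law of $(U_{0}(t))_{t\ge0}$, and a bookkeeping of how many of the $k$ rebounds occur after time $s$.

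\textbf{Step 3: Laplace transform, summation, identification.} Set $\mathcal{L}_{k}(\l)f=\int_{0}^{\infty}e^{-\l t}\bm{U}_{k}(t)f\,\d t$ for $\l>0$, convergent by \eqref{eq:normUk}. Using $\T_{\mathrm{max}}\bm{U}_{k}(t)f=\partial_{t}\bm{U}_{k}(t)f$, closedness of $\T_{\mathrm{max}}$, $\bm{U}_{k}(0)=0$ and an integration by parts, $\mathcal{L}_{k}(\l)f\in\D(\T_{\mathrm{max}})$ with $\T_{\mathrm{max}}\mathcal{L}_{k}(\l)f=\l\mathcal{L}_{k}(\l)f$, while item (5) gives $\B^{-}\mathcal{L}_{k}(\l)f=\H\B^{+}\mathcal{L}_{k-1}(\l)f$; the uniqueness part of Theorem~\ref{Theo4.2} (with vanishing right-hand side) then forces $\mathcal{L}_{k}(\l)f=\mathsf{\Xi}_{\l}\H\B^{+}\mathcal{L}_{k-1}(\l)f$, and \eqref{propxil1} gives $\B^{+}\mathcal{L}_{k}(\l)f=\mathsf{M}_{\l}\H\B^{+}\mathcal{L}_{k-1}(\l)f$; since $\mathcal{L}_{0}(\l)f=\mathsf{R}_{\l}f=\Rs(\l,\T_{0})f$ with $\B^{+}\mathsf{R}_{\l}f=\mathsf{G}_{\l}f$, induction yields item (7), $\B^{+}\mathcal{L}_{k}(\l)f=(\mathsf{M}_{\l}\H)^{k}\mathsf{G}_{\l}f$. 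For the series it is enough to take $0\le f\in\D_{0}$: by positivity of all the operators $\sum_{k\ge0}\int_{0}^{\infty}e^{-\l t}\|\B^{+}\bm{U}_{k}(t)f\|_{\lp}\,\d t=\sum_{k\ge0}\|(\mathsf{M}_{\l}\H)^{k}\mathsf{G}_{\l}f\|_{\lp}<\infty$ for $\mathrm{Re}\,\l>0$ (using $r_{\sigma}(\mathsf{M}_{\l}\H)<1$ there, equivalently the operator-norm convergence of the series in Proposition~\ref{propo:resolvante}); Tonelli together with the Step~1 bound $\|\bm{U}_{k}(t)f\|_{\X_{0}}\le\int_{0}^{t}\|\B^{+}\bm{U}_{k-1}(s)f\|_{\lp}\,\d s$ then forces $\sum_{k\ge0}\|\bm{U}_{k}(t)f\|_{\X_{0}}<\infty$ for every $t>0$, so $\sum_{k}\bm{U}_{k}(t)f$ converges absolutely in $\X_{0}$; its sum $\widetilde{U}(t)$ is a $C_{0}$-semigroup by items (1)--(2), and its Laplace transform is $\sum_{k}\mathcal{L}_{k}(\l)f=\Rs(\l,\T_{0})f+\sum_{n\ge0}\mathsf{\Xi}_{\l}\H(\mathsf{M}_{\l}\H)^{n}\mathsf{G}_{\l}f=\Rs(\l,\T_{\H})f$ by \eqref{eq:lambdaTH}; as two $C_{0}$-semigroups sharing the same resolvent coincide, $\widetilde{U}(t)=U_{\H}(t)$, which is item (8).
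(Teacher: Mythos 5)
Your argument is correct and is essentially the proof the paper relies on: Theorem~\ref{theo:UKT} is not proved in-line (the paper cites \cite{luisa} and \cite[Appendix A and Theorem 3.8]{ALMJM}), but your Step~1 reproduces the computation the paper does carry out explicitly in Appendix~\ref{sec:appDP} for the parallel family $\bm{V}_k$ (Proposition~\ref{prop:Vnt}, Theorem~\ref{theo:VKT}), and your Step~3 matches the Remark following the theorem, which derives \eqref{eq:LaplaceDP} through precisely the uniqueness argument via Theorem~\ref{Theo4.2} and \eqref{propxil1} and then sums to $\Rs(\l,\T_{\H})$ via \eqref{eq:lambdaTH}. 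Two small points to tighten: the parenthetical ``no boundary term since $\bm{U}_{k-1}(0)=0$'' is off for $k=1$ (there $\bm{U}_0(0)=\mathrm{Id}\neq 0$; the term vanishes instead because $\B^{+}f=0$ for $f\in\D_0$), and item~(8) for general $f\in\X_0$ still needs the uniform bound $\big\|\sum_{k\le N}\bm{U}_k(t)\big\|_{\mathscr{B}(\X_0)}\le 1$, obtained from $\big|\sum_{k\le N}\bm{U}_k(t)f\big|\le\sum_{k\le N}\bm{U}_k(t)|f|\le U_{\H}(t)|f|$ by positivity and monotone convergence, before the density passage from $\D_0$ is legitimate.
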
 
\begin{nb} One sees from the point (7) together with \cite[Theorem 2.4]{LMR} that, for any $k \geq 1$, 
$$\mathcal{L}_{k}(\l)f=\mathsf{\Xi}_{\l}\H\B^{+}\mathcal{L}_{k-1}(\l)f.$$
Since $\mathcal{L}_{0}(\l)f=\mathsf{R}_{\l}f$ we deduce that
$$\mathcal{L}_{1}(\l)=\mathsf{\Xi}_{\l}\H\B^{+}\mathsf{R}_{\l}=\mathsf{\Xi}_{\l}\H\mathsf{G}_{\l},$$
and, since $\B^{+}\mathsf{\Xi}_{\l}=\mathsf{M}_{\l}$, one gets by induction that, for any $k \geq 1$,
$$\mathcal{L}_{k}(\l)=\mathsf{\Xi}_{\l}\H\left(\mathsf{M}_{\l}\H\right)^{k-1}\mathsf{G}_{\l}.$$
In particular, one sees that, in the representation series \eqref{eq:lambdaTH} that, for any $n \geq 0$
\begin{equation}\label{eq:LaplaceDP}
\mathsf{\Xi}_{\lambda}\mathsf{H}\left(\mathsf{M}_{\lambda}\mathsf{H}\right)^{n}\mathsf{G}_{\lambda}f=\int_{0}^{\infty}\exp(-\l\,t)\bm{U}_{n+1}(t)f\d t\end{equation}
for any $\l >0$ which is of course coherent with the above point (4).
\end{nb}

\subsection{Decay of the iterates} We extend the decay of the semigroup $\left(U_{0}(t))\right)_{t\geq0}$ obtained in Lemma \ref{lem:decayU0} to the iterates 
$\left(\bm{U}_{k}(t)\right)_{t\geq0}$. To do so, we  first observe that Assumption \ref{hypH} \textit{1)} implies a nice behaviour of $\H$ for small velocities. More precisely, introducing for any $\delta >0$, the operator $\widetilde{\H}^{(\delta)} \in \mathscr{B}(\lp,\lm)$ given by
\begin{equation}\label{defi:Hepsi}
\widetilde{\H}^{(\delta)}\psi(x,v)=\ind_{|v|\leq \delta}\H\psi(x,v) \qquad \forall \psi \in \lp, \quad (x,v) \in \Gamma_{-}\end{equation}
Then, for any $\psi \in \lp$, one has
\begin{equation*}\begin{split}
\|\widetilde{\H}^{(\delta)}\psi\|_{\lm}&=\int_{\Gamma_{-}}\ind_{|v| \leq \delta}|\H\psi(x,v)|\d\mu_{-}(x,v)\\
&\leq \delta^{n+1}\int_{\Gamma_{-}}\ind_{|v|\leq \delta}|\H\psi(x,v)||v|^{-(n+1)}\d\mu_{-}(x,v)\end{split}\end{equation*}
from which we deduce the estimate
\begin{equation}\label{eq:Hepsi}
\|\widetilde{\H}^{(\delta)}\|_{\mathscr{B}(\lp,\lm)} \leq \|\H\|_{\mathscr{B}(\lp,\Y^{-}_{n+1})}\,\delta^{n+1} \qquad \forall n \leq N_{\H}.\end{equation}
Having such a property in mind, we can deduce the decay of $\bm{U}_{k}(t)$ as $t \to \infty$ for any $k \in \N$. For the clarity of exposition, we give full details for the decay of $\bm{U}_{1}(t)$. 
\begin{lemme}\label{lem:decayU1}
Let $k \in \N$ and $f \in \X_{k}$. Then, there exists some universal constant $C_{k} >0$ (depending only on $\H$, $k$ and $D$ but not  on $f$) such that
$$\left\|\bm{U}_{1}(t)f\right\|_{\X_{0}} \leq  C_{k}\left(t^{-(N_{\H}+1)} + t^{-k}\right)\,\|f\|_{\X_{k}}, \qquad \forall t >0$$
where $N_{\H}$ is defined in Assumption \ref{hypH}.
\end{lemme}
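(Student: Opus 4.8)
The plan is to decompose $\bm{U}_{1}(t)f$ according to whether the incoming velocity $v_{0}$ at the first rebound is small or large, exploiting the explicit formula \eqref{eq:Untf} for $n=1$:
\[
\left[\bm{U}_{1}(t)f\right](x,v)=\int_{\Gamma_{+}(y_{0})}\bm{k}(y_{0},v,v_{0})\,f\bigl(y_{0}-(t-t_{-}(x,v))v_{0},v_{0}\bigr)\,\ind_{A_{t}}\,|v_{0}\cdot n(y_{0})|\bm{m}(\d v_{0}),
\]
where $y_{0}=x-t_{-}(x,v)v$ and $A_{t}=\{t_{-}(x,v)<t<t_{-}(x,v)+\tau_{-}(y_{0},v_{0})\}$. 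Fix a threshold $\delta=\delta(t)>0$ to be optimized at the end and split the $v_{0}$-integral as $\ind_{|v_{0}|\leq\delta}+\ind_{|v_{0}|>\delta}$. First I would treat the \emph{large-velocity part}: here $\bm{U}_{1}(t)$ restricted to $|v_{0}|>\delta$ is, up to the structure of $\H$, essentially the operator $\bm{U}_{1}$ built from a boundary kernel supported on $|v_{0}|>\delta$ composed with a free flight $U_{0}(t-s)$ of duration at least controlled from below; using the semigroup-type identity (Theorem \ref{theo:UKT}(2)) $\bm{U}_{1}(t)=\bm{U}_{0}(t/2)\bm{U}_{1}(t/2)+\bm{U}_{1}(t/2)\bm{U}_{0}(t/2)$ one can insert a free-transport factor $U_{0}(t/2)$ and invoke Lemma \ref{lem:decayU0}: since on $|v_{0}|>\delta$ one has $|v_{0}|^{-k}\leq\delta^{-k}$, the free flight of the post-rebound particle gains a factor $D^{k}(t/2)^{-k}$ while the loss from restricting to large velocities costs only $\delta^{-k}$ times a norm that is still controlled by $\|f\|_{\X_{k}}$ after undoing the rebound. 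This contributes a term of order $\delta^{-k}t^{-k}\|f\|_{\X_{k}}$ (and the $\bm{U}_{0}(t/2)$ on the other side is handled the same way using $\|\bm{U}_{1}(t/2)\|_{\mathscr B(\X_0)}\le 1$).

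Next I would treat the \emph{small-velocity part} $\ind_{|v_{0}|\leq\delta}$. Here the point is the integrability gain encoded in \eqref{eq:Hepsi}: the operator $\widetilde{\H}^{(\delta)}$ (acting on the \emph{outgoing} trace produced by $f$) has norm $\leq\|\H\|_{\mathscr B(\lp,\Y^-_{n+1})}\,\delta^{n+1}$ for any $n\leq N_{\H}$, so taking $n=N_{\H}$ gives a factor $\delta^{N_{\H}+1}$. More precisely, $\bm{U}_{1}(t)$ restricted to $|v_{0}|\le\delta$ equals $\mathsf{\Xi}\,\widetilde{\H}^{(\delta)}\mathsf{B}^{+}U_{0}(\cdot)f$ integrated against the travel-time constraint, and since $\|\mathsf{B}^{+}U_{0}(s)f\|_{\lp}$ is integrable in $s$ (by Theorem \ref{Theo4.2}, or directly $\int_0^\infty\|\mathsf B^+ U_0(s)f\|_{\lp}\d s\le\|f\|_{\X_0}$ from \eqref{Eq:G0}) while the $\mathsf{\Xi}$-lifting of a function supported on $|v_{0}|\le\delta$ costs at most $D$ times its $\lm$-norm (the crude bound, no extra velocity weight needed here), we get a contribution of order $\delta^{N_{\H}+1}\|f\|_{\X_0}\le\delta^{N_{\H}+1}\|f\|_{\X_k}$. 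One must be a little careful that this bound must also reflect decay in $t$; but on the support $A_{t}$ one has $\tau_{-}(y_{0},v_{0})\geq t-t_{-}(x,v)$, and since $\tau_{-}(y_{0},v_{0})\leq D/|v_{0}|$ this forces $|v_{0}|\leq D/(t-t_{-}(x,v))$, i.e. the small-velocity regime is \emph{automatically} the relevant one for large $t$ — so in fact one may directly bound the whole of $\bm{U}_{1}(t)f$ for $t$ large by the $\widetilde{\H}^{(\delta)}$-estimate with $\delta\sim D/t$, yielding the $t^{-(N_{\H}+1)}$ term without even splitting.

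Finally, combining the two regimes: the large-velocity part gives $\lesssim\delta^{-k}t^{-k}\|f\|_{\X_k}$ and the small-velocity part gives $\lesssim\delta^{N_{\H}+1}\|f\|_{\X_k}$ (or, via the travel-time constraint, one simply gets $\lesssim\min(1,D/t)^{N_{\H}+1}\|f\|_{\X_k}$ directly). Choosing $\delta$ to balance — or just taking $\delta\asymp 1/t$ — produces $\|\bm{U}_{1}(t)f\|_{\X_0}\lesssim t^{-(N_{\H}+1)}\|f\|_{\X_k}+t^{-k}\|f\|_{\X_k}$ for $t$ large, and for $t$ bounded away from $0$ and $\infty$ the uniform bound $\|\bm{U}_{1}(t)\|_{\mathscr B(\X_0)}\le 1$ absorbs everything into the constant $C_{k}$; this gives the claimed estimate for all $t>0$. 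The main obstacle I anticipate is the bookkeeping in the large-velocity part: one has to correctly route the velocity weight $\varpi_{k}$ through the rebound — the incoming particle carries $f(\cdot,v_{0})$ with $|v_{0}|>\delta$ small on the weighted scale, but the operator $\mathsf{\Xi}$ lifting the post-rebound data does not commute with $\varpi_k$ in a way that is entirely transparent, so one needs the refined bounds of Lemma \ref{lem:motau}/Corollary \ref{cor:boundMX} rather than the crude ones, and must check that the constant depends only on $\H$, $k$, $D$. The travel-time observation $|v_0|\lesssim D/t$ on $A_t$ is what makes the argument clean and is, I expect, the key simplifying idea the authors use.
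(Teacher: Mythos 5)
Your plan goes wrong because you split on the \emph{pre}-rebound velocity $v_{0}$, whereas the paper splits on the \emph{post}-rebound velocity $v$, and the two roles cannot be interchanged. The estimate $\|\widetilde{\H}^{(\delta)}\|_{\mathscr{B}(\lp,\lm)}\leq\|\H\|_{\mathscr{B}(\lp,\Y^-_{N_\H+1})}\delta^{N_\H+1}$ from \eqref{eq:Hepsi} comes from the fact that $\widetilde{\H}^{(\delta)}\psi(x,v)=\ind_{|v|\leq\delta}\H\psi(x,v)$ truncates in the \emph{output} velocity $v$: the gain of integrability encoded in Assumption \ref{hypH}\,\textit{1)} lives at small post-rebound $v$. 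Your "small-velocity part $\ind_{|v_{0}|\leq\delta}$" is a truncation of the \emph{input} variable of $\H$, and there is no corresponding $\delta^{N_\H+1}$ gain there; when you write "$\bm{U}_{1}(t)$ restricted to $|v_{0}|\leq\delta$ equals $\mathsf{\Xi}\widetilde{\H}^{(\delta)}\B^+U_0(\cdot)f$", this identification is simply false.

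The "shortcut" at the end has the same problem. You are right that on $A_{t}$ one has $\tau_{-}(y_{0},v_{0})>t-t_{-}(x,v)$ and hence $|v_{0}|<D/(t-t_{-}(x,v))$, but this only forces $|v_{0}|\lesssim D/t$ if $t_{-}(x,v)$ is itself much smaller than $t$, i.e.\ if $|v|$ is bounded away from $0$. When $|v|\leq\delta$ the post-rebound flight can last up to $D/|v|$, so $t-t_{-}(x,v)$ can be tiny and the bound on $v_{0}$ is vacuous; the claim that "the small-velocity regime is automatically the relevant one for large $t$" is therefore wrong without first isolating the region $|v|>\delta$. This is precisely why the paper must split on the post-rebound $v$: for $|v|>\delta$ one has $t_{-}(x,v)<D/\delta$, hence $|v_{0}|<D/(t-D/\delta)$, and then it is the $\X_{k}$-integrability of $f$ (not any property of $\H$) that produces $|v_{0}|^{k}\leq (D/(t-D/\delta))^{k}$ via the substitution $g(z,w)=|w|^{-k}|f(z,w)|$; for $|v|\leq\delta$ one uses $\widetilde{\H}^{(\delta)}$. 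Choosing $\delta\asymp D/t$ then gives $t^{-(N_\H+1)}+t^{-k}$.

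Finally, even as a heuristic your "large-$v_{0}$ part" cannot produce the $t^{-k}$ factor. Inserting $\bm{U}_{0}(t/2)$ via the composition law and invoking Lemma \ref{lem:decayU0} requires the post-rebound density to lie in $\X_{k}$, but $\H$ only gives $\mathsf{\Xi}_{0}\H\in\mathscr{B}(\lp,\X_{N_\H})$ (Corollary \ref{cor:boundMX}), so that route caps out at $t^{-N_\H}$, not $t^{-k}$. The extra power $t^{-k}$ in the lemma is entirely due to the time constraint on $A_{t}$ squeezing the \emph{pre}-rebound velocity $v_{0}$, and that mechanism only kicks in on the $|v|>\delta$ half of the correct splitting. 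You also note a residual "bookkeeping" concern at the end; it is not a detail to be routed around — it is exactly the structural obstruction described above.
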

\begin{proof} The proof is based upon the decomposition of $\H$ for small and large velocities. Namely, we introduce, for some $\delta >0$ to be determined, the splitting
$$\H=\H^{(\delta)}+\widetilde{\H}^{(\delta)}$$
where $\widetilde{\H}^{(\delta)}$ is defined in \eqref{defi:Hepsi} and 
$$\H^{(\delta)}\psi(x,v)=\ind_{|v| >\delta}\H\psi(x,v), \qquad \psi \in \lp, \quad (x,v) \in \Gamma_{-}.$$ With such a splitting, one has of course, 
$$\bm{U}_{1}(t)=\bm{U}_{1}^{(\delta)}(t)+\widetilde{\bm{U}}_{1}^{(\delta)}(t), \qquad t >0, \qquad \delta >0$$
where $\bm{U}_{1}^{(\delta)}(t),\widetilde{\bm{U}}_{1}^{(\delta)}(t)$ are  constructed as in Definition \ref{defi:Uk} with $\H$ replaced respectively by $\H^{(\delta)}$ and $\widetilde{H}^{(\delta)}$. Let now fix $k\geq 1,$ $f \in \X_{k}, t >0.$ One has
\begin{equation*}
\|\bm{U}_{1}(t)f\|_{\X_{0}} \leq \|\bm{U}_{1}^{(\delta)}(t)f\|_{\X_{0}}+\|\widetilde{\bm{U}}_{1}^{(\delta)}(t)f\|_{\X_{0}} 
\leq \|{\bm{U}}_{1}^{(\delta)}(t)f\|_{\X_{0}} + \|\widetilde{\H}^{(\delta)}\|_{\mathscr{B}(\lp,\lm)}\|f\|_{\X_{0}} \end{equation*}
where we used \eqref{eq:Vnt}. Using now \eqref{eq:Hepsi}, there is $C$ (depending only on $\H$) such that
\begin{equation}\label{eq:U1tep}
\|\bm{U}_{1}(t)f\|_{\X_{0}} \leq C\delta^{N_{\H}+1}\|f\|_{\X_{0}}+\|{\bm{U}}_{1}^{(\delta)}(t)f\|_{\X_{0}} \qquad \forall \delta >0.\end{equation}
Let us focus then on the estimate for $\ {\bm{U}_{1}^{(\delta)}}(t)f$. The crucial point is of course that $ {\bm{U}}_{1}^{(\delta)}(t)f$ is supported on $\Omega \times \{v \in V\;;\,|v| > \delta\}$ and, on this set, $\tau_{-}(\cdot,\cdot)$ is uniformly bounded since
$$t_{-}(x,v) \leq \frac{D}{|v|} \leq \frac{D}{\delta} \qquad \forall (x,v) \in \Omega \times V,\:|v| >\delta.$$ 
Let us then consider $x \in \Omega$ and $|v| >\delta$ and $t >\frac{D}{\delta} > t_{-}(x,v)$. We recall (see Lemma \ref{lem:exprUn}) that
\begin{multline*}
\left[\bm{U}_{1}^{(\delta)}(t)f\right](x,v)=\ind_{|v|>\delta}\int_{\Gamma_{+}(y_{0})}\bm{k}(y_{0},v,v_{0})f(y_{0}-(t-t_{-}(x,v))v_{0},v_{0})\\
\times \ind_{\{t_{-}(x,v) < t < t_{-}(x,v)+\tau_{-}(y_{0},v_{0})\}}|v_{0}\cdot n(y_{0})|\bm{m}(\d v_{0}).
\end{multline*}
One sees that, for $t > t_{-}(x,v)$ and $|v|>\delta$, 
$$\ind_{\{t_{-}(x,v) < t < t_{-}(x,v)+\tau_{-}(y_{0},v_{0})\}} \neq 0 \iff \tau_{-}(y_{0},v_{0}) > t-t_{-}(x,v) \geq t-\frac{D}{\delta}.$$
Since moreover $\tau_{-}(y_{0},v_{0}) \leq \frac{D}{|v_{0}|}$ one deduces that
$$\ind_{\{t_{-}(x,v) < t < t_{-}(x,v)+\tau_{-}(y_{0},v_{0})\}} \neq0  \:\:\Longrightarrow\:\:  \frac{D}{|v_{0}|} > t-\frac{D}{\delta}.$$
Therefore, 
\begin{multline*}
\left|\left[\bm{U}_{1}^{(\delta)}(t)f\right](x,v)\right| \leq \ind_{|v| >\delta}\int_{\Gamma_{+}(y_{0})} \ind_{|v_{0}| \leq \frac{D}{t-\frac{D}{\delta}}}\bm{k}(y_{0},v,v_{0})\\
\times \ind_{\{t_{-}(x,v) < t < t_{-}(x,v)+\tau_{-}(y_{0},v_{0})\}}\left|f(y_{0}-(t-t_{-}(x,v))v_{0},v_{0})\right||v_{0}\cdot n(y_{0})|\bm{m}(\d v_{0}).\end{multline*}
Introducing 
$$g(z,w)=|w|^{-k}\left|f(z,w)\right|, \qquad \forall (z,w) \in \Omega \times V$$
one deduces easily that, for any $t > \frac{D}{\delta}$,
\begin{equation*}
\left|\left[\bm{U}_{1}^{(\delta)}(t)f\right](x,v)\right| \leq \frac{D^{j}}{\left(t-\frac{D}{\delta}\right)^{j}}\left[\bm{U}_{1}^{(\delta)}(t)g\right](x,v)\end{equation*}
Therefore,
$$\left\|\bm{U}_{1}^{(\delta)}(t)f\right\|_{\X_{0}} \leq \frac{D^{j}}{\left(t-\frac{D}{\delta}\right)^{j}}\left\|\bm{U}_{1}^{(\delta)}(t)g\right\|_{\X_{0}}\leq \frac{D^{j}}{\left(t-\frac{D}{\delta}\right)^{j}}\left\|g\right\|_{\X_{0}}, \qquad \forall t > \frac{D}{\delta}$$
where we used \eqref{eq:normUk}. Combining this with \eqref{eq:U1tep}, and since $\max(\|f\|_{\X_{0}},\|g\|_{\X_{0}}) \leq \|f\|_{\X_{k}}$, one deduces that 
$$\|\bm{U}_{1}(t)f\|_{\X_{0}} \leq \left(C\e^{N_{\H}+1}+\frac{D^{j}}{\left(t-\frac{D}{\delta}\right)^{j}}\right)\|f\|_{\X_{k}}, \qquad \forall t > \frac{D}{\delta}.$$
Choosing $\delta$ such that $t=\frac{2D}{\delta}$, we get the result.
\end{proof}

We generalise this to the other iterates
\begin{lemme}\label{lem:decayUn}
Let $k \in \N$ and $f \in \X_{k}$. Then, there exists some universal constant $C_{k} >0$ (depending only on $\H$, $k$ and $D$ but not on $f$) such that, for any $n \geq 1$,
$$\left\|\bm{U}_{n}(t)f\right\|_{\X_{0}} \leq  C_{k}\left(\left(2^{n}-1\right)\left(\frac{n+1}{t}\right)^{(N_{\H}+1)}+\left(\frac{n+1}{t}\right)^{k}\right)\,\|f\|_{\X_{k}}, \qquad \forall t >0$$
where $N_{\H}$ is defined in Assumption \ref{hypH}.
\end{lemme}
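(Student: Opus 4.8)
The plan is to adapt the proof of Lemma~\ref{lem:decayU1}, but now using the fully explicit formula \eqref{eq:Untf} and splitting the boundary operator at \emph{every} rebound. Fix $k\ge 1$, $f\in\X_k$ and $t>0$; for a parameter $\delta>0$ to be chosen at the end write $\H=\H^{(\delta)}+\widetilde{\H}^{(\delta)}$, where $\widetilde{\H}^{(\delta)}$ is as in \eqref{defi:Hepsi} and $\H^{(\delta)}\psi(x,v)=\ind_{|v|>\delta}\H\psi(x,v)$. Inserting this splitting into each of the $n$ kernels of \eqref{eq:Untf} — equivalently, building $\bm{U}_n(t)$ as in Definition~\ref{defi:Uk} with $\H$ replaced, at each rebound independently, by $\H^{(\delta)}$ or $\widetilde{\H}^{(\delta)}$ — expresses $\bm{U}_n(t)$ as a sum of $2^n$ operators indexed by the set $S\subseteq\{0,\dots,n-1\}$ of rebounds at which one keeps the ``small velocity'' piece $\widetilde{\H}^{(\delta)}$. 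I would then estimate the single term with $S=\emptyset$ and the remaining $2^n-1$ terms separately, and optimise in $\delta$ at the end.

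For $S=\emptyset$ all the output velocities $v,v_0,\dots,v_{n-2}$ occurring in \eqref{eq:Untf} satisfy $|\cdot|>\delta$, so (using $t_-(x,v)\le D/|v|$, $\tau_-(y_j,v_j)\le D/|v_j|$, cf.~\eqref{eq:scale}) one has $t_-(x,v)+\sum_{j=0}^{n-2}\tau_-(y_j,v_j)<nD/\delta$, and hence on the set $A_t(x,v,v_0,\dots,v_{n-2})$ one is forced to have $\tau_-(y_{n-1},v_{n-1})>t-nD/\delta$, i.e. $|v_{n-1}|<D/(t-nD/\delta)$; the choice $\delta:=(n+1)D/t$ then makes $t-nD/\delta=t/(n+1)$, so $|v_{n-1}|<(n+1)D/t$ on the support of this term. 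Writing $g(z,w):=|w|^{-k}|f(z,w)|$ as in Lemma~\ref{lem:decayU1}, one bounds $|f(\,\cdot\,,v_{n-1})|\le |v_{n-1}|^k g(\,\cdot\,,v_{n-1})\le\big((n+1)D/t\big)^k g(\,\cdot\,,v_{n-1})$ and, dropping the restriction indicators (licit since $g\ge0$ and $\bm{k}\ge0$), this term is dominated pointwise by $\big((n+1)D/t\big)^k[\bm{U}_n(t)g](x,v)$; integrating over $\Omega\times V$ and using \eqref{eq:normUk} together with $\|g\|_{\X_0}\le\|f\|_{\X_k}$ gives the contribution $\big((n+1)D/t\big)^k\|f\|_{\X_k}$.

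For every $S\ne\emptyset$ the corresponding operator carries at least one factor $\widetilde{\H}^{(\delta)}$. Bounding one such factor through \eqref{eq:Hepsi} (used with the free parameter there equal to $N_\H$), $\|\widetilde{\H}^{(\delta)}\|_{\mathscr{B}(\lp,\lm)}\le\|\H\|_{\mathscr{B}(\lp,\Y^-_{N_\H+1})}\,\delta^{N_\H+1}$, any further factor $\widetilde{\H}^{(\delta)}$ by $\|\widetilde{\H}^{(\delta)}\|_{\mathscr{B}(\lp,\lm)}\le\|\H\|_{\mathscr{B}(\lp,\lm)}=1$ (stochasticity), each remaining $\H$ again by $\|\H\|_{\mathscr{B}(\lp,\lm)}=1$, and each one‑segment free‑transport/lifting operator by its operator norm $\le1$, exactly as in the estimate of $\widetilde{\bm{U}}^{(\delta)}_1(t)$ in the proof of Lemma~\ref{lem:decayU1} (cf.~\eqref{eq:Vnt}), one gets that each such term has $\X_0$‑operator‑norm $\le\|\H\|_{\mathscr{B}(\lp,\Y^-_{N_\H+1})}\,\delta^{N_\H+1}$. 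Summing over the $2^n-1$ such terms and inserting $\delta=(n+1)D/t$ gives the contribution $(2^n-1)\,\|\H\|_{\mathscr{B}(\lp,\Y^-_{N_\H+1})}\,\big((n+1)D/t\big)^{N_\H+1}\|f\|_{\X_k}$. Adding the two contributions and absorbing $D^{N_\H+1}$, $D^k$ and $\|\H\|_{\mathscr{B}(\lp,\Y^-_{N_\H+1})}$ into a single constant $C_k$ independent of $n$ and $f$ yields the claimed bound; the argument never required $t$ to be large (with $\delta=(n+1)D/t$ the quantity $t-nD/\delta=t/(n+1)$ is positive for all $t>0$), and in any case $\|\bm{U}_n(t)\|_{\mathscr{B}(\X_0)}\le1$ covers any residual range of $t$.

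The main obstacle I anticipate is the functional/combinatorial bookkeeping in the third step: one has to make sure that, \emph{uniformly in $n$}, precisely one factor $\delta^{N_\H+1}$ is extracted from each of the $2^n-1$ terms with $S\ne\emptyset$ while the remaining $n-|S|$ copies of $\H$ and all the one‑segment propagators contribute only constants $\le1$, so that no further $n$‑dependence enters beyond the explicit $2^n-1$ and $(n+1)$ already present in the statement; this is exactly the point where the stochasticity of $\H$ (giving $\|\H\|_{\mathscr{B}(\lp,\lm)}=1$) and the contractivity of $\mathsf{M}_\lambda$, $\mathsf{G}_\lambda$ and of the $\mathsf{\Xi}$‑type lifting along a single segment are used, and it is what reduces the whole estimate to the already established $n=1$ case of Lemma~\ref{lem:decayU1} together with \eqref{eq:Hepsi} and \eqref{eq:normUk}.
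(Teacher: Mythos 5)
Your proposal is correct and follows essentially the same route as the paper's proof: the combinatorial splitting into $2^n$ terms with at each rebound either $\H^{(\delta)}$ or $\widetilde{\H}^{(\delta)}$, the extraction of one factor $\|\widetilde{\H}^{(\delta)}\|\le C\delta^{N_\H+1}$ from each of the $2^n-1$ terms carrying a small-velocity piece (the paper delegates this to Proposition~\ref{prop:Vnt}), the bound $|v_{n-1}|<D/(t-nD/\delta)$ on the support of the all-large-velocity term, and the optimisation $\delta=(n+1)D/t$. The only minor added value in your write-up is the explicit remark that with this choice $t-nD/\delta=t/(n+1)>0$ automatically, so no case distinction on $t$ is needed.
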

\begin{proof} The proof uses the same ideas introduced in the proof for $n=1$. Given $\e >0$, we still introduce the splitting $\H=\widetilde{\H}^{(\delta)}
+\H^{(\delta)}$. With the representation of $\H$ given  by \eqref{eq:Hhkernel}, it is clear that
\begin{equation*}\begin{cases}
\widetilde{\H}^{(\delta)}\psi(x,v)&=\ds\int_{v'\cdot n(x) > 0}\widetilde{\bm{k}}^{(\delta)}(x,v,v')\psi(x,v')\,|v'\cdot n(x)|\bm{m}(\d v'),\\
\\
\H^{(\delta)}\psi(x,v)&=\ds\int_{v'\cdot n(x) > 0}\bm{k}^{(\delta)}(x,v,v')\psi(x,v')\,|v'\cdot n(x)|\bm{m}(\d v'), \qquad \psi \in \lp,\qquad (x,v) \in \Gamma_{-}
\end{cases}\end{equation*}
where
\begin{multline*}
\widetilde{\bm{k}}^{(\delta)}(x,v,v')=\ind_{|v|\leq \delta}\bm{k}(x,v,v'), \qquad \bm{k}^{(\delta)}(x,v,v')=\ind_{|v|>\delta}\bm{k}(x,v,v'),\\
\text{and}  \qquad (x,v) \in \Gamma_{-}, \:v'\in \Gamma_{+}(x).\end{multline*}
Since $\bm{k}(\cdot,\cdot,\cdot)=\widetilde{\bm{k}}^{(\delta)}(\cdot,\cdot,\cdot)+\bm{k}^{(\delta)}(\cdot,\cdot,\cdot)$, 
using the representation formula \eqref{eq:Untf} one sees by a simple combinatorial argument that, for $n \geq 1$, one can write
$$\bm{U}_{n}(t)=\bm{U}_{n}^{(\delta)}(t)+\widetilde{\bm{U}}_{n}^{(\delta)}(t)$$
where $\bm{U}_{n}^{(\delta)}(t)$ is given by \eqref{eq:Untf} with \emph{all kernels} $\bm{k}(\cdot,\cdot,\cdot)$ replaced with $\bm{k}^{(\delta)}(\cdot,\cdot,\cdot)$ whereas the reminder term $\widetilde{\bm{U}}_{n}^{(\delta)}(t)$ is the some of $2^{n}-1$ operators 
$$\widetilde{\bm{U}}_{n}^{(\delta)}(t)=\sum_{j=1}^{2^{n}-1}\bm{V}_{n}^{(j)}(t)$$
where, for any $j \in \{1,\ldots,2^{n}-1\}$, $\bm{V}_{n}^{(j)}(t)$ is defined by \eqref{eq:Untf} with \emph{at least one kernel} $\widetilde{\bm{k}}^{(\delta)}(\cdot,\cdot,\cdot)$. Alternatively, this means that $\bm{V}_{n}^{(j)}(t)$ is defined as in Definition \ref{defi:Vk} for a family of boundary operators $(\H_{1},\ldots,\H_{n})$ where there is at least one $i \in \{1,\ldots,n\}$ such that $\H_{i}=\widetilde{\H}^{(\delta)}$ (the other ones being indifferently $\H^{(\delta)}$ or $\widetilde{\H}^{(\delta)}$).  Using Proposition \ref{prop:Vnt}, one has then 
$$\|\bm{V}_{n}^{(j)}(t)\|_{\mathscr{B}(\X_{0})} \leq \|\widetilde{\H}^{(\delta)}\|_{\mathscr{B}(\lp,\lm)}.$$
Therefore
$$\left\|\widetilde{\bm{U}}_{n}^{(\delta)}(t)f\right\|_{\X_{0}} \leq \left(2^{n}-1\right)\|\widetilde{\H}^{(\delta)}\|_{\mathscr{B}(\lp,\lm)}\|f\|_{\X_{0}}$$
and 
\begin{equation}\label{eq:Untep}
\left\|\bm{U}_{n}(t)f\right\|_{\X_{0}}\leq \|\bm{U}_{n}^{(\delta)}(t)f\|_{\X_{0}}+ C(2^{n}-1)\,\delta^{N_{\H}+1}\|f\|_{\X_{0}}, \qquad t >0, \qquad \e >0\end{equation}
where we used \eqref{eq:Hepsi}. We focus now on the expression of $\bm{U}_{n}^{(\delta)}(t)f$. As before, $\bm{U}_{n}^{(\delta)}(t)f$ is supported on $\Omega\times \{v \in V\;;\;|v| >\delta\}$. We have, from \eqref{eq:Untf},
\begin{multline}\label{eq:Untfe}
\left[\bm{U}_{n}^{(\delta)}(t)f\right](x,v)=\\
\ind_{|v| >\delta}\int_{\Gamma_{+}^{\delta}(y_{0})}\bm{k}(y_{0},v,v_{0})|v_{0}\cdot n(y_{0})|\bm{m}(\d v_{0})\int_{\Gamma_{+}^{\delta}(y_{1})}\bm{k}(y_{1},v_{0},v_{1})|v_{1}\cdot n(y_{1})|\bm{m}(\d v_{1})\ldots\\
\int_{\Gamma_{+}^{\delta}(y_{n-2})}\bm{k}(y_{n-2},v_{n-3},v_{n-2})\,|v_{n-2}\cdot n(y_{n-2})|\bm{m}(\d v_{n-2})\times\\
\times\int_{A_{t}^{\delta}(x,v,v_{0},\ldots,v_{n-2})}\bm{k}(y_{n-1},v_{n-2},v_{n-1}|\,|v_{n-1}\cdot n(y_{n-1})|\\
\times f\left(y_{n-1}-\left[t-t_{-}(x,v)-\sum_{k=0}^{n-2}\tau_{-}(y_{k},v_{k})\right]v_{n-1},v_{n-1}\right)\,\bm{m}(\d v_{n-1})\,,
\end{multline}
where $y_{0}=x-t_{-}(x,v)v,$ $y_{k+1}=y_{k}-\tau_{-}(y_{k},v_{k})v_{k}$ $(k=0,\ldots,n-2),$ 
$$\Gamma_{+}^{\delta}(y_{k})=\Gamma_{+}(y_{j}) \cap \{v \in V\,;\,|v| > \delta\}, \qquad k=0,\ldots,n-2$$
and,  
\begin{multline*}
A_{t}^{\delta}(x,v,v_{0},\ldots,v_{n-2})\\
=\left\{v_{n-1} \in \Gamma_{+}^{\delta}(y_{n-1})\;;\; \sum_{k=0}^{n-2}\tau_{-}(y_{k},v_{k})<t-t_{-}(x,v)<\sum_{k=0}^{n-1}\tau_{-}(y_{k},v_{k})\right\}.\end{multline*}
Notice that, since $|v| >\delta$ and $|v_{k}| >\delta$ for any $k=0,\ldots,n-2,$ one has 
$$t_{-}(x,v)+\sum_{k=0}^{n-2}\tau_{-}(y_{k},v_{k}) \leq \frac{nD}{\delta}.$$
Let us consider then $t > \frac{nD}{\delta}$. One sees then that if $v_{n-1} \in A_{t}^{\delta}(x,v,v_{0},\ldots,v_{n-2})$ then
$$\frac{D}{|v_{n-1}|} \geq \tau_{-}(y_{n-1},v_{n-1}) \geq t-\left(t_{-}(x,v)+\sum_{k=0}^{n-2}\tau_{-}(y_{k},v_{k})\right) \geq t-\frac{nD}{\delta}$$
which implies 
\begin{equation}\label{eq:vn-1D}
|v_{n-1}| < \frac{D}{t-\frac{nD}{\delta}}.\end{equation}
As in the proof of Lemma \ref{lem:decayU1}, one introduce now
$$g(z,w)=|w|^{-k}\left|f(z,w)\right|, \qquad (z,w) \in\Omega\times V$$
and sees then from \eqref{eq:Untep},
\begin{multline*}
\left|\left[\bm{U}_{n}^{(\delta)}(t)f\right](x,v)\right|\\
\leq\ind_{|v| >\delta}\int_{\Gamma_{+}^{\delta}(y_{0})}\bm{k}(y_{0},v,v_{0})|v_{0}\cdot n(y_{0})|\bm{m}(\d v_{0})\int_{\Gamma_{+}^{\delta}(y_{1})}\bm{k}(y_{1},v_{0},v_{1})|v_{1}\cdot n(y_{1})|\bm{m}(\d v_{1})\ldots\\
\int_{\Gamma_{+}^{\delta}(y_{n-2})}\bm{k}(y_{n-2},v_{n-3},v_{n-2})\,|v_{n-2}\cdot n(y_{n-2})|\bm{m}(\d v_{n-2})\times\\
\times\int_{A_{t}^{\delta}(x,v,v_{0},\ldots,v_{n-2})}\bm{k}(y_{n-1},v_{n-2},v_{n-1}|\,|v_{n-1}\cdot n(y_{n-1})|\\
\times |v_{n-1}|^{k}g\left(y_{n-1}-\left[t-t_{-}(x,v)-\sum_{k=0}^{n-2}\tau_{-}(y_{k},v_{k})\right]v_{n-1},v_{n-1}\right)\,\bm{m}(\d v_{n-1})\,.
\end{multline*}
Therefore,  from \eqref{eq:vn-1D},
$$\left|\left[\bm{U}_{n}^{(\delta)}f\right](x,v)\right| \leq \left(\frac{D}{t-\frac{nD}{\delta}}\right)^{k}\left[\bm{U}_{n}^{(\delta)}(t)g\right](x,v) \qquad t >\frac{nD}{\delta}.$$
Consequently
$$\left\|\bm{U}_{n}^{(\delta)}(t)f\right\|_{\X_{0}} \leq \frac{D^{k}}{\left(t-\frac{nD}{\delta}\right)^{k}}\,\left\|\bm{U}_{n}^{(\delta)}(t)g\right\|_{\X_{0}}\leq \frac{D^{k}}{\left(t-\frac{nD}{\delta}\right)^{k}}\|g\|_{\X_{0}}, \qquad t >\frac{nD}{\delta}$$
thanks to \eqref{eq:normUk}. Combining this with \eqref{eq:Untep} we finally obtain
$$\left\|\bm{U}_{n}(t)f\right\|_{\X_{0}} \leq \frac{D^{k}}{\left(t-\frac{nD}{\delta}\right)^{k}}\|g\|_{\X_{0}}+ C(2^{n}-1)\,\delta^{N_{\H}+1}\|f\|_{\X_{0}}, \quad t > \frac{nD}{\delta}.$$
Picking now $\delta >0$ such that $t=\frac{(n+1)D}{\delta}$ we get the result since both $\|g\|_{\X_{0}}$ and $\|f\|_{\X_{0}}$ are smaller than $\|f\|_{\X_{k}}.$
\end{proof}

We deduce directly from Lemmas \ref{lem:decayU0} and \ref{lem:decayUn}
the following
\begin{propo}\label{prop:Snt} Assume that
$$f \in \X_{N_{\H}+1}$$
then, for any $n \geq 1$, there exists $\bm{C}_{n} >0$ such that
\begin{equation}\label{eq:UntNH}
\left\|\sum_{k=0}^{n}\bm{U}_{k}(t)f\right\|_{\X_{0}} \leq \bm{C}_{n}\,t^{-\left(N_{\H}+1\right)}\left\|f\right\|_{\X_{N_{\H}+1}} \qquad \forall t >0.\end{equation}
\end{propo}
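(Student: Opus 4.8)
The plan is to obtain \eqref{eq:UntNH} as a direct bookkeeping consequence of the two decay estimates already established, namely Lemma \ref{lem:decayU0} for the free term $\bm{U}_{0}(t)=U_{0}(t)$ and Lemma \ref{lem:decayUn} for each iterate $\bm{U}_{k}(t)$ with $1\le k\le n$, both fed with the \emph{same} weight exponent $N_{\H}+1$. As a preliminary, I would record that $s\mapsto\max(1,|v|^{-s})$ is nondecreasing, so that $\X_{N_{\H}+1}\hookrightarrow\X_{s}$ with $\|g\|_{\X_{s}}\le\|g\|_{\X_{N_{\H}+1}}$ for every $0\le s\le N_{\H}+1$; in particular $f\in\X_{k}$ and $\|f\|_{\X_{k}}\le\|f\|_{\X_{N_{\H}+1}}$ for $0\le k\le N_{\H}+1$, which is exactly what is needed to apply both lemmas to $f$ with the right-hand norm measured in $\X_{N_{\H}+1}$ (recall also that $\bm{U}_{k}(t)$ has already been extended to all of $\X_{0}$ in Theorem \ref{theo:UKT}, so there is no density issue).

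Next, for the term $k=0$ I would invoke Lemma \ref{lem:decayU0} with exponent $N_{\H}+1$, which gives
$$\|\bm{U}_{0}(t)f\|_{\X_{0}}=\|U_{0}(t)f\|_{\X_{0}}\le\frac{D^{N_{\H}+1}}{t^{N_{\H}+1}}\,\|f\|_{\X_{N_{\H}+1}},\qquad t>0.$$
For each $1\le k\le n$ I would apply Lemma \ref{lem:decayUn} with its weight parameter chosen equal to $N_{\H}+1$: then both terms in that lemma's bound carry the single power $t^{-(N_{\H}+1)}$, and since $(2^{k}-1)+1=2^{k}$ one gets
$$\|\bm{U}_{k}(t)f\|_{\X_{0}}\le C_{N_{\H}+1}\,2^{k}\Big(\frac{k+1}{t}\Big)^{N_{\H}+1}\|f\|_{\X_{N_{\H}+1}},\qquad t>0,$$
where $C_{N_{\H}+1}>0$ is the universal constant produced by Lemma \ref{lem:decayUn} (depending only on $\H$, $N_{\H}+1$ and $D$).

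Finally I would sum these finitely many inequalities via the triangle inequality in $\X_{0}$, obtaining \eqref{eq:UntNH} with
$$\bm{C}_{n}:=D^{N_{\H}+1}+C_{N_{\H}+1}\sum_{k=1}^{n}2^{k}(k+1)^{N_{\H}+1}<\infty,$$
a finite constant depending only on $\H$, $D$, $N_{\H}$ and $n$ — and dependence on $n$ is permitted by the statement. I do not expect any genuine obstacle here: the proof is purely a matter of assembling Lemmas \ref{lem:decayU0} and \ref{lem:decayUn}, the only point worth a line of care being the alignment of weight exponents (so that every summand decays at the common rate $t^{-(N_{\H}+1)}$ and the better-than-needed decay of $\bm{U}_{k}(t)f$ in the $\X_{k}$-scale for $k>N_{\H}+1$ is simply not used), together with the observation that the geometric factor $2^{k}$ coming from the $2^{k}-1$ "mixed'' boundary operators in Lemma \ref{lem:decayUn} is harmless once the (finite) index $n$ is fixed.
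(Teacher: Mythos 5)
Your argument is exactly what the paper has in mind: the text immediately preceding the Proposition reads ``We deduce directly from Lemmas \ref{lem:decayU0} and \ref{lem:decayUn} the following,'' and your proof is precisely that deduction, applying both lemmas with the weight parameter set to $N_{\H}+1$, using $(2^{k}-1)+1=2^{k}$ to merge the two summands in Lemma \ref{lem:decayUn}, and summing finitely many terms. The bookkeeping and the constant $\bm{C}_{n}$ are correct.
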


\subsection{Representation formulae for remainder terms}

Introducing
$$\bm{S}_{n}(t):=U_{\H}(t)-\sum_{k=0}^{n}\bm{U}_{k}(t), \qquad n \geq 1,\;\,t >0$$
one has the following
\begin{propo} For any $n \in \N$, $n\geq \mathsf{p}$ where $\mathsf{p}$ defined through \eqref{eq:power}. Then, for any $f \in \X_{0}$,
one has
\begin{equation}\label{eq:maindecay}
\bm{S}_{n}(t)f=\frac{\exp(\e t)}{2\pi}\lim_{\ell\to\infty} \int_{-\ell}^{\ell}\exp\left(i\eta t\right)\Upsilon_{n}(\e+i\eta)f\d\eta, \qquad \forall f \in \X_{0}\end{equation}
for any $t >0$, $\e >0.$
%
%
%
\end{propo}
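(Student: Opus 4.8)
The plan is to recognise $\bm{S}_{n}(t)f$ as the inverse Laplace transform, along a vertical line $\mathrm{Re}\,\l=\e$, of $\Upsilon_{n}(\cdot)f$, and then to invoke the complex inversion theorem for vector-valued Laplace transforms from \cite{arendt}. As soft preliminaries I would first record that $\bm{S}_{n}(t)=U_{\H}(t)-\sum_{k=0}^{n}\bm{U}_{k}(t)=\sum_{k=n+1}^{\infty}\bm{U}_{k}(t)$, the last series being strongly convergent by Theorem \ref{theo:UKT}; since every $\bm{U}_{k}(t)$ is a contraction of $\X_{0}$ and $(U_{\H}(t))_{t\geq0}$ is a stochastic $C_{0}$-semigroup, the map $t\mapsto\bm{S}_{n}(t)f$ is continuous on $[0,\infty)$ and bounded by $(n+2)\|f\|_{\X_{0}}$. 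In particular it is exponentially bounded with abscissa of convergence at most $0$, so its Laplace transform is well defined and holomorphic on $\C_{+}$.

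The second step is to identify that Laplace transform with $\Upsilon_{n}(\cdot)f$. For $\mathrm{Re}\,\l>0$ one has $\int_{0}^{\infty}e^{-\l t}U_{\H}(t)f\,\d t=\Rs(\l,\T_{\H})f$, while Theorem \ref{theo:UKT}\textit{(7)} together with the remark following it gives $\int_{0}^{\infty}e^{-\l t}\bm{U}_{0}(t)f\,\d t=\Rs(\l,\T_{0})f$ and $\int_{0}^{\infty}e^{-\l t}\bm{U}_{k}(t)f\,\d t=\mathsf{\Xi}_{\l}\H(\mathsf{M}_{\l}\H)^{k-1}\mathsf{G}_{\l}f$ for $k\geq1$. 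Splitting the finite sum and inserting the Dyson--Phillips representation \eqref{eq:lambdaTH} of $\Rs(\l,\T_{\H})$ then yields
$$\int_{0}^{\infty}e^{-\l t}\bm{S}_{n}(t)f\,\d t=\Rs(\l,\T_{\H})f-\Rs(\l,\T_{0})f-\sum_{k=1}^{n}\mathsf{\Xi}_{\l}\H(\mathsf{M}_{\l}\H)^{k-1}\mathsf{G}_{\l}f=\sum_{k=n}^{\infty}\mathsf{\Xi}_{\l}\H(\mathsf{M}_{\l}\H)^{k}\mathsf{G}_{\l}f=\Upsilon_{n}(\l)f$$
for all $\l\in\C_{+}$, the series converging in $\mathscr{B}(\X_{0})$ by Proposition \ref{propo:resolvante}.

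The crux --- and the step I expect to be the real difficulty --- is to show that $\eta\in\R\mapsto\Upsilon_{n}(\e+i\eta)f\in\X_{0}$ is integrable; this is exactly where the assumption $n\geq\mathsf{p}$ and the bound \eqref{eq:power} are used. Since $\bm{k}\geq0$ and $|e^{-(\e+i\eta)\tau_{-}}|=e^{-\e\tau_{-}}$, a positivity argument gives the pointwise domination $|(\mathsf{M}_{\e+i\eta}\H)^{j}\psi|\leq(\mathsf{M}_{\e}\H)^{j}|\psi|$, hence $\|(\mathsf{M}_{\e+i\eta}\H)^{j}\|_{\mathscr{B}(\lp)}\leq\|(\mathsf{M}_{\e}\H)^{j}\|_{\mathscr{B}(\lp)}$ for every $j$ and every $\eta$. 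Factoring $(\mathsf{M}_{\e+i\eta}\H)^{k}=(\mathsf{M}_{\e+i\eta}\H)^{k-\mathsf{p}}(\mathsf{M}_{\e+i\eta}\H)^{\mathsf{p}}$ (licit since $k\geq n\geq\mathsf{p}$) and using $\|\mathsf{\Xi}_{\e+i\eta}\|_{\mathscr{B}(\lm,\X_{0})}\leq\e^{-1}$ (from \eqref{eq:XiLRL} and positivity) together with $\|\mathsf{G}_{\e+i\eta}\|\leq1$ and $\|\H\|\leq1$, I would bound
$$\int_{\R}\|\Upsilon_{n}(\e+i\eta)f\|_{\X_{0}}\,\d\eta\leq\frac{\|f\|_{\X_{0}}}{\e}\Big(\sum_{k=n}^{\infty}\|(\mathsf{M}_{\e}\H)^{k-\mathsf{p}}\|_{\mathscr{B}(\lp)}\Big)\int_{\R}\|(\mathsf{M}_{\e+i\eta}\H)^{\mathsf{p}}\|_{\mathscr{B}(\lp)}\,\d\eta.$$
The last integral is finite by \eqref{eq:power}, and the series converges because $r_{\sigma}(\mathsf{M}_{\e}\H)<1$ for $\e>0$ (recalled in Section \ref{sec:method}; see also \cite{LMR}) forces $\limsup_{m}\|(\mathsf{M}_{\e}\H)^{m}\|_{\mathscr{B}(\lp)}^{1/m}<1$, hence geometric decay of the summands. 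This yields $\Upsilon_{n}(\e+i\cdot)f\in L^{1}(\R,\X_{0})$.

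Finally, since $t\mapsto\bm{S}_{n}(t)f$ is continuous and exponentially bounded with abscissa below $\e$, and its Laplace transform is integrable along $\mathrm{Re}\,\l=\e$, the complex inversion theorem for vector-valued Laplace transforms \cite{arendt} gives, for every $t>0$,
$$\bm{S}_{n}(t)f=\frac{1}{2\pi}\int_{-\infty}^{\infty}e^{(\e+i\eta)t}\Upsilon_{n}(\e+i\eta)f\,\d\eta=\frac{e^{\e t}}{2\pi}\int_{-\infty}^{\infty}e^{i\eta t}\Upsilon_{n}(\e+i\eta)f\,\d\eta,$$
and, the integrand being in $L^{1}(\R,\X_{0})$, the integral coincides with $\lim_{\ell\to\infty}\int_{-\ell}^{\ell}$, which is precisely \eqref{eq:maindecay}. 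It is exactly the integrability established in the third step --- hence the hypothesis $n\geq\mathsf{p}$ --- that turns the a priori principal-value inversion integral into an absolutely convergent one; the remaining manipulations are routine.
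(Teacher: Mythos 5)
Your proof is correct and rests on the same three ingredients as the paper: identifying $\Upsilon_{n}(\l)f$ as the Laplace transform of $\bm{S}_{n}(t)f$, the positivity dominations $\|(\mathsf{M}_{\e+i\eta}\H)^{j}\|\leq\|(\mathsf{M}_{\e}\H)^{j}\|$ and $\|\mathsf{\Xi}_{\e+i\eta}\|\leq\|\mathsf{\Xi}_{\e}\|$, and the split $(\mathsf{M}_{\l}\H)^{k}=(\mathsf{M}_{\l}\H)^{k-\mathsf{p}}(\mathsf{M}_{\l}\H)^{\mathsf{p}}$ made licit by $n\geq\mathsf{p}$ together with \eqref{eq:power}. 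The one organizational difference is the inversion step: the paper first invokes the complex inversion theorem of \cite{arendt} under minimal hypotheses (continuity, boundedness, $\bm{S}_{n}(0)f=0$) to get \emph{Cesàro} convergence, and only afterwards uses the integrability bound to upgrade the Cesàro limit to a classical one, whereas you establish $\Upsilon_{n}(\e+i\cdot)f\in L^{1}(\R,\X_{0})$ first and then apply Fourier/Laplace inversion for integrable transforms directly; both routes are valid and rely on the same estimate, and the paper's ordering has the minor advantage of needing only the weaker (Cesàro) inversion theorem to get off the ground.
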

\begin{proof}  One notices that, for any $n \geq0$ and any $f \in \X_{0}$, it holds
\begin{equation*}\begin{split}
\int_{0}^{\infty}\exp\left(-\l t\right)\bm{S}_{n}(t)f\d t&=\sum_{k=n}^{\infty}\int_{0}^{\infty}\exp\left(-\l t\right)\bm{U}_{k+1}(t)f\d t\\
&=\sum_{k=n}^{\infty}\mathsf{\Xi}_{\l}\H\left(\mathsf{M}_{\l}\H\right)^{k}\mathsf{G}_{\l}f=\Upsilon_{n}(\l)f, \qquad \mathrm{Re}\l >0\end{split}\end{equation*}
where we used \eqref{eq:LaplaceDP} together with the fact that, for $\mathrm{Re}\l >0$, $\|\mathsf{M}_{\l}\H\|_{\mathscr{B}(\lp)} < 1.$ Since moreover, for any $f \in \X_{0}$, the mapping $t \geq 0 \mapsto \bm{S}_{n}(t)f$ is continuous and bounded, with $\bm{S}_{n}(0)f=0$, one applies the complex Laplace inversion formula \cite[Theorem 4.2.21]{arendt} to deduce 
\begin{equation}\label{eq:maindecay}
\bm{S}_{n}(t)f=\frac{\exp(\e t)}{2\pi}\lim_{L\to\infty}\frac{1}{2L}\int_{-L}^{L}\d \ell \int_{-\ell}^{\ell}\exp\left(i\eta t\right)\Upsilon_{n}(\e+i\eta)f\d\eta, \qquad \forall f \in \X_{0}\end{equation}
for any $t >0$, $\e >0$, i.e. $\bm{S}_{n}(t)f$ is the Cesar\`o limit of the family 
$$\left(\int_{-\ell}^{\ell}\exp\left(i\eta t\right)\Upsilon_{n}(\e+i\eta)f\d\eta\right)_{\ell}.$$ Let us prove it is actually a classical limit. Fix $\e >0$ and $f \in\X_{0}$. Arguing as in \eqref{eq:Psin},  
\begin{multline*}
\left\|\Upsilon_{n}(\e+i\eta)\right\|_{\mathscr{B}(\X_{0},\X_{k})} 
\leq \|\mathsf{\Xi}_{0}\|_{\mathscr{B}(\Y_{k}^{-},\X_{k})}\|\H\|_{\mathscr{B}(\lp,\Y^{-}_{k})}\left\|\left(\mathsf{M}_{\e+i\eta}\H\right)^{n}\right\|_{\mathscr{B}(\lp)}\\
\|\Rs(1,\mathsf{M}_{\e+i\eta}\H)\|_{\mathscr{B}(\lp)}\,\|\mathsf{G}_{\e+i\eta}\|_{\mathscr{B}(\X_{0},\lp)}\end{multline*}
Since, for any $\e >0$,
$$\left\|\Rs(1,\mathsf{M}_{\e+i\eta}\H)\right\|_{\mathscr{B}(\lp)} \leq \|\Rs(1,\mathsf{M}_{\e}\H)\|_{\mathscr{B}(\lp)}$$
while $\sup_{\eta}\|\mathsf{G}_{\e+i\eta}\|_{\mathscr{B}(\X_{0},\lp)} \leq 1$ we deduce that there exists $C_{\e} >0$ such that
$$\left\|\Upsilon_{n}(\e+i\eta)\right\|_{\mathscr{B}(\X_{0},\X_{k})} \leq C_{\e}\left\|\left(\mathsf{M}_{\e+i\eta}\H\right)^{n}\right\|_{\mathscr{B}(\lp)}, \qquad \forall \eta \in \R.$$
For $n \geq \mathsf{p}$, one has $\left\|\left(\mathsf{M}_{\e+i\eta}\H\right)^n\right\|_{\mathscr{B}(\lp)} \leq \left\|\left(\mathsf{M}_{\e+i\eta}\H\right)^{\mathsf{p}}\right\|_{\mathscr{B}(\lp)}$, we deduce from \eqref{eq:power} that there is $M_{\e} >0$ such that
$$\int_{-\infty}^{\infty}\left\|\Upsilon_{n}(\e+i\eta)\right\|_{\mathscr{B}(\X_{0},\X_{k})} \d \eta \leq M_{\e}, \qquad \forall \e >0.$$
This of course implies that
$$\int_{-\infty}^{\infty}\left\|\exp\left((\e+i\eta)t\right)\Upsilon_{n}(\e+i\eta)\right\|_{\mathscr{B}(\X_{0},\X_{k})} \d \eta \leq M_{\e}\exp(\e t), \qquad \forall \e >0.$$
In particular, for any $f \in \X_{0}$, the limit 
$$\lim_{\ell\to\infty}\frac{1}{2\pi}\int_{-\ell}^{\ell}\exp\left((\e+i\eta)t\right)\Upsilon_{n}(\e+i\eta)f\d\eta$$
exists in $\X_{k}$. Since its Cesar\`o limit is $\bm{S}_{n}(t)f$, we deduce the result.
\end{proof}
\begin{nb} It appears that the convergence actually holds in operator norm, i.e. the integral 
$$\frac{\exp(\e t)}{2\pi}\lim_{\ell\to\infty} \int_{-\ell}^{\ell}\exp\left(i\eta t\right)\Upsilon_{n}(\e+i\eta)\d\eta$$
converges to $\bm{S}_{n}(t)$ in $\mathscr{B}(\X_{0},\X_{k})$ as $\ell \to \infty$. As a consequence, under Assumption \ref{hypH}, one sees that $\bm{S}_{n}(t)$ is a compact operator for $n$ large enough.\end{nb}

It is clear that, from the above representation formula, no decay of $\bm{S}_{n}(t)$ can be expected because of the growing function $\exp(\e t)$. As said already in the Introduction, we will need therefore to derive a second representation formula showing that \eqref{eq:maindecay}
 actually holds for $\e=0$. Of course, to do so, we need first to suitably define the trace of $\Upsilon_{n}(\lambda)$ on the imaginary axis, i.e. to define properly the limit as $\e \to 0^{+}$ of $\Upsilon(\e+i\eta)f$ for suitable $f$. This will require several preliminary definitions and regularity estimates of the various operators defining $\Upsilon_{n}(\e+i\eta).$

\section{General regularity and spectral results}\label{sec:fineR}

This section is devoted to the core technical results which will allow to define the boundary function of the mapping $\l \in \C_{+} \longmapsto \Upsilon_{n}(\l) \in \mathscr{B}(\X_{0}).$ 

We first start with the convergence of the various operators $\mathsf{M}_{\l},\mathsf{\Xi}_{\l}$ and $\mathsf{G}_{\l}$. The proof of this result is postponed to the Appendix \ref{app:technA}
\begin{propo}\label{propo:convK} For any $f \in \X_{0}$, the limit
\begin{equation}\label{lem:unifGeis}
\lim_{\e\to0^{+}}\left\|\mathsf{G}_{\e+i\eta}f-\mathsf{G}_{i\eta}f\right\|_{\lp}=0\end{equation}
uniformly with respect to $\eta \in \R$ with
\begin{equation}\label{eq:RiemLeb}
\lim_{|\eta| \to \infty}\sup_{\e \in [0,1]}\left\|\mathsf{G}_{\e+i\eta}f\right\|_{\lp}=0.\end{equation}

 Moreover, for any $\eta \in \R$ and any $k \in \N$, 
\begin{equation}\label{eq:Meisk}
\left\|\mathsf{M}_{\delta+i\eta}-\mathsf{M}_{i\eta}\right\|_{\mathscr{B}(\Y^{-}_{k+1},\Y_{k})} \leq \delta\,D, \qquad \left\|\mathsf{\Xi}_{\e+i\eta}-\mathsf{\Xi}_{i\eta}\right\|_{\mathscr{B}(\Y_{k+1}^{-},\X_{k})} \leq \e\,D\end{equation}
where $D$ is the diameter of $\Omega$. Consequently, for any $k \leq N_{\H}$ where $N_{\H}$ is defined in \eqref{eq:HYn}
\begin{multline}\label{eq:MeisHk}
\left\|\mathsf{M}_{\delta+i\eta}\H-\mathsf{M}_{i\eta}\H\right\|_{\mathscr{B}(\lp,\Y_{k}^{+})} \leq  \delta\,D\,\|\H\|_{\mathscr{B}(\lp,\Y_{k+1}^{-})} \\
\text{ and } \quad \left\|\mathsf{\Xi}_{\e+i\eta}\H-\mathsf{\Xi}_{i\eta}\H\right\|_{\mathscr{B}(\lp,\X_{k})} \leq \e\,D\,\|\H\|_{\mathscr{B}(\lp,\Y_{k+1}^{-})} 
\qquad \forall \eta \in \R.\end{multline}
Moreover, for  any $j \in \N$, one has
\begin{equation}\label{cor:uniformpower}
\lim_{\delta\to0}\left\|\left(\mathsf{M}_{\e+i\eta}\H\right)^{j}-\left(\mathsf{M}_{i\eta}\H\right)^{j}\right\|_{\mathscr{B}(\lp)}=0\end{equation}
uniformly with respect to $\eta \in \R.$
\end{propo}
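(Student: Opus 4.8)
The plan is to reduce every assertion of the proposition to the elementary inequality $|e^{-\alpha}-1|\leq\alpha$ ($\alpha\geq0$), combined with the change-of-variables formulae \eqref{10.47}--\eqref{10.51}, the weight-gain estimates of Lemma \ref{lem:motau}, and — for the statements on $\mathsf{G}_{\l}$ — a density argument built on the uniform bounds $\sup_{\e\geq0,\,\eta}\|\mathsf{G}_{\e+i\eta}\|_{\mathscr{B}(\X_{0},\lp)}\leq1$ and $\sup_{\eta}\|\mathsf{M}_{i\eta}\|_{\mathscr{B}(\lm,\lp)}\leq1$ recorded in Section \ref{sec:prelim}.

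For the estimates on $\mathsf{M}_{\l}$ and $\mathsf{\Xi}_{\l}$ I would argue pointwise. Since
$$\left|(\mathsf{M}_{\delta+i\eta}-\mathsf{M}_{i\eta})u(x,v)\right|=\left|u(x-\tau_{-}(x,v)v,v)\right|\left|e^{-\delta\tau_{-}(x,v)}-1\right|\leq\delta\,\tau_{-}(x,v)\left|u(x-\tau_{-}(x,v)v,v)\right|$$
and $\tau_{-}(x,v)\leq D/|v|\leq D\varpi_{1}(v)$, multiplying by $\varpi_{k}(v)$, integrating over $\Gamma_{+}$ against $\d\mu_{+}$ and transferring to $\Gamma_{-}$ by \eqref{10.51} gives the bound for $\mathsf{M}$ in \eqref{eq:Meisk}; the analogous computation for $\mathsf{\Xi}_{\l}u(x,v)=u(x-t_{-}(x,v)v,v)e^{-\l t_{-}(x,v)}\ind_{\{t_{-}<\infty\}}$, now using the integration-along-characteristics formula \eqref{10.47} (with $t_{-}(z+sv,v)=s$ on the ray issued from $(z,v)\in\Gamma_{-}$) together with the gain bound $\|\mathsf{\Xi}_{0}w\|_{\X_{k}}\leq D\|w\|_{\Y^{-}_{k+1}}$ of Lemma \ref{lem:motau}, gives the bound for $\mathsf{\Xi}$. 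Composing each with $\H\in\mathscr{B}(\lp,\Y^{-}_{k+1})$ for $k\leq N_{\H}$ then yields \eqref{eq:MeisHk}. Finally \eqref{cor:uniformpower} follows from the telescoping identity
$$\left(\mathsf{M}_{\e+i\eta}\H\right)^{j}-\left(\mathsf{M}_{i\eta}\H\right)^{j}=\sum_{\ell=0}^{j-1}\left(\mathsf{M}_{\e+i\eta}\H\right)^{\ell}\left(\mathsf{M}_{\e+i\eta}\H-\mathsf{M}_{i\eta}\H\right)\left(\mathsf{M}_{i\eta}\H\right)^{j-1-\ell}$$
together with $\|\mathsf{M}_{\e+i\eta}\H\|_{\mathscr{B}(\lp)},\|\mathsf{M}_{i\eta}\H\|_{\mathscr{B}(\lp)}\leq\|\H\|_{\mathscr{B}(\lp,\lm)}$ and the case $k=0$ of \eqref{eq:MeisHk} (recall $\Y^{+}_{0}=\lp$), which bounds the middle factor by $\e D\|\H\|_{\mathscr{B}(\lp,\Y^{-}_{1})}$; hence the left-hand side is at most $j\,\|\H\|_{\mathscr{B}(\lp,\lm)}^{j-1}\,\e D\,\|\H\|_{\mathscr{B}(\lp,\Y^{-}_{1})}\to0$ as $\e\to0^{+}$, uniformly in $\eta\in\R$.

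For $\mathsf{G}_{\l}$, I would first prove \eqref{lem:unifGeis} for $f\in\X_{1}$: from $(\mathsf{G}_{\e+i\eta}-\mathsf{G}_{i\eta})f(x,v)=\int_{0}^{\tau_{-}(x,v)}f(x-sv,v)e^{-i\eta s}(e^{-\e s}-1)\,\d s$ and $0\leq1-e^{-\e s}\leq\e s\leq\e D/|v|$ on the range of integration, formula \eqref{10.47} gives $\|(\mathsf{G}_{\e+i\eta}-\mathsf{G}_{i\eta})f\|_{\lp}\leq\e D\int_{\Omega\times V}|v|^{-1}|f|\,\d x\,\bm{m}(\d v)\leq\e D\|f\|_{\X_{1}}$, uniformly in $\eta$; a routine density argument ($\X_{1}\hookrightarrow\X_{0}$ densely, $\|\mathsf{G}_{\e+i\eta}\|_{\mathscr{B}(\X_{0},\lp)}\leq1$) extends this to all $f\in\X_{0}$. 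For \eqref{eq:RiemLeb} I would reduce, by the same density/uniform-bound scheme, to test functions $f(x,v)=\phi(x)\psi(v)$ with $\phi\in\mathscr{C}^{\infty}_{c}(\Omega)$, $\psi\in\mathscr{C}_{c}(V\setminus\{0\})$ (finite sums of such being dense in $\X_{0}$ because $\bm{m}(\{0\})=0$); for $\eta\neq0$ an integration by parts in $s$ in $\mathsf{G}_{\e+i\eta}f(x,v)=\psi(v)\int_{0}^{\tau_{-}(x,v)}\phi(x-sv)e^{-\e s}e^{-i\eta s}\,\d s$ kills the boundary terms — $s\mapsto\phi(x-sv)$ vanishes at $s=0$ and at $s=\tau_{-}(x,v)$ since $\phi\equiv0$ near $\partial\Omega$ — and leaves
$$\left|\mathsf{G}_{\e+i\eta}f(x,v)\right|\leq\frac{|\psi(v)|}{|\eta|}\int_{0}^{\tau_{-}(x,v)}\left(|v|\,|\nabla\phi(x-sv)|+\e\,|\phi(x-sv)|\right)\d s,$$
so that, for $\e\in[0,1]$, integrating over $\Gamma_{+}$ against $\d\mu_{+}$ and using \eqref{10.47} yields $\|\mathsf{G}_{\e+i\eta}f\|_{\lp}\leq|\eta|^{-1}C(\phi,\psi)$ with $C(\phi,\psi)$ independent of $\e$ and $\eta$; the density argument then gives \eqref{eq:RiemLeb} for all $f\in\X_{0}$.

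I do not expect any single step to be deep. The one point that needs care is \eqref{eq:RiemLeb}: the dense class of test functions must simultaneously be regular in $x$ and supported away from $\partial\Omega$ (so that the integration by parts produces no boundary contribution) and yield an $\mathbf{O}(|\eta|^{-1})$ bound that is uniform in $\e\in[0,1]$ — this last uniformity being automatic once the crude estimate $\e\leq1$ is used to absorb the zeroth-order term $\e\,\phi(x-sv)$.
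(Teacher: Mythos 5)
Your proof is correct, and for the estimates on $\mathsf{M}_{\l}$, $\mathsf{\Xi}_{\l}$, $\mathsf{M}_{\l}\H$, $\mathsf{\Xi}_{\l}\H$ and the telescoping (equivalently, inductive) bound for $(\mathsf{M}_{\l}\H)^{j}-(\mathsf{M}_{i\eta}\H)^{j}$ it coincides with the paper's (Lemma~\ref{lem:Meis} and the appendix proof of Proposition~\ref{propo:convK}). The genuine divergence is in the treatment of \eqref{lem:unifGeis} and especially \eqref{eq:RiemLeb}. For \eqref{lem:unifGeis} the paper keeps $f\in\X_{0}$ fixed, bounds $1-e^{-\e t}\leq 1$, and invokes dominated convergence via the isometry identity \eqref{Eq:G0}; you instead prove the quantitative rate $\e D\|f\|_{\X_{1}}$ on the dense subspace $\X_{1}$ and transfer to all of $\X_{0}$ using $\|\mathsf{G}_{\e+i\eta}\|_{\mathscr{B}(\X_{0},\lp)}\leq 1$. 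For \eqref{eq:RiemLeb} the paper again works directly with an arbitrary $f\in\X_{0}$: it writes $\mathsf{G}_{\e+i\eta}f(x,v)$ as the Fourier transform of $h^{\e}_{x,v}(s)=\ind_{[0,\tau_{-}(x,v)]}(s)f(x-sv,v)e^{-\e s}$, runs the classical $\pi/|\eta|$-shift Riemann--Lebesgue argument pointwise in $(x,v)$, obtains uniformity in $\e\in[0,1]$ by peeling off the factor $e^{-\e s}$, and closes by dominated convergence. You instead reduce by density to tensor products $\phi\otimes\psi$ with $\phi\in\mathscr{C}^{\infty}_{c}(\Omega)$, $\psi\in\mathscr{C}_{c}(V\setminus\{0\})$, where a single integration by parts in $s$ produces a quantitative $\mathbf{O}(|\eta|^{-1})$ decay uniformly in $\e\in[0,1]$; the boundary terms vanish because both $s=0$ and $s=\tau_{-}(x,v)$ land on $\partial\Omega$ where $\phi\equiv 0$, and the crude bound $\e\leq 1$ absorbs the zeroth-order term. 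Both routes are rigorous: the paper's avoids the density machinery and is slightly shorter, while yours yields explicit decay rates on a dense class, which is in the spirit of the quantitative estimates pursued elsewhere in the paper.
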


 \begin{nb}\label{cor:exten} An important consequence of the above Proposition \ref{propo:convK} is that the holomorphic functions
$$\lambda \in \C_{+} \mapsto \mathsf{M}_{\l}\H \in \mathscr{B}(\lp) \quad { and } \quad 
\lambda \in \C_{+} \mapsto \mathsf{\Xi}_{\l}\H \in \mathscr{B}(\lp,\X_{0})$$
can be extended to continuous functions on $\overline{\C}_{+}.$  Moreover, one easily deduces that
$$\int_{-\infty}^{\infty}\left\|\left(\mathsf{M}_{i\eta}\H\right)^{\mathsf{p}}\right\|_{\mathscr{B}(\lp)}\d\eta < \infty$$
where $\mathsf{p} >0$ is defined through \eqref{eq:power}.
\end{nb}

\subsection{Spectral properties of $\mathsf{M}_{\l}\H$ along the imaginary axis}

We study here more carefully the properties of $\mathsf{M}_{i\eta}\H$ for $\eta \in\R$. 
 \begin{propo}\phantomsection \label{prop:Meps} 
For any $\lambda \in \C \setminus \{0\}$ with $\mathrm{Re}\l \geq 0$, 
$$r_{\sigma}(\mathsf{M}_{\l}\H) < 1.$$
\end{propo}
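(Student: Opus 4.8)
The goal is to show that the spectral radius $r_\sigma(\mathsf{M}_\l\H)$ is strictly less than $1$ for every $\l \neq 0$ with $\mathrm{Re}\,\l \geq 0$. The plan is to split into the two genuinely different regimes $\mathrm{Re}\,\l > 0$ and $\l = i\eta$ with $\eta \in \R\setminus\{0\}$.

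For $\mathrm{Re}\,\l > 0$, this is essentially classical (and already used implicitly in Proposition \ref{propo:resolvante}): since $\H$ is stochastic, $\mathsf{M}_0\H$ is a positive contraction on $\lp$, and the exponential damping $e^{-\l\tau_-(x,v)}$ with $\mathrm{Re}\,\l > 0$ makes $\|\mathsf{M}_\l\H\|_{\mathscr{B}(\lp)} < 1$ — more precisely one shows $|\mathsf{M}_\l\H\psi| \leq \mathsf{M}_{\mathrm{Re}\,\l}\H|\psi|$ pointwise and then that $\mathsf{M}_{\mathrm{Re}\,\l}\H$ has norm $<1$ by the integration formula \eqref{10.51} together with the fact that $\tau_-$ is positive $\mu$-a.e. on $\Gamma_+$ and finite on a set of positive measure. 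Hence $r_\sigma(\mathsf{M}_\l\H) \leq \|\mathsf{M}_\l\H\| < 1$.

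The substantial case is $\l = i\eta$, $\eta \neq 0$. First I would note that $\mathsf{M}_{i\eta}\H$ is a positive operator only in modulus (the kernel carries the oscillatory factor $e^{-i\eta\tau_-}$), so $|\mathsf{M}_{i\eta}\H\psi| \leq \mathsf{M}_0\H|\psi|$ pointwise, whence $r_\sigma(\mathsf{M}_{i\eta}\H) \leq r_\sigma(\mathsf{M}_0\H) = 1$ (the last equality by Assumption \ref{hypH} \textit{3)}, irreducibility of $\mathsf{M}_0\H$, together with stochasticity — $1$ is the Perron eigenvalue). So it remains to rule out $r_\sigma(\mathsf{M}_{i\eta}\H) = 1$. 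Since $\mathsf{M}_0\H$ is (power-)compact by Assumption \ref{hypH} \textit{2)} — $\H\mathsf{M}_0\H$ weakly compact forces $(\mathsf{M}_0\H)^3$ or so to be weakly compact, hence $(\mathsf{M}_0\H)^{\mathsf{p}}$ compact on $L^1$ — the dominated operator $\mathsf{M}_{i\eta}\H$ inherits enough compactness for its peripheral spectrum to consist of eigenvalues. I would then suppose $e^{i\theta}$ is an eigenvalue of $\mathsf{M}_{i\eta}\H$ of modulus $1$, with eigenfunction $\psi$, and apply the standard domination/equality argument: from $e^{i\theta}\psi = \mathsf{M}_{i\eta}\H\psi$ and $|\psi| \leq \mathsf{M}_0\H|\psi|$ one gets $|\psi| \leq \mathsf{M}_0\H|\psi|$ with $\|\mathsf{M}_0\H|\psi|\| = \||\psi|\|$, so by stochasticity $\mathsf{M}_0\H|\psi| = |\psi|$, i.e.\ $|\psi|$ is the Perron eigenfunction, which by irreducibility is strictly positive a.e. Writing $\psi = |\psi|\,e^{i\varphi}$ and comparing phases in the eigenvalue equation forces, for a.e.\ $(x,v) \in \Gamma_+$ and a.e.\ $v'$ in the support of $\bm{k}(x-\tau_- v, \cdot, v')$, the relation
$$\theta + \varphi(x,v) = \varphi\big(x - \tau_-(x,v)v,\,v'\big) - \eta\,\tau_-(x,v) \pmod{2\pi}.$$
The crux — and the step I expect to be the main obstacle — is to derive a contradiction from this phase-rigidity relation when $\eta \neq 0$. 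The idea is that the travel times $\tau_-(x,v)$ vary continuously and genuinely (they take a continuum of values because velocities accumulate at $0$, by Assumption \ref{hypO} \textit{(3)}, and $\tau_-(x,v)|v| = \tau_-(x,\omega)$ is bounded), so the term $\eta\,\tau_-(x,v)$ cannot be absorbed into a phase function $\varphi$ that depends only on the post-collision configuration; iterating the relation along a chain of collisions (using irreducibility to connect configurations, as in \cite{LMR}) produces an identity of the form $\eta \cdot(\text{sum of travel times over a loop}) \in 2\pi\Z$ for a continuum of loop-times, which is impossible for $\eta \neq 0$. I would make this rigorous by adapting the argument used in \cite{LMR} (or \cite{mmkseifert} in the slab case) to show that $1$ is a \emph{simple} isolated eigenvalue of $\mathsf{M}_0\H$ and that no unimodular eigenvalue survives the perturbation by $e^{-i\eta\tau_-}$ unless $\eta = 0$; concretely, the absolute-continuity and positivity of the kernel near small velocities, combined with the scaling \eqref{eq:scale}, give a nondegenerate range of $\tau_-$ over which the phase relation must hold identically, forcing $\eta = 0$. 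This concludes $r_\sigma(\mathsf{M}_{i\eta}\H) < 1$ for $\eta \neq 0$, and hence the proposition.
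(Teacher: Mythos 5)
Your plan for the case $\mathrm{Re}\,\lambda>0$ contains a genuine error. The claim that $\|\mathsf{M}_{\mathrm{Re}\lambda}\H\|_{\mathscr{B}(\lp)}<1$ is false in general: by \eqref{10.51} one has
$\|\mathsf{M}_{\mathrm{Re}\lambda}\H u\|_{\lp}=\int_{\Gamma_-}e^{-\mathrm{Re}\lambda\,\tau_+(z,v)}|\H u(z,v)|\,d\mu_-(z,v)$,
which is strictly smaller than $\|u\|_{\lp}$ for each individual $u$, but the essential supremum of $e^{-\mathrm{Re}\lambda\,\tau_+}$ over $\Gamma_-$ equals $1$ because $\tau_+$ is not bounded away from zero (grazing trajectories, and large speeds when $V$ is unbounded, drive $\tau_+$ to $0$). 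So the operator norm is $1$, not less, and the inference $r_\sigma<1$ does not follow from a norm estimate. The simple fix that stays within your framework: if $\mathsf{M}_\lambda\H\psi=\mu\psi$ with $|\mu|=1$, then $|\psi|\le\mathsf{M}_{\mathrm{Re}\lambda}\H|\psi|\le\mathsf{M}_0\H|\psi|$ pointwise; integrating and using stochasticity of $\mathsf{M}_0\H$ forces equality throughout, so $(1-e^{-\mathrm{Re}\lambda\,\tau_-})\,\mathsf{M}_0\H|\psi|=0$ a.e., and since $\tau_->0$ a.e.\ this gives $|\psi|=0$, a contradiction. No phase analysis is needed here; the real damping alone kills the eigenfunction.

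For $\lambda=i\eta$, $\eta\neq0$, your route is genuinely different from the paper's. The paper handles both regimes in a few lines by an abstract comparison argument: it forms the Chac\'on--Krengel modulus $|\mathsf{M}_{i\eta}\H|$, notes the domination $|\mathsf{M}_{i\eta}\H|\le\mathsf{M}_0\H$ with $\mathsf{M}_0\H$ irreducible and power-compact (so $r_{\mathrm{ess}}=0$), and invokes a comparison theorem of Marek to get strict inequality of spectral radii. Your argument is the concrete, hands-on version of the same Perron--Frobenius phenomenon; it is longer but makes visible exactly \emph{why} $\eta\neq0$ matters. The step you flag as the main obstacle closes cleanly once you exploit the scaling identity \eqref{eq:scale}: from the modulus-equality one gets $\psi(y,v')=|\psi(y,v')|e^{i\beta(y)}$ with $\beta$ a function of $y$ alone (here the diffuse structure of $\bm{k}$ is used), and then the eigenvalue equation reduces to
\begin{equation*}
\theta+\beta(x)+\eta\,\tau_-(x,v)\equiv\beta\bigl(x-\tau_-(x,v)v\bigr)\pmod{2\pi}.
\end{equation*}
Fix the direction $\omega=v/|v|$ and let the speed $\rho=|v|$ vary: the backward footpoint $x-\tau_-(x,v)v=x-\tau_-(x,\omega)\omega$ does \emph{not} depend on $\rho$, while $\tau_-(x,v)=\tau_-(x,\omega)/\rho$ ranges over an infinite set (Assumption \ref{hypO} \textit{(3)} makes $0$ an accumulation point of speeds), so the left side cannot be constant modulo $2\pi$ unless $\eta\,\tau_-(x,\omega)=0$, hence $\eta=0$. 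This is what makes the argument airtight, and it is the scaling rather than absolute continuity of $\bm{m}$ that does the work. Compared with the paper's proof, your route trades a one-line citation for a more instructive mechanism; both buy the same conclusion, but yours exhibits the incompatibility between the oscillation $e^{-i\eta\tau_-}$ and the Perron structure explicitly. One more small remark: the exponent at which compactness kicks in is $(\mathsf{M}_0\H)^4$, not ``$3$ or so'' --- $(\mathsf{M}_0\H)^2=\mathsf{M}_0(\H\mathsf{M}_0\H)$ is weakly compact and the square of a weakly compact operator on $L^1$ is compact by Dunford--Pettis.
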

\begin{proof} We give the proof only for $\mathrm{Re}\l=0$, the case $\mathrm{Re}\l >0$ being similar. Since $\left|\mathsf{M}_{i\eta}\H\psi(x,v)\right|=\left|\mathsf{M}_{0}\H\psi(x,v)\right|$ for any $\psi \in \lp$, $(x,v) \in \Gamma_{+},$ $\eta \in \R$, one sees that 
$$\mathsf{M}_{i\eta}\H \in \mathscr{B}(\lp) \qquad \text{ with } \quad \left|\mathsf{M}_{i\eta}\H \right| \leq \mathsf{M}_{0}\H$$
where $\left|\mathsf{M}_{i\eta}\H\right|$ denotes the absolute value operator of $\mathsf{M}_{i\eta}\H$ (see \cite{chacon}). The operator $\mathsf{M}_{0}\H$ being power compact, the same holds for $\left|\mathsf{M}_{i\eta}\H \right|$ by a domination argument  so that
$$r_{\mathrm{ess}}(\left|\mathsf{M}_{i\eta}\H \right|)=0$$
where $r_{\mathrm{ess}}(\cdot)$ denotes the essential spectral radius. We prove that $r_{\sigma}(\left|\mathsf{M}_{i\eta}\H\right|) < 1$ by contradiction: assume, on the contrary, $r_{\sigma}(\left|\mathsf{M}_{i\eta}\H\right|)=1 > r_{\mathrm{ess}}(\left|\mathsf{M}_{i\eta}\H\right|)=0$, then $r_{\sigma}(\left|\mathsf{M}_{i\eta}\H\right|)$ is an isolated eigenvalue of $\left|\mathsf{M}_{i\eta}\H\right|$ with finite algebraic multiplicity and also an eigenvalue of the dual operator, associated to a nonnegative eigenfunction. From the fact that $\left|\mathsf{M}_{i\eta}\H\right| \leq \mathsf{M}_{0}\H$ with $\left|\mathsf{M}_{i\eta}\H\right| \neq \mathsf{M}_{0}\H$, one can invoke \cite[Theorem 4.3]{marek} to get that
$$r_{\sigma}(\left|\mathsf{M}_{i\eta}\H\right|) < r_{\sigma}(\mathsf{M}_{0}\H)=1$$
which is a contradiction. Therefore, $r_{\sigma}(\left|\mathsf{M}_{i\eta}\H\right|) < 1$ and, since $r_{\sigma}(\mathsf{M}_{i\eta}\H) \leq r_{\sigma}(\left|\mathsf{M}_{i\eta}\H\right|)$, the conclusion holds true. 
\end{proof}
 
We deduce the following 
\begin{cor}\label{cor:ResMei} For any $\eta_{0} \in \R\setminus\{0\}$, there is $0 < \delta < \tfrac{1}{2}|\eta_{0}|$ such that
$$\lim_{\e\to0^{+}}\sup_{|\eta-\eta_{0}| < \delta}\bigg\|\Rs(1,\mathsf{M}_{\e+i\eta}\H)-\Rs(1,\mathsf{M}_{i\eta}\H)\bigg\|_{\mathscr{B}(\lp)}=0.$$
\end{cor}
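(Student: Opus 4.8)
The plan is to reduce everything to a \emph{uniform} bound on the resolvents $\Rs(1,\mathsf{M}_{\e+i\eta}\H)$ near $\eta_{0}$, and then to conclude by the resolvent identity together with the already established norm estimate on $\mathsf{M}_{\e+i\eta}\H-\mathsf{M}_{i\eta}\H$. The only delicate point is that $r_{\sigma}(\mathsf{M}_{i\eta_0}\H)<1$ (Proposition~\ref{prop:Meps}) \emph{cannot} be used pointwise to bound $\Rs(1,\mathsf{M}_{i\eta}\H)$ uniformly in $\eta$, since the spectral radius is merely upper semicontinuous; I will therefore pass through a \emph{fixed} power of the operator, on which the dependence is norm-continuous.

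\textbf{Step 1 (a fixed power with norm $<1$).} By the spectral radius formula $r_{\sigma}(A)=\lim_{m}\|A^{m}\|_{\mathscr{B}(\lp)}^{1/m}$ applied to $A=\mathsf{M}_{i\eta_0}\H$ and Proposition~\ref{prop:Meps}, fix $m\in\N$ with $\theta_{0}:=\|(\mathsf{M}_{i\eta_0}\H)^{m}\|_{\mathscr{B}(\lp)}<1$. Recall that $\|\mathsf{M}_{\e+i\eta}\H\|_{\mathscr{B}(\lp)}\leq 1$ for all $\e\geq0$, $\eta\in\R$ (from $|\mathsf{M}_{\e+i\eta}\H|\leq\mathsf{M}_{0}\H$ and the fact that $\mathsf{M}_{0}\H$ is stochastic). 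Since $\|A^{m}-B^{m}\|\leq m\|A-B\|$ whenever $\|A\|,\|B\|\leq1$, Lemma~\ref{lem:unifconti} gives that $\eta\mapsto(\mathsf{M}_{i\eta}\H)^{m}\in\mathscr{B}(\lp)$ is (Lipschitz) continuous; hence there exists $0<\delta<\tfrac12|\eta_{0}|$ with $\|(\mathsf{M}_{i\eta}\H)^{m}\|_{\mathscr{B}(\lp)}\leq\theta_{1}:=\tfrac{1+\theta_{0}}{2}<1$ for $|\eta-\eta_{0}|<\delta$. Using \eqref{eq:MeisHk} with $k=0$ (equivalently \eqref{cor:uniformpower}) together with the same telescoping, $\|(\mathsf{M}_{\e+i\eta}\H)^{m}-(\mathsf{M}_{i\eta}\H)^{m}\|_{\mathscr{B}(\lp)}\leq m\,\e\,D\,\|\H\|_{\mathscr{B}(\lp,\Y^{-}_{1})}$ uniformly in $\eta$, so one may choose $\e_{0}>0$ for which $\|(\mathsf{M}_{\e+i\eta}\H)^{m}\|_{\mathscr{B}(\lp)}\leq\theta_{2}:=\tfrac{1+\theta_{1}}{2}<1$ for all $|\eta-\eta_{0}|<\delta$ and all $0\leq\e\leq\e_{0}$.

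\textbf{Step 2 (uniform resolvent bound and conclusion).} For any $A\in\mathscr{B}(\lp)$ with $\|A\|\leq1$ and $\|A^{m}\|\leq\theta_{2}<1$, the polynomial identity $I-A^{m}=(I-A)(I+A+\cdots+A^{m-1})$ and the Neumann series $(I-A^{m})^{-1}=\sum_{j\geq0}A^{jm}$ show $1\in\rho(A)$ with $\|\Rs(1,A)\|_{\mathscr{B}(\lp)}=\|(I-A)^{-1}\|\leq \tfrac{m}{1-\theta_{2}}$. Applying this to $A=\mathsf{M}_{\e+i\eta}\H$ and to $A=\mathsf{M}_{i\eta}\H$ yields, for all $|\eta-\eta_{0}|<\delta$ and $0\leq\e\leq\e_{0}$, both that $\Rs(1,\mathsf{M}_{\e+i\eta}\H)$ and $\Rs(1,\mathsf{M}_{i\eta}\H)$ are well defined and that they are bounded by $\tfrac{m}{1-\theta_{2}}$. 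The resolvent identity then gives
\[
\Rs(1,\mathsf{M}_{\e+i\eta}\H)-\Rs(1,\mathsf{M}_{i\eta}\H)=\Rs(1,\mathsf{M}_{\e+i\eta}\H)\big(\mathsf{M}_{\e+i\eta}\H-\mathsf{M}_{i\eta}\H\big)\Rs(1,\mathsf{M}_{i\eta}\H),
\]
and combining with $\|\mathsf{M}_{\e+i\eta}\H-\mathsf{M}_{i\eta}\H\|_{\mathscr{B}(\lp)}\leq\e\,D\,\|\H\|_{\mathscr{B}(\lp,\Y^{-}_{1})}$ from \eqref{eq:MeisHk} one obtains $\sup_{|\eta-\eta_{0}|<\delta}\|\Rs(1,\mathsf{M}_{\e+i\eta}\H)-\Rs(1,\mathsf{M}_{i\eta}\H)\|_{\mathscr{B}(\lp)}\leq \tfrac{m^{2}D\,\|\H\|_{\mathscr{B}(\lp,\Y^{-}_{1})}}{(1-\theta_{2})^{2}}\,\e$, which tends to $0$ as $\e\to0^{+}$, proving the claim.

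The main obstacle, as indicated, is purely the passage from spectral-radius control to a genuine operator-norm bound that is uniform on a neighbourhood of $\eta_{0}$; once one commits to a fixed power $m$ and exploits the contraction estimate $\|\mathsf{M}_{\e+i\eta}\H\|_{\mathscr{B}(\lp)}\leq1$ (so that powers depend Lipschitz-continuously on the parameters, via Lemma~\ref{lem:unifconti} and \eqref{eq:MeisHk}), the rest is a routine Neumann-series/resolvent-identity computation.
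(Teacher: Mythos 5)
Your proof is correct. The overall strategy is the same as the paper's --- both arguments pass through a fixed power $m$ (the paper calls it $\ell$) chosen so that $\|(\mathsf{M}_{i\eta_0}\H)^m\|<1$, use Lemma~\ref{lem:unifconti} and the Lipschitz estimate \eqref{eq:MeisHk} to propagate this to a neighbourhood $|\eta-\eta_0|<\delta$ and to small $\e$, and then exploit this to control the resolvent --- but the two differ in the final step. The paper writes $\Rs(1,\mathsf{M}_{\e+i\eta}\H)$ as the double Neumann series $\sum_{k\geq0}\sum_{j<m}(\mathsf{M}_{\e+i\eta}\H)^{km+j}$, subtracts the corresponding series for $\e=0$, passes to the limit term by term (using \eqref{cor:uniformpower}) and controls the tail uniformly by $\sum_{k\geq n}\varrho^{km}$. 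You instead use the polynomial factorization $I-A^m=(I-A)(I+A+\cdots+A^{m-1})$ together with the contraction property $\|A\|\leq1$ to get the explicit uniform bound $\|\Rs(1,A)\|\leq m/(1-\theta_2)$, and then finish with the second resolvent identity $\Rs(1,A)-\Rs(1,B)=\Rs(1,A)(A-B)\Rs(1,B)$ and \eqref{eq:MeisHk}. Your version is somewhat more efficient and gives the strictly stronger quantitative conclusion that the difference is $\mathbf{O}(\e)$ uniformly on $|\eta-\eta_0|<\delta$ (with an explicit constant), whereas the paper's argument is a pure limit argument. Your opening remark about upper semicontinuity of the spectral radius correctly pinpoints why the passage through a fixed power (rather than a pointwise spectral-radius bound) is the essential device in both proofs.
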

\begin{proof} Notice that, if $0 <\delta<\tfrac{|\eta_{0}|}{2}$ then, $\eta \neq 0$ whenever  $|\eta-\eta_{0}| < \delta$. Without loss of generality, we can assume $\eta_{0} >0$. From Proposition \ref{prop:Meps}, there is $\varrho \in (0,1)$ such that $r_{\sigma}(\mathsf{M}_{i\eta_{0}}\H) < \varrho <1$. In particular, there is $\ell \in \N$ such that
$$\left\|\left(\mathsf{M}_{i\eta_{0}}\H\right)^{\ell}\right\|_{\mathscr{B}(\lp)}^{\frac{1}{\ell}} < \varrho<1.$$
Since $\mathsf{M}_{i\eta}\H$ converges to $\mathsf{M}_{i\eta_{0}}\H$ in operator norm as $\eta \to \eta_{0}$, there is $\delta < \frac{\eta_{0}}{2}$ such that
$$\left\|\left(\mathsf{M}_{i\eta}\H\right)^{\ell}\right\|_{\mathscr{B}(\lp)} < \varrho^{\ell} \qquad \forall \eta \in (\eta_{0}-\delta,\eta_{0}+\delta).$$
Because of Eq. \eqref{cor:uniformpower}, there is $\e_{0} >0$ small enough, such that, for any $0<\e<\e_{0}$ we also have
$$\left\|\left(\mathsf{M}_{\e+i\eta}\H\right)^{\ell}\right\|_{\mathscr{B}(\lp)} < \varrho^{\ell} \qquad \forall \eta \in (\eta_{0}-\delta,\eta_{0}+\delta).$$
One has then
$$\Rs(1,\mathsf{M}_{\e+i\eta}\H)=\sum_{n=0}^{\infty}\left(\mathsf{M}_{\e+i\eta}\H\right)^{n}=\sum_{k=0}^{\infty}\sum_{j=0}^{\ell-1}\left(\mathsf{M}_{\e+i\eta}\H\right)^{k\ell+j}$$
and, similarly
$$\Rs(1,\mathsf{M}_{i\eta}\H)=\sum_{k=0}^{\infty}\sum_{j=0}^{\ell-1}\left(\mathsf{M}_{i\eta}\H\right)^{k\ell+j} \qquad  \forall \eta \in (\eta_{0}-\delta_{0},\eta_{0}+\delta_{0}).$$
Therefore,
$$\Rs(1,\mathsf{M}_{\e+i\eta}\H)-\Rs(1,\mathsf{M}_{i\eta}\H)=\sum_{k=0}^{\infty}\sum_{j=0}^{\ell-1}\left[\left(\mathsf{M}_{\e+i\eta}\H\right)^{k\ell+j}-\left(\mathsf{M}_{i\eta}\H\right)^{k\ell+j}\right]$$
for any $\e \in (0,\e_{0}),$ $\eta \in (\eta_{0}-\delta_{0},\eta_{0}+\delta_{0}).$ Using again \eqref{cor:uniformpower}, each term of the series converges to $0$ as $\eta \to 0$ \emph{uniformly} with respect to $\eta \in  (\eta_{0}-\delta_{0},\eta_{0}+\delta_{0}).$ To prove the result, it is enough therefore to show that the remainder of the series can be made arbitrarily small in operator norm \emph{uniformly} on $(\eta_{0}-\delta_{0},\eta_{0}+\delta_{0}).$ Since, for any $k,j \geq 0$
$$\left\|\left(\mathsf{M}_{\e+i\eta}\H\right)^{k\ell+j}\right\|_{\mathscr{B}(\lp)} \leq \left\|\left(\mathsf{M}_{\e+i\eta}\H\right)^{k\ell}\right\|_{\mathscr{B}(\lp)} \leq \left\|\left(\mathsf{M}_{\e+i\eta}\H\right)^{\ell}\right\|_{\mathscr{B}(\lp)}^{k} \leq \varrho^{k\ell}$$
for any $\e \in [0,\e_{0})$, we get that, for any $n \geq 0$
$$\sup_{|\eta-\eta_{0}| < \delta_{0}}\sup_{\e \in (0,\e_{0})}\sum_{k=n}^{\infty}\sum_{j=0}^{\ell-1}\left\|\left(\mathsf{M}_{\e+i\eta}\H\right)^{k\ell+j}-\left(\mathsf{M}_{i\eta}\H\right)^{k\ell+j}\right\|_{\mathscr{B}(\lp)} \leq 2\ell\sum_{k=n}^{\infty}\varrho^{\ell\,k}$$
which tends to $0$ as $n \to \infty$. This combined with the term-by-term convergence of the series as $\e \to 0$  gives the result.
\end{proof}

\subsection{First consequence on the spectrum of $\T_{\H}$}

We recall the following result (see \cite[Theorem 1.1.(c)]{voigt84}): 
\begin{theo}\label{theo:spectT0}
Under Assumption \ref{hypO} \textit{3)},  $\mathfrak{S}(\T_{0})=\{\l\in \C\;;\,\mathrm{Re}\l \leq 0\}.$
\end{theo}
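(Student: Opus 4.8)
The plan is to prove the two inclusions separately. The inclusion $\mathfrak{S}(\T_{0})\subseteq\{\lambda\in\C\;;\;\mathrm{Re}\,\lambda\leq 0\}$ is immediate: the semigroup $U_{0}(t)f(x,v)=f(x-tv,v)\ind_{\{t<t_{-}(x,v)\}}$ is a contraction $C_{0}$-semigroup on $\X_{0}$ (translation invariance of $\d x$), so its growth bound is $\leq 0$; equivalently, taking $u=0$ in Theorem \ref{Theo4.2} gives $\Rs(\lambda,\T_{0})=\mathsf{R}_{\lambda}$ with $\|\mathsf{R}_{\lambda}\|_{\mathscr{B}(\X_{0})}\leq(\mathrm{Re}\,\lambda)^{-1}$ for $\mathrm{Re}\,\lambda>0$, so that $\{\mathrm{Re}\,\lambda>0\}\subseteq\rho(\T_{0})$.

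For the reverse inclusion $\{\mathrm{Re}\,\lambda\leq0\}\subseteq\mathfrak{S}(\T_{0})$ I would show, for every such $\lambda$, that $\T_{0}-\lambda$ is not bounded below — hence not boundedly invertible, so $\lambda\in\mathfrak{S}(\T_{0})$ — by exhibiting a sequence $(f_{n})_{n}\subset\D(\T_{0})$ with $\|f_{n}\|_{\X_{0}}=1$ and $\|(\lambda-\T_{0})f_{n}\|_{\X_{0}}\to 0$. The heuristic is that $e^{-\lambda t_{-}(x,v)}$ solves $(\lambda-\T_{0})u=0$ formally, since $v\cdot\nabla_{x}t_{-}=1$ a.e., and violates the absorbing boundary condition only on $\Gamma_{-}$, where $t_{-}\equiv0$. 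I therefore fix a smooth $\chi$ with $\chi\equiv0$ on $[0,1]$ and $\chi\equiv1$ on $[2,+\infty)$ and set
\[
f_{n}(x,v)=c_{n}\,\ind_{\{1/n\leq|v|\leq2/n\}}(v)\,\chi(t_{-}(x,v))\,e^{-\lambda t_{-}(x,v)},
\]
with $c_{n}>0$ chosen so that $\|f_{n}\|_{\X_{0}}=1$. Each $f_{n}$ belongs to $\D(\T_{0})$: it is bounded with velocity support compact in $\R^{d}\setminus\{0\}$, so $f_{n}$ and $v\cdot\nabla_{x}f_{n}$ lie in $\X_{0}$, and $f_{n}|_{\Gamma_{-}}=0$ because $\chi(0)=0$. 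A direct computation using $v\cdot\nabla_{x}t_{-}=1$ gives
\[
(\lambda-\T_{0})f_{n}(x,v)=c_{n}\,\ind_{\{1/n\leq|v|\leq2/n\}}(v)\,\chi'(t_{-}(x,v))\,e^{-\lambda t_{-}(x,v)},
\]
which is supported on the thin slab $\{1\leq t_{-}(x,v)\leq2\}$.

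It then remains to show $\|(\lambda-\T_{0})f_{n}\|_{\X_{0}}/\|f_{n}\|_{\X_{0}}\to0$, and here Assumption \ref{hypO}\,\textit{(3)} is crucial: there is positive $\bm{m}$-mass of arbitrarily small velocities, along which the transit time $t_{-}(x,v)=t_{-}(x,|v|^{-1}v)/|v|$ (scaling \eqref{eq:scale}) is large. Applying the integration formula \eqref{10.47} from $\Gamma_{-}$, along which $t_{-}(z+sv,v)=s$, turns both norms into integrals over $\Gamma_{-}^{(n)}:=\Gamma_{-}\cap\{1/n\leq|v|\leq2/n\}$ of $\int_{0}^{\tau_{+}(z,v)}\chi(s)e^{-\mathrm{Re}\lambda\,s}\d s$, respectively $\int_{0}^{\tau_{+}(z,v)}|\chi'(s)|e^{-\mathrm{Re}\lambda\,s}\d s$. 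The latter inner integral is bounded uniformly in $(z,v)$ and $n$; for the former, on the subset of $\Gamma_{-}^{(n)}$ where the chord length $\tau_{+}(z,|v|^{-1}v)$ exceeds a fixed threshold $c_{0}>0$, one has $\tau_{+}(z,v)\geq c_{0}n/2$, so the inner integral is at least $c_{0}n/2-2$ (and, when $\mathrm{Re}\,\lambda<0$, exponentially larger). Since grazing rays form a $\mu_{-}$-null set by the $\mathcal{C}^{1}$ regularity of $\partial\Omega$, this subset has $\mu_{-}$-measure bounded below by a fixed fraction of $\mu_{-}(\Gamma_{-}^{(n)})$, uniformly in $n$; dividing, $\|(\lambda-\T_{0})f_{n}\|_{\X_{0}}/\|f_{n}\|_{\X_{0}}=\mathbf{O}(1/n)$, which is the desired sequence.

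The main obstacle is precisely this last estimate: one must carry out carefully the change of variables on $\Gamma_{-}^{(n)}$ into (boundary point) $\times$ (direction) $\times$ (radius), and verify that the ``good'' subset — directions whose chord in $\Omega$ is bounded below — has $\mu_{-}$-measure comparable to $\mu_{-}(\Gamma_{-}^{(n)})$ uniformly in $n$; everything else is routine bookkeeping. Equivalently, the same computation shows that $\mathsf{R}_{\lambda}$ fails to be bounded on $\X_{0}$ whenever $\mathrm{Re}\,\lambda\leq0$, by testing on data concentrated near $\Gamma_{-}$ on the velocity shell $\{1/n\leq|v|\leq2/n\}$; this also forces $\lambda\in\mathfrak{S}(\T_{0})$, since for $\lambda\in\rho(\T_{0})$ the resolvent necessarily coincides with integration along characteristics, i.e.\ with $\mathsf{R}_{\lambda}$. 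This yields a self-contained proof, alternative to invoking \cite{voigt84}.
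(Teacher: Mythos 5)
The paper itself gives no proof of this theorem; it is cited directly from Voigt's 1984 paper \cite[Theorem 1.1.(c)]{voigt84}. Your argument is therefore a genuine self-contained alternative, and the strategy is sound: the formal eigenfunction $e^{-\lambda t_-}$, cut off near $\Gamma_-$ by $\chi(t_-)$ and localized on slow velocity shells, does produce an approximate eigensequence, because the defect $(\lambda-\T_0)f_n=c_n\ind_{\text{shell}}\chi'(t_-)e^{-\lambda t_-}$ lives on a thin slab $\{1\leq t_-\leq 2\}$ while $\|f_n\|_{\X_0}$ accumulates along the whole chord of length $\tau_+(z,v)\gtrsim n$. Both norms reduce, via formula \eqref{10.47} and $t_-(z+sv,v)=s$, to one-dimensional integrals in $s$ over $\Gamma_-$, and since $\lambda-\T_0$ is closed, "not bounded below" does imply $\lambda\in\mathfrak{S}(\T_0)$.

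Two points deserve tightening. First, you normalize on the shell $\{1/n\leq|v|\leq2/n\}$, but Assumption \ref{hypO}\textit{(3)} only guarantees $\bm{m}(B(0,r)\cap V)>0$ for all $r>0$; a specific dyadic shell can have $\bm{m}$-measure zero. Since $B(0,1/k)\setminus\{0\}$ is a countable union of such shells and has positive measure for every $k$, infinitely many shells do carry mass, so you should state explicitly that you pass to the subsequence of such $n$ (or choose shells $\{r_k/2\leq|v|\leq r_k\}$, $r_k\to0$, adaptively). This is also why you cannot replace the shell by the full ball $\{|v|\leq1/n\}$: for $\mathrm{Re}\,\lambda<0$ the factor $e^{-\mathrm{Re}\lambda\,t_-}$ is bounded only because $|v|$ is bounded \emph{below} on the shell, so $t_-\leq Dn$ there.

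Second, the "good directions" uniformity is actually cleaner than you suggest. Because $\bm{m}$ is orthogonally invariant (Lemma \ref{lem:polar}) and $|v\cdot n(z)|=|v|\,|\omega\cdot n(z)|$ with $\omega=v/|v|$, the measure $\mu_-$ restricted to $\Gamma_-^{(n)}$ factors as $\bigl[\frac{1}{|\S^{d-1}|}\int_{1/n}^{2/n}\varrho\,\bm{m}_0(\d\varrho)\bigr]\times\bigl[\,|\omega\cdot n(z)|\,\pi(\d z)\d\omega\,\bigr]$; the radial factor cancels in the ratio $\mu_-(A_n)/\mu_-(\Gamma_-^{(n)})$, which is therefore \emph{independent of $n$}, not merely uniformly bounded. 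What remains is to pick $c_0>0$ small enough that this $n$-independent ratio is, say, $\geq 1/2$, and for this the relevant input is not $\mathcal{C}^1$ regularity of $\partial\Omega$ (grazing rays being null is not what is used) but simply that $\tau_+(z,\omega)>0$ for every $(z,\omega)\in\Gamma_-$ because $\Omega$ is open, combined with dominated convergence as $c_0\to0^+$. With these two clarifications the argument is complete and is, as far as one can tell without Voigt's paper in hand, of a similar spirit to the original but reconstructed independently.
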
 
We first deduce from this property of $\T_{0}$ the following:
\begin{theo}\phantomsection \label{theo:spectTH} If $\mathsf{H} \in \mathscr{B}(\lp,\lm)$ satisfies Assumption \ref{hypH} then $i\R \subset \mathfrak{S}(\mathsf{T_{H}}).$\end{theo}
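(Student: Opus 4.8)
The plan is to show that $i\mathbb{R}\subset\mathfrak{S}(\mathsf{T}_{\mathsf{H}})$ by exploiting the resolvent identity \eqref{eq:lambdaTH}, namely
$$\Rs(\lambda,\mathsf{T}_{\mathsf{H}})=\mathsf{R}_{\lambda}+\mathsf{\Xi}_{\lambda}\mathsf{H}\Rs(1,\mathsf{M}_{\lambda}\mathsf{H})\mathsf{G}_{\lambda},\qquad \mathrm{Re}\,\lambda>0,$$
together with the fact that $\mathsf{R}_{\lambda}=\Rs(\lambda,\T_{0})$ blows up as $\lambda$ approaches the imaginary axis, since $i\mathbb{R}\subset\mathfrak{S}(\T_{0})$ by Theorem \ref{theo:spectT0}. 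First I would treat the generic point $\lambda=i\eta$ with $\eta\neq 0$: by Proposition \ref{prop:Meps} we have $r_{\sigma}(\mathsf{M}_{i\eta}\mathsf{H})<1$, so $\Rs(1,\mathsf{M}_{i\eta}\mathsf{H})$ is a bounded operator and, by Corollary \ref{cor:ResMei} together with Proposition \ref{propo:convK}, the operator $\mathsf{\Xi}_{\e+i\eta}\mathsf{H}\Rs(1,\mathsf{M}_{\e+i\eta}\mathsf{H})\mathsf{G}_{\e+i\eta}$ converges in $\mathscr{B}(\X_{0})$ as $\e\to 0^{+}$ to a bounded operator (uniformly locally in $\eta\neq 0$). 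If $i\eta$ were in the resolvent set of $\mathsf{T}_{\mathsf{H}}$, then $\Rs(\e+i\eta,\mathsf{T}_{\mathsf{H}})$ would stay bounded as $\e\to 0^{+}$; but then \eqref{eq:lambdaTH} would force $\mathsf{R}_{\e+i\eta}=\Rs(\e+i\eta,\T_{0})$ to remain bounded as $\e\to 0^{+}$, contradicting $i\eta\in\mathfrak{S}(\T_{0})$. Hence $i\eta\in\mathfrak{S}(\mathsf{T}_{\mathsf{H}})$ for all $\eta\neq 0$.

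For the point $\lambda=0$ one cannot argue the same way, since $1$ is an eigenvalue of $\mathsf{M}_{0}\mathsf{H}$ and $\Rs(1,\mathsf{M}_{\l}\H)$ itself blows up as $\l\to 0$, so both terms on the right-hand side of \eqref{eq:lambdaTH} are singular and could conceivably cancel. Here I would instead use that $\mathfrak{S}(\mathsf{T}_{\mathsf{H}})$ is closed: since $i\eta\in\mathfrak{S}(\mathsf{T}_{\mathsf{H}})$ for every $\eta\neq 0$ and the spectrum is a closed subset of $\C$, it follows immediately that $0=\lim_{\eta\to 0}i\eta$ also belongs to $\mathfrak{S}(\mathsf{T}_{\mathsf{H}})$. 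Combining the two cases yields $i\mathbb{R}\subset\mathfrak{S}(\mathsf{T}_{\mathsf{H}})$. (Alternatively, $0\in\mathfrak{S}(\mathsf{T}_{\mathsf{H}})$ is already known from Theorem \ref{theo:LMR}, where $\mathrm{Ker}(\mathsf{T}_{\mathsf{H}})=\mathrm{Span}(\Psi_{\mathsf{H}})\neq\{0\}$, so $0$ is in fact an eigenvalue; either observation closes the gap at the origin.)

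The main obstacle I anticipate is making the contradiction argument at $\eta\neq 0$ fully rigorous: one must be careful that "$\Rs(\e+i\eta,\mathsf{T}_{\mathsf{H}})$ bounded as $\e\to 0^{+}$" genuinely implies $i\eta\in\rho(\mathsf{T}_{\mathsf{H}})$ (a standard fact, since the resolvent set is open and the resolvent blows up near the boundary of the spectrum, so a bounded limit forces $i\eta$ to lie in the resolvent set), and that the decomposition \eqref{eq:lambdaTH} can be rearranged as $\mathsf{R}_{\e+i\eta}=\Rs(\e+i\eta,\mathsf{T}_{\mathsf{H}})-\mathsf{\Xi}_{\e+i\eta}\mathsf{H}\Rs(1,\mathsf{M}_{\e+i\eta}\mathsf{H})\mathsf{G}_{\e+i\eta}$ with the second term genuinely bounded as $\e\to0^{+}$ — which is exactly what Corollary \ref{cor:ResMei} and Proposition \ref{propo:convK} provide. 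No delicate estimate near $0$ is needed thanks to the closedness of the spectrum, so this is really the only subtle point.
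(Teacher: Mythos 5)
Your proof is correct and follows essentially the same route as the paper: decompose $\Rs(\e+i\eta,\T_\H)=\Rs(\e+i\eta,\T_0)+\mathsf{\Xi}_{\e+i\eta}\H\Rs(1,\mathsf{M}_{\e+i\eta}\H)\mathsf{G}_{\e+i\eta}$, use Theorem \ref{theo:spectT0} for the blow-up of the first term together with uniform boundedness of the correction for $\eta\neq 0$ (the paper obtains this directly from Proposition \ref{prop:Meps} via Banach--Steinhaus rather than Corollary \ref{cor:ResMei}), and then settle $\eta=0$ by observing that $0\in\mathfrak{S}_p(\T_\H)$ from Theorem \ref{theo:LMR}, exactly as in your parenthetical remark. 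One minor imprecision to be aware of: the correction term does not converge in the operator norm of $\mathscr{B}(\X_0)$ as $\e\to0^+$, since $\mathsf{G}_{\e+i\eta}\to\mathsf{G}_{i\eta}$ is only strong (Proposition \ref{propo:convK}); however, $\|\mathsf{G}_{\e+i\eta}\|_{\mathscr{B}(\X_0,\lp)}\leq1$ uniformly in $\e\geq0$, so the $\limsup$ bound your contradiction argument actually requires is valid and the proof goes through.
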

\begin{proof} According to Theorem \ref{theo:spectT0}, it holds 
\begin{equation}\label{eq:epsi-s-T0}
\lim_{\varepsilon\to 0^{+}}\left\|\Rs(\varepsilon+i\eta,\mathsf{T_{0}})\right\|_{\mathscr{B}(\X_{0})}=+\infty, \qquad \forall \eta \in \R.\end{equation}
From Proposition \ref{prop:Meps} and Banach-Steinhaus Theorem \cite[Theorem 2.2, p. 32]{brezis}, for any $\eta \neq 0$,
$$\limsup_{\varepsilon \to 0^{+}}\left\|\Rs(1,\mathsf{M}_{\varepsilon+i\eta}\mathsf{H})\right\|_{\mathscr{B}(\lp)} < \infty.$$
Since, under \eqref{eq:HYn}, the range of $\mathsf{H}$ is included in $\Y^{-}_{1}$ and $\|\mathsf{\Xi_{0}}u\|_{\X_{0}} \leq \|u\|_{\Y^{-}_{1}}$ for any $u \in \Y^{-}_{1}$ (see Lemma  \ref{lem:motau}), one has
$$\sup_{\varepsilon >0,\eta\in \R}\|\mathsf{\Xi_{\varepsilon+i\eta}H}\|_{\mathscr{B}(\lp,\X_{0})} < \infty.$$ Moreover $\sup_{\varepsilon >0,\eta\in \R}\|\mathsf{G}_{\varepsilon+i\eta}\|_{\mathscr{B}(\X_{0},\lp)} < \infty$ we get that, for any $\eta \in \R$, $\eta \neq 0$, it holds:
$$\limsup_{\varepsilon \to 0^{+}}\left\|\mathsf{\Xi_{\varepsilon+i\eta}H}\Rs(1,\mathsf{M}_{\varepsilon+i\eta}\mathsf{H})\mathsf{G}_{{\varepsilon+i\eta}}\right\|_{\mathscr{B}(\X_{0})} < \infty.$$
This, together with \eqref{eq:epsi-s-T0} and \eqref{eq:lambdaTH} proves that, for any $\eta \in \R$, $\eta \neq 0$, it holds
$$\limsup_{\varepsilon \to 0^{+}}\left\|\Rs(\varepsilon+i\eta,\mathsf{T_{H}})\right\|_{\mathscr{B}(\X_{0})}=\infty,$$
whence $i\eta\in \mathfrak{S}(\mathsf{T_{H}})$ for any $\eta \neq 0$. Recalling that $0 \in \mathfrak{S}_{p}(\T_{\H})$ we get the conclusion.\end{proof}

\subsection{{Spectral properties of $\mathsf{M}_{\l}\H$ in the vicinity of $\l=0$.}}
We recall that, being $\mathsf{M}_{0}\H$ stochastic and irreducible, the spectral radius $r_{\sigma}(\mathsf{M}_{0}\H)=1$ is an algebraically simple and isolated eigenvalue of $\mathsf{M}_{0}\H$ and there exists $0 < r < 1$ such that
$$\mathfrak{S}(\mathsf{M}_{0}\H) \setminus \{1\} \subset \{z \in \C\;;\;|z| < r\}$$
and there is a normalised and positive eigenfunction $\varphi_{0}$ such that
$$\mathsf{M}_{0}\H\,\varphi_{0}=1, \qquad \int_{\Gamma_{+}}\varphi_{0}\,\d\mu_{+}=1.$$
Because $\mathsf{M}_{0}\H$ is stochastic, the dual operator $\left(\mathsf{M}_{0}\H\right)^{\star}$ (in $L^{\infty}(\Gamma_{+},\d\mu_{+})$) admits the eigenfunction 
$$\varphi_{0}^{\star}=\mathbf{1}_{\Gamma_{+}}$$ 
associated to the algebraically simple eigenvalue $1.$ The spectral projection of $\mathsf{M}_{0}\H$ associated to the eigenvalue $1$ is then defined as
$$\mathsf{P}(0)=\frac{1}{2i\pi}\oint_{\{|z-1|=r_{0}\}}\Rs(z,\mathsf{M}_{0}\H)\d z$$
where $r_{0} >0$ is chosen so that $\{z \in \C\;;\;|z-1|=r_{0}\} \subset \{z \in \C\;;\;|z| >r\}.$ Such a spectral structure is somehow inherited by $\mathsf{M}_{\l}\H$ for $\l$ small enough:
\begin{propo}\label{prop:eigenMLH}
For any $\l \in \overline{\C}_{+}$  the spectrum of $\mathsf{M}_{\l}\H$ is given by
$$\mathfrak{S}(\mathsf{M}_{\l}\H)=\{0\} \cup \{\nu_{j}(\l)\;;\;j \in \N_{\l} \subset \N\}$$
where, $\N_{\l}$ is a (possibly finite) subset of $\N$ and, for each $j \in \N_{\l}$, $\nu_{j}(\l)$ is an isolated eigenvalue of $\mathsf{M}_{\l}\H$ of finite algebraic multiplicities and $0$ being the only possible accumulation point of the sequence $\{\nu_{j}(\l)\}_{j\in \N_{\l}}$. Moreover,
$$|\nu_{j}(\l)| < 1 \qquad \text{ for any } j \in \N_{\l}, \qquad \l \neq 0.$$
Finally, there exists $\delta_{0} >0$ such that, for any $|\lambda| \leq \delta_{0}$, $\l \in \overline{\C}_{+}$,
$$\mathfrak{S}(\mathsf{M}_{\l}\H) \cap \{z \in \C\;;\;|z-1| <\epsilon\}=\{\nu(\l)\}$$
where $\nu(\l)$ is an algebraically simple eigenvalue of $\mathsf{M}_{\l}\H$ such that
$$\lim_{\l \to 0}\nu(\l)=1$$
and there exist an eigenfunction $\varphi_{\l}$ of $\mathsf{M}_{\l}\H$ and an  eigenfunction $\varphi^{\star}_{\l}$ of $\left(\mathsf{M}_{\l}\H\right)^{\star}$ associated to $\nu(\l)$ such that
$$\lim_{\l\to0}\|\varphi(\l)-\varphi_{0}\|_{\lp}=0, \qquad \lim_{\l\to0}\|\varphi_{\l}^{\star}-\varphi_{0}^{\star}\|_{L^{\infty}(\Gamma_{+},\d\mu_{+})}=0.$$
\end{propo}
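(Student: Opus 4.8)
The plan is to split the statement into three parts: (a) the general spectral description, deduced from power compactness of $\mathsf{M}_{\l}\H$; (b) the bound $|\nu_{j}(\l)|<1$ for $\l\neq0$, which is merely a restatement of Proposition \ref{prop:Meps}; and (c) the behaviour near $\l=0$, the substantive part, handled by finite-rank perturbation theory.

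For (a), I would first note that for every $\l\in\overline{\C}_{+}$ one has $|e^{-\l\tau_{-}(x,v)}|=e^{-(\mathrm{Re}\,\l)\tau_{-}(x,v)}\le1$, so that, $\H$ being a positive operator, $|\mathsf{M}_{\l}\H\psi(x,v)|\le \mathsf{M}_{0}\H|\psi|(x,v)$ for all $\psi\in\lp$ and $(x,v)\in\Gamma_{+}$, i.e. $|\mathsf{M}_{\l}\H|\le\mathsf{M}_{0}\H$ as positive operators. Since $\mathsf{M}_{0}\H$ is power compact (recalled in the proof of Proposition \ref{prop:Meps}: a consequence of Assumption \ref{hypH}\,\textit{2)} and the Dunford--Pettis property of $L^{1}$), the very same domination argument used there yields that $\mathsf{M}_{\l}\H$ is power compact for every $\l\in\overline{\C}_{+}$; Riesz--Schauder theory then gives that $\mathfrak{S}(\mathsf{M}_{\l}\H)\setminus\{0\}$ consists only of isolated eigenvalues of finite algebraic multiplicity with $0$ as sole possible accumulation point. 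Part (b) is immediate, since every nonzero point of $\mathfrak{S}(\mathsf{M}_{\l}\H)$ is one of the $\nu_{j}(\l)$ and $r_{\sigma}(\mathsf{M}_{\l}\H)<1$ for $\l\neq0$ by Proposition \ref{prop:Meps}.

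For (c), I would run the standard continuous perturbation argument around the algebraically simple isolated eigenvalue $1$ of $\mathsf{M}_{0}\H$. Keeping the circle $\mathscr{C}=\{z\in\C\;;\;|z-1|=r_{0}\}$ (with $r_{0}<1-r$) used to define $\mathsf{P}(0)$, the norm continuity of $\l\mapsto\mathsf{M}_{\l}\H\in\mathscr{B}(\lp)$ on $\overline{\C}_{+}$ (Remark \ref{cor:exten}), combined with $\sup_{z\in\mathscr{C}}\|\Rs(z,\mathsf{M}_{0}\H)\|<\infty$ and the second resolvent identity, provides $\delta_{0}>0$ such that, for all $\l\in\overline{\C}_{+}$ with $|\l|\le\delta_{0}$, the circle $\mathscr{C}$ lies in the resolvent set of $\mathsf{M}_{\l}\H$ and the Riesz projection $\mathsf{P}(\l):=\frac{1}{2i\pi}\oint_{\mathscr{C}}\Rs(z,\mathsf{M}_{\l}\H)\,\d z$ converges to $\mathsf{P}(0)$ in $\mathscr{B}(\lp)$ as $\l\to0$. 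Then $\|\mathsf{P}(\l)-\mathsf{P}(0)\|<1$ for small $|\l|$ forces $\mathrm{rank}\,\mathsf{P}(\l)=\mathrm{rank}\,\mathsf{P}(0)=1$, whence $\mathsf{M}_{\l}\H$ has exactly one eigenvalue $\nu(\l)$ inside $\mathscr{C}$, necessarily algebraically simple, with $\mathsf{M}_{\l}\H\,\mathsf{P}(\l)=\nu(\l)\,\mathsf{P}(\l)$; passing to the limit in this identity (using $\mathsf{M}_{0}\H\,\mathsf{P}(0)=\mathsf{P}(0)$, $\mathsf{P}(\l)\to\mathsf{P}(0)$ and $\|\mathsf{P}(\l)\|$ bounded below) gives $\nu(\l)\to1$ as $\l\to0$. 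That $\{|z-1|<\epsilon\}$ contains no other point of $\mathfrak{S}(\mathsf{M}_{\l}\H)$ follows, for $\epsilon<1-r$, from the upper semicontinuity of $\l\mapsto\mathfrak{S}(\mathsf{M}_{\l}\H)$ together with $\mathfrak{S}(\mathsf{M}_{0}\H)\setminus\{1\}\subset\{|z|<r\}$, after shrinking $\delta_{0}$ if needed. Finally, I would take $\varphi_{\l}:=\mathsf{P}(\l)\varphi_{0}$ and $\varphi_{\l}^{\star}:=\mathsf{P}(\l)^{\star}\varphi_{0}^{\star}=\mathsf{P}(\l)^{\star}\mathbf{1}_{\Gamma_{+}}$, which are eigenfunctions of $\mathsf{M}_{\l}\H$ and of $(\mathsf{M}_{\l}\H)^{\star}$ for $\nu(\l)$ (ranges of the rank-one spectral projections), nonzero for $|\l|\le\delta_{0}$ since $\mathsf{P}(\l)\varphi_{0}\to\varphi_{0}\neq0$; the convergences $\|\mathsf{P}(\l)-\mathsf{P}(0)\|_{\mathscr{B}(\lp)}\to0$ and $\|\mathsf{P}(\l)^{\star}-\mathsf{P}(0)^{\star}\|_{\mathscr{B}(L^{\infty}(\Gamma_{+},\d\mu_{+}))}=\|\mathsf{P}(\l)-\mathsf{P}(0)\|_{\mathscr{B}(\lp)}\to0$ then yield $\|\varphi_{\l}-\varphi_{0}\|_{\lp}\to0$ and $\|\varphi_{\l}^{\star}-\varphi_{0}^{\star}\|_{L^{\infty}(\Gamma_{+},\d\mu_{+})}\to0$.

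The hard part is really (a): transferring power compactness from $\mathsf{M}_{0}\H$ to $\mathsf{M}_{\l}\H$ through $|\mathsf{M}_{\l}\H|\le\mathsf{M}_{0}\H$ calls for some care with the modulus operator and the domination behaviour of (weakly) compact operators on $L^{1}$ — but this is precisely the mechanism already invoked in the proof of Proposition \ref{prop:Meps}, so it can be imported essentially verbatim. All the rest is routine finite-rank perturbation theory; the only bookkeeping is to fix $\epsilon$ (equivalently $r_{0}$) small enough that $\{|z-1|<\epsilon\}\cap\mathfrak{S}(\mathsf{M}_{0}\H)=\{1\}$ and choose $\delta_{0}$ accordingly.
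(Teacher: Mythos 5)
Your proposal is correct and follows essentially the same route as the paper: domination of (a power of) $\mathsf{M}_{\l}\H$ by a power of $\mathsf{M}_{0}\H$ to transfer power compactness and get the Riesz--Schauder spectral picture, Proposition~\ref{prop:Meps} for $|\nu_{j}(\l)|<1$ when $\l\neq0$, and norm-continuous perturbation of the Riesz projection around the algebraically simple eigenvalue $1$ of $\mathsf{M}_{0}\H$ (the paper simply cites Kato's Theorem 3.16 for the spectral separation step you re-derive, and, like you, defines $\varphi_{\l}=\mathsf{P}(\l)\varphi_{0}$, $\varphi_{\l}^{\star}=\mathsf{P}(\l)^{\star}\varphi_{0}^{\star}$). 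The only cosmetic difference is that the paper dominates $|(\mathsf{M}_{\l}\H)^{2}|\le(\mathsf{M}_{0}\H)^{2}$ directly to get weak compactness of the square, whereas you state $|\mathsf{M}_{\l}\H|\le\mathsf{M}_{0}\H$ first and then invoke the same domination mechanism at the level of powers.
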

\begin{proof} Since $\left|\left(\mathsf{M}_{\l}\H\right)^{2}\right| \leq \left(\mathsf{M}_{0}\H\right)^{2}$, one has that $(\mathsf{M}_{\l}\H)^{2}$ is weakly compact and the structure of $\mathfrak{S}(\mathsf{M}_{\l}\H)$ follows. The fact that all eigenvalues have modulus less than one comes from Proposition \ref{prop:Meps}. This gives the first part of the Proposition. For the second part, because $\mathsf{M}_{\l}\H$ converges in operator norm towards $\mathsf{M}_{0}\H$ as $\l \to 0$ $(\l \in \overline{\C}_{+})$, it follows from general results about the separation of the spectrum \cite[Theorem 3.16, p.212]{kato} that, for $|\l| < \delta_{0}$ small enough, the curve $\{z \in \C\;;\;|z-1|=r_{0}\}$ is separating the spectrum $\mathfrak{S}(\mathsf{M}_{\l}\H)$ into two disjoint parts, say
$$\mathfrak{S}(\mathsf{M}_{\l}\H)=\mathfrak{S}_{\text{in}}(\mathsf{M}_{\l}\H) \cup \mathfrak{S}_{\text{ext}}(\mathsf{M}_{\l}\H)$$
where $\mathfrak{S}_{\text{in}}(\mathsf{M}_{\l}\H) \subset \{z\in \C\;;|z-1| < r_{0}\}$ and $\mathfrak{S}_{\text{ext}}(\mathsf{M}_{\l}\H) \subset \{z\in \C\;;|z-1|>r_{0}\}.$ Moreover, the spectral projection of $\mathsf{M}_{\l}\H$ associated to $\mathfrak{S}_{\text{in}}(\mathsf{M}_{\l}\H)$, defined as,
\begin{equation}\label{eq:Pl}
\mathsf{P}(\l)=\frac{1}{2i\pi}\oint_{\{|z-1|=r_{0}\}}\Rs(z,\mathsf{M}_{\l}\H)\d z,\end{equation}
is converging in operator norm to $\mathsf{P}(0)$ as $\l \to 0$ $(\mathrm{Re}\l \geq 0)$ so that, in particular, up to reduce again $\delta_{0}$,
$$\mathrm{dim}(\mathrm{Range}(\mathsf{P}(\l)))=\mathrm{dim}(\mathrm{Range}(\mathsf{P}(0)))=1, \qquad |\l| < \delta_{0}, \mathrm{Re}\l \geq 0.$$
This shows that
$$\mathfrak{S}_{\text{in}}(\mathsf{M}_{\l}\H)=\mathfrak{S}(\mathsf{M}_{\l}\H) \cap \{z \in \C\;;\;|z-1| < \epsilon\}=\{\nu(\l)\}, \qquad |\l| < \delta_{0}, \mathrm{Re}\l \geq0,$$
where $\nu(\l)$ is a \emph{algebraically simple} eigenvalue of $\mathsf{M}_{\l}\H$. Notice that, clearly
$$\lim_{\l\to 0}\nu(\l)=1 \qquad (\mathrm{Re}\l \geq0).$$
In the same way, defining
$$\mathsf{P}(\l)^{\star}=\frac{1}{2i\pi}\oint_{\{|z-1|=r_{0}\}}\Rs(z,\left(\mathsf{M}_{\l}\H\right)^{\star})\d z, \qquad |\l| \leq \delta_{0}, \mathrm{Re}\l \geq0$$
it holds that
$$\lim_{\l\to0}\|\mathsf{P}(\l)^{\star}-\mathsf{P}(0)^{\star}\|_{\mathscr{B}(L^{\infty}(\Gamma_{+},\d\mu_{+}))}=0.$$
Set 
$$\varphi_{\l}:=\mathsf{P}(\l)\varphi_{0}, \qquad \l \in \C_{+}.$$
Since $\varphi_{\l}$ converges to $\mathsf{P}(0)\varphi_{0}=\varphi_{0} \neq 0$, we get that $\varphi_{\l} \neq 0$ for $\l$ small enough and, since $\nu(\l)$ is algebraically simple, $\varphi(\l)$ is an eigenfunction of $\mathsf{M}_{\l}\H$ for $|\l|$ small enough. In the same way, for $|\l|$ small enough,
$$\varphi^{\star}_{\l}:=\mathsf{P}(\l)^{\star}\varphi^{\star}_{0} \longrightarrow \mathsf{P}(0)^{\star}\varphi^{\star}_{0}=1$$
as $\l \to 0$ and $\varphi^{\star}_{\l}$ is an eigenfunction of $\left(\mathsf{M}_{\l}\H\right)^{\star}$ associated to the eigenvalue $\nu(\l).$
\end{proof}

From now, we define $\delta >0$ small enough, so that the rectangle
$$\mathscr{C}_{\delta}:=\{\l \in \C\;;\;0 \leq \mathrm{Re}\l \leq \delta\,,\,|\mathrm{Im}\l|\leq\delta\} \subset \{\l \in \C\;;\;|\l| < \delta_{0}\}\,,$$
where $\delta_{0}$ is introduced in the previous Proposition \ref{prop:eigenMLH}. 
\begin{lemme}\label{lem:p'0}
The mapping 
$$\l \in \mathscr{C}_{\delta} \longmapsto \mathsf{P}(\l) \in \mathscr{B}(\lp)$$
is differentiable with
$$\mathsf{P}'(0)=-\frac{1}{2i\pi}\oint_{\{|z-1|=r_{0}\}}\Rs(z,\mathsf{M}_{0}\H)(\tau_{-}\mathsf{M}_{0}\H)\Rs(z,\mathsf{M}_{0}\H)\d z.$$
More generally, for any $\eta \in (-\delta,\delta)$,
$$\frac{\d}{\d\eta}\mathsf{P}(i\eta)=-\frac{1}{2i\pi}\oint_{\{|z-1|=r_{0}\}}\Rs(z,\mathsf{M}_{i\eta}\H)\left(\dfrac{\d}{\d\eta}\mathsf{M}_{i\eta}\H\right)\Rs(z,\mathsf{M}_{i\eta}\H)\d z.$$
\end{lemme}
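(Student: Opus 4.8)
The plan is to differentiate the Dunford integral \eqref{eq:Pl} defining $\mathsf{P}(\lambda)$ under the integral sign. Two facts make this legitimate. First, $\delta$ being chosen as in Proposition \ref{prop:eigenMLH} and $\mathscr{C}_{\delta}\subset\{|\lambda|<\delta_{0}\}\cap\overline{\C}_{+}$, the circle $\Gamma:=\{z\in\C\;;\;|z-1|=r_{0}\}$ lies in the resolvent set of $\mathsf{M}_{\lambda}\H$ for every $\lambda\in\mathscr{C}_{\delta}$; since $\lambda\mapsto\mathsf{M}_{\lambda}\H$ is norm-continuous on $\mathscr{C}_{\delta}$ (Proposition \ref{propo:convK}, or \eqref{eq:MeisHk} with $k=0$) and $\Gamma\times\mathscr{C}_{\delta}$ is compact, the resolvent $(z,\lambda)\mapsto\Rs(z,\mathsf{M}_{\lambda}\H)$ is bounded in operator norm on $\Gamma\times\mathscr{C}_{\delta}$. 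Second, the map $\lambda\in\mathscr{C}_{\delta}\mapsto\mathsf{M}_{\lambda}\H\in\mathscr{B}(\lp)$ is differentiable --- with a one-sided derivative along the edge $\mathrm{Re}\lambda=0$ --- with
$$\frac{\d}{\d\lambda}\bigl(\mathsf{M}_{\lambda}\H\bigr)=-\tau_{-}\mathsf{M}_{\lambda}\H,\qquad\frac{\d}{\d\eta}\bigl(\mathsf{M}_{i\eta}\H\bigr)=-i\,\tau_{-}\mathsf{M}_{i\eta}\H\qquad\text{in }\mathscr{B}(\lp),$$
where $\tau_{-}$ denotes multiplication by the function $(x,v)\mapsto\tau_{-}(x,v)$ on $\Gamma_{+}$; here $-\tau_{-}\mathsf{M}_{\lambda}\H$ is genuinely bounded on $\lp$, uniformly in $\lambda\in\overline{\C}_{+}$, because $\tau_{-}(x,v)\leq D/|v|$ and \eqref{10.51} give $\|\tau_{-}\mathsf{M}_{\lambda}\H\|_{\mathscr{B}(\lp)}\leq D\,\|\mathsf{M}_{0}\|_{\mathscr{B}(\Y^{-}_{1},\Y^{+}_{1})}\|\H\|_{\mathscr{B}(\lp,\Y^{-}_{1})}$, exactly as in the proof of Lemma \ref{lem:unifconti}.

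Granting these, the rest is routine perturbation theory. For $\lambda,\lambda_{0}\in\mathscr{C}_{\delta}$ and $z\in\Gamma$ the second resolvent identity reads
$$\Rs(z,\mathsf{M}_{\lambda}\H)-\Rs(z,\mathsf{M}_{\lambda_{0}}\H)=\Rs(z,\mathsf{M}_{\lambda}\H)\bigl(\mathsf{M}_{\lambda}\H-\mathsf{M}_{\lambda_{0}}\H\bigr)\Rs(z,\mathsf{M}_{\lambda_{0}}\H),$$
so, dividing by $\lambda-\lambda_{0}$ and letting $\lambda\to\lambda_{0}$, the middle factor tends to $-\tau_{-}\mathsf{M}_{\lambda_{0}}\H$ in $\mathscr{B}(\lp)$ while $\Rs(z,\mathsf{M}_{\lambda}\H)\to\Rs(z,\mathsf{M}_{\lambda_{0}}\H)$ in $\mathscr{B}(\lp)$ uniformly for $z\in\Gamma$ (again by Proposition \ref{propo:convK} together with the uniform resolvent bound above); hence the difference quotient of the integrand converges in $\mathscr{B}(\lp)$ uniformly on the compact contour $\Gamma$, and one may pass to the limit under $\oint_{\Gamma}$. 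This shows that $\lambda\mapsto\mathsf{P}(\lambda)$ is differentiable on $\mathscr{C}_{\delta}$ and that
$$\mathsf{P}'(\lambda_{0})=\frac{1}{2i\pi}\oint_{\Gamma}\Rs(z,\mathsf{M}_{\lambda_{0}}\H)\Bigl(\tfrac{\d}{\d\lambda}\mathsf{M}_{\lambda}\H\big|_{\lambda_{0}}\Bigr)\Rs(z,\mathsf{M}_{\lambda_{0}}\H)\,\d z.$$
Taking $\lambda_{0}=0$ and inserting $\tfrac{\d}{\d\lambda}\mathsf{M}_{\lambda}\H\big|_{0}=-\tau_{-}\mathsf{M}_{0}\H$ gives the formula for $\mathsf{P}'(0)$; running the same argument along $\lambda=i\eta$, $\eta\in(-\delta,\delta)$ --- legitimate because $i\eta\in\mathscr{C}_{\delta}$ --- with $\tfrac{\d}{\d\eta}\mathsf{M}_{i\eta}\H$ in place of $\tfrac{\d}{\d\lambda}\mathsf{M}_{\lambda}\H$ gives the formula for $\tfrac{\d}{\d\eta}\mathsf{P}(i\eta)$.

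The step I expect to be the real obstacle is the second fact above: differentiability of $\lambda\mapsto\mathsf{M}_{\lambda}\H$ in operator norm \emph{up to the imaginary axis}. Since $\mathsf{M}_{\lambda}\H\psi(x,v)=e^{-\lambda\tau_{-}(x,v)}[\H\psi](x-\tau_{-}(x,v)v,v)$, the difference quotient amounts to estimating $\bigl|e^{-\lambda\tau_{-}}-e^{-\lambda_{0}\tau_{-}}+(\lambda-\lambda_{0})\tau_{-}e^{-\lambda_{0}\tau_{-}}\bigr|$ against $|\H\psi|$, and the trouble is that $\tau_{-}$ is unbounded on $\Gamma_{+}$ as $|v|\to0$, so the crude bound $|e^{-w}-1+w|\leq\tfrac12|w|^{2}$ produces a factor $\tau_{-}^{2}$ which need not be integrable against $|\H\psi|$. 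The cure is to localize in velocity: on $\{\tau_{-}\leq R\}$ use $|e^{-w}-1+w|\leq\tfrac12|w|^{2}$, on $\{\tau_{-}>R\}$ use $|e^{-w}-1+w|\leq2|w|$, and combine with $\tau_{-}(x,v)\leq D/|v|$, the change of variables \eqref{10.51}, and the gain of integrability $\H\in\mathscr{B}(\lp,\Y^{-}_{N_{\H}+1})$ from Assumption \ref{hypH} \emph{1)} (which makes $\sup_{\|\psi\|_{\lp}\leq1}\int_{\{\tau_{-}>R\}}\tau_{-}|\H\psi|\,\d\mu_{+}\to0$ as $R\to\infty$); choosing first $R$ large, then $|\lambda-\lambda_{0}|$ small, yields the desired $o(|\lambda-\lambda_{0}|)$ bound in $\mathscr{B}(\lp)$. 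In fact the relevant estimates are already essentially packaged in Proposition \ref{propo:convK} (see \eqref{eq:MeisHk} and \eqref{cor:uniformpower}) and in the explicit derivative computation of Corollary \ref{cor:boundMX}, so one may invoke those rather than redo the estimate by hand.
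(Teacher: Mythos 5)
Your proof is correct and follows essentially the same route as the paper's: both differentiate the Dunford integral \eqref{eq:Pl} under $\oint$, both rely on the uniform resolvent bound on the contour $\{|z-1|=r_0\}$ over $\mathscr{C}_\delta$, and both reduce the crux to the one-sided differentiability of $\lambda\mapsto\mathsf{M}_\lambda\H$ up to the imaginary axis, for which the paper invokes \eqref{prop:derMeis} in Proposition \ref{prop:DerivG} and its corollary \ref{cor:MHclass} (you write the derivative formula for the resolvent via the second resolvent identity; the paper simply states it --- same content).

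One small remark: the localization-in-velocity cure you propose for the differentiability of $\mathsf{M}_\lambda\H$ is not actually needed, and is also not what the paper does. Since the standing hypothesis gives $\H\in\mathscr{B}(\lp,\Y^-_{N_\H+1})$ with $N_\H\geq1$, the operator $\tau_-^2\mathsf{M}_0\H$ is already bounded on $\lp$ (because $\tau_-^2\leq D^2|v|^{-2}$ and $\mathsf{M}_0\H\in\mathscr{B}(\lp,\Y^+_2)$), so the global Taylor bound $|e^{-w}-1+w|\leq\tfrac12|w|^2$ suffices without splitting at a threshold $R$; this is precisely what \eqref{prop:derMeis} encodes. (And in the borderline case $N_\H=0$, where you would actually need the truncation, the required \emph{uniform} smallness of $\sup_{\|\psi\|\leq1}\int_{\{\tau_->R\}}\tau_-|\mathsf{M}_0\H\psi|\,\d\mu_+$ as $R\to\infty$ does not follow from $\H\in\mathscr{B}(\lp,\Y^-_1)$ alone --- it is a uniform-integrability statement, not a moment bound --- so the splitting would not rescue that case either.) Your closing sentence correctly identifies that the needed estimates are already in the paper; the precise reference is \eqref{prop:derMeis} rather than \eqref{eq:MeisHk} or \eqref{cor:uniformpower}.
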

\begin{proof} The only difficulty is to prove the differentiability on the imaginary axis. As soon as $z \notin \mathfrak{S}\left(\mathsf{M}_{\l}\H\right)$ for any $\l \in \mathscr{C}_{\delta}$, one has
$$\dfrac{\d}{\d\l}\Rs(z,\mathsf{M}_{\l}\H)=-\Rs(z,\mathsf{M}_{\l}\H)\left(\frac{\d}{\d\l}\mathsf{M}_{\l}\H\right)\Rs(z,\mathsf{M}_{\l}\H),$$
so that
$$\dfrac{\d}{\d\l}\mathsf{P}(\l)=-\frac{1}{2i\pi}\oint_{\{|z-1|=r_{0}\}}\Rs(z,\mathsf{M}_{\l}\H)\left(\frac{\d}{\d\l}\mathsf{M}_{\l}\H\right)\Rs(z,\mathsf{M}_{\l}\H)\d z\qquad \forall \l \in \mathscr{C}_{\delta}$$
and, since $\lim_{\l \to 0}\frac{\d}{\d\l}\left(\mathsf{M}_{\l}\H\right)=-\lim_{\l\to0}\left(\tau_{-}\mathsf{M}_{\l}\H\right)=-\tau_{-}\mathsf{M}_{0}\H$ we easily get the differentiability in $0.$ The same computations also give
$$\dfrac{\d}{\d\eta}\mathsf{P}(\e+i\eta)=-\frac{1}{2i\pi}\oint_{\{|z-1|=r_{0}\}}\Rs(z,\mathsf{M}_{\e+i\eta})\left(\frac{\d}{\d\eta}\mathsf{M}_{\e+i\eta}\H\right)\Rs(z,\mathsf{M}_{\e+i\eta}\H)\d z, \qquad \forall \eta \in \R\setminus\{0\}.$$
Using now Prop. \ref{prop:derMeis}  which asserts that $\tfrac{\d}{\d\eta}\mathsf{M}_{\e+i\eta}\H$ converges to $\tfrac{\d}{\d\eta}\mathsf{M}_{i\eta}\H$ as $\e\to0^{+}$ uniformly with respect to $\eta$, we deduce the second part of the Lemma. 
\end{proof}

We can complement the above result with the following:
\begin{lemme}\label{lem:deriv} With the notations of Proposition \ref{prop:eigenMLH}, the function $\l \in \mathscr{C}_{\delta} \mapsto \nu(\l) \in \C$ is differentiable with derivative $\nu'(\l)$ such that the limit
$$\nu'(0)=\lim_{\l\to0}\nu'(\l)$$
exists and is given by
$$\nu'(0)=-\int_{\Gamma_{+}}\tau_{-}(x,v)\varphi_{0}(x,v)\d \mu_{+}(x,v)<0.$$
\end{lemme}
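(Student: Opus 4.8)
The plan is to read the statement as a first-order (Rayleigh--Schr\"odinger) perturbation formula for the algebraically simple eigenvalue $\nu(\l)$ of $\mathsf{M}_{\l}\H$ produced in Proposition \ref{prop:eigenMLH}, and to exploit the biorthogonal system $(\varphi_{\l},\varphi_{\l}^{\star})$ together with the fact, recorded in the proof of Lemma \ref{lem:p'0}, that $\l\mapsto\mathsf{M}_{\l}\H$ is norm-differentiable on $\mathscr{C}_{\delta}$ with $\tfrac{\d}{\d\l}\mathsf{M}_{\l}\H=-\tau_{-}\mathsf{M}_{\l}\H$. Throughout I write $\langle g,\psi\rangle=\int_{\Gamma_{+}}g\,\psi\,\d\mu_{+}$ for the duality pairing between $g\in L^{\infty}(\Gamma_{+},\d\mu_{+})$ and $\psi\in\lp$, and I recall from the discussion preceding Proposition \ref{prop:eigenMLH} that $\mathsf{M}_{0}\H\varphi_{0}=\varphi_{0}$, that $(\mathsf{M}_{0}\H)^{\star}\varphi_{0}^{\star}=\varphi_{0}^{\star}$ with $\varphi_{0}^{\star}=\mathbf{1}_{\Gamma_{+}}$, and that $\langle\varphi_{0}^{\star},\varphi_{0}\rangle=\int_{\Gamma_{+}}\varphi_{0}\,\d\mu_{+}=1$.

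First I would establish the differentiability of $\nu$ on $\mathscr{C}_{\delta}$. Since $\mathsf{P}(\l)$ has one-dimensional range and commutes with $\mathsf{M}_{\l}\H$, one has $\mathsf{M}_{\l}\H\,\mathsf{P}(\l)\varphi_{0}=\nu(\l)\,\mathsf{P}(\l)\varphi_{0}$; pairing with $\varphi_{0}^{\star}$ and using that $\langle\varphi_{0}^{\star},\mathsf{P}(\l)\varphi_{0}\rangle\to\langle\varphi_{0}^{\star},\varphi_{0}\rangle=1$ as $\l\to0$ (Proposition \ref{prop:eigenMLH}), after possibly shrinking $\delta$ the denominator stays bounded away from $0$ on $\mathscr{C}_{\delta}$ and
\begin{equation*}
\nu(\l)=\frac{\langle\varphi_{0}^{\star},\,\mathsf{M}_{\l}\H\,\mathsf{P}(\l)\varphi_{0}\rangle}{\langle\varphi_{0}^{\star},\,\mathsf{P}(\l)\varphi_{0}\rangle},\qquad \l\in\mathscr{C}_{\delta}.
\end{equation*}
The numerator and denominator are differentiable in $\l$ since $\l\mapsto\mathsf{P}(\l)\varphi_{0}\in\lp$ is (Lemma \ref{lem:p'0}), $\l\mapsto\mathsf{M}_{\l}\H\in\mathscr{B}(\lp)$ is, and the pairing is bounded bilinear; the contour representations of $\mathsf{P}'(\l)$ and of $(\mathsf{M}_{\l}\H)'$ show moreover that $\nu'$ extends continuously up to $\l=0$, so that $\nu'(0)=\lim_{\l\to0}\nu'(\l)$ exists.

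To compute the limit, I would set $\psi_{\l}:=\mathsf{P}(\l)\varphi_{0}$, $N(\l):=\langle\varphi_{0}^{\star},\mathsf{M}_{\l}\H\psi_{\l}\rangle$ and $D(\l):=\langle\varphi_{0}^{\star},\psi_{\l}\rangle$, so that $N(0)=D(0)=1$, hence $\nu'(0)=N'(0)-D'(0)$. Differentiating with the product rule, using $\tfrac{\d}{\d\l}\mathsf{M}_{\l}\H=-\tau_{-}\mathsf{M}_{\l}\H$ and then $(\mathsf{M}_{0}\H)^{\star}\varphi_{0}^{\star}=\varphi_{0}^{\star}$ to move $\mathsf{M}_{0}\H$ onto the left factor,
\begin{equation*}
N'(0)=-\langle\varphi_{0}^{\star},\tau_{-}\mathsf{M}_{0}\H\varphi_{0}\rangle+\langle\varphi_{0}^{\star},\psi_{0}'\rangle,\qquad D'(0)=\langle\varphi_{0}^{\star},\psi_{0}'\rangle,
\end{equation*}
so that the $\psi_{0}'=\mathsf{P}'(0)\varphi_{0}$ contributions cancel and
\begin{equation*}
\nu'(0)=-\langle\varphi_{0}^{\star},\tau_{-}\mathsf{M}_{0}\H\varphi_{0}\rangle=-\int_{\Gamma_{+}}\tau_{-}(x,v)\,[\mathsf{M}_{0}\H\varphi_{0}](x,v)\,\d\mu_{+}(x,v)=-\int_{\Gamma_{+}}\tau_{-}(x,v)\,\varphi_{0}(x,v)\,\d\mu_{+}(x,v),
\end{equation*}
the last step using $\mathsf{M}_{0}\H\varphi_{0}=\varphi_{0}$. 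Finiteness of the integral follows since $\varphi_{0}=\mathsf{M}_{0}\H\varphi_{0}\in\Y^{+}_{N_{\H}+1}\subseteq\Y^{+}_{1}$ by Corollary \ref{cor:boundMX} and $\tau_{-}(x,v)\leq D/|v|$ on $\Gamma_{+}$; and since $\tau_{-}>0$ on $\Gamma_{+}$ (Section \ref{sec:trav}) and $\varphi_{0}>0$, the integral is strictly positive, whence $\nu'(0)<0$.

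I expect the only delicate points to be bookkeeping ones: keeping track of the topology in which each convergence and each differentiation statement holds ($\lp$ for $\psi_{\l}$, $L^{\infty}(\Gamma_{+},\d\mu_{+})$ for $\varphi_{0}^{\star}$, operator norm for $\mathsf{M}_{\l}\H$ and $\mathsf{P}(\l)$), and checking that the denominator $\langle\varphi_{0}^{\star},\mathsf{P}(\l)\varphi_{0}\rangle$ remains bounded away from $0$ on all of $\mathscr{C}_{\delta}$. The structural heart of the argument --- the cancellation of the $\psi_{0}'$ terms, which is the standard mechanism of first-order perturbation theory --- presents no real obstacle.
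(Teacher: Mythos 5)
Your proof is correct and uses essentially the same Rayleigh--Schr\"odinger first-order perturbation argument as the paper: pair the eigenvalue relation against the dual eigenfunction, differentiate, and let the adjoint relation $(\mathsf{M}_{0}\H)^{\star}\varphi_{0}^{\star}=\varphi_{0}^{\star}$ cancel the $\mathsf{P}'(0)\varphi_{0}$ contribution. The one minor economy you make is to work with the \emph{frozen} dual eigenfunction $\varphi_{0}^{\star}$ and the quotient $\nu(\l)=\langle\varphi_{0}^{\star},\mathsf{M}_{\l}\H\mathsf{P}(\l)\varphi_{0}\rangle/\langle\varphi_{0}^{\star},\mathsf{P}(\l)\varphi_{0}\rangle$ rather than the paper's moving pair $(\varphi_{\l},\varphi_{\l}^{\star})$, which spares you the differentiability of $\l\mapsto\varphi_{\l}^{\star}$ but otherwise produces the identical cancellation and the same final formula.
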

\begin{proof} Recall that we introduced in the proof of Proposition \ref{prop:eigenMLH} the functions
$$\varphi_{\l}=\mathsf{P}(\l)\varphi_{0}, \qquad \varphi_{\l}^{\star}=\mathsf{P}(\l)^{\star}\varphi_{0}^{\star}, \qquad \l \in \mathscr{C}_{\delta}$$
which are such that $\lim_{\l \to 0}\varphi_{\l}=\varphi_{0}$ and $\lim_{\l\to0}\varphi_{\l}^{\star}=\varphi^{\star}_{0}=\mathbf{1}_{\Gamma_{+}}$. Introducing the duality bracket $\langle\cdot,\cdot\rangle$ between $\lp$ and its dual $(\lp)^{\star}=L^{\infty}(\Gamma_{+},\d\mu_{+})$, we have in particular
$$\lim_{\l\to0}\langle \varphi_{\l},\varphi_{\l}^{\star}\rangle=\langle \varphi_{0},\varphi_{0}^{\star}\rangle=\int_{\Gamma_{+}}\varphi_{0}\d\mu_{+}=1.$$
Moreover, the mappings $\l \in \mathscr{C}_{\delta}\mapsto \varphi_{\l} \in \lp$ and $\l\in \mathscr{C}_{\delta} \mapsto \varphi_{\l}^{\star} \in (\lp)^{\star}$ are differentiable with 
$$\dfrac{\d}{\d\l}\varphi_{\l}=\dfrac{\d}{\d\l}\mathsf{P}(\l)\varphi_{0}, \qquad \dfrac{\d}{\d\l}\varphi_{\l}^{\star}=\dfrac{\d}{\d\l}\mathsf{P}(\l)^{\star}\varphi_{0}^{\star}.$$
Since 
$$\mathsf{M}_{\l}\H\varphi_{\l}=\nu(\l)\varphi_{\l}$$ 
so that $\langle \mathsf{M}_{\l}\H\varphi_{\l}, \varphi_{\l}^{\star}\rangle=\nu(\l)\langle\varphi_{\l},\varphi_{\l}^{\star}\rangle$,
we deduce first that $\l \in \mathscr{C}_{\delta} \mapsto \nu(\l)$ is differentiable, and  differentiating the above identity yields
$$\dfrac{\d}{\d\l}\left(\mathsf{M}_{\l}\H\varphi_{\l}\right)=\nu'(\l)\varphi_{\l}+\nu(\l)\dfrac{\d}{\d\l}\varphi_{\l}.$$
Computing the derivatives and multiplying with $\varphi_{\l}^{\star}$ and integrating over $\Gamma_{+}$ we get
$$\langle \left(\tfrac{\d}{\d\l}\mathsf{M}_{\l}\H\right)\varphi_{\l} + \mathsf{M}_{\l}\H\tfrac{\d}{\d\l}\varphi_{\l},\varphi_{\l}^{\star}\rangle =
\nu'(\l)\langle\varphi_{\l},\varphi_{\l}^{\star}\rangle + \nu(\l)\langle \tfrac{\d}{\d\l}\varphi_{\l},\varphi_{\l}^{\star}\rangle.$$
Using that $\tfrac{\d}{\d\l}\mathsf{M}_{\l}\H=-\tau_{-}\mathsf{M}_{\l}\H$ whereas 
$$\langle \mathsf{M}_{\l}\H\tfrac{\d}{\d\l}\varphi_{\l},\varphi_{\l}^{\star}\rangle=\langle\tfrac{\d}{\d\l}\varphi_{\l},(\mathsf{M}_{\l}\H)^{\star}\varphi_{\l}^{\star}\rangle=\nu(\l)\langle \tfrac{\d}{\d\l}\varphi_{\l},\varphi_{\l}^{\star}\rangle$$ we obtain
$$-\langle \tau_{-}\mathsf{M}_{\l}\H\varphi_{\l},\varphi_{\l}^{\star}\rangle + \nu(\l)\langle \tfrac{\d}{\d\l}\varphi_{\l},\varphi_{\l}^{\star}\rangle
=\nu'(\l)\langle \langle\varphi_{\l},\varphi_{\l}^{\star}\rangle + \nu(\l)\langle \tfrac{\d}{\d\l}\varphi_{\l},\varphi_{\l}^{\star}\rangle.$$
Thus
$$-\langle \tau_{-}\mathsf{M}_{\l}\H\varphi_{\l},\varphi_{\l}^{\star}\rangle=\nu'(\l)\langle \varphi_{\l},\varphi_{\l}^{\star}\rangle, \qquad \forall \l \in \mathscr{C}_{\delta}.$$
Letting $\l \to 0$, we get that
$$\lim_{\l\to0}\nu'(\l)=-\langle \tau_{-}\mathsf{M}_{0}\H\varphi_{0},\varphi_{0}^{\star}\rangle$$
which is the desired result since $\mathsf{M}_{0}\H\varphi_{0}=\varphi_{0}$. \end{proof}

\subsection{Boundary functions for $\Rs(\l,\T_{0})$ and $\Rs(\l,\T_{\H})$}\label{sec:bounRs} We have now all the tools at hands to define the traces of the functions $\l \in \C_{+} \mapsto \Rs(\l,\T_{0})f \in \X_{0}$ and $\l \in \C_{+}\mapsto \Rs(\l,\T_{\H})f$ along the imaginary axis. We will distinguish between the two cases
$\eta \neq 0$ and $\eta = 0$. For the latter case, the technical difficulty is tremendously increased due to the fact that $1$ lies in the spectrum of $\mathsf{M}_{0}\H$ and  we will resort to the  careful study of the spectral properties of $\mathsf{M}_{\l}\H$ for $\l \in \overline{\C}_{+}$ with $|\l|$ small. To handle this case, we will need  the additional assumption that $f$ has zero mean $\varrho_{f}=0$ (see \eqref{eq:0mean}) 
together with a slight additional integrability $f \in \X_{1}$. Notice that the constraint \eqref{eq:0mean} exactly means that $\mathbb{P}f=0$ (where $\mathbb{P}$ is the spectral projection associated to the (dominant) zero eigenvalue of $\T_{\H}$) or equivalently, $f=\mathbb{(I-P)}f$.

For such a case, the assumption \eqref{eq:0mean} that $f$ has zero mean will be fully exploited. Related to this assumption \eqref{eq:0mean}, we introduce 
$$\X_{k}^{0}:=\{f \in \X_{k}\;;\;\varrho_{f}=0\}, \qquad k \in \N.$$
which is a closed subspace of $\X_{k}$. Notice that, endowed with the $\X_{k}$-norm, $\X_{k}^{0}$ is a Banach space.  
Since 
$$\int_{\Omega \times V}U_{\H}(t)f\d x\otimes \bm{m}(\d v)=\int_{\Omega\times V}f\d x\otimes \bm{m}(\d v), \qquad \forall t \geq0, \quad f \in \X_{0}$$
one has
\begin{equation*}\begin{split}
\int_{\Omega\times V}\Rs(\l,\T_{\H})f\d x\otimes \bm{m}(\d v)&=\int_{\Omega\times V}\left(\int_{0}^{\infty}e^{-\l\,t}U_{\H}(t)f\d t\right)\d x\otimes \bm{m}(\d v)\\
&=\frac{1}{\l}\int_{\Omega\times V}f\d x\otimes \bm{m}(\d v), \qquad \forall \l \in \C_{+}\end{split}\end{equation*}
and therefore the resolvent and all its iterates $\Rs(\l,\T_{\H})^{k}$ leave $\X_{0}^{0}$ invariant ($k \geq 0$).

An important consequence of the spectral result stated in Prop. \ref{prop:eigenMLH} is the following
\begin{lemme}\label{lem:convZk} 
For any $f \in \X_{1}^{0}$ the limit 
$$\lim_{\e\to0^{+}}\Rs(1,\mathsf{M}_{\e+i\eta}\H)\mathsf{G}_{\e+i\eta}f=\mathsf{\Phi}(\eta)f$$
exists in $\X_{0}$ where  
\begin{equation}\label{eq:defPhi}\mathsf{\Phi}(\eta)f :=\begin{cases}
\Rs(1,\mathsf{M}_{i\eta}\H)\mathsf{G}_{i\eta}f \qquad &\text{ if } \eta \neq 0\\
\Rs\left(1,\mathsf{M}_{0}\H\left(\mathsf{I-P}(0)\right)\right)\mathsf{G}_{0}f  -\frac{1}{\nu'(0)}\left[\mathsf{P}'(0)\mathsf{G}_{0}f+\mathsf{P}(0)\mathsf{G}'_{0}f)\right] \qquad &\text{ if } \eta =0\end{cases}
\end{equation}
Moreover, the convergence is \emph{uniform} on any compact subset of $\R$. \medskip
\end{lemme}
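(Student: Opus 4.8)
The plan is to treat the regimes $\eta\neq0$ and $\eta=0$ separately. For $\eta\neq0$ the statement is essentially a consequence of results already established: by Proposition~\ref{prop:Meps} we have $r_\sigma(\mathsf{M}_{i\eta}\H)<1$, so $\Rs(1,\mathsf{M}_{i\eta}\H)\in\mathscr{B}(\lp)$, and Corollary~\ref{cor:ResMei} gives $\Rs(1,\mathsf{M}_{\e+i\eta}\H)\to\Rs(1,\mathsf{M}_{i\eta}\H)$ in $\mathscr{B}(\lp)$ locally uniformly in $\eta\in\R\setminus\{0\}$; combining this with the uniform convergence $\mathsf{G}_{\e+i\eta}f\to\mathsf{G}_{i\eta}f$ in $\lp$ from \eqref{lem:unifGeis} and the uniform bound $\|\mathsf{G}_{\e+i\eta}f\|_{\lp}\leq\|f\|_{\X_0}$ yields $\Rs(1,\mathsf{M}_{\e+i\eta}\H)\mathsf{G}_{\e+i\eta}f\to\Rs(1,\mathsf{M}_{i\eta}\H)\mathsf{G}_{i\eta}f$ uniformly on compact subsets of $\R\setminus\{0\}$. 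This gives the first line of \eqref{eq:defPhi}, and it remains only to analyse a neighbourhood of $0$.

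Near $\l=0$ I would work on the rectangle $\mathscr{C}_\delta$ and exploit the spectral decomposition of Proposition~\ref{prop:eigenMLH}. Setting $\mathsf{Q}(\l)=\mathsf{I}-\mathsf{P}(\l)$ and using that $\mathsf{M}_\l\H$ commutes with $\mathsf{P}(\l)$, acts as $\nu(\l)$ on the one–dimensional $\mathrm{Range}\,\mathsf{P}(\l)$, and that $\mathsf{M}_\l\H\mathsf{Q}(\l)$ has $1$ in its resolvent set for every $\l\in\mathscr{C}_\delta$ (because $\mathfrak{S}(\mathsf{M}_0\H)\setminus\{1\}\subset\{|z|<1\}$ and $\l\mapsto\mathsf{M}_\l\H\mathsf{Q}(\l)$ is norm–continuous on the compact $\mathscr{C}_\delta$, by Lemma~\ref{lem:p'0}), a direct computation gives
$$\Rs(1,\mathsf{M}_\l\H)=\Rs\bigl(1,\mathsf{M}_\l\H\mathsf{Q}(\l)\bigr)+\frac{\nu(\l)}{1-\nu(\l)}\,\mathsf{P}(\l),\qquad \l\in\mathscr{C}_\delta\setminus\{0\}.$$
Applying this to $\mathsf{G}_\l f$ and letting $\l=\e+i\eta\to0$, the first summand converges in $\lp$ to $\Rs(1,\mathsf{M}_0\H(\mathsf{I}-\mathsf{P}(0)))\mathsf{G}_0 f$, since $\l\mapsto\Rs(1,\mathsf{M}_\l\H\mathsf{Q}(\l))$ is norm–continuous on $\mathscr{C}_\delta$ and $\mathsf{G}_\l f\to\mathsf{G}_0 f$. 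The whole difficulty is thus concentrated in the singular term $\tfrac{\nu(\l)}{1-\nu(\l)}\mathsf{P}(\l)\mathsf{G}_\l f$, where $1-\nu(\l)\to0$.

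This is the main obstacle, and it is resolved precisely by the hypotheses $\varrho_f=0$ and $f\in\X_1$. Since $1$ is an algebraically simple eigenvalue of $\mathsf{M}_0\H$ with eigenfunction $\varphi_0$ and dual eigenfunction $\varphi_0^\star=\mathbf{1}_{\Gamma_+}$, the projection is the rank–one operator $\mathsf{P}(0)\psi=\bigl(\int_{\Gamma_+}\psi\,\d\mu_+\bigr)\varphi_0$; combined with $\int_{\Gamma_+}\mathsf{G}_0 f\,\d\mu_+=\varrho_f$ from \eqref{Eq:G0} this gives $\mathsf{P}(0)\mathsf{G}_0 f=\varrho_f\varphi_0=0$. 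Moreover $f\in\X_1$ makes $\l\mapsto\mathsf{G}_\l f\in\lp$ differentiable (the derivative being controlled by $D\|f\|_{\X_1}$ since $s\leq\tau_-\leq D/|v|$), while $\l\mapsto\mathsf{P}(\l)$ and $\l\mapsto\nu(\l)$ are of class $\mathscr{C}^1$ on $\mathscr{C}_\delta$ by Lemmas~\ref{lem:p'0}--\ref{lem:deriv}, with $\nu(0)=1$ and $\nu'(0)=-\int_{\Gamma_+}\tau_-\varphi_0\,\d\mu_+<0$. Hence $H(\l):=\mathsf{P}(\l)\mathsf{G}_\l f$ satisfies $H(0)=0$ and $H(\l)=\l\bigl(\mathsf{P}'(0)\mathsf{G}_0 f+\mathsf{P}(0)\mathsf{G}_0' f\bigr)+o(|\l|)$ with a remainder uniform on $\mathscr{C}_\delta$, while $1-\nu(\l)=-\nu'(0)\l+o(|\l|)$; dividing, $\tfrac{\nu(\l)}{1-\nu(\l)}\mathsf{P}(\l)\mathsf{G}_\l f\to -\tfrac{1}{\nu'(0)}\bigl(\mathsf{P}'(0)\mathsf{G}_0 f+\mathsf{P}(0)\mathsf{G}_0' f\bigr)$, which together with the previous paragraph yields the second line of \eqref{eq:defPhi}.

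It remains to upgrade these pointwise limits to uniform convergence on a compact $K\subset\R$. I would cover $K\setminus(-\delta,\delta)$ by finitely many intervals on which Corollary~\ref{cor:ResMei} applies directly, and on $K\cap[-\delta,\delta]$ use the decomposition above together with the Lipschitz–in–$\e$ bounds on $\mathsf{M}_{\e+i\eta}\H$, $\mathsf{P}(\e+i\eta)$ and $\nu(\e+i\eta)$ over the compact $\mathscr{C}_\delta$ (from \eqref{eq:MeisHk} and Lemmas~\ref{lem:p'0}--\ref{lem:deriv}), the uniform convergence $\mathsf{G}_{\e+i\eta}f\to\mathsf{G}_{i\eta}f$, and a final split according to whether $|\eta|$ is bounded below on $K$ (so that $|1-\nu(i\eta)|$ is bounded below and the singular term is harmless) or small (where the \emph{uniform} Taylor remainders for $H$ and $1-\nu$ control both the $\e+i\eta$ and the $i\eta$ contributions). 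Note that this simultaneously shows that $\eta\mapsto\mathsf{\Phi}(\eta)f$ is continuous at $0$, since the limit of the singular term as $\eta\to0$ coincides with the value prescribed in \eqref{eq:defPhi}.
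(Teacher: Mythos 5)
Your argument follows the same spectral route as the paper's: split $\Rs(1,\mathsf{M}_\l\H)$ along the projection $\mathsf{P}(\l)$ of Proposition~\ref{prop:eigenMLH}, observe that $\varrho_f=0$ forces $\mathsf{P}(0)\mathsf{G}_0f=0$, and resolve the $0/0$ indeterminacy at $\l=0$ using the differentiability of $\mathsf{P}(\cdot)\mathsf{G}_{(\cdot)}f$ and of $\nu(\cdot)$ together with $\nu'(0)\neq0$. Two minor variations from the paper's proof are worth recording. First, you write the decomposition in the algebraically clean closed form $\Rs(1,\mathsf{M}_\l\H)=\Rs(1,\mathsf{M}_\l\H\mathsf{Q}(\l))+\tfrac{\nu(\l)}{1-\nu(\l)}\mathsf{P}(\l)$, whereas the paper records it as a sum of two resolvents with the singular piece written as $\tfrac{1}{1-\nu(\l)}\mathsf{P}(\l)\mathsf{G}_\l f$; the discrepancy is exactly $\mathsf{P}(\l)\mathsf{G}_\l f$, which vanishes in the limit because $\varrho_f=0$, so both yield the same boundary value, but your version is the one that is exact for all $\l\in\mathscr{C}_\delta$. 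Second, for the uniformity on compacts you argue directly via a finite cover and uniform Taylor remainders over $\mathscr{C}_\delta$, while the paper argues by contradiction with converging subsequences $(\e_n,\eta_n)\to(0,0)$; the two arguments consume the same estimates (Corollary~\ref{cor:ResMei} away from $0$, Lemmas~\ref{lem:p'0}--\ref{lem:deriv} and the differentiability of $\mathsf{G}_\l f$ near $0$), so neither buys anything substantive over the other.
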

\begin{proof} Let  $f \in \X_{1}$ be fixed with $\varrho_{f}=0$. For any $\e >0,$ $\eta \in\R$, one writes
\begin{multline*}
\Rs(1,\mathsf{M}_{\e+i\eta}\H)\mathsf{G}_{\e+i\eta}f=\Rs(1,\mathsf{M}_{\e+i\eta}\H(\mathsf{I-P}(\e+i\eta)))\mathsf{G}_{\e+i\eta}f\\
+\Rs(1,\mathsf{M}_{\e+i\eta}\H\mathsf{P}(\e+i\eta))\mathsf{G}_{\e+i\eta}f\end{multline*}
since $\mathsf{P}(\l)$ commutes with $\mathsf{M}_{\l}\H$. Notice that, with the notations of Proposition \ref{prop:eigenMLH}, 
$$\mathfrak{S}\left(\mathsf{M}_{\e+i\eta}\H\left[\mathsf{I-P}(\e+i\eta)\right]\right) \subset \{z \in \C\;;\;|z| <r\}$$
so that
$$r_{\sigma}\left(\mathsf{M}_{\e+i\eta}\H(\mathsf{I-P}(\e+i\eta))\right) \leq r <1.$$
One has then, for $r < r'< r_{0}$
$$\mathsf{I-P}(\e+i\eta))=\frac{1}{2i\pi}\oint_{\{|z|=r'\}}\Rs(z,\mathsf{M}_{\e+i\eta}\H)\d z$$
so that $\lim_{\e\to0^{+}}\mathsf{I-P}(\e+i\eta))=\mathsf{I-P}(i\eta))$ in $\mathscr{B}(\lp)$ uniformly with respect to $|\eta| < \delta.$ Consequently,
\begin{equation}\label{eq:convI-P}
\lim_{\e\to0^{+}}\sup_{|\eta| \leq \delta}\left\|\Rs(1,\mathsf{M}_{\e+i\eta}\H(\mathsf{I-P}(\e+i\eta)))\mathsf{G}_{i\eta}f-\Rs(1,\mathsf{M}_{i\eta}\H(\mathsf{I-P}(i\eta)))\mathsf{G}_{i\eta}f\right\|_{\lp}=0.\end{equation}
On the other hand, 
$$\Rs(1,\mathsf{M}_{\e+i\eta}\H\mathsf{P}(\e+i\eta))\mathsf{G}_{\e+i\eta}f=\frac{1}{1-\nu(\e+i\eta)}\mathsf{P}(\e+i\eta)\mathsf{G}_{\e+i\eta}f$$
so that
$$\lim_{\e\to0^{+}}\Rs(1,\mathsf{M}_{\e+i\eta}\H\mathsf{P}(\e+i\eta))\mathsf{G}_{\e+i\eta}f=\frac{1}{1-\nu(i\eta)}\mathsf{P}(i\eta)\mathsf{G}_{i\eta}f, \qquad \forall \eta \neq 0$$
where the limit is meant in $\lp$ and we used  the continuity of $\l \in \mathscr{C}_{\delta} \mapsto \nu(\l)$. Whenever $\eta=0$, we have
$$\Rs(1,\mathsf{M}_{\e}\H\mathsf{P}(\e))\mathsf{G}_{\e}f=\frac{1}{1-\nu(\e)}\mathsf{P}(\e)\mathsf{G}_{\e}f=\frac{\e}{1-\nu(\e)}\frac{\mathsf{P}(\e)\mathsf{G}_{\e}f-\mathsf{P}(0)\mathsf{G}_{0}f}{\e}$$
where we used the fact that, since $\varrho_{f}=0$ and $\mathsf{G}_{0}$ is stochastic, one has 
$$\int_{\Gamma_{+}}\mathsf{G}_{0}f\d\mu_{+}=0 \qquad \text{ so } \quad \mathsf{P}(0)\mathsf{G}_{0}f=0.$$ 
As already seen, the derivative $\mathsf{G}'(0)f$ exists since $f \in \X_{1}$ and therefore, by virtue of Lemma \ref{lem:p'0}, 
$$\lim_{\e \to 0^{+}}\frac{\mathsf{P}(\e)\mathsf{G}_{\e}f-\mathsf{P}(0)\mathsf{G}_{0}f}{\e}=\mathsf{P}'(0)\mathsf{G}_{0}f+\mathsf{P}(0)\mathsf{G}'_{0}f.$$
According to Lemma \ref{lem:deriv}, 
$$\lim_{\e\to0^{+}}\frac{\e}{1-\nu(\e)}=-\frac{1}{\nu'(0)} > 0$$
so that
$$\lim_{\e\to0^{+}}\Rs(1,\mathsf{M}_{\e}\H\mathsf{P}(\e))\mathsf{G}_{\e}f=-\frac{1}{\nu'(0)}\left[\mathsf{P}'(0)\mathsf{G}_{0}f+\mathsf{P}(0)\mathsf{G}'_{0}f\right].$$
Finally, we obtain that $\lim_{\e\to0^{+}}\Rs(1,\mathsf{M}_{\e}\H)\mathsf{G}_{\e}f$ exists in $\lp$ and is given by
$$\Rs(1,\mathsf{M}_{0}\H(\mathsf{I-P}(0)))\mathsf{G}_{0}f-\frac{1}{\nu'(0)}\left[\mathsf{P}'(0)\mathsf{G}_{0}f+\mathsf{P}(0)\mathsf{G}'_{0}f\right].$$
This proves the convergence. Let us prove that the convergence is uniform with respect to $|\eta| \leq \delta.$  According to \eqref{eq:convI-P}, we only need to prove that the convergence
$$\lim_{\e\to0^{+}}\Rs(1,\mathsf{M}_{\e+i\eta}\H\mathsf{P}(\e+i\eta))\mathsf{G}_{\e+i\eta}f$$
towards
$$F(\eta)=\begin{cases}
\Rs(1,\mathsf{M}_{i\eta}\H\mathsf{P}(i\eta))\mathsf{G}_{i\eta}f \qquad &\text{ if } \eta \neq 0\\
-\frac{1}{\nu'(0)}\left[\mathsf{P}'(0)\mathsf{G}_{0}f+\mathsf{P}(0)\mathsf{G}'_{0}f)\right] \qquad &\text{ if } \eta =0.\end{cases}
$$
is uniform with respect to $|\eta| < \delta.$ We argue by contradiction, assuming that there exist $c >0$, a sequence $(\e_{n})_{n} \subset (0,\infty)$ converging to $0$ and a sequence $(\eta_{n})_{n} \subset (-\delta,\delta)$ such that
\begin{equation}\label{eq:absurd}
\left\|\Rs(1,\mathsf{M}_{\e_{n}+i\eta_{n}}\H\mathsf{P}(\e_{n}+i\eta_{n}))\mathsf{G}_{\e_{n}+i\eta_{n}}f-F(\eta_{n})\right\|_{\lp} \geq c >0.\end{equation}
Up to considering a subsequence, if necessary, we can assume without loss of generality that $\lim_{n}\eta_{n}=\eta_{0}$ with $|\eta_{0}| \leq \delta.$ First, one sees that then $\eta_{0}=0$ since the convergence of $\Rs(1,\mathsf{M}_{\e+i\eta}\H\mathsf{P}(\e+i\eta)\mathsf{G}_{\e+i\eta}f$ to $F(\eta)$ is actually uniform in any neighbourhood around $\eta_{0} \neq 0$ (see \eqref{eq:convunifr}). Because $\eta_{0}=0$, defining $\lambda_{n}:=\e_{n}+i\eta_{n}$, $n \in \N$, the sequence $(\l_{n})_{n}\subset \mathscr{C}_{\delta}$ is converging to $0$. Now, as before,
$$\Rs(1,\mathsf{M}_{\l_{n}}\H\mathsf{P}_{\l_{n}})\mathsf{G}_{\l_{n}}f=\frac{\l_{n}}{1-\nu(\l_{n})}\frac{\mathsf{P}(\l_{n})\mathsf{G}_{\l_{n}}f-\mathsf{P}(0)\mathsf{G}_{0}f}{\l_{n}}, \qquad n\in \N$$
with
$$\lim_{n\to \infty}\frac{\l_{n}}{1-\nu(\l_{n})}=-\frac{1}{\nu'(0)}, \qquad \lim_{n\to\infty}
\frac{\mathsf{P}(\l_{n})\mathsf{G}_{\l_{n}}f-\mathsf{P}(0)\mathsf{G}_{0}f}{\l_{n}}=\left[\mathsf{P}'(0)\mathsf{G}_{0}f+\mathsf{P}(0)\mathsf{G}'_{0}f\right].$$
Therefore, 
$$\lim_{n\to\infty}\Rs(1,\mathsf{M}_{\l_{n}}\H\mathsf{P}(\l_{n}))\mathsf{G}_{\l_{n}}f=-\frac{1}{\nu'(0)}\left[\mathsf{P}'(0)\mathsf{G}_{0}f+\mathsf{P}(0)\mathsf{G}'_{0}f\right].$$
One also has
$$F(\eta_{n})=\Rs(1,\mathsf{M}_{i\eta_{n}}\H\mathsf{P}({i\eta_{n}}))\mathsf{G}_{i\eta_{n}}f=\frac{i\eta_{n}}{1-\nu(i\eta_{n})}\frac{\mathsf{P}(i\eta_{n})\mathsf{G}_{i\eta_{n}}f-\mathsf{P}(0)\mathsf{G}_{0}f}{i\eta_{n}}, \qquad n\in \N$$
so that $F(i\eta_{n})$ has the same limit $-\frac{1}{\nu'(0)}\left[\mathsf{P}'(0)\mathsf{G}_{0}f+\mathsf{P}(0)\mathsf{G}'_{0}f\right]$ as $n \to \infty$. This contradicts \eqref{eq:absurd}. 
\end{proof}

One deduce from this the following
\begin{propo}\label{propo:limi11} For any $f \in \X_{0}^{0}$, 
$$\mathsf{\Xi}_{\e+i\eta}\H\Rs(1,\mathsf{M}_{\e+i\eta}\H)\mathsf{G}_{i\eta}f \in \X_{k} \qquad \forall k \in \{0,\ldots,N_{\H}\}$$
and, for any $k \leq N_{\H}$, 
\begin{equation}\label{eq:limi}
\lim_{\e\to 0^{+}}\mathsf{\Xi}_{\e+i\eta}\H\Rs(1,\mathsf{M}_{\e+i\eta}\H)\mathsf{G}_{i\eta}f=\mathsf{\Xi}_{i\eta}\H\mathsf{\Phi}(\eta)f\end{equation}
where the convergence is meant in $\mathscr{C}_{0}(\R,\X_{k})$. 
\end{propo}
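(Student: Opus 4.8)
The plan is to dispose first of the membership claim, which is essentially free, and then of the convergence in $\mathscr{C}_{0}(\R,\X_{k})$, which is the real content; the latter I would split into three regimes in $\eta$: bounded away from $0$, large, and close to $0$, the last being the only genuinely delicate one and the reason Lemma \ref{lem:convZk} was established beforehand. For the membership, fix $\e>0$ and $\eta\in\R$; by Proposition \ref{prop:Meps} we have $r_{\sigma}(\mathsf{M}_{\e+i\eta}\H)<1$, so $\Rs(1,\mathsf{M}_{\e+i\eta}\H)\in\mathscr{B}(\lp)$ and $\Rs(1,\mathsf{M}_{\e+i\eta}\H)\mathsf{G}_{i\eta}f\in\lp$ (using $\|\mathsf{G}_{i\eta}\|_{\mathscr{B}(\X_{0},\lp)}\leq1$). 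Since $|\mathsf{\Xi}_{\e+i\eta}\H h|\leq\mathsf{\Xi}_{0}\H|h|$ pointwise ($\H$ and $\mathsf{\Xi}_{0}$ being positive) and $\mathsf{\Xi}_{0}\H\in\mathscr{B}(\lp,\X_{N_{\H}})$ by Corollary \ref{cor:boundMX}, the quantity in question lies in $\X_{k}$ for each $k\leq N_{\H}$, with $\X_{k}$-norm at most $\|\mathsf{\Xi}_{0}\H\|_{\mathscr{B}(\lp,\X_{N_{\H}})}\,\|\Rs(1,\mathsf{M}_{\e+i\eta}\H)\|_{\mathscr{B}(\lp)}\,\|f\|_{\X_{0}}$; continuity in $\eta$ is routine, and since $\|(\mathsf{M}_{\e+i\eta}\H)^{\mathsf{p}}\|_{\mathscr{B}(\lp)}\to0$ as $|\eta|\to\infty$ for fixed $\e>0$ by \eqref{eq:powerlim} while $\|\mathsf{G}_{i\eta}f\|_{\lp}\to0$ by \eqref{eq:RiemLeb}, each member of the $\e$-family already belongs to $\mathscr{C}_{0}(\R,\X_{k})$.

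On the regime $\delta\leq|\eta|\leq R$ I would expand the difference $\mathsf{\Xi}_{\e+i\eta}\H\Rs(1,\mathsf{M}_{\e+i\eta}\H)\mathsf{G}_{i\eta}f-\mathsf{\Xi}_{i\eta}\H\Rs(1,\mathsf{M}_{i\eta}\H)\mathsf{G}_{i\eta}f$ telescopically: the term carrying $\mathsf{\Xi}_{\e+i\eta}\H-\mathsf{\Xi}_{i\eta}\H$ is $\mathbf{O}(\e)$ uniformly in $\eta$ by \eqref{eq:MeisHk}, and the term carrying $\Rs(1,\mathsf{M}_{\e+i\eta}\H)-\Rs(1,\mathsf{M}_{i\eta}\H)$ tends to $0$ uniformly on $\{\delta\leq|\eta|\leq R\}$ by Corollary \ref{cor:ResMei} together with a finite-cover argument; as $\|\mathsf{G}_{i\eta}f\|_{\lp}\leq\|f\|_{\X_{0}}$, this gives uniform convergence (and continuity of the limit) there. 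For the tail $|\eta|\geq R$ with $R$ large, a Neumann series in blocks of length $\mathsf{p}$ (using $\|\mathsf{M}_{\e+i\eta}\H\|_{\mathscr{B}(\lp)}\leq1$, the bound \eqref{eq:power}, the uniform-in-$\e$ continuity of Lemma \ref{lem:unifconti}, and the uniform smallness of $\|(\mathsf{M}_{\e+i\eta}\H)^{\mathsf{p}}-(\mathsf{M}_{i\eta}\H)^{\mathsf{p}}\|_{\mathscr{B}(\lp)}$ from Proposition \ref{propo:convK}) shows $\sup_{\e\in[0,\e_{0}]}\|\Rs(1,\mathsf{M}_{\e+i\eta}\H)\|_{\mathscr{B}(\lp)}$ is bounded on $\{|\eta|\geq R\}$, while $\|\mathsf{G}_{i\eta}f\|_{\lp}\to0$ there; hence the $\e$-family and the limit are both uniformly small on the tail, which simultaneously controls the tail contribution to the uniform estimate and yields the decay at infinity of $\mathsf{\Xi}_{i\eta}\H\mathsf{\Phi}(\eta)f$.

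The regime $\eta\to0$ is the main obstacle. On $\{|\eta|\leq\delta\}$ I would argue as in Lemma \ref{lem:convZk}: split $\Rs(1,\mathsf{M}_{\e+i\eta}\H)=\Rs(1,\mathsf{M}_{\e+i\eta}\H(\mathsf{I}-\mathsf{P}(\e+i\eta)))+(1-\nu(\e+i\eta))^{-1}\mathsf{P}(\e+i\eta)$. The first summand converges uniformly, since $\e\mapsto\mathsf{M}_{\e+i\eta}\H(\mathsf{I}-\mathsf{P}(\e+i\eta))$ is norm-continuous with spectral radius uniformly $\leq r<1$. The a priori singular factor $(1-\nu(\e+i\eta))^{-1}\mathsf{P}(\e+i\eta)$, applied to $\mathsf{G}_{i\eta}f$, is where the zero-mean hypothesis \eqref{eq:0mean} is decisive: for $f\in\X_{0}^{0}$ one has $\mathsf{P}(0)\mathsf{G}_{0}f=\varrho_{f}\varphi_{0}=0$, so this factor is a difference quotient whose limit exists by the differentiability of $\mathsf{P}(\cdot)$ (Lemma \ref{lem:p'0}) and of $\nu(\cdot)$ with $\nu'(0)<0$ (Lemma \ref{lem:deriv}), identifying the uniform $\lp$-limit as $\mathsf{\Phi}(\eta)f$ of \eqref{eq:defPhi}. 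Composing with $\mathsf{\Xi}_{\e+i\eta}\H$, which is bounded $\lp\to\X_{k}$ for $k\leq N_{\H}$ (Corollary \ref{cor:boundMX}) and converges to $\mathsf{\Xi}_{i\eta}\H$ in $\mathscr{B}(\lp,\X_{k})$ uniformly in $\eta$ by \eqref{eq:MeisHk}, yields uniform convergence of the $\X_{k}$-valued product on $\{|\eta|\leq\delta\}$ to $\mathsf{\Xi}_{i\eta}\H\mathsf{\Phi}(\eta)f$.

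Patching the three regimes gives uniform convergence on all of $\R$ (hence, the uniform limit of continuous $\X_{k}$-valued functions being continuous, also continuity of $\eta\mapsto\mathsf{\Xi}_{i\eta}\H\mathsf{\Phi}(\eta)f$, and its decay at infinity from the tail estimate), i.e. convergence in $\mathscr{C}_{0}(\R,\X_{k})$ with the asserted limit. The only delicate step throughout is the near-$0$ analysis: it is there that the first-order expansion of the dominant eigenvalue $\nu(\l)$ of $\mathsf{M}_{\l}\H$ and the constraint \eqref{eq:0mean} combine to cancel the apparent pole of $\Rs(1,\mathsf{M}_{\l}\H)$ at $\l=0$, everything else being a matter of assembling already-established uniform continuity and integrability statements.
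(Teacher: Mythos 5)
Your proposal is correct and follows essentially the same strategy as the paper: the membership claim is disposed of via the domination $|\mathsf{\Xi}_{\e+i\eta}\H h|\leq\mathsf{\Xi}_{0}\H|h|$ together with Corollary \ref{cor:boundMX}, and the convergence in $\mathscr{C}_0(\R,\X_k)$ is obtained by combining the uniform-on-compacts convergence $\Rs(1,\mathsf{M}_{\e+i\eta}\H)\mathsf{G}_{\e+i\eta}f\to\mathsf{\Phi}(\eta)f$ of Lemma \ref{lem:convZk} with the $\mathbf{O}(\e)$ bound on $\mathsf{\Xi}_{\e+i\eta}\H-\mathsf{\Xi}_{i\eta}\H$ from \eqref{eq:MeisHk}, and the tail is handled, exactly as in the paper, by the uniform-in-$\e$ boundedness of $\mathsf{\Xi}_{\e+i\eta}\H\Rs(1,\mathsf{M}_{\e+i\eta}\H)$ together with the decay \eqref{eq:RiemLeb} of $\mathsf{G}_{\e+i\eta}f$. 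The only cosmetic difference is that you split the compact region into $\{|\eta|\le\delta\}$ and $\{\delta\le|\eta|\le R\}$ and re-derive the near-$0$ analysis, whereas the paper simply invokes Lemma \ref{lem:convZk}, which already covers both.
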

\begin{proof} The first part of the result is clear from the regularising properties of $\mathsf{\Xi}_{\e+i\eta}\H$. Let us focus on the proof of \eqref{eq:limi} and let us fix $f \in \X_{0}$ satisfying \eqref{eq:0mean} and $k \in \{0,\ldots,N_{\H}\}$. Let $[a,b]$ be a compact subset of $\R$. Using \eqref{eq:MeisHk} in Proposition \ref{propo:convK} we see that
\begin{multline}\label{eq:AB}
\sup_{\eta \in [a,b]}\left\|\mathsf{\Xi}_{\e+i\eta}\H\Rs(1,\mathsf{M}_{\e+i\eta}\H)\mathsf{G}_{i\eta}f-\mathsf{\Xi}_{i\eta}\H\mathsf{\Phi}(\eta)f\right\|_{\X_{k}}\\
\leq \e D\|\H\|_{\mathscr{B}(\lp,\Y_{k+1}^{-})}\sup_{\eta\in [a,b]}\left\|\Rs(1,\mathsf{M}_{\e+i\eta}\H)\mathsf{G}_{i\eta}f\right\|_{\X_{k}}
\\
+\|\Xi_{0}\|_{\mathscr{B}(\Y^{-}_{k+1},\X_{k})}\|\H\|_{\mathscr{B}(\lp,\Y_{k+1}^{-})}\sup_{\eta\in [a,b]}\left\|\Rs(1,\mathsf{M}_{\e+i\eta}\H)\mathsf{G}_{i\eta}f-\mathsf{\Xi}_{i\eta}\H\mathsf{\Phi}(\eta)f\right\|_{\X_{k}}
\end{multline}
where we used that $\sup_{\eta \in \R}\left\|\mathsf{\Xi}_{i\eta}\H\right\|_{\mathscr{B}(\lp,\X_{k})} \leq \|\mathsf{\Xi}_{0}\|_{\mathscr{B}(\Y^{-}_{k+1},\X_{k})}\|\H\|_{\mathscr{B}(\lp,\Y_{k+1}^{-})}.$ From Corollary \ref{lem:convZk} one concludes that $\sup_{\e\in (0,1)}\sup_{\eta \in [a,b]}\left\|\Rs(1,\mathsf{M}_{\e+i\eta}\H)\mathsf{G}_{i\eta}f\right\|_{\X_{k}}$ is finite while the last term in \eqref{eq:AB} converges to $0$ as $\e \to 0^{+}$. This shows that
$$\lim_{\e\to0^{+}}\sup_{\eta\in [a,b]}\left\|\mathsf{\Xi}_{\e+i\eta}\H\Rs(1,\mathsf{M}_{\e+i\eta}\H)\mathsf{G}_{i\eta}f-\mathsf{\Xi}_{i\eta}\H\mathsf{\Phi}(\eta)f\right\|_{\X_{k}}=0.$$
Let us then focus on $|\eta| > R$, $R >0$ arbitrary. We already saw that
$$\sup_{\e\in [0,1]}\sup_{|\eta| >R}\left\|\mathsf{\Xi}_{\e+i\eta}\H\Rs(1,\mathsf{M}_{\e+i\eta}\H)\right\|_{\mathscr{B}(\lp,\X_{k})} < \infty,$$
which, combined with \eqref{eq:RiemLeb} in Prop. \ref{propo:convK}, gives
$$\lim_{|\eta| \to \infty}\sup_{\e \in [0,1]}\left\|\mathsf{\Xi}_{\e+i\eta}\H\Rs(1,\mathsf{M}_{\e+i\eta}\H)\mathsf{G}_{\e+i\eta}f\right\|_{\X_{k}}=0$$
and this implies clearly the result.
\end{proof}
Regarding the behaviour of $\Rs(\e+i\eta,\T_{0})f$, one has
\begin{propo}\label{prop:convRsT0}
For any $f \in \X_{0}$ and $\e >0$, the mapping 
$$\eta \in \R \longmapsto \Rs(\e+i\eta,\T_{0})f \in \X_{0}$$
belongs to $\mathscr{C}_{0}^{k}(\R,\X_{0})$ for any $k \in \N.$ 

Moreover, given $k \in \N$, for any $f \in \X_{k+1}$, 
$$\lim_{\e\to0^{+}}\Rs(\e+i\eta,\T_{0})f$$ exists in $\mathscr{C}_{0}(\R,\X_{k})$. Its limit is denoted $\Rs(i\eta,\T_{0})f.$ 
\end{propo}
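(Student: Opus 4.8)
The plan is to work throughout with the explicit integral representation
$\Rs(\e+i\eta,\T_0)f(x,v)=\int_0^{t_-(x,v)}f(x-tv,v)\,e^{-(\e+i\eta)t}\,\d t$, which for $\e>0$ coincides with the operator $\mathsf{R}_{\e+i\eta}$ (Remark \ref{nb:lift}) and hence with the resolvent on $\C_+$ by Theorem \ref{theo:spectT0}, and which for $\e=0$ will simply \emph{define} $\Rs(i\eta,\T_0)f$ by the same formula. For fixed $\e>0$ and $f\in\X_0$ I would first prove, by induction on $k$ and differentiation under the integral sign, that
$\frac{\d^k}{\d\eta^k}\Rs(\e+i\eta,\T_0)f(x,v)=\int_0^{t_-(x,v)}(-it)^k f(x-tv,v)\,e^{-(\e+i\eta)t}\,\d t$. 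The differentiation is legitimate in the $\X_0$-valued sense: the difference quotients are dominated, uniformly in $\eta$ and in small increments, using $|e^{-i(\eta+h)t}-e^{-i\eta t}|\le|h|\,t$ together with the elementary bound $t^ke^{-\e t}\le C_{k,\e}\,e^{-\e t/2}$ for a suitable constant $C_{k,\e}$, and after integration over $\Omega\times V$ the resulting majorant is $C_{k,\e}\|\mathsf{R}_{\e/2}|f|\|_{\X_0}\le\tfrac{2}{\e}C_{k,\e}\|f\|_{\X_0}<\infty$ by \eqref{eq:XiLRL}. The same domination gives continuity of each $\eta$-derivative via dominated convergence, and the decay as $|\eta|\to\infty$ follows from the Riemann--Lebesgue lemma applied pointwise in $(x,v)$ (for a.e. $(x,v)$ the map $t\mapsto(-it)^kf(x-tv,v)e^{-\e t}\ind_{\{t<t_-(x,v)\}}$ lies in $L^1(\R_+,\d t)$) combined with dominated convergence in $\X_0$ against the same $\eta$-independent majorant $C_{k,\e}\mathsf{R}_{\e/2}|f|$. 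This yields $\eta\mapsto\Rs(\e+i\eta,\T_0)f\in\mathscr{C}_0^k(\R,\X_0)$ for every $k\in\N$.

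For the second statement I would fix $k\in\N$ and $f\in\X_{k+1}$. The key elementary input is the travel-time bound $t_-(x,v)\le D/|v|$, which gives $t^k\le D^k\varpi_k(v)$ on the domain of integration, hence $|\Rs(i\eta,\T_0)f(x,v)|\le\mathsf{R}_0|f|(x,v)$, and $\varpi_k\,\mathsf{R}_0|f|=\mathsf{R}_0(\varpi_k|f|)$. Writing $\mathsf{R}_0 g=\int_0^\infty U_0(t)g\,\d t$ and applying Lemma \ref{lem:UotInt} to $\varpi_k|f|\in\X_1$ gives $\|\mathsf{R}_0(\varpi_k|f|)\|_{\X_0}\le D\|f\|_{\X_{k+1}}$, i.e. $\mathsf{R}_0|f|\in\X_k$; thus $\Rs(i\eta,\T_0)f\in\X_k$ for every $\eta$, with $\varpi_k\mathsf{R}_0|f|\in\X_0$ as an $\eta$-independent majorant. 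Continuity of $\eta\mapsto\Rs(i\eta,\T_0)f$ in $\X_k$ and the decay $\lim_{|\eta|\to\infty}\|\Rs(i\eta,\T_0)f\|_{\X_k}=0$ then follow exactly as above (dominated convergence, Riemann--Lebesgue), whence $\Rs(i\eta,\T_0)f\in\mathscr{C}_0(\R,\X_k)$.

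It remains to prove the convergence $\Rs(\e+i\eta,\T_0)f\to\Rs(i\eta,\T_0)f$ in $\mathscr{C}_0(\R,\X_k)$, i.e. \emph{uniformly in $\eta$}. For this I would use the pointwise estimate
$\bigl|\Rs(\e+i\eta,\T_0)f(x,v)-\Rs(i\eta,\T_0)f(x,v)\bigr|\le\int_0^{t_-(x,v)}|f(x-tv,v)|\,(1-e^{-\e t})\,\d t=:g_\e(x,v)$,
whose right-hand side does not depend on $\eta$; since $0\le g_\e\le\mathsf{R}_0|f|\in\X_k$ and $g_\e(x,v)\to0$ for a.e. $(x,v)$ as $\e\to0^+$, dominated convergence in $\X_k$ gives $\|g_\e\|_{\X_k}\to0$, hence $\sup_{\eta\in\R}\|\Rs(\e+i\eta,\T_0)f-\Rs(i\eta,\T_0)f\|_{\X_k}\le\|g_\e\|_{\X_k}\to0$. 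I expect the only points requiring genuine care to be the justification of differentiation under the integral sign as an $\X_0$-valued operation and, in this last step, securing the $\e\to0^+$ convergence uniformly in $\eta$; both are dispatched by exhibiting $\eta$-independent majorants in the relevant $\X_k$, the gain of one negative power of $|v|$ afforded by $t_-(x,v)\le D/|v|$ being precisely what makes those majorants integrable when $f\in\X_{k+1}$.
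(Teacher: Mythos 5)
Your proposal is correct and follows essentially the same route as the paper. The paper works at the level of the Bochner integral $\Rs(\e+i\eta,\T_0)f=\int_0^\infty e^{-i\eta t}e^{-\e t}U_0(t)f\,\d t$ and invokes the vector-valued Riemann--Lebesgue lemma and Lemma~\ref{lem:UotInt} directly, whereas you unfold the same integral pointwise in $(x,v)$, apply scalar Riemann--Lebesgue for a.e.\ $(x,v)$, and then pass to the $\X_0$- or $\X_k$-norm by dominated convergence against the $\eta$-independent majorants $C_{k,\e}\mathsf{R}_{\e/2}|f|$ (for $\e>0$) and $\mathsf{R}_0|f|\in\X_k$ (for the $\e\to0^+$ limit); the mechanics -- same integral representation, same use of $t_-(x,v)\le D/|v|$ through Lemma~\ref{lem:UotInt}, same $\eta$-uniform bound $\int_0^\infty|e^{-\e t}-1|\,\|U_0(t)f\|_{\X_k}\,\d t$ -- are identical.
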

\begin{proof} We begin with the first part of the Proposition. Recalling that
$$\Rs(\e+i\eta,\T_{0})f=\int_{0}^{\infty}e^{-i\eta t}\e^{-\e t}U_{0}(t)f\d t$$
with $t \in \R \mapsto e^{-\e t}U_{0}(t)f \in \X_{0}$ Bochner integrable one deduces from Riemann-Lebesgue Theorem that
$$\lim_{|\eta|\to\infty}\left\|\Rs(\e+i\eta,\T_{0})f\right\|_{\X_{0}}=0.$$
Given $k \in \N$, because 
$$\dfrac{\d^{k}}{\d\eta^{k}}\Rs(\e+i\eta,\T_{0})f=(-i)^{k}\int_{0}^{\infty}t^{k}e^{-i\eta t} e^{-\e t}U_{0}(t)f\,\d t$$
the exact same argument shows that 
$$\lim_{|\eta|\to\infty}\left\|\frac{\d^{k}}{\d\eta^{k}}\Rs(\e+i\eta,\T_{0})f\right\|_{\X_{0}}=0$$
which proves that $\eta \in \R \mapsto \Rs(\e+i\eta,\T_{0})f$ belongs to $\mathscr{C}_{0}^{k}(\R,\X_{0}).$ Let us focus now on the limit for $\e \to 0^{+}$. Given $f \in \X_{k+1}$, we deduce from Lemma \ref{lem:UotInt} and the dominated convergence theorem that
$$\lim_{\e\to0^{+}}\Rs(\e+i\eta,\T_{0})f=\lim_{\e\to0^{+}}\int_{0}^{\infty}e^{-i\eta t}e^{-\e t}U_{0}(t)f \d t=\int_{0}^{\infty}e^{-i\eta t}U_{0}(f)f\d t$$
exists in $\X_{k}$. The limit is of course denoted $\Rs(i\eta,\T_{0})$ and one has
$$
\left\|\Rs(\e+i\eta,\T_{0})f-\Rs(i\eta,\T_{0})f\right\|_{\X_{k}} \leq \int_{0}^{\infty}\left|e^{-\e t}-1\right|\,\|U_{0}(t)f\|_{\X_{k}}\d t\, \qquad \forall \eta \in\R,\,\e >0.$$
Thus
\begin{equation}\label{eq:RsT0ieta}
\lim_{\e\to0^{+}}\sup_{\eta\in \R}\left\|\Rs(\e+i\eta,\T_{0})f-\Rs(i\eta,\T_{0})f\right\|_{\X_{k}}=0\end{equation}
still using the fact that $t \mapsto \|U_{0}(t)f\|_{\X_{k}}$ is integrable over $[0,\infty)$ and the dominated convergence theorem.
\end{proof}
\begin{nb}\label{nb:Ck} One deduces from the above Proposition and  Banach-Steinhaus Theorem  \cite[Theorem 2.2, p. 32]{brezis} that 
\begin{equation}\label{eq:Ck}
C_{k}:=\sup\left\{\left\|\Rs(\e+i\eta,\T_{0})\right\|_{\mathscr{B}(\X_{k+1},\X_{k})}\;;\;\e \in (0,1]\;;\;\eta \in \R\right\} <\infty \qquad \forall k \in \N.\end{equation}
\end{nb}
\begin{cor}\label{cor:double} Given $k \in \N$, if 
$$g\::\:\l \in \C_{+} \longmapsto g(\l) \in \X_{k+1}$$
is a continuous mapping such that
$$\lim_{|\eta|\to\infty}\|g(\e+i\eta)\|_{\X_{k+1}}=0 \qquad \forall \e >0$$
while the limit
$$\widetilde{g}(\eta):=\lim_{\e\to0^{+}}g(\e+i\eta)$$
exists in $\X_{k+1}$ uniformly with respect to $\eta \in \R$, then
$$\lim_{\e\to 0^{+}}\Rs(\e+i\eta,\T_{0})g(\e+i\eta)=\Rs(i\eta,\T_{0})\widetilde{g}(\eta)$$
in $\mathscr{C}_{0}(\R,\X_{k}).$
\end{cor}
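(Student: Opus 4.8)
The plan is to split the difference into two pieces, each controlled uniformly in $\eta$. For $\e \in (0,1]$ and $\eta \in \R$ write
\[
\Rs(\e+i\eta,\T_0)g(\e+i\eta)-\Rs(i\eta,\T_0)\widetilde g(\eta)
=\Rs(\e+i\eta,\T_0)\bigl[g(\e+i\eta)-\widetilde g(\eta)\bigr]
+\bigl[\Rs(\e+i\eta,\T_0)-\Rs(i\eta,\T_0)\bigr]\widetilde g(\eta).
\]
The first term is bounded in $\X_k$ by $C_k\,\|g(\e+i\eta)-\widetilde g(\eta)\|_{\X_{k+1}}$ thanks to the uniform estimate \eqref{eq:Ck}, and this tends to $0$ uniformly in $\eta$ by the assumption that $g(\e+i\cdot)\to\widetilde g(\cdot)$ uniformly in $\X_{k+1}$.

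First I would record the elementary properties of $\widetilde g$. Being a uniform (in $\eta$) limit of the continuous maps $\eta\mapsto g(\e+i\eta)\in\X_{k+1}$, the map $\widetilde g$ is continuous on $\R$; choosing $\e_0>0$ with $\sup_{\eta}\|g(\e_0+i\eta)-\widetilde g(\eta)\|_{\X_{k+1}}\leq 1$ and using $\lim_{|\eta|\to\infty}\|g(\e_0+i\eta)\|_{\X_{k+1}}=0$ shows $\widetilde g\in\mathscr{C}_0(\R,\X_{k+1})$; in particular $M:=\sup_{\eta}\|\widetilde g(\eta)\|_{\X_{k+1}}<\infty$ and the range $K:=\widetilde g(\R)\cup\{0\}$ is a compact subset of $\X_{k+1}$ (the continuous image of $\R$ together with its limit at infinity). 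Note also that $\Rs(i\eta,\T_0)\widetilde g(\eta)$ is well defined in $\X_k$ since $\widetilde g(\eta)\in\X_{k+1}$ and $\Rs(i\eta,\T_0)\in\mathscr{B}(\X_{k+1},\X_k)$ by Proposition \ref{prop:convRsT0} and the estimate recalled below.

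For the second term I would invoke the estimate obtained in the proof of Proposition \ref{prop:convRsT0}: for $h\in\X_{k+1}$,
\[
\bigl\|\bigl(\Rs(\e+i\eta,\T_0)-\Rs(i\eta,\T_0)\bigr)h\bigr\|_{\X_k}\leq \Phi(\e,h):=\int_0^{\infty}\bigl|e^{-\e t}-1\bigr|\,\|U_0(t)h\|_{\X_k}\,\d t .
\]
By Lemma \ref{lem:UotInt}, $\Phi(\e,h)\leq D\|h\|_{\X_{k+1}}$ and, using $|e^{-\e t}-1|\leq 1$, $|\Phi(\e,h)-\Phi(\e,h')|\leq D\|h-h'\|_{\X_{k+1}}$ for all $\e$, so $\{\Phi(\e,\cdot)\}_{\e\in(0,1]}$ is an equi-Lipschitz family on $\X_{k+1}$; moreover $\Phi(\e,h)\to 0$ as $\e\to 0^+$ for each fixed $h$ by dominated convergence (the integrand is dominated by $\|U_0(t)h\|_{\X_k}$, integrable by Lemma \ref{lem:UotInt}). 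An $\epsilon$-net argument over the compact set $K$ then upgrades this to $\sup_{h\in K}\Phi(\e,h)\to 0$, whence $\sup_{\eta}\bigl\|\bigl(\Rs(\e+i\eta,\T_0)-\Rs(i\eta,\T_0)\bigr)\widetilde g(\eta)\bigr\|_{\X_k}\to 0$. Combining the two bounds yields $\lim_{\e\to0^+}\sup_{\eta}\|\Rs(\e+i\eta,\T_0)g(\e+i\eta)-\Rs(i\eta,\T_0)\widetilde g(\eta)\|_{\X_k}=0$.

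Finally, to conclude that the convergence takes place \emph{in} $\mathscr{C}_0(\R,\X_k)$ I would argue that $\eta\mapsto\Rs(i\eta,\T_0)\widetilde g(\eta)$ is itself continuous and vanishes at infinity: it is the uniform limit of the maps $\eta\mapsto\Rs(\e+i\eta,\T_0)g(\e+i\eta)$, each of which is continuous (for fixed $\e$, this follows from continuity of $g$ on $\C_+$, the $\mathscr{C}^k$-regularity of $\eta\mapsto\Rs(\e+i\eta,\T_0)f$ given by Proposition \ref{prop:convRsT0}, and the uniform bound \eqref{eq:Ck}); and, fixing $\e$ so that the sup-distance above is $<\tfrac{\epsilon}{2}$, the bound $\|\Rs(\e+i\eta,\T_0)g(\e+i\eta)\|_{\X_k}\leq C_k\|g(\e+i\eta)\|_{\X_{k+1}}\to 0$ as $|\eta|\to\infty$ forces $\limsup_{|\eta|\to\infty}\|\Rs(i\eta,\T_0)\widetilde g(\eta)\|_{\X_k}\leq\tfrac{\epsilon}{2}$. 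The one genuinely delicate point is the exchange of the limit $\e\to0^+$ with the supremum over $\eta$ in the second term; this is exactly where the compactness of the range $K$ of $\widetilde g$ — rather than its mere boundedness — is used.
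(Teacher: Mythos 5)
Your proof is correct and follows essentially the same route as the paper's: the same two-term splitting, the same use of the uniform bound $C_k$ for the first term, and the same compactness of $\widetilde g(\R)$ in $\X_{k+1}$ to handle the second term. You simply make explicit two points the paper glosses over — that $\widetilde g\in\mathscr{C}_0(\R,\X_{k+1})$ follows from the hypotheses rather than being assumed outright, and that the "uniform on compacts" step rests on the equi-Lipschitz bound from Lemma \ref{lem:UotInt} together with an $\epsilon$-net argument.
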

\begin{proof} Since the convergence \eqref{eq:RsT0ieta} holds for any $f \in\X_{k+1}$, the convergence is of course uniform on any compact subset of $\X_{k+1}$. Since by assumption the mapping
$\widetilde{g}\::\:\eta \in \R \mapsto \widetilde{g}(\eta) \in \X_{0}$
belongs to $\mathscr{C}_{0}(\R,\X_{k+1})$, the set
$$\{\widetilde{g}(\eta)\,;\,\eta \in \R\} \quad \text{ is a compact subset of } \X_{k+1}.$$
Thus
$$\sup_{\e\to0^{+}}\sup_{\eta\in \R}\left\|\Rs(\e+i\eta,\T_{0})\widetilde{g}(\eta)-\Rs(i\eta,\T_{0})\widetilde{g}(\eta)\right\|_{\X_{k}}=0$$
Now, noticing that, for any $\e \in (0,1]$, it holds
\begin{multline*}
\left\|\Rs(\e+i\eta,\T_{0})g(\e+i\eta)-\Rs(i\eta,\T_{0})\widetilde{g}(\eta)\right\|_{\X_{k}} \leq
\left\|\Rs(\e+i\eta,\T_{0})\left(g(\e+i\eta)-\widetilde{g}(\eta)\right)\right\|_{\X_{k}}\\
+\left\|\Rs(\e+i\eta,\T_{0})\widetilde{g}(\eta)-\Rs(i\eta,\T_{0})\widetilde{g}(\eta)\right\|_{\X_{k}}\\
C_{k}\left\|g(\e+i\eta)-\widetilde{g}(\eta)\right\|_{\X_{k+1}}+\left\|\Rs(\e+i\eta,\T_{0})\widetilde{g}(\eta)-\Rs(i\eta,\T_{0})\widetilde{g}(\eta)\right\|_{\X_{k}}
\end{multline*}
where $C_{k}$ is defined in \eqref{eq:Ck}, we deduce easily that
$$\lim_{\e\to0^{+}}\sup_{\eta\in\R}\left\|\Rs(\e+i\eta,\T_{0})g(\e+i\eta)-\Rs(i\eta,\T_{0})\widetilde{g}(\eta)\right\|_{\X_{k}}=0$$
which proves the result.\end{proof}

The convergence established in Prop. \ref{prop:convRsT0} extends to derivatives of $\Rs(\e+i\eta,\T_{0})f$
\begin{lemme}\label{lem:convDerRsT0} Given $k \in \N$ and $f \in \X_{k+1}$. It holds
\begin{equation*}
\lim_{\e\to0^{+}}\sup_{\eta \in \R}\left\|\dfrac{\d^{k}}{\d\eta^{k}}\Rs(\e+i\eta,\T_{0})f-\dfrac{\d^{k}}{\d\eta^{k}}\Rs(i\eta,\T_{0})f\right\|_{\X_{0}}=0.
\end{equation*}
Consequently, the mapping
$$\eta \in \R \longmapsto \Rs(i\eta,\T_{0})f \in \X_{0}$$
belongs to $\mathscr{C}_{0}^{k}(\R,\X_{0}).$
\end{lemme}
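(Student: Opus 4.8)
The plan is to run the Laplace-representation argument of Proposition \ref{prop:convRsT0} one level up, applied to the $\eta$-derivatives, and to reduce everything to a single use of the dominated convergence theorem. First I would record that, since $f\in\X_{k+1}$ and hence $f\in\X_{j+1}$ for every $0\le j\le k$ (the spaces $\X_{s}$ are nested, $\X_{s'}\subset\X_{s}$ for $s\le s'$), Lemma \ref{lem:UotInt} gives
\[
\int_{0}^{\infty}t^{j}\,\|U_{0}(t)f\|_{\X_{0}}\,\d t\;\le\;\frac{(2D)^{j+1}}{j+1}\,\|f\|_{\X_{j+1}}\;<\;\infty,\qquad 0\le j\le k .
\]
In particular $t\mapsto t^{j}U_{0}(t)f$ is Bochner integrable on $[0,\infty)$ for each such $j$, so for every $\e>0$ one may differentiate $\Rs(\e+i\eta,\T_{0})f=\int_{0}^{\infty}e^{-i\eta t}e^{-\e t}U_{0}(t)f\,\d t$ under the integral sign $k$ times, and the same computation is legitimate for $\e=0$:
\begin{gather*}
\frac{\d^{j}}{\d\eta^{j}}\Rs(\e+i\eta,\T_{0})f=(-i)^{j}\int_{0}^{\infty} t^{j}e^{-i\eta t}e^{-\e t}U_{0}(t)f\,\d t,\\
\frac{\d^{j}}{\d\eta^{j}}\Rs(i\eta,\T_{0})f=(-i)^{j}\int_{0}^{\infty} t^{j}e^{-i\eta t}U_{0}(t)f\,\d t,\qquad 0\le j\le k,
\end{gather*}
where $\Rs(i\eta,\T_{0})f$ is the limit constructed in Proposition \ref{prop:convRsT0}.

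The core estimate is then immediate: subtracting the two integrals, for every $\eta\in\R$, every $\e>0$ and every $0\le j\le k$,
\[
\left\|\frac{\d^{j}}{\d\eta^{j}}\Rs(\e+i\eta,\T_{0})f-\frac{\d^{j}}{\d\eta^{j}}\Rs(i\eta,\T_{0})f\right\|_{\X_{0}}
\le\int_{0}^{\infty}t^{j}\bigl|e^{-\e t}-1\bigr|\,\|U_{0}(t)f\|_{\X_{0}}\,\d t .
\]
The right-hand side does not depend on $\eta$; since $0\le1-e^{-\e t}\le1$ for all $t\ge0$, $e^{-\e t}\to1$ pointwise as $\e\to0^{+}$, and the dominating function $t^{j}\|U_{0}(t)f\|_{\X_{0}}$ is integrable by the displayed bound, the dominated convergence theorem shows that the right-hand side tends to $0$ as $\e\to0^{+}$. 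Taking $j=k$ gives the convergence asserted in the Lemma, and more generally one obtains $\lim_{\e\to0^{+}}\sup_{\eta\in\R}\bigl\|\tfrac{\d^{j}}{\d\eta^{j}}\Rs(\e+i\eta,\T_{0})f-\tfrac{\d^{j}}{\d\eta^{j}}\Rs(i\eta,\T_{0})f\bigr\|_{\X_{0}}=0$ for every $0\le j\le k$.

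For the final assertion I would argue inductively in $j$. For each fixed $\e>0$ the map $\eta\mapsto\Rs(\e+i\eta,\T_{0})f$ belongs to $\mathscr{C}_{0}^{k}(\R,\X_{0})$ by Proposition \ref{prop:convRsT0}, so each $\tfrac{\d^{j}}{\d\eta^{j}}\Rs(\e+i\eta,\T_{0})f$ is continuous; using the standard fact that if a family of $\mathscr{C}^{1}$ Banach-valued functions converges uniformly and its derivatives also converge uniformly then the limit is $\mathscr{C}^{1}$ with derivative equal to the limit of the derivatives, and applying this repeatedly from $j=0$ up to $j=k$ with the uniform limits found above, one concludes that $\eta\mapsto\Rs(i\eta,\T_{0})f$ is of class $\mathscr{C}^{k}$ on $\R$ with $\tfrac{\d^{j}}{\d\eta^{j}}\Rs(i\eta,\T_{0})f=(-i)^{j}\int_{0}^{\infty}t^{j}e^{-i\eta t}U_{0}(t)f\,\d t$. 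Finally each such derivative vanishes at infinity by the Riemann--Lebesgue theorem applied to the Bochner integrable function $t\mapsto t^{j}U_{0}(t)f$, so $\eta\mapsto\Rs(i\eta,\T_{0})f\in\mathscr{C}_{0}^{k}(\R,\X_{0})$. The argument is entirely routine; the only point requiring care is the hypothesis $f\in\X_{k+1}$ rather than merely $f\in\X_{k}$, which is exactly what makes the weight $t^{k}$ admissible in Lemma \ref{lem:UotInt} and hence legitimises both the differentiation under the integral sign and the choice of dominating function in the convergence step.
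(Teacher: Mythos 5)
Your proof is correct and follows essentially the same route as the paper's: differentiate $\Rs(\e+i\eta,\T_{0})f=\int_{0}^{\infty}e^{-i\eta t}e^{-\e t}U_{0}(t)f\,\d t$ under the integral sign, bound the difference by the $\eta$-independent integral $\int_{0}^{\infty}t^{k}\bigl|e^{-\e t}-1\bigr|\,\|U_{0}(t)f\|_{\X_{0}}\,\d t$, and conclude by dominated convergence using the integrability furnished by Lemma \ref{lem:UotInt}. The paper compresses the final regularity claim into ``one concludes as in Proposition \ref{prop:convRsT0}''; your explicit inductive argument via uniform convergence of derivatives together with Riemann--Lebesgue is exactly the content that phrase hides, just spelled out.
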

\begin{proof} As already established
$$\dfrac{\d^{k}}{\d\eta^{k}}\Rs(\e+i\eta,\T_{0})f=(-i)^{k}\int_{0}^{\infty}e^{-i\eta t}e^{-\e t}U_{0}(t)f\d t$$
and, since 
$$\Rs(i\eta,\T_{0})f=\int_{0}^{\infty}e^{-i\eta t}U_{0}(t)f\d t$$
one sees easily that, if $f \in \X_{k+1}$, 
$$\frac{\d^{k}}{\d\eta^{k}}\Rs(i\eta,\T_{0})f=(-i)^{k}\int_{0}^{\infty}e^{-i\eta t}t^{k}U_{0}(t)f\d t$$
is well-defined in $\X_{0}$ thanks to Lemma \ref{lem:UotInt}. One concludes then exactly as in Prop. \ref{prop:convRsT0}. 
\end{proof}

Recalling that
$$\Rs(\e+i\eta,\T_{\H})=\Rs(\e+i\eta,\T_{0})+\mathsf{\Xi}_{\e+i\eta}\H\Rs(1,\mathsf{M}_{\e+i\eta}\H)\mathsf{G}_{\e+i\eta}$$
the previous results allows to prove the following
\begin{propo}\label{theo:existtraceTH} For any $k \in \N$, $k \leq N_{\H}$ and any $f \in \X_{k+1}^{0}$ the limit
$$\lim_{\e\to0^{+}}\Rs(\e+i\eta,\T_{\H})f$$
exists in $\mathscr{C}_{0}(\R,\X_{k})$. We denote by $\Rs(i\eta,\T_{\H})f$ the limit.
\end{propo}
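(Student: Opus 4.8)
The plan is to obtain the trace of $\Rs(\e+i\eta,\T_{\H})f$ by splitting the resolvent identity recalled just above,
$$\Rs(\e+i\eta,\T_{\H})=\Rs(\e+i\eta,\T_{0})+\mathsf{\Xi}_{\e+i\eta}\H\,\Rs(1,\mathsf{M}_{\e+i\eta}\H)\,\mathsf{G}_{\e+i\eta},$$
and treating the two summands separately. Since $\X_{k+1}^{0}\subset\X_{k+1}$, Proposition \ref{prop:convRsT0} already provides that $\lim_{\e\to0^{+}}\Rs(\e+i\eta,\T_{0})f$ exists in $\mathscr{C}_{0}(\R,\X_{k})$, with limit denoted $\Rs(i\eta,\T_{0})f$; so the whole matter reduces to the second summand.

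For that summand, the first thing I would observe is that $f\in\X_{k+1}^{0}\subset\X_{1}^{0}$, so Lemma \ref{lem:convZk} applies: $\Rs(1,\mathsf{M}_{\e+i\eta}\H)\mathsf{G}_{\e+i\eta}f\to\mathsf{\Phi}(\eta)f$ in $\lp$ as $\e\to0^{+}$, uniformly on compact subsets of $\R$, so that $\eta\mapsto\mathsf{\Phi}(\eta)f$ is continuous on $\R$. It then remains to propagate this convergence through left multiplication by $\mathsf{\Xi}_{\e+i\eta}\H$. On a compact interval $[a,b]$ I would write
$$\mathsf{\Xi}_{\e+i\eta}\H\Rs(1,\mathsf{M}_{\e+i\eta}\H)\mathsf{G}_{\e+i\eta}f-\mathsf{\Xi}_{i\eta}\H\mathsf{\Phi}(\eta)f=\mathsf{\Xi}_{\e+i\eta}\H\bigl[\Rs(1,\mathsf{M}_{\e+i\eta}\H)\mathsf{G}_{\e+i\eta}f-\mathsf{\Phi}(\eta)f\bigr]+\bigl(\mathsf{\Xi}_{\e+i\eta}\H-\mathsf{\Xi}_{i\eta}\H\bigr)\mathsf{\Phi}(\eta)f,$$
bound the first term in $\X_{k}$ by $\|\mathsf{\Xi}_{\e+i\eta}\H\|_{\mathscr{B}(\lp,\X_{k})}\,\|\Rs(1,\mathsf{M}_{\e+i\eta}\H)\mathsf{G}_{\e+i\eta}f-\mathsf{\Phi}(\eta)f\|_{\lp}$ (the operator norm being uniformly bounded for $k\leq N_{\H}$ by Corollary \ref{cor:boundMX}, the second factor tending to $0$ uniformly on $[a,b]$ by Lemma \ref{lem:convZk}), and the second term by $\e D\,\|\H\|_{\mathscr{B}(\lp,\Y_{k+1}^{-})}\sup_{\eta\in[a,b]}\|\mathsf{\Phi}(\eta)f\|_{\lp}$ using \eqref{eq:MeisHk}, the supremum being finite by continuity of $\mathsf{\Phi}(\cdot)f$. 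Both go to $0$, which gives uniform convergence on every compact.

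Next I would handle the tails $|\eta|>R$ uniformly in $\e\in(0,1]$. By Lemma \ref{lem:unifconti}, Remark \ref{cor:exten} and \eqref{cor:uniformpower}, $\|(\mathsf{M}_{\e+i\eta}\H)^{\mathsf{p}}\|_{\mathscr{B}(\lp)}\to0$ as $|\eta|\to\infty$ uniformly for $\e\in[0,1]$, hence $\|\Rs(1,\mathsf{M}_{\e+i\eta}\H)\|_{\mathscr{B}(\lp)}$ stays bounded for $|\eta|$ large and $\e\in[0,1]$, and with Corollary \ref{cor:boundMX} so does $\|\mathsf{\Xi}_{\e+i\eta}\H\Rs(1,\mathsf{M}_{\e+i\eta}\H)\|_{\mathscr{B}(\lp,\X_{k})}$; combining with \eqref{eq:RiemLeb} yields $\lim_{|\eta|\to\infty}\sup_{\e\in[0,1]}\|\mathsf{\Xi}_{\e+i\eta}\H\Rs(1,\mathsf{M}_{\e+i\eta}\H)\mathsf{G}_{\e+i\eta}f\|_{\X_{k}}=0$, a bound that passes to the limit $\e\to0^{+}$. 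Together with the preceding paragraph, and the fact that for fixed $\e>0$ the map $\eta\mapsto\mathsf{\Xi}_{\e+i\eta}\H\Rs(1,\mathsf{M}_{\e+i\eta}\H)\mathsf{G}_{\e+i\eta}f$ lies in $\mathscr{C}_{0}(\R,\X_{k})$ (continuity is clear; vanishing at infinity follows from $\|\Rs(1,\mathsf{M}_{\e+i\eta}\H)\|_{\mathscr{B}(\lp)}\leq\|\Rs(1,\mathsf{M}_{\e}\H)\|_{\mathscr{B}(\lp)}$ together with \eqref{eq:RiemLeb}), this shows that the second summand converges in $\mathscr{C}_{0}(\R,\X_{k})$ to $\mathsf{\Xi}_{i\eta}\H\mathsf{\Phi}(\eta)f$. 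Adding the two contributions, $\lim_{\e\to0^{+}}\Rs(\e+i\eta,\T_{\H})f=\Rs(i\eta,\T_{0})f+\mathsf{\Xi}_{i\eta}\H\mathsf{\Phi}(\eta)f$ in $\mathscr{C}_{0}(\R,\X_{k})$, which is what we denote $\Rs(i\eta,\T_{\H})f$.

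The real difficulty is concentrated near $\eta=0$: there $1\in\mathfrak{S}(\mathsf{M}_{0}\H)$, so $\Rs(1,\mathsf{M}_{\e+i\eta}\H)$ blows up as $(\e,\eta)\to(0,0)$ and the operators $\mathsf{\Xi}_{\e+i\eta}\H$ and $\Rs(1,\mathsf{M}_{\e+i\eta}\H)$ cannot be estimated separately. That is exactly why one must push the \emph{combination} $\Rs(1,\mathsf{M}_{\e+i\eta}\H)\mathsf{G}_{\e+i\eta}f$ through Lemma \ref{lem:convZk}, and why the zero-mean hypothesis $\varrho_{f}=0$, i.e. $f\in\X_{k+1}^{0}$, is indispensable: it kills the singular term $\mathsf{P}(0)\mathsf{G}_{0}f$ and leaves only the finite limit governed by $\nu'(0)<0$ (Lemma \ref{lem:deriv}). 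Granted Lemma \ref{lem:convZk}, the present statement is then routine bookkeeping with the two uniform operator bounds above, so I do not expect any further obstruction.
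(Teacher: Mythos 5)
Your proof is correct and follows the paper's approach essentially verbatim: you use the same resolvent splitting, invoke Proposition \ref{prop:convRsT0} for the first summand, and for the second summand you re-derive inline the content of \eqref{eq:limi}/Proposition \ref{propo:limi11} (compact-interval estimate via \eqref{eq:MeisHk} and Lemma \ref{lem:convZk}, tail estimate via \eqref{eq:RiemLeb} and uniform boundedness of $\mathsf{\Xi}_{\e+i\eta}\H\Rs(1,\mathsf{M}_{\e+i\eta}\H)$), whereas the paper simply cites that proposition. The only cosmetic difference is which factor of the product you freeze at $i\eta$ when splitting the difference $\mathsf{\Xi}_{\e+i\eta}\H\Rs(1,\mathsf{M}_{\e+i\eta}\H)\mathsf{G}_{\e+i\eta}f-\mathsf{\Xi}_{i\eta}\H\mathsf{\Phi}(\eta)f$ into two pieces; both choices work and the estimates are the same.
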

\begin{proof} Using the above splitting $\Rs(\e+i\eta,\T_{\H})=\Rs(\e+i\eta,\T_{0})+\mathsf{\Xi}_{\e+i\eta}\H\Rs(1,\mathsf{M}_{\e+i\eta}\H)\mathsf{G}_{\e+i\eta}$, Prop. \ref{prop:convRsT0} shows the convergence of the first term and \eqref{eq:limi} gives the one of the second one. This proves the result with 
$$\Rs(i\eta,\T_{\H})f=\Rs(i\eta,\T_{0})f+\mathsf{\Xi}_{i\eta}\H\mathsf{\Phi}(\eta)f$$
and of course the mapping $\eta \in \R \mapsto \Rs(i\eta,\T_{\H})f$ belongs to $\mathscr{C}_{0}(\R,\X_{k})$.
\end{proof}
\begin{nb} Notice that 
$$\Rs(\e+i\eta,\T_{\H})\in \mathscr{B}(\X_{k+1},\X_{k}) \qquad \forall \e >0, \eta \in\R$$
and one sees from Prop. \ref{theo:existtraceTH} that $\Rs(i\eta,\T_{\H}) \in \mathscr{B}(\X_{k+1}^{0},\X_{k})$ with, thanks to Banach-Steinhaus Theorem,
$$\sup\left\{\|\Rs(\e+i\eta,\T_{\H})\|_{\mathscr{B}(\X_{k+1}^{0},\X_{k})}\;;\;\e \in (0,1]\;;\;\eta \in \R\right\}:=\overline{C}_{k} < \infty$$
for any $k \in \left\{0,\ldots,N_{\H}\right\}.$ Notice also that one cannot hope to go beyond the threshold value $k=N_{\H}$ since $\mathsf{\Xi}_{i\eta}\H$ maps $\lp$ in $\X_{N_{\H}}$ but not in $\X_{N_{\H}+1}.$\end{nb} 
As it was the case for $\Rs(\e+i\eta,\T_{0})$, the above convergence extends to derivatives. The crucial observation is the following general property of the resolvent
$$\dfrac{\d^{k}}{\d \l^{k}}\Rs(\l,\T_{\H}) =(-1)^{k}k!\Rs(\l,\T_{\H})^{k+1}, \qquad \l \in \C_{+}.$$
One has then the key technical result
\begin{propo}\label{lem:UPS}  Assume that $\H$ satisfies assumptions \ref{hypH} and let $f \in \X_{N_{\H}+1}^{0}$. Then,
$$\lim_{\e\to 0^{+}}\left[\Rs(\e+i\eta,\T_{\H})\right]^{k}f:=\left[\Rs(i\eta,\T_{\H}\right]^{k}f \qquad \text{ in } \quad \mathscr{C}_{0}\left(\R,\X_{N_{\H}-(k-1)}\right)$$
holds for any $k \in \{0,\ldots,N_{\H}+1\}.$ \end{propo}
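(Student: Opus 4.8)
The plan is to argue by induction on $k$, using that the $(k{+}1)$-st power of the resolvent factors as $\Rs(\l,\T_\H)^{k+1}=\Rs(\l,\T_\H)\,\Rs(\l,\T_\H)^{k}$, so that once the boundary trace of $\Rs(\cdot,\T_\H)^{k}$ has been built, the trace of $\Rs(\cdot,\T_\H)^{k+1}$ is obtained by letting $\Rs(\e+i\eta,\T_\H)$ act on the $\l$-dependent family $\eta\mapsto\Rs(\e+i\eta,\T_\H)^{k}f$. The index bookkeeping is governed by Proposition \ref{propo:regul}: iterating it, $\Rs(\l,\T_\H)^{k}$ maps $\X_{N_\H+1}^{0}$ into $\X_{N_\H-k+1}^{0}$ for every $\l\in\C_{+}$ and every $0\leq k\leq N_\H+1$, so each power stays in a space where Proposition \ref{theo:existtraceTH} has already produced the trace on the imaginary axis, and the induction stops precisely at $k=N_\H+1$ (where $\X_{N_\H-k+1}=\X_{0}$), matching the statement.

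The one genuinely new tool I would establish is a $\T_\H$-version of Corollary \ref{cor:double}: if $j\in\{0,\dots,N_\H\}$ and, for each $\e>0$, $g_{\e}\in\mathscr{C}_{0}(\R,\X_{j+1}^{0})$ with $\widetilde g(\eta):=\lim_{\e\to0^{+}}g_{\e}(\eta)$ existing in $\X_{j+1}$ uniformly in $\eta$, then $\widetilde g\in\mathscr{C}_{0}(\R,\X_{j+1}^{0})$ and $\Rs(\e+i\eta,\T_\H)g_{\e}(\eta)\to\Rs(i\eta,\T_\H)\widetilde g(\eta)$ in $\mathscr{C}_{0}(\R,\X_{j})$. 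Its proof rests on three inputs: the image $\{\widetilde g(\eta):\eta\in\R\}\cup\{0\}$ is compact in $\X_{j+1}^{0}$ (being the range of a $\mathscr{C}_{0}$-map); by Proposition \ref{theo:existtraceTH}, $\Rs(\e+i\eta,\T_\H)\phi\to\Rs(i\eta,\T_\H)\phi$ in $\mathscr{C}_{0}(\R,\X_{j})$ for each fixed $\phi\in\X_{j+1}^{0}$; and the uniform bound $\overline C_{j}=\sup\{\|\Rs(\e+i\eta,\T_\H)\|_{\mathscr{B}(\X_{j+1}^{0},\X_{j})}:\e\in(0,1],\eta\in\R\}<\infty$ from the remark after Proposition \ref{theo:existtraceTH}, which by strong convergence also bounds $\Rs(i\eta,\T_\H)$. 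An $\e/3$-argument over the compact set upgrades the second input to convergence uniform in $(\eta,\phi)$ with $\phi$ in that compact set; applied with $\phi=\widetilde g(\eta)$ and combined with $\|\Rs(\e+i\eta,\T_\H)(g_{\e}(\eta)-\widetilde g(\eta))\|_{\X_{j}}\leq\overline C_{j}\|g_{\e}(\eta)-\widetilde g(\eta)\|_{\X_{j+1}}\to0$ this gives the stated convergence, while $\|\Rs(i\eta,\T_\H)\widetilde g(\eta)\|_{\X_{j}}\leq\overline C_{j}\|\widetilde g(\eta)\|_{\X_{j+1}}\to0$ and the uniform-limit argument place the limit in $\mathscr{C}_{0}(\R,\X_{j})$.

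Then I would run the induction. The case $k=0$ is trivial and $k=1$ is Proposition \ref{theo:existtraceTH} with index $N_\H$; one checks in addition that for fixed $\e>0$ the map $\eta\mapsto\Rs(\e+i\eta,\T_\H)f$ belongs to $\mathscr{C}_{0}(\R,\X_{N_\H}^{0})$, by splitting $\Rs(\e+i\eta,\T_\H)=\Rs(\e+i\eta,\T_{0})+\mathsf{\Xi}_{\e+i\eta}\H\,\Rs(1,\mathsf{M}_{\e+i\eta}\H)\,\mathsf{G}_{\e+i\eta}$ and applying Riemann--Lebesgue in $\X_{N_\H}$ to the first term (using $\|U_{0}(t)f\|_{\X_{N_\H}}\leq\|f\|_{\X_{N_\H}}$) and \eqref{eq:RiemLeb} together with $\sup_{\eta}\|\mathsf{\Xi}_{\e+i\eta}\H\|_{\mathscr{B}(\lp,\X_{N_\H})}\leq\|\mathsf{\Xi}_{0}\H\|_{\mathscr{B}(\lp,\X_{N_\H})}$ and $\sup_{\eta}\|\Rs(1,\mathsf{M}_{\e+i\eta}\H)\|_{\mathscr{B}(\lp)}\leq\|\Rs(1,\mathsf{M}_{\e}\H)\|_{\mathscr{B}(\lp)}<\infty$ to the second (the last bound because $|\mathsf{M}_{\e+i\eta}\H|\leq\mathsf{M}_{\e}\H$ and $r_{\sigma}(\mathsf{M}_{\e}\H)<1$ for $\e>0$). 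For the step $k\to k+1$ with $1\leq k\leq N_\H$, put $g_{\e}(\eta):=\Rs(\e+i\eta,\T_\H)^{k}f$; the inductive hypothesis gives $g_{\e}\in\mathscr{C}_{0}(\R,\X_{N_\H-k+1}^{0})$ and $g_{\e}(\eta)\to\Rs(i\eta,\T_\H)^{k}f$ uniformly in $\X_{N_\H-k+1}$, so the $\T_\H$-version of Corollary \ref{cor:double} with $j=N_\H-k$ yields $\Rs(\e+i\eta,\T_\H)^{k+1}f=\Rs(\e+i\eta,\T_\H)g_{\e}(\eta)\to\Rs(i\eta,\T_\H)\bigl(\Rs(i\eta,\T_\H)^{k}f\bigr)$ in $\mathscr{C}_{0}(\R,\X_{N_\H-k})$, which is the assertion at level $k+1$ (the membership $\Rs(\e+i\eta,\T_\H)^{k+1}f\in\mathscr{C}_{0}(\R,\X_{N_\H-k}^{0})$ for $\e>0$ following once more from $\overline C_{N_\H-k}$ and Proposition \ref{propo:regul}).

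The main obstacle is not the induction, which is essentially formal, but making sure every estimate is uniform in $\eta$ through $\eta=0$, where $1\in\mathfrak{S}(\mathsf{M}_{0}\H)$ renders $\Rs(1,\mathsf{M}_{\e+i\eta}\H)$ singular as $\e\to0^{+}$; this difficulty, however, has already been absorbed into Proposition \ref{theo:existtraceTH} and the finiteness of $\overline C_{j}$, so here it reduces to the compactness and $\e/3$ arguments and to the routine $\mathscr{C}_{0}$-membership checks, and the delicate spectral analysis of $\mathsf{M}_{\l}\H$ near $\l=0$ need not be reopened.
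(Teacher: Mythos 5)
Your proof is correct, and it takes a slightly cleaner route to the same place. In the inductive step the paper decomposes $\Rs(\e+i\eta,\T_\H)=\Rs(\e+i\eta,\T_{0})+\Upsilon_{0}(\e+i\eta)$, then applies Corollary \ref{cor:double} to the $\T_0$-piece and re-runs the same $\varepsilon/3$-over-a-compact-set argument in parallel for the $\Upsilon_0$-piece using \eqref{eq:limi}; you instead establish a single $\T_\H$-analogue of Corollary \ref{cor:double} and apply it once, treating $\Rs(\cdot,\T_\H)$ as a black box. This is legitimate because the three ingredients your lemma needs are all already available at that point in the paper: uniform-in-$\eta$ convergence $\Rs(\e+i\eta,\T_\H)\phi\to\Rs(i\eta,\T_\H)\phi$ for each fixed $\phi\in\X_{j+1}^{0}$ from Proposition \ref{theo:existtraceTH}, the bound $\overline{C}_{j}<\infty$ from the remark following it (which also bounds the strong limit $\Rs(i\eta,\T_\H)$), and compactness of $\{\widetilde g(\eta):\eta\in\R\}\cup\{0\}$ because $\widetilde g$ is a $\mathscr{C}_{0}$-map. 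Your version thus avoids reopening the $\T_0$/$\Upsilon_0$ split at every inductive step, at the small cost — which you do address — of checking explicitly that the $\mathscr{C}_{0}$-membership of $\eta\mapsto\Rs(\e+i\eta,\T_\H)^{k}f$ for fixed $\e>0$ propagates, via $\overline C_{N_\H-k}$, from the base case (handled through Proposition \ref{prop:convRsT0} and \eqref{eq:RiemLeb}) and that $\widetilde g(\eta)\in\X_{j+1}^{0}$ (by continuity of $\varrho_{(\cdot)}$). The nontrivial pillars — Proposition \ref{theo:existtraceTH} and $\overline C_j<\infty$ — are identical; yours is a modest structural simplification of the induction, not a different mechanism.
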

\begin{proof} The proof is made by induction over $k \in \N$, $k \leq N_{\H}+1$ For $k=1$, the result holds true by Proposition \ref{theo:existtraceTH}. Let $k \in \N$, $k \leq N_{\H}$ and assume the result to be true for any $j\in \{1,\ldots,k\}.$ Let us prove the result is still true for $k+1$. One recalls that
$$\Rs(\e+i\eta,\T_{\H})=\Rs(\e+i\eta,\T_{0})+ {\Upsilon}_{0}(\e+i\eta)$$
where $\Upsilon_{0}(\l)=\mathsf{\Xi}_{\l}\H\Rs(1,\mathsf{M}_{\l}\H)\mathsf{G}_{\l}$. One has
\begin{multline}\label{eq:splitTH}
\left[\Rs(\e+i\eta,\T_{\H})\right]^{k+1}f=\Rs(\e+i\eta,\T_{0})\left[\Rs(\e+i\eta,\T_{\H})\right]^{k}f\\
+\Upsilon_{0}(\e+i\eta)\left[\Rs(\e+i\eta,\T_{\H})\right]^{k}f.\end{multline}
On the one hand, our induction hypothesis implies that
$\lim_{\e\to 0^{+}}\left[\Rs(\e+i\eta,\T_{\H}\right]^{k}f=\left[\Rs(i\eta,\T_{\H})\right]^{k}f$ in $\mathscr{C}_{0}(\R,\X_{N_{\H}+1-k})$. Thanks to Corollary \ref{cor:double}, we deduce that
\begin{equation}\label{eq:1sthand}\lim_{\e\to0^{+}}\Rs(\e+i\eta,\T_{0})\left[\Rs(\e+i\eta,\T_{\H})\right]^{k}f=\Rs(i\eta,\T_{0})\left[\Rs(i\eta,\T_{\H})\right]^{k}f \qquad \text{ in } \quad \mathscr{C}_{0}(\R,\X_{N_{\H}-k}).\end{equation}
On the other hand, from the induction hypothesis
$$\lim_{|\eta|\to\infty}\|\left[\Rs(i\eta,\T_{\H}\right]^{k}f\|_{\X_{N_{\H}+1-k}}=0$$
which implies in particular that
$$\lim_{|\eta|\to\infty}\|\left[\Rs(i\eta,\T_{\H}\right]^{k}f\|_{\X_{0}}=0.$$
Since moreover $\left[\Rs(i\eta,\T_{\H}\right]^{k}f \in \X_{0}^{0}$ and
$$\sup_{\eta \in \R}\left\|\mathsf{\Xi}_{i\eta}\H \mathsf{\Phi}(\eta)\right\|_{\mathscr{B}(\X_{0}^{0},\X_{N_{\H}-k})} < \infty$$
where $\mathsf{\Phi}(\eta)f$ is defined in Lemma \ref{lem:convZk},
one sees that
$$\lim_{|\eta|\to\infty}\left\|\mathsf{\Xi}_{i\eta}\H\mathsf{\Phi}(\eta)\left[\Rs(i\eta,\T_{\H}\right]^{k}f\right\|_{\X_{N_{\H}-k}}=0.$$
Now, to prove that 
$$\lim_{\e\to0}\Upsilon_{0}(\e+i\eta)\left[\Rs(\e+i\eta,\T_{\H})\right]^{k}f=\mathsf{\Xi}_{i\eta}\H\mathsf{\Phi}(\eta)\left[\Rs(i\eta,\T_{\H})\right]^{k}f$$ in $\mathscr{C}_{0}(\R,\X_{N_{\H}-k})$, one argues along the exact same lines as those used to prove \eqref{eq:1sthand} since, according to \eqref{eq:limi},
$$\lim_{\e\to0}\sup_{\eta \in \R}\|\Upsilon_{0}(\e+i\eta)g-\mathsf{\Xi}_{i\eta}\H\mathsf{\Phi}(\eta)g\|_{\X_{N_{\H}-k}}=0, \qquad \forall g \in \X_{0}^{0}$$
one can resume the argument of Corollary \ref{cor:double} to deduce that, for any continuous mapping $g\::\:\l \in \C_{+} \longmapsto g(\l) \in \X_{0}^{0}$ such that
$$\lim_{|\eta|\to\infty}\|g(\e+i\eta)\|_{\X_{0}}=0 \qquad \forall \e >0 \qquad \text{ and } \qquad \widetilde{g}(\eta):=\lim_{\e\to0^{+}}g(\e+i\eta)$$
exists in $\X_{0}^{0}$ uniformly with respect to $\eta \in \R$, it holds
$$\lim_{\e\to0}\sup_{\eta\in \R}\|\Upsilon_{0}(\e+i\eta)g(\e+i\eta)-\mathsf{\Xi}_{i\eta}\H\mathsf{\Phi}(\eta)\widetilde{g}(\eta)\|_{\X_{N_{\H}-k}}=0.$$
Applying this with $g(\e+i\eta)=\left[\Rs(\e+i\eta,\T_{\H})\right]^{k}f$ and $\widetilde{g}(i\eta)=\left[\Rs(i\eta,\T_{\H})\right]^{k}f$ we deduce that
$$\lim_{\e\to0}\Upsilon_{0}(\e+i\eta)\left[\Rs(\e+i\eta,\T_{\H})\right]^{k}f=\mathsf{\Xi}_{i\eta}\H\mathsf{\Phi}(\eta)\left[\Rs(i\eta,\T_{\H}\right]^{k}f \qquad \text { in } \mathscr{C}_{0}(\R,\X_{N_{\H}-k})$$
which, combined with \eqref{eq:1sthand} and \eqref{eq:splitTH} achieves the induction. \end{proof}

A fundamental consequence of the previous Proposition is 
\begin{cor}\label{cor:reguRf} Assume that $\H$ satisfies assumptions \ref{hypH}. For any $f \in \X_{N_{\H}+1}^{0},$ the mapping 
$$\eta \in \R \longmapsto \Rs(i\eta,\T_{\H})f $$
defined in Proposition \ref{theo:existtraceTH} belongs to $\mathscr{C}_{0}^{N_{\H}}(\R,\X_{0})$ and the convergence
$$\lim_{\e\to0^{+}}\Rs(\e+i\eta,\T_{\H})f=\Rs(i\eta,\T_{\H}f)$$
holds in $\mathscr{C}_{0}^{N_{\H}}(\R,\X_{0}).$\end{cor}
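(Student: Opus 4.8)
The plan is to obtain the $\mathscr{C}_0^{N_\H}$-regularity of $\eta \mapsto \Rs(i\eta,\T_\H)f$ by transferring each of its $\eta$-derivatives into a power of the resolvent, using the identity $\tfrac{\d^k}{\d\eta^k}\Rs(\e+i\eta,\T_\H) = i^k\,k!\,\Rs(\e+i\eta,\T_\H)^{k+1}$ (which follows by induction from $\tfrac{\d}{\d\l}\Rs(\l,\T_\H) = -\Rs(\l,\T_\H)^2$, together with the chain rule $\tfrac{\d}{\d\eta} = i\tfrac{\d}{\d\l}$), valid for $\e>0$, $\eta\in\R$. Thus for $0\le k \le N_\H$ one has, on the open right half-plane,
\begin{equation*}
\dfrac{\d^k}{\d\eta^k}\Rs(\e+i\eta,\T_\H)f = i^k\,k!\,\left[\Rs(\e+i\eta,\T_\H)\right]^{k+1}f.
\end{equation*}
First I would invoke Proposition \ref{lem:UPS}: since $f\in\X_{N_\H+1}^0$ and $k+1 \le N_\H+1$, the limit $\lim_{\e\to0^+}\left[\Rs(\e+i\eta,\T_\H)\right]^{k+1}f = \left[\Rs(i\eta,\T_\H)\right]^{k+1}f$ exists in $\mathscr{C}_0\left(\R,\X_{N_\H-k}\right)$, hence a fortiori in $\mathscr{C}_0(\R,\X_0)$. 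Therefore each of the mappings $\eta \mapsto \tfrac{\d^k}{\d\eta^k}\Rs(\e+i\eta,\T_\H)f$ converges uniformly on $\R$ (in $\X_0$-norm) as $\e\to0^+$, for every $k\in\{0,\ldots,N_\H\}$.

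Next I would assemble these uniform limits into a genuine $\mathscr{C}^{N_\H}$ statement about the limit function. Fix $\eta_0\in\R$ and work on a bounded interval around it. By Proposition \ref{theo:existtraceTH} the functions $g_\e:\eta\mapsto\Rs(\e+i\eta,\T_\H)f$ converge uniformly in $\X_0$ to $g_0:\eta\mapsto\Rs(i\eta,\T_\H)f$ as $\e\to0^+$; by the previous paragraph their successive derivatives $g_\e^{(k)}$ converge uniformly in $\X_0$ to some continuous limit $h_k:\eta\mapsto i^k k!\left[\Rs(i\eta,\T_\H)\right]^{k+1}f$ for each $k\le N_\H$. The standard theorem on differentiating uniform limits (applied iteratively, $k=1,2,\ldots,N_\H$, in the Banach space $\X_0$) then gives that $g_0$ is $N_\H$ times differentiable with $g_0^{(k)}=h_k$. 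Since each $h_k\in\mathscr{C}_0(\R,\X_0)$ (indeed $h_k\in\mathscr{C}_0(\R,\X_{N_\H-k})\hookrightarrow\mathscr{C}_0(\R,\X_0)$), we conclude $g_0\in\mathscr{C}_0^{N_\H}(\R,\X_0)$. Finally, the convergence $\Rs(\e+i\eta,\T_\H)f\to\Rs(i\eta,\T_\H)f$ holds in $\mathscr{C}_0^{N_\H}(\R,\X_0)$ precisely because $\sup_\eta\|g_\e^{(k)}-g_0^{(k)}\|_{\X_0}\to0$ for every $k\le N_\H$, which is exactly what the uniform-on-$\R$ convergence from Proposition \ref{lem:UPS} provides.

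The only genuinely delicate point is justifying the termwise differentiation of the uniform limit at $\e=0$: the classical result requires uniform convergence of the derivatives \emph{together with} convergence of the functions themselves at one point, and it is here that the strength of Proposition \ref{lem:UPS} — uniform convergence on all of $\R$, not merely on compacts — is essential, so that no boundary effects at $|\eta|\to\infty$ spoil the $\mathscr{C}_0$-membership of the derivatives. Everything else is bookkeeping: matching the smoothing indices $\X_{N_\H-(k-1)}$ supplied by Proposition \ref{lem:UPS} against the plain $\X_0$ target, and reading off the identity $\tfrac{\d^k}{\d\eta^k}\Rs = i^k k!\,\Rs^{k+1}$ in the limit.
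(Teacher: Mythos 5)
Your argument is essentially the paper's: differentiate via $\tfrac{\d^k}{\d\eta^k}\Rs(\e+i\eta,\T_\H)f=(-i)^k k!\,[\Rs(\e+i\eta,\T_\H)]^{k+1}f$ and invoke Proposition \ref{lem:UPS} for uniform convergence of each power of the resolvent, then pass to the limit; the paper states this more tersely, while you make explicit the (correct and needed) step of differentiating a uniform limit. The only slip is a sign: the chain rule $\tfrac{\d}{\d\eta}=i\tfrac{\d}{\d\l}$ combined with $\tfrac{\d}{\d\l}\Rs=-\Rs^2$ gives the prefactor $(-i)^k k!$, not $i^k k!$, though this does not affect the conclusion.
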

\begin{proof} Let $f \in \X_{N_{\H}+1}^{0}$ be fixed. Since, for any $k \in \{1,\ldots,N_{\H}\}$ and any $\e >0,\eta\in\R$,
$$\dfrac{\d^{k}}{\d\eta^{k}}\Rs(\e+i\eta,\T_{\H})f=(-i)^{k} k!\left[\Rs(\e+i\eta,\T_{\H})\right]^{k+1}f$$
the result follows directly  from Proposition \ref{lem:UPS} where the derivatives of $\Rs(i\eta,\T_{\H})f$ are defined by
$$\dfrac{\d^{k}}{\d\eta^{k}}\Rs(i\eta,\T_{\H})f=(-i)^{k} k!\left[\Rs(i\eta,\T_{\H})\right]^{k+1}f$$
for any $k \in \{0,\ldots,\N_{\H}\}.$
\end{proof}

\section{Definition and regularity of the boundary function of $\Upsilon_{n}(\l)$} \label{sec:near0}
 
This section is devoted to the construction of the trace along the imaginary axis, that is when $\lambda=i\eta$, $\eta \in \R$,  of  
$$\Upsilon_{n}(\l)f=\mathsf{\Xi}_{\l}\H\left(\mathsf{M}_{\l}\H\right)^{n}\Rs(1,\mathsf{M}_{\l}\H)\mathsf{G}_{\l}f, \qquad \l \in \mathbb{C}, \mathrm{Re}\l \geq0,\: n \in \N$$
for a suitable class of function $f.$ The crucial observation here is the following alternative representation of $\Upsilon_{n}(\l)f$ which can also be written as
\begin{equation}\label{eq:dif}
\Upsilon_{n}(\l)=\Rs(\l,\T_{\H})-\Rs(\l,\T_{0})-\sum_{k=0}^{n}\mathsf{\Xi}_{\l}\H\left(\mathsf{M}_{\l}\H\right)^{k}\mathsf{G}_{\l}, \qquad \l \in \C_{+}\end{equation}
as can easily be seen from the fact that
$$\Rs(\l,\T_{\H})=\Rs(\l,\T_{0})+\sum_{k=0}^{\infty}\mathsf{\Xi}_{\l}\H\left(\mathsf{M}_{\l}\H\right)^{k}\mathsf{G}_{\l}.$$ 
We already investigated the existence and regularity of the traces on the imaginary axis of the first two terms in \eqref{eq:dif} so we just need to focus on the properties of the \emph{finite sum} 
\begin{equation}\label{def:snl}
{s}_{n}(\l):=\sum_{p=0}^{n}\mathsf{\Xi}_{\l}\H\left(\mathsf{M}_{\l}\H\right)^{p}\mathsf{G}_{\l}, \qquad \l \in \overline{\C}_{+}.\end{equation}
The differentiability of the various involved (single) operators is summarized in the following whose proof is easy and postponed to Appendix \ref{app:technA}. In the sequel, the notion of differentiability of functions $h\::\:\l \in \overline{\C}_{+} \mapsto h(\l) \in Y$ (where $Y$ is a given Banach space) is the usual one but we have to emphasize the fact that  limits are always meant in $\overline{\C}_{+}$ \footnote{This means for instance that, if $\l_{0} \in \C_{+}$, $h$ is differentiable means that it is holomorphic in a neighborhoud of $\l_{0}$ whereas, for $\l_{0}=i\eta_{0}$, $\eta_{0} \in \R$, the differentiability at $\l_{0}$ of $h$ at means that there exists $h'(\l_{0}) \in Y$ such that 
$$\underset{\l \in \overline{\C}_{+}}{\lim_{\l \to \l_{0}}}\left\|\frac{h(\l)-h(\l_{0})}{\l-\l_{0}}-h'(\l_{0})\right\|_{Y}=0$$
where $\|\cdot\|_{Y}$ is the norm on $Y$.}
\begin{propo} \label{prop:DerivG} 
We have the following general differentiability properties:
\begin{enumerate}
\item For any $k \geq 1$ and any $f \in \X_{k}$, the limit
$$\lim_{\l \to 0}\dfrac{\d^{k}}{\d \l^{k}}\mathsf{G}_{\l}f=(-1)^{k}\mathsf{G}_{0}(t_{+}^{k}f)$$
exists in $\lp$ and 
\begin{equation}\label{lem:G1}\sup_{\lambda \in\overline{\C}_{+}}\left\|\dfrac{\d^{j}}{\d \lambda^{j}}\mathsf{\mathsf{G_{\lambda}}}f\right\|_{\lp} \leq  D^{j}\|f\|_{\X_{j}} \leq D^{j}\|f\|_{\X_{k}}, \qquad \forall j \in \{0,\ldots,k\}.\end{equation}
\item For any 
$k \leq N_{\H}$ 
\begin{equation}\label{prop:derMeis}\left\|\frac{\d^{k}}{\d \eta^{k}}\mathsf{M}_{\varepsilon+i\eta}\H-\frac{\d^{k}}{\d \eta^{k}}\mathsf{M}_{i\eta}\H\right\|_{\mathscr{B}(\lp)} \leq \varepsilon\,D\,\|\H\|_{\mathscr{B}(\lp,\Y_{k+1}^{-})} \qquad \forall \eta \in \R, \qquad \varepsilon >0.\end{equation}
\end{enumerate}
\end{propo}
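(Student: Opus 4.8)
The plan is to prove both items by differentiating under the integral sign in the defining formulae for $\mathsf{G}_{\l}$ and $\mathsf{M}_{\l}$, the only real work being the bookkeeping with the integration formulae \eqref{10.47}--\eqref{10.51} and the weights $\max(1,|v|^{-s})$. For item (1), I would fix $f \in \X_{k}$ and use that $|s^{j}e^{-\l s}| \leq s^{j} \leq \tau_{-}(x,v)^{j} \leq (D/|v|)^{j}$ whenever $\mathrm{Re}\,\l \geq 0$ and $0 \leq s \leq \tau_{-}(x,v)$: this both licenses $j$-fold differentiation under the integral sign and, via a routine dominated-convergence argument on difference quotients, upgrades it to strong differentiability of $\l \in \overline{\C}_{+} \mapsto \mathsf{G}_{\l}f \in \lp$, with
$$\frac{\d^{j}}{\d\l^{j}}\mathsf{G}_{\l}f(x,v)=(-1)^{j}\int_{0}^{\tau_{-}(x,v)}s^{j}\,f(x-sv,v)\,e^{-\l s}\,\d s, \qquad 0 \leq j \leq k.$$
Bounding $s^{j} \leq (D/|v|)^{j}$ and applying \eqref{10.47} to $h(x,v)=|v|^{-j}|f(x,v)|$ then gives $\|\frac{\d^{j}}{\d\l^{j}}\mathsf{G}_{\l}f\|_{\lp} \leq D^{j}\int_{\Omega\times V}|v|^{-j}|f|\,\d x\otimes\bm{m}(\d v) \leq D^{j}\|f\|_{\X_{j}} \leq D^{j}\|f\|_{\X_{k}}$, which is \eqref{lem:G1}.

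To identify the limit at $\l = 0$, I would first record the elementary geometric fact that for $(x,v) \in \Gamma_{+}$ and $0 < s < \tau_{-}(x,v)$ the segment $\{x - rv : 0 < r < s\}$ lies in $\Omega$ while $x - sv + sv = x \in \partial\Omega$, whence $t_{+}(x-sv,v) = s$; consequently $(-1)^{k}\mathsf{G}_{0}(t_{+}^{k}f)(x,v) = (-1)^{k}\int_{0}^{\tau_{-}(x,v)}s^{k}f(x-sv,v)\,\d s$ is exactly the pointwise value of the $k$-th derivative at $\l = 0$ (and $t_{+}^{k}f \in \X_{0}$ since $|t_{+}^{k}f|\leq D^{k}\max(1,|v|^{-k})|f|$, so $\mathsf{G}_{0}(t_{+}^{k}f)$ is well defined). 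Since the difference $\frac{\d^{k}}{\d\l^{k}}\mathsf{G}_{\l}f - (-1)^{k}\mathsf{G}_{0}(t_{+}^{k}f)$ equals $(-1)^{k}\int_{0}^{\tau_{-}(x,v)}s^{k}(e^{-\l s}-1)f(x-sv,v)\,\d s$, whose integrand is dominated uniformly in $\l \in \overline{\C}_{+}$ by $2s^{k}|f(x-sv,v)| \leq 2(D/|v|)^{k}|f(x-sv,v)|$ (integrable over $\Gamma_{+}$ again by \eqref{10.47}), dominated convergence will give $\lim_{\l\to0}\frac{\d^{k}}{\d\l^{k}}\mathsf{G}_{\l}f = (-1)^{k}\mathsf{G}_{0}(t_{+}^{k}f)$ in $\lp$, the limit being taken in $\overline{\C}_{+}$.

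For item (2), the starting observation is that in $\mathsf{M}_{\mu+i\eta}\H\psi(x,v) = (\H\psi)(x-\tau_{-}(x,v)v,v)\,e^{-(\mu+i\eta)\tau_{-}(x,v)}$ only the exponential depends on $\eta$, so by induction $\frac{\d^{k}}{\d\eta^{k}}\mathsf{M}_{\mu+i\eta}\H = (-i\tau_{-})^{k}\,\mathsf{M}_{\mu+i\eta}\H$ for $\mu \in \{0,\e\}$, where $(-i\tau_{-})^{k}$ denotes multiplication by $(x,v)\mapsto(-i\tau_{-}(x,v))^{k}$. Hence
$$\frac{\d^{k}}{\d\eta^{k}}\mathsf{M}_{\e+i\eta}\H - \frac{\d^{k}}{\d\eta^{k}}\mathsf{M}_{i\eta}\H = (-i\tau_{-})^{k}\big(\mathsf{M}_{\e+i\eta}-\mathsf{M}_{i\eta}\big)\H,$$
and I would estimate the right-hand side as a composition of three bounded maps: $\H \in \mathscr{B}(\lp,\Y^{-}_{k+1})$ (valid since $k \leq N_{\H}$); $\mathsf{M}_{\e+i\eta}-\mathsf{M}_{i\eta} \in \mathscr{B}(\Y^{-}_{k+1},\Y^{+}_{k})$ with norm at most $\e D$ (this is \eqref{eq:Meisk}, or directly from $|(\mathsf{M}_{\e+i\eta}-\mathsf{M}_{i\eta})u(x,v)| \leq \e\,\tau_{-}(x,v)\,|u(x-\tau_{-}v,v)|$, the inequality $\max(1,|v|^{-k})|v|^{-1} \leq \max(1,|v|^{-(k+1)})$, and \eqref{10.51}); and multiplication by $\tau_{-}^{k}$ as a map $\Y^{+}_{k}\to\lp$ of norm at most $D^{k}$, since $\tau_{-}(x,v)^{k} \leq D^{k}\max(1,|v|^{-k})$. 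Composing these controls $\|\frac{\d^{k}}{\d\eta^{k}}\mathsf{M}_{\e+i\eta}\H - \frac{\d^{k}}{\d\eta^{k}}\mathsf{M}_{i\eta}\H\|_{\mathscr{B}(\lp)}$ by $\e\,D^{k+1}\|\H\|_{\mathscr{B}(\lp,\Y^{-}_{k+1})}$, which yields \eqref{prop:derMeis} (any surplus power of $D$ being harmlessly absorbed). I do not expect a genuine obstacle here — the statement is elementary once the derivatives are written out; the only points requiring care are upgrading pointwise to strong ($\lp$-valued) differentiability, carrying out the limit $\l \to 0$ \emph{within} the closed half-plane $\overline{\C}_{+}$ (both via dominated convergence with the explicit dominating functions above), and the geometric identity $t_{+}(x-sv,v) = s$ that pins down the limiting operator.
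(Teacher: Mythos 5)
Your proof is correct and follows essentially the same route as the paper's. In item (1) you differentiate under the integral, dominate $s^{j}e^{-\l s}$ by $(D/|v|)^{j}$, use \eqref{10.47} with $h=|v|^{-j}|f|$ for the uniform bound, and identify the limit via $t_{+}(x-sv,v)=s$; this is precisely the paper's argument (which introduces $\varphi=|f|t_{+}^{j}$ and uses $\mathsf{G}_{0}$'s isometry to the same effect). In item (2) the paper computes the pointwise identity for the $k$-th derivative of the difference and then bounds $\tau_{-}^{k}|e^{-\e\tau_{-}}-1|$ directly, whereas you package the same inequality as a composition $\tau_{-}^{k}\circ(\mathsf{M}_{\e+i\eta}-\mathsf{M}_{i\eta})\circ\H$ through $\lp\to\Y^{-}_{k+1}\to\Y^{+}_{k}\to\lp$; these are the same estimates in slightly different bookkeeping, and both yield $\e\,D^{k+1}\|\H\|_{\mathscr{B}(\lp,\Y^{-}_{k+1})}$. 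Your parenthetical remark about the surplus power of $D$ is well taken: the paper's displayed bound in \eqref{prop:derMeis} has the constant $\e D$, but its own proof (like yours) produces $\e D^{k+1}$, a typo that is immaterial since the only use of the estimate is the uniform-in-$\eta$ convergence to zero as $\e\to0^{+}$.
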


\begin{nb}\label{nb:deriGl}
Notice that the above expression of $\dfrac{\d^{j}}{\d\lambda^{j}}\mathsf{G}_{\lambda}f$ shows that, for $f \in \X_{j}$,
\begin{equation}\label{eq:derivGj}
\dfrac{\d^{j}}{\d \lambda^{j}}\mathsf{G}_{\lambda}f=(-1)^{j}\mathsf{G}_{\l}\left[t_{+}^{j}f\right]\end{equation}
where $t_{+}^{j}$ denotes the multiplication operator by the function $(x,v) \mapsto t_{+}^{j}(x,v)$. \end{nb}

As a consequence
\begin{cor}\label{cor:MHclass}
For any $k \in \N$ such that $\H \in \mathscr{B}(\lp,\Y_{k+1}^{-})$  (i.e. $k \leq N_{\H}$), the  function
$$\l=\e+i\eta  \in \C_{+} \longmapsto \frac{\d^{j}}{\d \eta^{j}}\mathsf{M}_{\l}\H \in \mathscr{B}(\lp), \qquad 0 \leq j \leq k$$
can be extended to a continuous functions on $\overline{\C}_{+}.$ In particular, the  mapping
\begin{equation}\label{eq:ClassMH}
\eta \in \R \longmapsto \mathsf{M}_{i\eta}\H \in \mathscr{B}(\lp) \quad \text{ is of class } \mathscr{C}^{N_{\H}}\end{equation}
with bounded derivatives up to order $N_{\H}$. In the same way, the  function
$$\l=\e+i\eta  \in \C_{+} \longmapsto \frac{\d^{j}}{\d \eta^{j}}\mathsf{\Xi}_{\l}\H \in \mathscr{B}(\lp,\X_{0}), \qquad 0 \leq j \leq k$$
can be extended to a continuous functions on $\overline{\C}_{+}$ and the  mapping
\begin{equation}\label{eq:CNXi}
\eta \in \R \longmapsto \mathsf{\Xi}_{i\eta}\H \in \mathscr{B}(\lp) \qquad \text{ is of class } \mathscr{C}^{N_{\H}}\end{equation}
with bounded derivatives up to order $N_{\H}.$

Moreover, for any $\varphi\in \Y_{1}^{-}$, the limit $ {\lim_{\l\to 0}}\dfrac{\d}{\d\l}\mathsf{M}_{\l}\varphi$
exists in $\lp$. In particular, 
$$\lim_{\l\to 0}\dfrac{\d}{\d\l}\mathsf{M}_{\l}\H=-\tau_{-}\mathsf{M}_{0}\H$$
exists in $\mathscr{B}(\lp)$ where, as before, we use the same symbol $\tau_{-}$ for the measurable function $\tau_{-}(\cdot,\cdot)$ and the multiplication operator by that function and we recall that limits are meant in $ {\overline{\C}_{+}}$, i.e. $\underset{\l\to 0}{\lim}\{\ldots\}=\underset{\underset{\l \in \overline{\C}_{+}}{\l\to0}}{\lim}\{\ldots\}$.
\end{cor}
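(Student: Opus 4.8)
The plan is to reduce the whole statement to three ingredients, all of which are essentially already in place: an \emph{explicit formula} for the $\eta$-derivatives of $\mathsf{M}_{\l}\H$ and $\mathsf{\Xi}_{\l}\H$ on $\C_{+}$, obtained by differentiating only the scalar factor $e^{-\l\tau_{-}}$ (resp. $e^{-\l t_{-}}$); a family of \emph{uniform bounds} on those derivatives for $\l=\e+i\eta$, $\e\geq0$, coming from Corollary \ref{cor:boundMX} together with the pointwise dominations $|\mathsf{M}_{\e+i\eta}\H\psi|\leq|\mathsf{M}_{0}\H\psi|$ and $|\mathsf{\Xi}_{\e+i\eta}\H\psi|\leq|\mathsf{\Xi}_{0}\H\psi|$; and the \emph{uniform-in-$\eta$ convergence} of those derivatives as $\e\to0^{+}$, which is precisely estimate \eqref{prop:derMeis} in Proposition \ref{prop:DerivG} and the two estimates in \eqref{eq:MeisHk}. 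Granted these, the $\mathscr{C}^{N_{\H}}$ statements follow from the classical theorem that a pointwise limit of $\mathscr{C}^{1}$ maps whose derivatives converge uniformly is $\mathscr{C}^{1}$ with the expected derivative, iterated $N_{\H}$ times, while the continuous extension to $\overline{\C}_{+}$ is a routine joint-continuity argument.

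In detail, for $\mathsf{M}$: since $\tau_{-}$ is independent of $\l$ and $\l\mapsto\mathsf{M}_{\l}\H$ is holomorphic on $\C_{+}$, differentiating under the integral (the difference quotients being, for $\mathrm{Re}\,\l\geq\e_{0}>0$, dominated by $C(\e_{0})|\mathsf{M}_{0}\H\psi|\in\lp$ because $\mathsf{M}_{0}\H\psi\in\Y^{+}_{N_{\H}+1}$) gives
$$\frac{\d^{j}}{\d\eta^{j}}\mathsf{M}_{\e+i\eta}\H=(-i\tau_{-})^{j}\mathsf{M}_{\e+i\eta}\H,\qquad 0\leq j\leq N_{\H},\ \e>0,$$
where $(-i\tau_{-})^{j}$ denotes multiplication by $(x,v)\mapsto(-i\tau_{-}(x,v))^{j}$. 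Since $|(-i\tau_{-}(x,v))^{j}|\leq D^{j}|v|^{-j}\leq D^{j}\varpi_{j}(v)$ and $|\mathsf{M}_{\e+i\eta}\H\psi|\leq|\mathsf{M}_{0}\H\psi|$, one deduces from Corollary \ref{cor:boundMX} the bound $\|\frac{\d^{j}}{\d\eta^{j}}\mathsf{M}_{\e+i\eta}\H\|_{\mathscr{B}(\lp)}\leq D^{j}\|\mathsf{M}_{0}\H\|_{\mathscr{B}(\lp,\Y^{+}_{N_{\H}+1})}$ for all $\e\geq0$, $\eta\in\R$; in particular $T_{j}(\e,\eta):=(-i\tau_{-})^{j}\mathsf{M}_{\e+i\eta}\H$ is a well-defined element of $\mathscr{B}(\lp)$ also for $\e=0$, $j\leq N_{\H}$. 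Writing, as in Lemma \ref{lem:unifconti}, $T_{j}(\e,\eta_{1})-T_{j}(\e,\eta_{2})=(-i\tau_{-})^{j}(e^{-i\eta_{1}\tau_{-}}-e^{-i\eta_{2}\tau_{-}})\mathsf{M}_{\e}\H$ and using that $t\mapsto e^{it}$ is $2$-Lipschitz, together with $\tau_{-}\leq D|v|^{-1}$ and $\varpi_{j}(v)|v|^{-1}\leq\varpi_{j+1}(v)$, one gets $\|T_{j}(\e,\eta_{1})-T_{j}(\e,\eta_{2})\|_{\mathscr{B}(\lp)}\leq 2D^{j+1}\|\mathsf{M}_{0}\H\|_{\mathscr{B}(\lp,\Y^{+}_{N_{\H}+1})}|\eta_{1}-\eta_{2}|$ for all $j\leq N_{\H}$, uniformly in $\e\geq0$. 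Combining this equicontinuity in $\eta$ with $\|T_{j}(\e,\eta)-T_{j}(0,\eta)\|_{\mathscr{B}(\lp)}\leq\e D\|\H\|_{\mathscr{B}(\lp,\Y^{-}_{j+1})}$ from \eqref{prop:derMeis} (uniform in $\eta$) and the holomorphy of $T_{j}$ on $\{\mathrm{Re}\,\l>0\}$ shows that $(\e,\eta)\mapsto T_{j}(\e,\eta)$ is jointly continuous on $\overline{\C}_{+}$, which is the asserted continuous extension of $\l\mapsto\frac{\d^{j}}{\d\eta^{j}}\mathsf{M}_{\l}\H$. Finally, since for $\e>0$ the $T_{j}(\e,\cdot)$ are genuine $\eta$-derivatives of $\mathsf{M}_{\e+i\eta}\H$ and $T_{j}(\e,\cdot)\to T_{j}(0,\cdot)$ uniformly on $\R$ as $\e\to0^{+}$, the interchange theorem, applied successively for $j=1,\dots,N_{\H}$, gives $\frac{\d^{j}}{\d\eta^{j}}\mathsf{M}_{i\eta}\H=T_{j}(0,\eta)$; hence $\eta\mapsto\mathsf{M}_{i\eta}\H$ is $\mathscr{C}^{N_{\H}}$ with bounded derivatives, i.e. \eqref{eq:ClassMH}.

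The case of $\mathsf{\Xi}$ is handled identically, now with $\frac{\d^{j}}{\d\eta^{j}}\mathsf{\Xi}_{\e+i\eta}\H=(-it_{-})^{j}\mathsf{\Xi}_{\e+i\eta}\H$ (already recorded in the proof of Corollary \ref{cor:boundMX}), the domination $|\mathsf{\Xi}_{\e+i\eta}\H\psi|\leq|\mathsf{\Xi}_{0}\H\psi|$, the bound $\mathsf{\Xi}_{0}\H\in\mathscr{B}(\lp,\X_{N_{\H}})$, and the second estimate in \eqref{eq:MeisHk} in place of \eqref{prop:derMeis}; this gives \eqref{eq:CNXi}. The only difference is that here the available integrability budget is $\X_{N_{\H}}$, one power less than for $\mathsf{M}$, so the Lipschitz-in-$\eta$ bound is valid only for $j\leq N_{\H}-1$; at the top order $j=N_{\H}$ one instead gets continuity of $\eta\mapsto(-it_{-})^{N_{\H}}\mathsf{\Xi}_{i\eta}\H$ directly by dominated convergence, the dominating function $2D^{N_{\H}}\varpi_{N_{\H}}|\mathsf{\Xi}_{0}\H\psi|$ being integrable, which is enough. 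For the last assertion, differentiating $e^{-\l\tau_{-}}$ gives $\frac{\d}{\d\l}\mathsf{M}_{\l}\varphi=-\tau_{-}\mathsf{M}_{\l}\varphi$ for $\mathrm{Re}\,\l>0$ and $\varphi\in\Y^{-}_{1}$ (the difference quotients being dominated by $D\varpi_{1}|\mathsf{M}_{0}\varphi|\in\lp$ since $\mathsf{M}_{0}\varphi\in\Y^{+}_{1}$), and since $e^{-\l\tau_{-}}\to1$ pointwise as $\l\to0$ in $\overline{\C}_{+}$ while $\tau_{-}|\mathsf{M}_{0}\varphi|\leq D\varpi_{1}|\mathsf{M}_{0}\varphi|\in\lp$, dominated convergence yields $\lim_{\l\to0}\frac{\d}{\d\l}\mathsf{M}_{\l}\varphi=-\tau_{-}\mathsf{M}_{0}\varphi$ in $\lp$; taking $\varphi=\H\psi\in\Y^{-}_{N_{\H}+1}$ and splitting the $\Gamma_{+}$-integral into $\{|v|<\delta\}$ (controlled, up to a factor $\delta$, by $\|\H\|_{\mathscr{B}(\lp,\Y^{-}_{N_{\H}+1})}$) and $\{|v|\geq\delta\}$ (where $\tau_{-}\leq D/\delta$ and $|e^{-\l\tau_{-}}-1|\leq|\l|D/\delta$) promotes this to $\lim_{\l\to0}\frac{\d}{\d\l}\mathsf{M}_{\l}\H=-\tau_{-}\mathsf{M}_{0}\H$ in $\mathscr{B}(\lp)$.

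I expect the only genuinely delicate point to be the bookkeeping of these weight budgets: each $\eta$-derivative costs one power of $\varpi_{1}$ and the Lipschitz estimate in $\eta$ one more, so that differentiability precisely up to order $N_{\H}$ (and not $N_{\H}+1$) is exactly what the hypothesis $\H\in\mathscr{B}(\lp,\Y^{-}_{N_{\H}+1})$ affords, with the subtlety that for $\mathsf{\Xi}_{i\eta}\H$ the top-order derivative admits no Lipschitz bound and must be treated by dominated convergence. Everything else is a routine interchange of $\lim_{\e\to0^{+}}$ with $\frac{\d^{j}}{\d\eta^{j}}$ justified by the uniform-in-$\eta$ estimates \eqref{prop:derMeis}--\eqref{eq:MeisHk}.
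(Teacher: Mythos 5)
Your overall approach is exactly the paper's: explicit formula $\frac{\d^j}{\d\eta^j}\mathsf{M}_{\e+i\eta}\H=(-i\tau_-)^j\mathsf{M}_{\e+i\eta}\H$ (and its $\mathsf{\Xi}$-analogue), uniform-in-$\eta$ operator bounds from Corollary \ref{cor:boundMX}, holomorphy on $\C_+$, the uniform-in-$\eta$ convergence as $\e\to0^+$ of Proposition \ref{prop:DerivG}(2)/\eqref{eq:MeisHk}, and then the interchange theorem iterated $N_\H$ times. You are also right that the final limit assertion is a direct dominated-convergence computation with $\H\in\mathscr{B}(\lp,\Y^-_1)$ feeding the integrability.

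One remark on strategy. The equicontinuity-in-$\eta$ (Lipschitz) step you insert before the interchange is not needed: joint continuity of $T_j$ on $\overline{\C}_+$ already follows from holomorphy on $\{\mathrm{Re}\,\l>0\}$ together with the uniform-in-$\eta$ convergence $T_j(\e,\cdot)\to T_j(0,\cdot)$ as $\e\to0^+$, and the interchange theorem likewise needs only that uniform-in-$\e$ statement, not an $\eta$-Lipschitz bound. It is precisely this extra step that creates the ``subtlety'' you then have to patch at $j=N_\H$ for $\mathsf{\Xi}$.

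That said, you have correctly spotted a genuine asymmetry, and the diagnosis deserves to be sharpened. The weight bookkeeping for $\mathsf{\Xi}$ really is one power worse than for $\mathsf{M}$, because $\mathsf{M}_0:\Y^-_m\to\Y^+_m$ is isometric whereas $\mathsf{\Xi}_0:\Y^-_m\to\X_{m-1}$ loses a power of $\varpi_1$. The place where this bites is not the Lipschitz-in-$\eta$ bound (which, as above, is dispensable) but the very estimate you invoke to justify the interchange at top order: the bound $\left\|\mathsf{\Xi}_{\e+i\eta}\H-\mathsf{\Xi}_{i\eta}\H\right\|_{\mathscr{B}(\lp,\X_k)}\leq\e\,D\,\|\H\|_{\mathscr{B}(\lp,\Y^-_{k+1})}$ of \eqref{eq:MeisHk}, when written out, uses $1-e^{-\e t_-}\leq\e\,t_-\leq\e D\varpi_1$ and hence needs $\mathsf{\Xi}_0\H$ to land in $\X_{k+1}$, i.e. $\H\in\mathscr{B}(\lp,\Y^-_{k+2})$; under the standing hypothesis this holds only for $k\leq N_\H-1$. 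Thus at $k=N_\H$ what one actually has, by dominated convergence, is $\left\|(\mathsf{\Xi}_{\e+i\eta}\H-\mathsf{\Xi}_{i\eta}\H)\psi\right\|_{\X_{N_\H}}\to0$ for each fixed $\psi$, with no rate and no uniformity in $\psi$. Your dominated-convergence patch shows exactly this --- strong continuity of $\eta\mapsto T_{N_\H}(0,\eta)\psi$ --- but that is not sufficient for the interchange: to identify $T_{N_\H}(0,\cdot)$ as $\frac{\d}{\d\eta}T_{N_\H-1}(0,\cdot)$ in $\mathscr{B}(\lp,\X_0)$ one needs the convergence $T_{N_\H}(\e,\cdot)\to T_{N_\H}(0,\cdot)$ to be uniform in $\eta$ in the operator norm, which is the missing ingredient. (In fairness, the appendix proof of the Corollary cites the same estimate at order $k$, so this apparent one-power shortfall for $\mathsf{\Xi}$ at the top order is present in the paper's own argument; what is actually used downstream, e.g. in Proposition \ref{prop:snl}, is only strong differentiability applied to fixed vectors, for which your dominated-convergence argument is enough. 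If you want the full operator-norm $\mathscr{C}^{N_\H}$ claim you should either restrict the top-order statement to the strong topology, or exhibit some uniform-integrability property of $\H$ on small velocities to upgrade the pointwise limit to a uniform one.)
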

We refer to Appendix \ref{app:technA} for the full proof of Proposition \ref{prop:DerivG}  as well as that of Corollary \ref{cor:MHclass}. All the above results allow to prove the regularity the \emph{finite} sum $s_{n}(\l)$ defined by \eqref{def:snl}
\begin{propo}\label{prop:snl} For any $f \in \X_{N_{\H}+1}$, the mapping 
$$\eta \in \R \longmapsto s_{n}(\e+i\eta)f \in \X_{0} \qquad \text{ belongs to }  \mathscr{C}^{N_{\H}}_{0}(\R,\X_{0})$$ 
for any $\e \geq 0$ with
\begin{equation}\label{eq:snEps}
\lim_{|\eta|\to\infty}\sup_{\e \in (0,1]}\left\|\dfrac{\d^{k}}{\d\eta^{k}}\,s_{n}(\e+i\eta)f\right\|_{\X_{0}}=0\end{equation}
for any $k \in \{0,\ldots,N_{\H}\}.$ In particular
$$\lim_{\e\to0^{+}}\sup_{\eta \in \R}\left\|s_{n}(\e+i\eta)f-s_{n}(i\eta)f\right\|_{\X_{0}}=0.$$
\end{propo}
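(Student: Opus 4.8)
The plan is to treat each summand $\mathsf{\Xi}_{\l}\H\left(\mathsf{M}_{\l}\H\right)^{p}\mathsf{G}_{\l}$ appearing in the definition \eqref{def:snl} of $s_{n}(\l)$ separately, and then add up the $n+1$ contributions. Fix $p \in \{0,\ldots,n\}$ and write $A_{\l}^{(p)}:=\mathsf{\Xi}_{\l}\H\left(\mathsf{M}_{\l}\H\right)^{p}\mathsf{G}_{\l}$. The first step is to record that each of the three factors is $\mathscr{C}^{N_{\H}}$ in $\eta$ on $\overline{\C}_{+}$: for $\mathsf{G}_{\l}f$ this is Proposition \ref{prop:DerivG}(1) together with the hypothesis $f \in \X_{N_{\H}+1}$ (so that $t_{+}^{j}f \in \X_{N_{\H}+1-j}$ and $\frac{\d^{j}}{\d\eta^{j}}\mathsf{G}_{\e+i\eta}f=(-i)^{j}\mathsf{G}_{\e+i\eta}(t_{+}^{j}f)$ is well defined and bounded uniformly in $\e\geq0,\eta$); for $\mathsf{M}_{\l}\H$ and $\mathsf{\Xi}_{\l}\H$ this is exactly \eqref{eq:ClassMH} and \eqref{eq:CNXi} of Corollary \ref{cor:MHclass}. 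Hence $\eta\mapsto A_{\e+i\eta}^{(p)}f$ is $\mathscr{C}^{N_{\H}}$ for every $\e\geq0$, and the Leibniz rule expresses $\frac{\d^{k}}{\d\eta^{k}}A_{\e+i\eta}^{(p)}f$, for $k\leq N_{\H}$, as a finite sum of terms of the form
$$
\Bigl(\tfrac{\d^{a}}{\d\eta^{a}}\mathsf{\Xi}_{\e+i\eta}\H\Bigr)\Bigl(\tfrac{\d^{b_{1}}}{\d\eta^{b_{1}}}\mathsf{M}_{\e+i\eta}\H\Bigr)\cdots\Bigl(\tfrac{\d^{b_{p}}}{\d\eta^{b_{p}}}\mathsf{M}_{\e+i\eta}\H\Bigr)\mathsf{G}_{\e+i\eta}\bigl(t_{+}^{c}f\bigr),
$$
with $a+b_{1}+\cdots+b_{p}+c=k$. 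Every factor except the last is uniformly bounded in $(\e,\eta)\in[0,\infty)\times\R$ by Corollary \ref{cor:MHclass}, and $t_{+}^{c}f\in\X_{N_{\H}+1}\subset\X_{0}$.

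The decay \eqref{eq:snEps} then follows from the Riemann–Lebesgue-type decay of $\mathsf{G}$: by \eqref{eq:RiemLeb} of Proposition \ref{propo:convK} one has $\lim_{|\eta|\to\infty}\sup_{\e\in[0,1]}\|\mathsf{G}_{\e+i\eta}(t_{+}^{c}f)\|_{\lp}=0$ for each fixed $c$, while all the other factors above stay bounded; multiplying through and summing the finitely many Leibniz terms and the finitely many $p\in\{0,\ldots,n\}$ gives $\lim_{|\eta|\to\infty}\sup_{\e\in(0,1]}\|\frac{\d^{k}}{\d\eta^{k}}s_{n}(\e+i\eta)f\|_{\X_{0}}=0$ for each $k\leq N_{\H}$, which is exactly the statement that $\eta\mapsto s_{n}(\e+i\eta)f$ lies in $\mathscr{C}^{N_{\H}}_{0}(\R,\X_{0})$.

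For the uniform convergence as $\e\to0^{+}$, I would bound the difference $\frac{\d^{k}}{\d\eta^{k}}A_{\e+i\eta}^{(p)}f-\frac{\d^{k}}{\d\eta^{k}}A_{i\eta}^{(p)}f$ by a telescoping sum: replace the factors one at a time, estimating $\|\frac{\d^{a}}{\d\eta^{a}}\mathsf{\Xi}_{\e+i\eta}\H-\frac{\d^{a}}{\d\eta^{a}}\mathsf{\Xi}_{i\eta}\H\|$, $\|\frac{\d^{b}}{\d\eta^{b}}\mathsf{M}_{\e+i\eta}\H-\frac{\d^{b}}{\d\eta^{b}}\mathsf{M}_{i\eta}\H\|$, and $\|\mathsf{G}_{\e+i\eta}(t_{+}^{c}f)-\mathsf{G}_{i\eta}(t_{+}^{c}f)\|_{\lp}$. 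The first two are $\mathbf{O}(\e)$ uniformly in $\eta$ by \eqref{prop:derMeis} (for $\mathsf{M}$) and the analogous bound \eqref{eq:MeisHk}/Corollary \ref{cor:MHclass} (for $\mathsf{\Xi}$), and the last one tends to $0$ uniformly in $\eta$ by \eqref{lem:unifGeis} of Proposition \ref{propo:convK}; all complementary factors remain uniformly bounded. Summing the finitely many telescoping terms, the finitely many Leibniz terms, and $p=0,\ldots,n$ yields $\lim_{\e\to0^{+}}\sup_{\eta\in\R}\|s_{n}(\e+i\eta)f-s_{n}(i\eta)f\|_{\X_{0}}=0$, and the same argument at the level of the $k$-th derivatives shows the convergence actually holds in $\mathscr{C}^{N_{\H}}_{0}(\R,\X_{0})$.

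The only genuinely delicate bookkeeping point — the ``main obstacle'' such as it is — is to make sure that in each Leibniz term the \emph{total} loss of integrability weight is controlled: $\mathsf{G}_{\l}$ differentiated $c$ times eats $t_{+}^{c}$, which costs $c$ powers of $|v|^{-1}$, while $\mathsf{\Xi}_{0}\H$ and $\mathsf{M}_{0}\H$ already map $\lp$ into $\X_{N_{\H}}$ and $\Y^{+}_{N_{\H}+1}$ respectively (Corollary \ref{cor:boundMX}) and their $\eta$-derivatives cost further powers of $t_{-}$; one must check that the exponents never exceed $N_{\H}+1$ so that every factor stays in the space where the bounds of Section \ref{sec:prelim} apply. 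Since $a+\sum b_{i}+c=k\leq N_{\H}$ and $f\in\X_{N_{\H}+1}$, this is automatic, but it is the point where the precise hypothesis $f\in\X_{N_{\H}+1}$ (rather than merely $f\in\X_{0}$) is used.
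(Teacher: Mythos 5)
Your proof is correct and takes essentially the same route as the paper's: expand $\frac{\d^{k}}{\d\eta^{k}}s_{n}(\e+i\eta)f$ by Leibniz, use that each derivative of $\mathsf{\Xi}_{\l}\H$, $\mathsf{M}_{\l}\H$, $\mathsf{G}_{\l}$ is multiplication by a power of $t_{\mp}$ composed with the undifferentiated operator, control the resulting weight transfer through the $\Y^{\pm}_{s}/\X_{s}$ scale (legitimate precisely because the total order of differentiation is $\leq N_{\H}$ and $f\in\X_{N_{\H}+1}$), pull the $|\eta|\to\infty$ decay from \eqref{eq:RiemLeb} applied to the rightmost $\mathsf{G}$-factor, and pass to $\e\to0^{+}$ by the uniform estimates of Proposition \ref{propo:convK} and Corollary \ref{cor:MHclass}. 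The only cosmetic difference is that you expand $(\mathsf{M}_{\l}\H)^{p}$ fully via Leibniz, whereas the paper keeps the block $\mathsf{L}_{p}^{(j)}(\l)$ intact and invokes Lemma \ref{lem:estJ} for its norm — a distinction that is immaterial here for the \emph{finite} sum over $p\leq n$, but becomes essential in Lemma \ref{prop:reguPsif} where the series over $p$ is infinite.
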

\begin{proof} Let $n \in \N$ be fixed. For simplicity of notations, for any $p \in \{0,\ldots,n\}$, we define 
$$\mathsf{L}_{p}(\l)=\left(\mathsf{M}_{\l}\H\right)^{p}, \qquad \l \in \C_{+}$$ 
and denotes its derivatives of order $j$ by $\mathsf{L}_{p}^{(j)}(\l)$. Computing derivatives with Leibniz rule we get, for any $k \in \N$
\begin{multline*}
\dfrac{\d^{k}}{\d\l^{k}}s_{n}(\l)f=\sum_{p=0}^{n}\sum_{\ell=0}^{k}\left(\begin{array}{c}k \\\ell\end{array}\right)\left(\dfrac{\d^{k-\ell}}{\d\l^{k-\ell}}\mathsf{\Xi}_{\l}\H\right)\,\dfrac{\d^{\ell}}{\d\l^{\ell}}\left[\mathsf{L}_{p}(\l)\mathsf{G}_{\l}f\right]\\
=\sum_{p=0}^{n}\sum_{\ell=0}^{k}\sum_{j=0}^{\ell}\left(\begin{array}{c}\ell \\j\end{array}\right)\left(\begin{array}{c}k \\\ell\end{array}\right)\left(\dfrac{\d^{k-\ell}}{\d\l^{k-\ell}}\mathsf{\Xi}_{\l}\H\right)\,\mathsf{L}_{p}^{(j)}(\l)\dfrac{\d^{\ell-j}}{\d\l^{\ell-j}}\left[\mathsf{G}_{\l}f\right]\end{multline*}
Now, as  observed (see the proofs of Proposition \ref{prop:DerivG}  and Corollary \ref{cor:MHclass}), for any $j$
$$\dfrac{\d^{j}}{\d\l^{j}}\mathsf{\Xi}_{\l}=(-1)^{j}t_{-}^{j}\mathsf{\Xi}_{\l}, \qquad \dfrac{\d^{j}}{\d\l^{j}}\mathsf{G}_{\l}f=(-1)^{j}\mathsf{G}_{\l}\left(t_{+}^{j}f\right)$$
where $t_{\pm}^{j}$ denote here the multiplication operator by $t_{\pm}(x,v)^{j}$. 
Therefore
\begin{equation}\label{eq:derivSn}
\dfrac{\d^{k}}{\d\l^{k}}s_{n}(\l)f=\sum_{p=0}^{n}\sum_{\ell=0}^{k}\sum_{j=0}^{\ell}\left(\begin{array}{c}\ell \\j\end{array}\right)\left(\begin{array}{c}k \\\ell\end{array}\right)(-1)^{k-j}t_{-}^{k-\ell}\mathsf{\Xi}_{\l}\H\mathsf{L}_{p}^{(j)}(\l)\mathsf{G}_{\l}\left[t_{+}^{\ell-j}f\right].\end{equation}
If $f \in \X_{k}$, then $t_{+}^{\ell-j}f \in \X_{0}$ and $\mathsf{L}_{p}^{(j)}(\l)\mathsf{G}_{\l}\left[t_{+}^{\ell-j}f\right] \in \lm$ so that (see Eqs. \eqref{eq:ClassMH}--\eqref{eq:CNXi}) 
$$\H\mathsf{L}_{p}^{(j)}(\l)\mathsf{G}_{\l}\left[t_{+}^{\ell-j}f\right] \in \Y_{N_{H}+1}^{-} \quad \text{ and } \quad  \mathsf{\Xi}_{\l}\H\mathsf{L}_{p}^{(j)}(\l)\mathsf{G}_{\l}\left[t_{+}^{\ell-j}f\right] \in \X_{N_{\H}}.$$
Then, if $k \leq N_{\H}$, 
$$t_{-}^{\ell-j}\mathsf{\Xi}_{\l}\H\mathsf{L}_{p}^{(j)}(\l)\mathsf{G}_{\l}\left[t_{+}^{\ell-j}f\right] \in \X_{0}$$
for all $\ell \in \{0,\ldots,k\}, j \in \{0,\ldots,\ell\}.$ This easily proves that the mapping
$$\l \in \overline{\C}_{+} \longmapsto s_{n}(\l)f \in \X_{0}$$
is of class $\mathscr{C}^{N_{\H}}$ with 
$$\sup_{\l \in \C_{+}}\left\|\frac{\d^{k}}{\d\l^{k}}s_{n}(\l)f\right\|_{\X_{0}} \leq C_{k}\|f\|_{\X_{N_{\H}+1}}$$
for some positive $C_{k} >0$ depending only on $k \in \{0,\ldots,N_{\H}\}.$ Let us now prove \eqref{eq:snEps} which will also prove the fact that the mapping $\eta \mapsto s_{n}(\e+i\eta)f$ belongs to $\mathscr{C}_{0}^{N_{\H}}(\R,\X_{0})$. The proof of \eqref{eq:snEps} follows exactly the lines of Proposition \ref{propo:limi11}. Indeed, for any $k \leq N_{\H}$,  $p \in \{0,\ldots,n\}$, $\ell \in \{0,\ldots,k\}$ and $j \in \{0,\ldots,\ell\}$, one easily see that, for any $R >0$,
\begin{multline*}
\sup_{\e\in [0,1]}\sup_{|\eta| >R}\left\|t_{-}^{k-\ell}\mathsf{\Xi}_{\e+i\eta}\H\mathsf{L}_{p}^{(j)}(\e+i\eta)\right\|_{\mathscr{B}(\lp,\X_{0})} \leq
\sup_{\e \in [0,1]}\sup_{|\eta|>R}\left\|\mathsf{\Xi}_{\e+i\eta}\H\mathsf{L}_{p}^{(j)}(\e+i\eta)\right\|_{\mathscr{B}(\lp,\X_{k})}\\
\leq \|\mathsf{\Xi}_{0}\|_{\mathscr{B}(\Y^{-}_{k+1},\X_{k})}\|\H\|_{\mathscr{B}(\lp,\Y^{-}_{k+1})}\sup_{|\eta| >R}\|\mathsf{L}_{p}^{(j)}(i\eta)\|_{\mathscr{B}(\lp,\X_{k})}.
\end{multline*}
Now, one can prove that that there is $C_{k} >0$ such that
$$\sup_{|\eta| >R}\|\mathsf{L}_{p}^{(j)}(i\eta)\|_{\mathscr{B}(\lp,\X_{k})} \leq C_{k} < \infty$$
(see Lemma \ref{lem:estJ} in Appendix \ref{app:technA}) from which
$$\sup_{\e\in [0,1]}\sup_{|\eta| >R}\left\|t_{-}^{k-\ell}\mathsf{\Xi}_{\e+i\eta}\H\mathsf{L}_{p}^{(j)}(\e+i\eta)\right\|_{\mathscr{B}(\lp,\X_{0})} < \infty.$$
Combining this with \eqref{eq:RiemLeb} in Prop. \ref{propo:convK} and the representation \eqref{eq:derivSn} proves \eqref{eq:snEps}. The fact that $s_{n}(\e+i\eta)f$ converges to $s_{n}(i\eta)f$ in $\mathscr{C}_{0}^{N_{\H}}(\R,\X_{0})$ is then deduced from \eqref{eq:derivSn} and the limits established in Proposition \ref{propo:convK}.\end{proof}

We have all the tools to prove the first part of Theorem \ref{theo:introtr} in the Introduction.
\begin{theo}\label{theo:existtrace} Let $f \in \X_{N_{\H}+1}$ be such that
\begin{equation}\label{eq:0mean}
\varrho_{f}=\int_{\Omega\times V}f(x,v)\d x \otimes \bm{m}(\d v)=0.\end{equation}
Then, for any $n\geq0$ the limit
$$\lim_{\e\to0^{+}}\Upsilon_{n}(\e+i\eta)f,$$
exists in $\mathscr{C}_{0}^{N_{\H}}(\R,\X_{0})$. Its limit is denoted $\mathsf{\Psi}_{n}(\eta)f$.\medskip
\end{theo}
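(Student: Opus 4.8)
The plan is to exploit the decomposition \eqref{eq:dif}, namely
$$\Upsilon_{n}(\l)=\Rs(\l,\T_{\H})-\Rs(\l,\T_{0})-s_{n}(\l), \qquad \l \in \C_{+},$$
where $s_{n}(\l)$ is the finite sum defined in \eqref{def:snl}, and to treat each of the three terms separately using the regularity results established so far. First I would invoke Proposition \ref{propo:regul}: since $f \in \X_{N_{\H}+1}$ with $\varrho_{f}=0$, the assumption \eqref{eq:0mean} is exactly the requirement $f \in \X_{N_{\H}+1}^{0}$, so the trace-existence and smoothness results for $\Rs(\e+i\eta,\T_{\H})$ apply verbatim. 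Then the argument reduces to assembling three known convergences in $\mathscr{C}_{0}^{N_{\H}}(\R,\X_{0})$:
\begin{enumerate}[(i)]
\item Corollary \ref{cor:reguRf} gives that $\Rs(\e+i\eta,\T_{\H})f \to \Rs(i\eta,\T_{\H})f$ in $\mathscr{C}_{0}^{N_{\H}}(\R,\X_{0})$ as $\e \to 0^{+}$, using that $f \in \X_{N_{\H}+1}^{0}$;
\item Proposition \ref{prop:convRsT0} together with Lemma \ref{lem:convDerRsT0} gives that $\Rs(\e+i\eta,\T_{0})f \to \Rs(i\eta,\T_{0})f$ in $\mathscr{C}_{0}^{N_{\H}}(\R,\X_{0})$ (here only $f \in \X_{N_{\H}+1}$ is needed, the zero-mean condition is irrelevant);
\item Proposition \ref{prop:snl} gives that $s_{n}(\e+i\eta)f \to s_{n}(i\eta)f$ in $\mathscr{C}_{0}^{N_{\H}}(\R,\X_{0})$, again for $f \in \X_{N_{\H}+1}$ with no mean condition.
\end{enumerate}
Subtracting, the limit $\lim_{\e\to0^{+}}\Upsilon_{n}(\e+i\eta)f$ exists in $\mathscr{C}_{0}^{N_{\H}}(\R,\X_{0})$ and equals
$$\mathsf{\Psi}_{n}(\eta)f := \Rs(i\eta,\T_{\H})f-\Rs(i\eta,\T_{0})f-s_{n}(i\eta)f.$$
Since $\mathscr{C}_{0}^{N_{\H}}(\R,\X_{0})$ is a Banach space and the three convergences hold there, the limit is automatically an element of that space, which is the asserted regularity.

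A minor point worth flagging is coherence: one should check that this $\mathsf{\Psi}_{n}(\eta)f$ agrees, for $\eta \neq 0$, with the ``naive'' formula $\mathsf{\Xi}_{i\eta}\H\left(\mathsf{M}_{i\eta}\H\right)^{n}\Rs(1,\mathsf{M}_{i\eta}\H)\mathsf{G}_{i\eta}f$ obtained by passing to the limit directly in each single operator --- this follows from Corollary \ref{cor:ResMei}, Proposition \ref{propo:convK} and Corollary \ref{cor:MHclass}, and for $\eta=0$ from Lemma \ref{lem:convZk} (whose formula \eqref{eq:defPhi} encodes precisely the non-trivial cancellation coming from $\varrho_{f}=0$). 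In fact the cleanest presentation is to note that \eqref{eq:limi} in Proposition \ref{propo:limi11} already handles the term $\mathsf{\Xi}_{\e+i\eta}\H\Rs(1,\mathsf{M}_{\e+i\eta}\H)\mathsf{G}_{\e+i\eta}f = \Upsilon_{0}(\e+i\eta)f$ directly, and that for $n\geq 1$ one writes $\Upsilon_{n}(\l) = \Upsilon_{0}(\l) - s_{n-1}(\l)$ plus a bookkeeping adjustment, so in the end only (ii)--(iii) above plus Proposition \ref{propo:limi11} are strictly needed; I would, however, keep the resolvent decomposition \eqref{eq:dif} as the backbone since the smoothness (the $\mathscr{C}^{N_{\H}}$ part, as opposed to mere continuity) is cheapest to extract from the resolvent identity $\tfrac{\d^{k}}{\d\l^{k}}\Rs(\l,\T_{\H})=(-1)^{k}k!\,\Rs(\l,\T_{\H})^{k+1}$ via Proposition \ref{lem:UPS}.

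The main obstacle is not in this theorem itself --- given the battery of lemmas already in place, the proof is essentially a three-line assembly --- but it was displaced entirely into Proposition \ref{lem:UPS} and Lemma \ref{lem:convZk}, i.e. into controlling the $\e\to0^{+}$ limit of the iterated resolvent near the critical point $\eta=0$ where $r_{\sigma}(\mathsf{M}_{0}\H)=1$. The delicate mechanism there is the spectral decomposition $\Rs(1,\mathsf{M}_{\l}\H) = \Rs\bigl(1,\mathsf{M}_{\l}\H(\mathsf{I}-\mathsf{P}(\l))\bigr) + \tfrac{1}{1-\nu(\l)}\mathsf{P}(\l)$, where the singular prefactor $\tfrac{1}{1-\nu(\l)} \sim \tfrac{-1}{\nu'(0)\l}$ blows up but is tamed against the vanishing $\mathsf{P}(0)\mathsf{G}_{0}f = 0$ forced by $\varrho_{f}=0$, leaving a finite limit governed by $\mathsf{P}'(0)$ and $\mathsf{G}_{0}'$. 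So in the write-up of the present theorem I would simply cite Proposition \ref{prop:convRsT0}, Lemma \ref{lem:convDerRsT0}, Proposition \ref{prop:snl}, Proposition \ref{propo:limi11} and Corollary \ref{cor:reguRf}, combine them through \eqref{eq:dif}, and record the explicit formula for $\mathsf{\Psi}_{n}(\eta)f$; the integrability of the top derivative (the ``moreover'' part of Theorem \ref{theo:introtr}) is deferred, as the excerpt indicates, to Section \ref{sec:repr} where Assumption \ref{hypH}\,\textit{4)} enters.
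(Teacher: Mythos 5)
Your proposal is correct and follows essentially the same route as the paper: decompose $\Upsilon_{n}(\l)$ via \eqref{eq:dif} into $\Rs(\l,\T_{\H})-\Rs(\l,\T_{0})-s_{n}(\l)$ and invoke Corollary~\ref{cor:reguRf}, Lemma~\ref{lem:convDerRsT0} and Proposition~\ref{prop:snl} term by term. The extra commentary on consistency with the single-operator formula \eqref{eq:Psitrace} and on where the genuine difficulty lives (Proposition~\ref{lem:UPS}, Lemma~\ref{lem:convZk}) is accurate but goes beyond what the paper records in this proof.
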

\begin{proof}
 We know from Corollary \ref{cor:reguRf} that
$$\lim_{\e\to0^{+}}\Rs(\e+i\eta,\T_{\H})f=\Rs(i\eta,\T_{\H})f$$
holds in $\mathscr{C}_{0}^{N_{\H}}(\R,\X_{0}).$ In the same way, Lemma \ref{lem:convDerRsT0} shows that 
$$\lim_{\e\to0^{+}}\Rs(\e+i\eta,\T_{0})f=\Rs(i\eta,\T_{0})f$$
holds in $\mathscr{C}_{0}^{N_{\H}}(\R,\X_{0})$. Since one sees easily from Prop. \ref{prop:snl} that
$$\lim_{\e\to0^{+}}s_{n}(\e+i\eta)f=s_{n}(i\eta)f \qquad \text{ in } \quad \mathscr{C}_{0}^{N_{\H}}(\R,\X_{0})$$
we get the result from the representation \eqref{eq:dif}.
\end{proof}
\begin{nb} Notice that, using the representation 
$$\Upsilon_{n}(\e+i\eta)f=\mathsf{\Xi}_{\e+i\eta}\H\left(\mathsf{M}_{\e+i\eta}\H\right)^{n}\Rs(1,\mathsf{M}_{\e+i\eta}\H)\mathsf{G}_{\e+i\eta}, \qquad \e >0, \qquad \eta \in \R$$
together with Lemma \ref{lem:convZk}, it is easy to see, by uniqueness of the limit, that
\begin{equation}\label{eq:Psitrace}
\mathsf{\Psi}_{n}(\eta)f=\mathsf{\Xi}_{i\eta}\H\left(\mathsf{M}_{i\eta}\H\right)^{n}\mathsf{\Phi}(\eta)f, \qquad \forall \eta \in \R.
\end{equation}
where $\mathsf{\Phi}(\eta)f$ is defined in \eqref{eq:defPhi}.\end{nb}

In the following, we show also that, if $n$ is large enough, the boundary function is also integrable.
\begin{lemme}\label{prop:reguPsif}  Assume that $n \geq 2^{N_{\H}}\mathsf{p}$ (with $\mathsf{p}$ defined in \eqref{eq:power}) and $f \in \X_{N_{\H}+1}^{0}$. Then, the derivatives of the trace function
$$\eta \in \R \mapsto \mathsf{\Psi}_{n}(\eta)f \in \X_{0}$$ are  integrable with moreover
$$\int_{\R}\left\|\frac{\d^{k}}{\d\eta^{k}}\mathsf{\Psi}_{n}(\eta)f\right\|_{\X_{0}}\d\eta < \infty \qquad \forall k \in \{0,\ldots,N_{\H}\}.$$
\end{lemme}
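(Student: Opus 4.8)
The plan is to differentiate the representation \eqref{eq:Psitrace}, namely $\mathsf{\Psi}_{n}(\eta)f=\mathsf{\Xi}_{i\eta}\H\left(\mathsf{M}_{i\eta}\H\right)^{n}\mathsf{\Phi}(\eta)f$, by Leibniz's rule and to control each resulting term in $L^{1}(\R,\X_{0})$. First I would record, as in \eqref{eq:derivSn}, that
$$\dfrac{\d^{k}}{\d\eta^{k}}\mathsf{\Psi}_{n}(\eta)f=\sum_{\substack{a+b+c=k}}\binom{k}{a,b,c}(-i)^{a}t_{-}^{a}\mathsf{\Xi}_{i\eta}\H\,\dfrac{\d^{b}}{\d\eta^{b}}\left(\mathsf{M}_{i\eta}\H\right)^{n}\dfrac{\d^{c}}{\d\eta^{c}}\left(\mathsf{G}_{i\eta}f\right),$$
which is licit for $k\leq N_{\H}$ by Corollary \ref{cor:MHclass} and Proposition \ref{prop:DerivG} (here $\frac{\d^{c}}{\d\eta^{c}}\mathsf{G}_{i\eta}f=(-i)^{c}\mathsf{G}_{i\eta}(t_{+}^{c}f)$, which makes sense since $f\in\X_{N_{\H}+1}\subset\X_{c}$). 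Since $t_{-}^{a}\mathsf{\Xi}_{i\eta}\H=\mathsf{\Xi}_{i\eta}\H$ composed with the bounded multiplier $t_{-}^{a}\leq D^{a}\varpi_{a}$ and $\mathsf{\Xi}_{i\eta}\H\in\mathscr{B}(\lp,\X_{N_{\H}})$, the factor $t_{-}^{a}\mathsf{\Xi}_{i\eta}\H$ is uniformly bounded from $\lp$ to $\X_{0}$ (using $a\leq k\leq N_{\H}$), and $\mathsf{G}_{i\eta}$ is a uniformly bounded operator. Hence it suffices to show that for each $b\leq N_{\H}$ the map $\eta\mapsto\frac{\d^{b}}{\d\eta^{b}}\left(\mathsf{M}_{i\eta}\H\right)^{n}\in\mathscr{B}(\lp)$ is integrable over $\R$, and then argue as in Proposition \ref{propo:limi11} / \eqref{eq:snEps} to localise: on a compact set integrability is automatic from continuity, and at infinity we combine the $L^{1}$-bound on $\frac{\d^{b}}{\d\eta^{b}}(\mathsf{M}_{i\eta}\H)^{n}$ with boundedness of the remaining factors.

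The core computation is the $L^{1}$-bound on $\eta\mapsto\frac{\d^{b}}{\d\eta^{b}}(\mathsf{M}_{i\eta}\H)^{n}$ for $0\leq b\leq N_{\H}$, and this is where Assumption \ref{hypH}\,\textit{(4)} and the hypothesis $n\geq 2^{N_{\H}}\mathsf{p}$ enter. By Leibniz's rule, $\frac{\d^{b}}{\d\eta^{b}}(\mathsf{M}_{i\eta}\H)^{n}$ is a sum of $n^{b}$-many products of $n$ factors, each factor being $\frac{\d^{c_{i}}}{\d\eta^{c_{i}}}(\mathsf{M}_{i\eta}\H)$ with $\sum c_{i}=b\leq N_{\H}$; since $b\leq N_{\H}$, at most $N_{\H}$ of the factors carry a derivative, so at least $n-N_{\H}$ factors are the plain operator $\mathsf{M}_{i\eta}\H$. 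The plan is to split that block of $\geq n-N_{\H}$ plain factors into $\lfloor (n-N_{\H})/\mathsf{p}\rfloor$ groups of $\mathsf{p}$ consecutive factors and bound the product: the derived factors $\frac{\d^{c_{i}}}{\d\eta^{c_{i}}}(\mathsf{M}_{i\eta}\H)=(-i t_{-})^{c_{i}}\mathsf{M}_{i\eta}\H$ are uniformly bounded on $\lp$ (since $t_{-}^{c_{i}}\leq D^{c_{i}}\varpi_{c_{i}}$ and $\mathsf{M}_{i\eta}\H\in\mathscr{B}(\lp,\Y^{+}_{N_{\H}+1})$ by Corollary \ref{cor:boundMX}, uniformly in $\eta$), and one of the $\mathsf{p}$-blocks, say the first one after all the derivatives, contributes $\|(\mathsf{M}_{i\eta}\H)^{\mathsf{p}}\|_{\mathscr{B}(\lp)}$, which is integrable over $\R$ by Remark \ref{cor:exten}; the remaining $\mathsf{p}$-blocks each contribute a bounded factor $\|(\mathsf{M}_{i\eta}\H)^{\mathsf{p}}\|_{\mathscr{B}(\lp)}\leq C$. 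Taking $n\geq 2^{N_{\H}}\mathsf{p}$ guarantees $n-N_{\H}\geq\mathsf{p}$ (indeed much more), so at least one full $\mathsf{p}$-block survives and the argument goes through; the large exponent also leaves room for the crude constant $n^{b}$ and the number $\binom{n}{\leq N_{\H}}$ of Leibniz terms to be absorbed.

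The hard part will be the careful bookkeeping in the last step: making sure that after splitting each product of $n$ derivatives into blocks, one can always isolate a single factor of the form $\|(\mathsf{M}_{i\eta}\H)^{\mathsf{p}}\|_{\mathscr{B}(\lp)}$ that is integrated (Remark \ref{cor:exten}) while all the other $n/\mathsf{p}-1$ blocks are merely bounded in $\eta$, and that the resulting bound is uniform in $\varepsilon\in(0,1]$ so that passing to the limit $\varepsilon\to0^{+}$ (which is already justified by Theorem \ref{theo:existtrace}) is compatible with Fatou's lemma; the combinatorial constants are harmless but must be tracked. Once the estimate
$$\int_{\R}\left\|\dfrac{\d^{b}}{\d\eta^{b}}\left(\mathsf{M}_{i\eta}\H\right)^{n}\right\|_{\mathscr{B}(\lp)}\d\eta<\infty,\qquad 0\leq b\leq N_{\H},$$
is in hand, the claimed integrability of $\frac{\d^{k}}{\d\eta^{k}}\mathsf{\Psi}_{n}(\eta)f$ for all $k\leq N_{\H}$ follows from the displayed Leibniz expansion together with the uniform operator bounds on $t_{-}^{a}\mathsf{\Xi}_{i\eta}\H$ and on $\mathsf{G}_{i\eta}(t_{+}^{c}\,\cdot\,)$, and the proof is complete.
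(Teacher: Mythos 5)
Your proof has a genuine gap at its very first step. The trace function for $\eta\neq 0$ is
$$\mathsf{\Psi}_{n}(\eta)f=\mathsf{\Xi}_{i\eta}\H\left(\mathsf{M}_{i\eta}\H\right)^{n}\Rs(1,\mathsf{M}_{i\eta}\H)\mathsf{G}_{i\eta}f,$$
but in your displayed Leibniz formula the resolvent factor $\Rs(1,\mathsf{M}_{i\eta}\H)$ has silently disappeared: you wrote the $k$-th derivative as a sum over $a+b+c=k$ of $t_{-}^{a}\mathsf{\Xi}_{i\eta}\H$, $\frac{\d^{b}}{\d\eta^{b}}(\mathsf{M}_{i\eta}\H)^{n}$ and $\frac{\d^{c}}{\d\eta^{c}}\mathsf{G}_{i\eta}f$, which is the derivative of $\mathsf{\Xi}_{i\eta}\H(\mathsf{M}_{i\eta}\H)^{n}\mathsf{G}_{i\eta}f$ and not of $\mathsf{\Psi}_{n}(\eta)f$. (You cite \eqref{eq:derivSn} as a template, but that formula is for the \emph{finite} sum $s_{n}(\l)$, which deliberately excludes the resolvent.) Everything you do afterwards is about this wrong quantity, and never re-introduces $\Rs(1,\mathsf{M}_{i\eta}\H)$ or its derivatives. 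To repair this you must either (i) do as the paper does and expand $\Rs(1,\mathsf{M}_{i\eta}\H)=\sum_{m\geq0}\sum_{r<\mathsf{p}}(\mathsf{M}_{i\eta}\H)^{m\mathsf{p}+r}$ for $|\eta|>R$, carry the infinite sum over $m$ through the Leibniz expansion, and use the extra $\|\mathsf{L}_{\mathsf{p}}(i\eta)\|\leq\frac12$ factors to beat the polynomial growth from the derivatives (that is exactly the content of Lemma \ref{lem:estJ} plus the choice $n\geq2^{N_{\H}}\mathsf{p}$); or (ii) treat $\Rs(1,\mathsf{M}_{i\eta}\H)$ as a single factor in the Leibniz product, and prove separately that $\sup_{|\eta|>R}\|\frac{\d^{c}}{\d\eta^{c}}\Rs(1,\mathsf{M}_{i\eta}\H)\|_{\mathscr{B}(\lp)}<\infty$ for $c\leq N_{\H}$ by iterating $\frac{\d}{\d\eta}\Rs(1,\mathsf{M}_{i\eta}\H)=\Rs(1,\mathsf{M}_{i\eta}\H)\bigl(\frac{\d}{\d\eta}\mathsf{M}_{i\eta}\H\bigr)\Rs(1,\mathsf{M}_{i\eta}\H)$. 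Either route is viable, but as written your argument proves the integrability of the wrong function.

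Modulo that omission, your strategy for $(\mathsf{M}_{i\eta}\H)^{n}$ is sound and is essentially a re-packaging of the paper's Lemma \ref{lem:estJ}: since at most $N_{\H}$ factors carry a derivative, a block of $\mathsf{p}$ consecutive undifferentiated factors survives (this is where $n\geq2^{N_{\H}}\mathsf{p}$ enters), and that block supplies the integrable factor $\|(\mathsf{M}_{i\eta}\H)^{\mathsf{p}}\|_{\mathscr{B}(\lp)}$. The paper's version is more economical — it isolates a recursive estimate $\|\mathsf{L}_{N}^{(j)}(i\eta)\|\leq C_{j}(N+1)^{j}\|\mathsf{L}_{\lfloor N/2^{j}\rfloor}(i\eta)\|$ rather than tracking the positions of the derivatives — but both say the same thing. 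Two smaller remarks: the number of Leibniz terms is $\binom{b+n-1}{b}$, not $n^{b}$ (harmless, as you say); and your last sentence about "uniform in $\varepsilon\in(0,1]$" and Fatou is not needed — Theorem \ref{theo:existtrace} already gives the derivatives of $\mathsf{\Psi}_{n}(\cdot)f$ as limits in $\mathscr{C}_{0}^{N_{\H}}(\R,\X_{0})$, so you only need to estimate the limit object directly on the imaginary axis.
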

\begin{proof} We use here the representation of the boundary function $\mathsf{\Psi}_{n}(\eta)f$ in \eqref{eq:Psitrace}.
We recall that, for $|\eta| >R$, it holds $\mathsf{\Phi}_{n}(\eta)f=\Rs(1,\mathsf{M}_{i\eta}\H)\mathsf{G}_{i\eta}f$ and 
$$\mathsf{\Psi}_{n}(\eta)f=\Upsilon_{n}(i\eta)f=\mathsf{\Xi}_{i\eta}\H\left(\mathsf{M}_{i\eta}\H\right)^{n}\Rs(1,\mathsf{M}_{i\eta}\H)\mathsf{G}_{i\eta}f$$
where we can write
$$\Rs(1,\mathsf{M}_{i\eta}\H)=\sum_{m=0}^{\infty}\sum_{r=0}^{\mathsf{p}-1}\left(\mathsf{M}_{i\eta}\H\right)^{m\mathsf{p}+r}$$
where $R>0$ is chosen such that $\left\|\left(\mathsf{M}_{i\eta}\H\right)^{\mathsf{p}}\right\|_{\mathscr{B}(\lp)} \leq \frac{1}{2}$ for $|\eta| >R.$
Introducing, as in the proof of Proposition \ref{prop:snl}, 
$$\mathsf{L}_{k}(\l)=\left(\mathsf{M}_{\l}\H\right)^{k}, \qquad \l \in \overline{\C}_{+}\;;\;k \in \N$$
one has then
$$\mathsf{\Psi}_{n}(\eta)f=\sum_{m=0}^{\infty}\sum_{r=0}^{\mathsf{p}-1}\mathsf{\Xi}_{i\eta}\H\mathsf{L}_{m\mathsf{p}+r+n}(i\eta)\mathsf{G}_{i\eta}f$$
Exactly as in Proposition \ref{prop:snl}, Eq. \eqref{eq:derivSn}, we have, 
\begin{multline*}
\dfrac{\d^{k}}{\d\eta^{k}}\mathsf{\Psi}_{n}(\eta)f=\sum_{m=0}^{\infty}\sum_{r=0}^{\mathsf{p}-1}\sum_{\ell=0}^{k}\sum_{j=0}^{\ell}\left(\begin{array}{c}\ell \\j\end{array}\right)\left(\begin{array}{c}k \\\ell\end{array}\right)(-i)^{k-j}t_{-}^{k-\ell}\mathsf{\Xi}_{i\eta}\H\mathsf{L}_{m\mathsf{p}+r+n}^{(j)}(i\eta)\mathsf{G}_{i\eta}\left[t_{+}^{\ell-j}f\right]\end{multline*}
Recall now that, for any $\ell-j \leq k \leq N_{\H}$, 
$$\sup_{|\eta| >R}\left\|\mathsf{G}_{i\eta}\left[t_{+}^{\ell-j}f\right]\right\|_{\lp} \leq \|f\|_{\X_{N_{\H}+1}}$$
and
$$\left\|t_{-}^{k-\ell}\mathsf{\Xi}_{i\eta}\H\right\|_{\mathscr{B}(\lp)} \leq \|\mathsf{\Xi}_{0}\|_{\mathscr{B}(\Y_{k}^{-},\X_{k})}\|\H\|_{\mathscr{B}(\lp,\Y_{k}^{-})}$$
from which
\begin{equation}\label{eq:derivPsi}
\left\|\dfrac{\d^{k}}{\d\eta^{k}}\mathsf{\Psi}_{n}(\eta)f\right\|_{\X_{0}} \leq C_{k}\|f\|_{\X_{N_{\H}+1}}\sum_{m=0}^{\infty}\sum_{r=0}^{\mathsf{p}-1}\sum_{j=0}^{k}\left\|\mathsf{L}_{m\mathsf{p}+r+n}^{(j)}(i\eta)\right\|_{\mathscr{B}(\lp)}, \qquad \forall |\eta| >R.\end{equation}
We chose now $n\geq 2^{k}\mathsf{p}$ and use inequality \eqref{eq:estJ} in  Lemma \ref{lem:estJ} to deduce that, for any $m \geq 0,r\geq0$ and $j \in \{0,\ldots,k\}$, there is $\bm{C}_{k} >0$ such that
$$\left\|\mathsf{L}_{m\mathsf{p}+r+n}^{(j)}(i\eta)\right\|_{\mathscr{B}(\lp)} \leq \bm{C}_{k}\left((m+1)\mathsf{p}+n\right)^{k}\left\|\mathsf{L}_{\floor{\frac{mp+r+n}{2^{k}}}}(i\eta)\right\|_{\mathscr{B}(\lp)}.$$
Then, since $n \geq 2^{k}\mathsf{p}$, one has $\floor{\frac{mp+r+n}{2^{k}}} \geq p+\floor{\frac{m}{2^{k}}}p$ so that
\begin{multline*}
\left\|\mathsf{L}_{m\mathsf{p}+r+n}^{(j)}(i\eta)\right\|_{\mathscr{B}(\lp)} \leq \bm{C}_{k}\|\mathsf{L}_{p}(i\eta)\|_{\mathscr{B}(\lp)}\left((m+1)\mathsf{p}+n\right)^{k}\left\|\mathsf{L}_{\floor{\frac{m}{2^{k}}}p}(i\eta)\right\|_{\mathscr{B}(\lp)}\\
\leq \bm{C}_{k}\|\mathsf{L}_{\mathsf{p}}(i\eta)\|_{\mathscr{B}(\lp)}\left((m+1)\mathsf{p}+n\right)^{k}2^{-\floor{\frac{m}{2^{k}}}}\end{multline*}
where we recall that $\|\mathsf{L}_{\mathsf{p}}(i\eta)\|_{\mathscr{B}(\lp)} \leq \frac{1}{2}$ for any $|\eta| > R.$ Since 
$$\sum_{m=0}^{\infty} \sum_{r=0}^{\mathsf{p}-1}\sum_{j=0}^{k}\left((m+1)\mathsf{p}+n\right)^{k}2^{-\floor{\frac{m}{2^{k}}}} \leq \mathsf{p}(k+1)\sum_{m=0}^{\infty}\left((m+1)\mathsf{p}+n\right)^{k}2^{-\floor{\frac{m}{2^{k}}}} < \infty$$
we deduce from \eqref{eq:derivPsi} that there is a positive constant $\beta_{k} >0$ such that
\begin{equation}\label{eq:estimLpPsi}
\left\|\dfrac{\d^{k}}{\d\eta^{k}}\mathsf{\Psi}_{n}(\eta)f\right\|_{\X_{0}} \leq \beta_{k}\|f\|_{\X_{N_{\H}+1}}\left\|\mathsf{L}_{\mathsf{p}}(i\eta)\right\|_{\mathscr{B}(\lp)} \qquad \forall |\eta| >R.\end{equation}
Using \eqref{eq:power} once again, we deduce the result.
\end{proof}
 
\subsection{A second representation of the remainder terms}\label{sec:repr}


Theorem \ref{theo:existtrace} allows to establish the following new representation of $\bm{S}_{n}(t)$ where we recall that $\bm{S}_{n}(t)=\bm{U}_{\H}(t)-\sum_{k=0}^{n}\bm{U}_{k}(t)$ has been defined in Proposition \ref{prop:Snt}.

\begin{theo}\label{theo:represent2} For any $n \geq \mathsf{p}$ where $\mathsf{p}$ is defined through \eqref{eq:power} and any $f \in \X_{N_{\H}+1}$ satisfying \eqref{eq:0mean}, one has
\begin{equation}\label{eq:SntInt} 
\bm{S}_{n}(t)f=\lim_{\ell\to\infty}\frac{1}{2\pi} \int_{-\ell}^{\ell}\exp\left(i\eta t\right)\mathsf{\Psi}_{n}(\eta)f\d\eta=\frac{1}{2\pi}\int_{-\infty}^{\infty}\exp\left(i\eta t\right)\mathsf{\Psi}_{n}(\eta)f\d\eta, \qquad \forall t >0 \end{equation}
where the convergence holds in $\X_{0}$. Consequently, for any $n \geq 2^{N_{\H}}\mathsf{p}$ and any $f \in \X_{N_{\H}+1}^{0}$, 
\begin{equation}\label{eq:SnDeri}
\bm{S}_{n}(t)f=\left(-\frac{i}{t}\right)^{N_{\H}}\int_{-\infty}^{\infty}\exp\left(i\eta t\right)\dfrac{\d^{N_{\H}}}{\d\eta^{N_{\H}}}\mathsf{\Psi}_{n}(\eta)f\frac{\d\eta}{2\pi}\end{equation}
holds true for any $t \geq0$ where the convergence of the integral holds in $\X_{0}$.

\end{theo}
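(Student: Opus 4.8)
The plan is to combine the first representation formula \eqref{eq:maindecay} with the trace construction of Theorem \ref{theo:existtrace} via a contour-shift argument. Starting from
$$\bm{S}_{n}(t)f=\frac{\exp(\e t)}{2\pi}\lim_{\ell\to\infty}\int_{-\ell}^{\ell}\exp(i\eta t)\Upsilon_{n}(\e+i\eta)f\,\d\eta, \qquad \e >0,$$
I would like to let $\e\to0^{+}$ under the integral sign. Fix $n\geq\mathsf{p}$. From the bound $\|\Upsilon_{n}(\e+i\eta)\|_{\mathscr{B}(\X_{0})}\leq C_{\e}\|(\mathsf{M}_{\e+i\eta}\H)^{\mathsf{p}}\|_{\mathscr{B}(\lp)}$ established in the proof of Proposition (the one leading to \eqref{eq:maindecay}), together with Assumption \ref{hypH} \textit{4)} in the sharpened form \eqref{eq:power} valid uniformly for $\e\geq0$, the family $\eta\mapsto\Upsilon_{n}(\e+i\eta)f$ is dominated in $L^{1}(\R,\X_{0})$ by an integrable function independent of $\e\in[0,1]$. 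By Theorem \ref{theo:existtrace}, $\Upsilon_{n}(\e+i\eta)f\to\mathsf{\Psi}_{n}(\eta)f$ pointwise (indeed in $\mathscr{C}_{0}^{N_{\H}}(\R,\X_{0})$, hence locally uniformly). Dominated convergence then yields
$$\lim_{\e\to0^{+}}\int_{-\infty}^{\infty}\exp(i\eta t)\Upsilon_{n}(\e+i\eta)f\,\d\eta=\int_{-\infty}^{\infty}\exp(i\eta t)\mathsf{\Psi}_{n}(\eta)f\,\d\eta,$$
and since $\exp(\e t)\to1$, this gives \eqref{eq:SntInt}, the truncated-integral limit coinciding with the full integral because the integrand is in $L^{1}(\R,\X_{0})$.

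The one technical subtlety is that \eqref{eq:maindecay} was stated with an outer Cesàro average and then promoted to a genuine limit only for $\e>0$; I would note that the Cesàro-to-limit upgrade argument given there (the $L^{1}(\R,\X_{0})$-domination of $\eta\mapsto\Upsilon_{n}(\e+i\eta)f$) applies verbatim, so for each $\e>0$ we already have the clean formula $\bm{S}_{n}(t)f=\tfrac{e^{\e t}}{2\pi}\int_{-\infty}^{\infty}e^{i\eta t}\Upsilon_{n}(\e+i\eta)f\,\d\eta$, and it is this identity I pass to the limit. The left-hand side does not depend on $\e$, so no contour-deformation justification is even required — the $\e\to0$ limit exists on the right by the dominated convergence argument above, and must equal $\bm{S}_{n}(t)f$.

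For the second formula \eqref{eq:SnDeri}, assume now $n\geq2^{N_{\H}}\mathsf{p}$ and $f\in\X_{N_{\H}+1}^{0}$. By Lemma \ref{prop:reguPsif}, the maps $\eta\mapsto\tfrac{\d^{k}}{\d\eta^{k}}\mathsf{\Psi}_{n}(\eta)f$ are integrable over $\R$ for all $k\leq N_{\H}$, and by Theorem \ref{theo:existtrace} they are continuous and vanish at infinity. Starting from \eqref{eq:SntInt}, I would integrate by parts $N_{\H}$ times in the truncated integral $\int_{-\ell}^{\ell}e^{i\eta t}\mathsf{\Psi}_{n}(\eta)f\,\d\eta$: each step produces a factor $\tfrac{i}{t}$ (writing $e^{i\eta t}=\tfrac{1}{it}\tfrac{\d}{\d\eta}e^{i\eta t}$) together with boundary terms $e^{\pm i\ell t}\tfrac{\d^{j}}{\d\eta^{j}}\mathsf{\Psi}_{n}(\pm\ell)f$; these boundary terms tend to $0$ as $\ell\to\infty$ since each derivative lies in $\mathscr{C}_{0}(\R,\X_{0})$. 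After $N_{\H}$ integrations by parts,
$$\int_{-\ell}^{\ell}e^{i\eta t}\mathsf{\Psi}_{n}(\eta)f\,\d\eta=\left(-\frac{i}{t}\right)^{N_{\H}}\int_{-\ell}^{\ell}e^{i\eta t}\frac{\d^{N_{\H}}}{\d\eta^{N_{\H}}}\mathsf{\Psi}_{n}(\eta)f\,\d\eta+o(1),$$
and letting $\ell\to\infty$, using the integrability of the $N_{\H}$-th derivative to pass to the full integral, gives \eqref{eq:SnDeri}. The main obstacle — already dispatched by the earlier sections — is precisely ensuring the $L^{1}(\R,\X_{0})$-integrability of $\mathsf{\Psi}_{n}$ and its derivatives uniformly enough to justify both the dominated convergence in $\e$ and the vanishing of the integration-by-parts boundary terms; this is exactly what Lemma \ref{prop:reguPsif} (hence Assumption \ref{hypH} \textit{4)}) and the $\mathscr{C}_{0}^{N_{\H}}$-convergence in Theorem \ref{theo:existtrace} were designed to supply, so the proof here is essentially bookkeeping.
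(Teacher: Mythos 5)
Your overall strategy mirrors the paper's: take the truncated inverse-Laplace representation \eqref{eq:maindecay} for each fixed $\e>0$, observe that the left-hand side $\bm{S}_n(t)f$ is $\e$-independent, pass to the boundary $\e\to0^+$ on the right using the trace Theorem \ref{theo:existtrace} together with an integrability argument, and then integrate by parts $N_\H$ times for \eqref{eq:SnDeri}. The integration-by-parts step is fine, given the $\mathscr{C}_{0}^{N_\H}$-regularity from Theorem \ref{theo:existtrace} and the $L^1$-integrability of $\frac{\d^{N_\H}}{\d\eta^{N_\H}}\mathsf{\Psi}_n(\cdot)f$ from Lemma \ref{prop:reguPsif}. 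The gap is in the limiting step: the domination you invoke does not follow from what you cite.

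You appeal to the bound $\|\Upsilon_{n}(\e+i\eta)\|_{\mathscr{B}(\X_{0})}\leq C_{\e}\|(\mathsf{M}_{\e+i\eta}\H)^{\mathsf{p}}\|_{\mathscr{B}(\lp)}$ together with \eqref{eq:power}. But $C_\e$ contains $\|\Rs(1,\mathsf{M}_{\e}\H)\|_{\mathscr{B}(\lp)}$, which blows up as $\e\to0^{+}$ precisely because $1\in\mathfrak{S}(\mathsf{M}_{0}\H)$ — this is the whole difficulty that the spectral analysis of Section \ref{sec:near0} is built to circumvent. So this inequality cannot supply an $\e$-independent majorant. Moreover, \eqref{eq:power} is a uniform bound on the $L^1$-\emph{norm} of $\eta\mapsto\|(\mathsf{M}_{\e+i\eta}\H)^{\mathsf{p}}\|$, not a pointwise domination by a single integrable function; uniform $L^1$-norm control alone is not sufficient to invoke the dominated convergence theorem. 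The paper fills the gap with a near/far split in $\eta$: on the compact region $|\eta|\leq R$ the locally uniform convergence $\Upsilon_n(\e+i\eta)f\to\mathsf{\Psi}_n(\eta)f$ from Theorem \ref{theo:existtrace} gives the truncated-integral limit directly (no domination needed), while on $|\eta|>R$, with $R$ chosen so that $\|(\mathsf{M}_{i\eta}\H)^{\mathsf{p}}\|_{\mathscr{B}(\lp)}\leq\frac{1}{2}$, the resolvent $\Rs(1,\mathsf{M}_{\e+i\eta}\H)$ is bounded uniformly for small $\e$ (via \eqref{cor:uniformpower}), and one obtains the genuine $\e$-uniform majorant $C\,\|(\mathsf{M}_{i\eta}\H)^{\mathsf{p}}\|_{\mathscr{B}(\lp)}\,\|f\|_{\X_1}$, which is integrable on $\{|\eta|>R\}$ by \eqref{eq:power}. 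Only after that split does dominated convergence apply legitimately; your argument needs this refinement.
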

\begin{proof} We first deduce from the uniform convergence obtained in Theorem \ref{theo:existtrace} that,  for any $\ell > 0$ and any $f \in \X_{N_{\H}+1}$ and any $t \geq0$
\begin{equation}\label{conv:integ}
\lim_{\e\to0^{+}}\frac{1}{2\pi}\int_{-\ell}^{\ell}\exp\left((\e+i\eta)t\right)\Upsilon_{n}(\e+i\eta)f\d \eta=
\frac{1}{2\pi}\int_{-\ell}^{\ell}\exp\left(i\eta t\right)\mathsf{\Psi}_{n}(\eta)f\d \eta\end{equation}
where the convergence occurs in $\X_{0}$. 

For any $\eta\neq0$, one has
$\mathsf{\Psi}_{n}(\eta)f=\mathsf{\Xi}_{i\eta}\H\left(\mathsf{M}_{i\eta}\H\right)^{n}\Rs(1,\mathsf{M}_{i\eta}\H)\mathsf{G}_{i\eta}f$
so that \begin{multline}\label{eq:Psin}
\|\mathsf{\Psi}_{n}(\eta)f\|_{\X_{0}} \leq \|\mathsf{\Xi}_{0}\|_{\mathscr{B}(\Y_{1}^{-},\X_{0})}\|\H\|_{\mathscr{B}(\lp,\Y_{1}^{-})}\left\|\left(\mathsf{M}_{i\eta}\H\right)^{n}\right\|_{\mathscr{B}(\lp)}\\
\left\|\Rs(1,\mathsf{M}_{i\eta}\H)\right\|_{\mathscr{B}(\lp)}\,\|\mathsf{G}_{i\eta}\|_{\mathscr{B}(\X_{1},\lp)}\|f\|_{\X_{1}}\\
\leq\|\mathsf{\Xi}_{0}\|_{\mathscr{B}(\Y_{1}^{-},\X_{0})}\|\H\|_{\mathscr{B}(\lp,\Y_{1}^{-})}\left\|\left(\mathsf{M}_{i\eta}\H\right)^{n}\right\|_{\mathscr{B}(\lp)}\left\|\Rs(1,\mathsf{M}_{i\eta}\H)\right\|_{\mathscr{B}(\lp)}\|f\|_{\X_{1}}\end{multline}
where we used that $|\mathsf{\Xi}_{i\eta}| \leq \mathsf{\Xi}_{0}$, $\|\mathsf{G}_{i\eta}\|_{\mathscr{B}(\X_{0},\lp)}\leq \|\mathsf{G}_{0}\|_{\mathscr{B}(\X_{0},\lp)} \leq 1$ and $\|\mathsf{M}_{i\eta}\H\|_{\mathscr{B}(\lp)} \leq 1$. Recall (see Lemma \ref{lem:unifconti}) that  
we can find $R >0$ large enough such that
$$\left\|\left(\mathsf{M}_{i\eta}\H\right)^{\mathsf{p}}\right\|_{\mathscr{B}(\lp)} \leq \frac{1}{2}, \qquad \forall |\eta| > R.$$
Arguing exactly as in Corollary \ref{cor:ResMei}, one proves easily that, for $|\eta| >R$, 
$$\Rs(1,\mathsf{M}_{i\eta}\H)=\sum_{m=0}^{\infty}\sum_{j=0}^{\mathsf{p}-1}\left(\mathsf{M}_{i\eta}\H\right)^{m\mathsf{p}+j}$$ from which
\begin{equation*}\label{eq:RsMin}
\|\Rs(1,\mathsf{M}_{i\eta}\H)\|_{\mathscr{B}(\lp)} \leq \mathsf{p} \sum_{m=0}^{\infty}\|\left(\mathsf{M}_{i\eta}\H\right)^{\mathsf{p}}\|_{\mathscr{B}(\lp)}^{m}=\frac{\mathsf{p}}{1-\left\|\left(\mathsf{M}_{i\eta}\H\right)^{\mathsf{p}}\right\|_{\mathscr{B}(\lp)}} \leq 2\mathsf{p}\end{equation*}
Combining this with \eqref{eq:Psin}, one gets
$$\|\mathsf{\Psi}_{n}(\eta)f\|_{ \X_{0}} \leq 2\mathsf{p}\|\mathsf{\Xi}_{0}\|_{\mathscr{B}(\Y_{1}^{-},\X_{0})}\|\H\|_{\mathscr{B}(\lp,\Y_{1}^{-})}\left\|\left(\mathsf{M}_{i\eta}\H\right)^{n}\right\|_{\mathscr{B}(\lp)}\|f\|_{\X_{1}}, \qquad |\eta| >R.$$
Moreover, for $n \geq \mathsf{p}$, $\|\left(\mathsf{M}_{i\eta}\H\right)^{n}\|_{\mathscr{B}(\lp)} \leq \|\left(\mathsf{M}_{i\eta}\H\right)^{\mathsf{p}}\|_{\mathscr{B}(\lp)}$ so that, there exists $C >0$  such that
$$\|\mathsf{\Psi}_{n}(\eta)f\|_{\X_{0}}  \leq C\|\left(\mathsf{M}_{i\eta}\H\right)^{\mathsf{p}}\|_{\mathscr{B}(\lp)}\|f\|_{\X_{1}} \qquad |\eta| >R, \qquad n \geq \mathsf{p}.$$
The same exact reasoning shows that, actually, 
$$\sup_{\e \in (0,1)}\|\exp\left((\e+i\eta)t\right)\Upsilon_{n}(\e+i\eta)f\|_{\X_{0}} \leq C\exp(t)\left\|\left(\mathsf{M}_{i\eta}\H\right)^{\mathsf{p}}\right\|_{\mathscr{B}(\lp)}\|f\|_{\X_{1}}$$
for any $|\eta| > R$ and any $n\geq \mathsf{p}.$ Since the mapping $|\eta| > R \longmapsto \left\|\left(\mathsf{M}_{i\eta}\H\right)^{\mathsf{p}}\right\|_{\mathscr{B}(\lp)}$ is integrable thanks to \eqref{eq:power}, we deduce, from the dominated convergence theorem that
$$\lim_{\e\to0^{+}}\int_{|\eta| >R}\exp\left((\e+i\eta)t\right)\Upsilon_{n}(\e+i\eta)f\d\eta=\int_{|\eta| >R}\exp(i\eta t)\mathsf{\Psi}_{n}(\eta)f\d \eta$$
where the convergence holds in $\X_{0}$. 
This proves \eqref{eq:SntInt} according to the representation formula \eqref{eq:maindecay}. The proof of \eqref{eq:SnDeri} is then deduced easily after $N_{\H}$ integration by parts, using Lemma \ref{prop:reguPsif}.\end{proof}
 %

\subsection{Decay rate: proof of  Theorem \ref{theo:maindec}}

We can prove our main result  stated in the Introduction
\begin{proof}[Proof of Theorem \ref{theo:maindec}] Let us fix $f \in \X_{N_{\H}+1}$. To prove the result, we can assume without loss of generality that $\varrho_{f}=0$, i.e $f \in \X_{N_{\H}+1}^{0}.$ Of course, the term $\mathsf{\Theta}_{f}(\cdot)$ in Theorem \ref{theo:maindec} is given by
$$
\mathsf{\Theta}_{f}(\eta)=\dfrac{\d^{N_{\H}}}{\d \eta^{N_{\H}}}\mathsf{\Psi}_{n}(\eta)f \in  \X_{0}, \qquad \eta \in \R$$
for some suitable choice of $n \in \N.$ Recall first that, for any $n \in \N$ and any $t\geq 0$
$$U_{\H}(t)f=\sum_{k=0}^{\infty}\bm{U}_{k}(t)f=\sum_{k=0}^{n}\bm{U}_{k}(t)+\bm{S}_{n}(t)f$$
where, according to Proposition \ref{prop:Snt},
$$\left\|\sum_{k=0}^{n}\bm{U}_{k}(t)f\right\|_{\X_{0}} \leq C_{n}(1+t)^{-N_{\H}-1}, \qquad \forall t\geq 0$$
for some positive constant $C_{n}$ depending on $n$ and $f$ (but not on $t$). Choosing now $n\geq 2^{N_{\H}}\mathsf{p}$ and using \eqref{eq:SnDeri}, one obtains
$$\left\|U_{\H}(t)f\right\|_{\X_{0}} \leq  C_{n}(1+t)^{-N_{\H}-1}+t^{-N_{\H}}\mathcal{F}_{n}(t)$$
where
$$\mathcal{F}_{n}(t)=\left\|\frac{(-i)^{N_{\H}}}{2\pi}\int_{-\infty}^{\infty}\exp\left(i\eta t\right)\dfrac{\d^{N_{\H}}}{\d\eta^{N_{\H}}}\mathsf{\Psi}_{n}(\eta)f\d\eta\right\|_{\X_{0}}$$
is such that $\lim_{t\to \infty} \mathcal{F}_{n}(t)=0$ according to Riemann-Lebesgue Theorem (recall the mapping $\eta \mapsto \dfrac{\d^{N_{\H}}}{\d\eta^{N_{\H}}}\mathsf{\Psi}_{n}(\eta)f \in \X_{0}$ is integrable over $\R$ according to Lemma \ref{prop:reguPsif}). This proves the first part of the result. Let us now prove the second part of it and assume that \eqref{eq:decay-power} holds true, i.e.
\begin{equation*}
\int_{|\eta| >R}\left\|\left(\mathsf{M}_{i\eta}\H\right)^{\mathsf{p}}\right\|_{\mathscr{B}(\lp)} \d \eta \leq \frac{C(\mathsf{p})}{R^{\beta}}, \qquad \forall R >0\end{equation*}
for some $C(\mathsf{p}) >0.$ We already observed that the mapping
$$\mathsf{\Theta}_{f}\::\:\eta \in \R \longmapsto  \dfrac{\d^{N_{\H}}}{\d\eta^{N_{\H}}}\mathsf{\Psi}_{n}(\eta)f \in \X_{0}$$
belongs to $\mathscr{C}_{0}(\R,\X_{0})$ so is  uniformly continuous. This allows to define a (minimal) modulus of continuity 
$$\omega_{f}(s):=\sup\left\{\left\|\mathsf{\Theta}_{f}(\eta_{1})-\mathsf{\Theta}_{f}(\eta_{2})\right\|_{\X_{0}}\,;\,\eta_{1},\eta_{2}\in \R,\,|\eta_{1}-\eta_{2}|\leq s\right\},\quad  s\geq 0.$$
The estimate then comes from some standard reasoning about Fourier transform (see for instance \cite[Theorem 3.3.9 (b), p. 196]{grafakos} for similar considerations for the decay of Fourier coefficients of H\"older function). Namely, introducing the Fourier transform  of the (Bochner integrable) function $\mathsf{\Theta}_{f}$ as 
$$\widehat{\mathsf{\Theta}}_{f}(t)=\int_{\R}\exp(i\eta t)\mathsf{\Theta}_{f}(\eta)\d\eta \in  \X_{0}, \qquad t \geq 0$$
one has then, since $e^{i\pi}=-1=\exp(i\pi t/t),$ $t >0$,
$$\widehat{\mathsf{\Theta}}_{f}(t)=-\int_{\R}\exp\left(i\eta t + i \frac{\pi}{t} t\right)\mathsf{\Theta}_{f}(\eta)\d \eta=-\int_{\R}\exp\left(i y t\right)\mathsf{\Theta}_{f}\left(y-\frac{\pi}{t}\right)\d y$$
which gives, taking the mean of both the expressions of $\widehat{\mathsf{\Theta}}_{f}(t)$,
$$\widehat{\mathsf{\Theta}}_{f}(t)=\frac{1}{2}\int_{\R}\exp\left(i\eta t\right)\left(\mathsf{\Theta}_{f}(\eta)-\mathsf{\Theta}_{f}\left(\eta-\frac{\pi}{t}\right)\right)\d\eta.$$
Consequently, if one assumes that $R > 2\pi$, 
\begin{equation*}
\left\|\widehat{\mathsf{\Theta}}_{f}(t)\right\|_{\X_{0}} \leq \frac{1}{2}\int_{|\eta|\leq R}\left\|\mathsf{\Theta}_{f}(\eta)-\mathsf{\Theta}_{f}\left(\eta-\frac{\pi}{t}\right)\right\|_{\X_{0}}\d\eta 
+\int_{|\eta| >\frac{R}{2}}\left\|\mathsf{\Theta}_{f}(\eta)\right\|_{ \X_{0}}\d\eta\end{equation*}
where we used that $\{\eta \in \R\;;\;|\eta+\frac{\pi}{t}| > R\} \subset \{\eta \in \R\;;\;|\eta| > R-\pi\} \subset \{\eta \in \R\;;\;|\eta| >\frac{R}{2}\}$ since $t\geq1,$ $R-\pi >\frac{R}{2}$. Therefore, using the modulus of continuity $\omega_{f}$ and the additional assumption \eqref{eq:decay-power}, we deduce that
\begin{equation}\begin{split}\label{eq:Hol}
\left\|\widehat{\mathsf{\Theta}}_{f}(t)\right\|_{\X_{0}}&\leq 2R\omega_{f}\left(\frac{\pi}{t}\right)  + \int_{|\eta| >\frac{R}{2}}\left\|\mathsf{\Theta}_{f}(\eta)\right\|_{\X_{0}}\d\eta\\
&\leq 2R\omega_{f}\left(\frac{\pi}{t}\right) + 2^{\beta}C(\mathsf{p})R^{-\beta}\|f\|_{\X_{N_{\H}+1}}, \qquad \forall R > 2\pi, \qquad t \geq 1.\end{split}
\end{equation}
Optimising then the parameter $R$, i.e. choosing
$$R=\left(\frac{2^{\beta-1}\beta\,C(\mathsf{p})\|f\|_{\X_{N_{\H}+1}} }{\omega_{f}\left(\frac{\pi}{t}\right)}\right)^{\frac{1}{\beta+1}}$$
(up to work with $t \geq t_{0} \geq 1$ to ensure that $R >2\pi$), we obtain the desired estimate.\end{proof} 
\begin{nb} Notice that, if \eqref{eq:decay-power} holds true \emph{for any} $\beta >0$ large enough, then one sees that the decay rate of $\widehat{\mathsf{\Theta}}_{f}(t)$ can be estimated as 
$$\|\widehat{\mathsf{\Theta}}_{f}(t)\|_{\X_{0}}=\mathbf{O}\left(\omega_{f}\left(\frac{\pi}{t}\right)\right)^{\alpha}$$
for any $\alpha \in (0,1)$. This means that the decay rate of $\|\widehat{\mathsf{\Theta}}_{f}(t)\|_{\X_{0}}$ can be made as close as possible (without reaching it) to that of $t \mapsto \omega_{f}\left(\frac{\pi}{t}\right)$.\end{nb}

\subsection{Comments and conjecture}\label{sec:conj}

On the basis of the second part of Theorem \ref{theo:maindec}, a careful study of the modulus of continuity $\omega_{f}(\cdot)$ of the mapping
$$\eta \in \R \longmapsto \mathsf{\Theta}_{f}(\eta)=\dfrac{\d^{N_{\H}}}{\d\eta^{N_{\H}}}\mathsf{\Psi}_{n}(\eta)f \in \X_{0}$$
where $\mathsf{\Psi}_{n}(\eta)f=\mathsf{\Xi}_{i\eta}\H\left[\mathsf{M}_{i\eta}\H\right]^{n}\mathsf{\Phi}(\eta)f$ (with $\mathsf{\Phi}(\eta)f$ defined in \eqref{eq:defPhi})
would be fundamental to make more explicit our main rate of convergence
$$\left\|U_{\H}(t)f-\varrho_{f}\Psi_{H}\right\|_{\X_{0}} \leq \frac{C_{f}}{(1+t)^{N_{\H}}} \bm{\epsilon}(t) \qquad \forall t\geq 0.$$
We actually are able to evaluate the modulus of continuity of the bounded and uniformly continuous  mapping
\begin{equation}\label{eq:chi}
\bm{\chi}\::\:\eta \in \R \longmapsto \dfrac{\d^{N_{\H}}}{\d\eta^{N_{\H}}}\mathsf{\Xi}_{i\eta}\H \in \mathscr{B}(\lp,\X_{0})\end{equation}
(see Remark \ref{nb:conj}) and we conjecture that it also the modulus of continuity of $\mathsf{\Theta}_{f}(\cdot)$. Namely,
\begin{equation}\label{eq:conj}
\omega_{f}(s) \leq C_{0}\,\bm{\omega}(s) + C_{1}s \qquad \forall s \geq0\end{equation}
for some positive $C_{0},C_{1} >0$  where $\bm{\omega}(\cdot)$ is 
the (minimal) modulus of continuity of $\bm{\chi}.$ 

We point out that this is indeed the case on $\R \setminus (-\delta,\delta)$, $\delta >0$. However, the general strategy of this paper seems to be such that we cannot deal with the modulus of continuity in the neighborhoud of the origin of a derivative of order $N_{\H}$.

  
\begin{nb}\label{nb:conj} The interest of the above conjecture is of course that investigating the modulus of continuity of $\bm{\chi}$ is technically much simpler. In particular, under the additional assumption that there exists $\alpha \in (0,1)$ such that
\begin{equation}\label{eq:Yfrac}
\H \in \mathscr{B}(\lp,\Y_{N_{\H}+1+\alpha}^{-})\end{equation}
then, one can prove that the modulus of continuity $\bm{\omega}(\cdot)$ of $\bm{\chi}$ is such that
$$\bm{\omega}(s) \leq c_{1}|s|^{\frac{k_{\H}}{1-\alpha+k_{\H}}}, \qquad \forall s \geq0$$
for some $c_{1} >0$ and where $k_{\H}$ is defined as 
$$k_{\H}:=\max\{s \geq 0\;;\;\limsup_{\e\to0^{+}}\e^{-s}\left\|\widetilde{\H}^{(\e)}\right\|_{\mathscr{B}(\lp,\lm)} < \infty\}$$
with $\widetilde{\H}^{(\e)}=\ind_{|v| < \e}\H$ defined in \eqref{eq:Hepsi}. In this case, if the conjecture \eqref{eq:conj} is correct, the rate of convergence in Theorem \ref{theo:maindec} would be upgraded in
\begin{equation}\label{eq:rateamel}\left\|U_{\H}(t)f-\varrho_{f}\Psi_{\H}\right\|_{\X_{0}}=\mathbf{O}\left(t^{-N_{\H}-\frac{\beta k_{\H}}{(1+\beta)(1-\alpha+k_{\H})}}\right), \quad \forall f \in \X_{N_{\H}+1}\end{equation}
In particular, for the model described in Example \ref{exe:maxw}, since $\beta$ can be chosen arbitrarily and \eqref{eq:Yfrac} holds for any $\alpha \in (0,1)$, the rate of convergence would be 
$$\left\|U_{\H}(t)f-\varrho_{f}\Psi_{\H}\right\|_{\X_{0}}=\mathbf{O}\left(t^{-d+\delta}\right), \qquad \forall f \in \X_{d}$$
for any $\delta >0$ (see Section \ref{sec:exam} for details).
\end{nb}
\section{About Assumptions \ref{hypH}}\label{sec:REGU}

We recall here that, in our previous contribution \cite{LMR}, we introduced a general class of diffuse  boundary operators that we called \emph{regular} and proved in  \cite[Theorem 5.1]{LMR}, that, if $\H$ is a regular and stochastic diffuse boundary operator, then
$$\H\mathsf{M}_{0}\H \in \mathscr{B}(\lp,\lm) \text{ is weakly-compact,}$$ i.e. Assumption  \ref{hypH} \textit{2)} is met. We also provided in \cite{LMR} practical criteria ensuring that $\mathsf{M}_{0}\H$ is irreducible, satisfying then Assumption \ref{hypH} \textit{3)}. Point \textit{1)} of Assumption \ref{hypH} is of course something that has to be computed for each specific boundary operator $\H$. We focus on this Section on some practical criteria yielding to Assumption \ref{hypH} \textit{4)}. 

\subsection{Some useful change of variables}\label{appen:chv}
We establish now a fundamental change of variable formula which has its own interest. A systematic use of such a change of variable will be made in the companion paper \cite{LM-iso}. We begin with the following technical lemma:

\begin{lemme}\phantomsection\label{lem:jaco}
Let $x \in \partial{\Omega}$ be fixed. We denote by $\mathbf{B}_{d-1}$ the closed unit ball of $\R^{d-1}$ and define
$$\mathfrak{p}\::\:z \in \mathbf{B}_{d-1} \longmapsto \mathfrak{p}(z)=x-\tau_{-}(x,\sigma(z))\sigma(z) \in \partial{\Omega}$$
where
$$\sigma(z)=\left(\sigma_{1}(z),\ldots,\sigma_{d}(z)\right)=\left(z_{1},\ldots,z_{d-1},\sqrt{1-|z|^{2}}\right) \in \S^{d-1}, \qquad z \in \mathbf{B}_{d-1}.$$
Defining
$$\mathcal{O}_{x}:=\left\{z \in \mathbf{B}_{d-1}\;;\;\sigma(z) \cdot n(x) >0\;;\;\sigma(z) \cdot n(\mathfrak{p}(z)) < 0\right\},$$ 
it holds that $\mathfrak{p}$ is differentiable on $\mathcal{O}_{x}$ and
$$\mathrm{det}\Bigg(\Bigg(\left\langle \frac{\partial \mathfrak{p}(z)}{\partial z_{i}},\frac{\partial \mathfrak{p}(z)}{\partial z_{j}}\right\rangle\Bigg)_{1\leq i,j\leq d-1}\Bigg)=\left(\frac{\tau_{-}(x,\sigma(z))^{d-1}}{\left(\sigma(z)\cdot n(\mathfrak{p}(z))\right)\sigma_{d}(z)}\right)^{2} \qquad \forall z \in \mathcal{O}_{x}.$$\end{lemme}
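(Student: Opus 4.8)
The statement is a Jacobian computation for the map $\mathfrak{p}$ which sends a direction $\sigma(z)$ at the fixed point $x$ to the other endpoint $\mathfrak{p}(z)=x-\tau_{-}(x,\sigma(z))\sigma(z)$ of the chord of $\Omega$ through $x$ in direction $-\sigma(z)$. The plan is to differentiate $\mathfrak{p}$ explicitly, to identify the resulting vectors $\partial_{z_i}\mathfrak{p}(z)$ as a rank-one perturbation of the scaled parametrization $\tau_{-}(x,\sigma(z))\,\partial_{z_i}\sigma(z)$ of the sphere, and then to compute the Gram determinant using the multilinearity of the determinant together with the fact that all the correction terms are parallel to $\sigma(z)$, hence contribute nothing once contracted against the tangent directions to the image surface.

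\textbf{Step 1: differentiate $\mathfrak{p}$.} First I would write $\mathfrak{p}(z)=x-t(z)\sigma(z)$ with $t(z):=\tau_{-}(x,\sigma(z))$, which is differentiable on $\mathcal O_x$ because on this set $\sigma(z)\cdot n(\mathfrak p(z))<0$ is a transversality (non-grazing) condition, so the implicit function theorem applies to the defining relation $\mathfrak p(z)\in\partial\Omega$. Differentiating $\mathfrak p(z)\in\partial\Omega$ and using that $n(\mathfrak p(z))$ is normal to $\partial\Omega$ there, one gets, for each $i$,
\begin{equation*}
\partial_{z_i}\mathfrak p(z)=-\big(\partial_{z_i}t(z)\big)\sigma(z)-t(z)\,\partial_{z_i}\sigma(z),
\qquad
\partial_{z_i}t(z)=-\,t(z)\,\frac{\partial_{z_i}\sigma(z)\cdot n(\mathfrak p(z))}{\sigma(z)\cdot n(\mathfrak p(z))},
\end{equation*}
the formula for $\partial_{z_i}t$ coming from taking the inner product of $\partial_{z_i}\mathfrak p(z)$ with $n(\mathfrak p(z))$ and setting it to zero. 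Hence $\partial_{z_i}\mathfrak p(z)=-t(z)\,\partial_{z_i}\sigma(z)+c_i(z)\,\sigma(z)$ with $c_i(z)=t(z)\,\big(\partial_{z_i}\sigma(z)\cdot n(\mathfrak p(z))\big)/\big(\sigma(z)\cdot n(\mathfrak p(z))\big)$.

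\textbf{Step 2: compute the Gram determinant.} Now I would observe that since $|\sigma(z)|^2=1$, we have $\partial_{z_i}\sigma(z)\cdot\sigma(z)=0$, i.e. the $d-1$ vectors $\partial_{z_i}\sigma(z)$ span the tangent space $T_{\sigma(z)}\S^{d-1}=\sigma(z)^{\perp}$, which is exactly the span of the $\partial_{z_i}\mathfrak p(z)$ modulo $\sigma(z)$. Writing the $d\times(d-1)$ matrix $J$ with columns $\partial_{z_i}\mathfrak p(z)$ as $J=-t(z)\,A + \sigma(z)\,c^{\top}$ where $A$ has columns $\partial_{z_i}\sigma(z)$ and $c=(c_1,\dots,c_{d-1})^{\top}$, the Gram matrix is $J^{\top}J=t(z)^2 A^{\top}A - t(z)(A^{\top}\sigma)c^{\top} - t(z)\,c(\sigma^{\top}A) + (\sigma^{\top}\sigma)\,c c^{\top}=t(z)^2 A^{\top}A + c c^{\top}$, using $A^{\top}\sigma=0$ and $\sigma^{\top}\sigma=1$. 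By the matrix determinant lemma, $\det(t(z)^2 A^{\top}A + c c^{\top})=\det(t(z)^2 A^{\top}A)\big(1+c^{\top}(t(z)^2 A^{\top}A)^{-1}c\big)$; but one checks $c$ lies in the column span of $A^{\top}$ appropriately and the correction factor simplifies — alternatively, and more cleanly, I would compute the determinant of the $(d-1)\times(d-1)$ Gram matrix by choosing an orthonormal basis $(e_1,\dots,e_{d-1})$ of $\sigma(z)^{\perp}$, so that the component of $\partial_{z_i}\mathfrak p$ in the surface tangent directions is purely $-t(z)\partial_{z_i}\sigma(z)$ and the $\sigma(z)$-component is irrelevant to the area element only after accounting correctly: the genuine area scaling of the image surface in $\partial\Omega$ picks up the factor $1/|\sigma(z)\cdot n(\mathfrak p(z))|$ from the projection onto the tangent plane of $\partial\Omega$, and the factor $1/\sigma_d(z)$ is the Jacobian of the chart $z\mapsto\sigma(z)$ on $\S^{d-1}$ (since $\sqrt{\det(A^{\top}A)}=\sigma_d(z)^{-1}$, the standard spherical-cap parametrization). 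Assembling: $\sqrt{\det(J^{\top}J)}=t(z)^{d-1}\cdot\sigma_d(z)^{-1}\cdot|\sigma(z)\cdot n(\mathfrak p(z))|^{-1}$, and squaring gives the claimed identity.

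\textbf{Main obstacle.} The routine part is the differentiation in Step 1; the delicate point — the one I would spend the most care on — is the precise bookkeeping in Step 2 of how the three factors $t(z)^{d-1}$, $1/\sigma_d(z)$ and $1/(\sigma(z)\cdot n(\mathfrak p(z)))$ arise, i.e. distinguishing (a) the pure dilation by $t(z)=\tau_-(x,\sigma(z))$ in $d-1$ tangent directions, (b) the Jacobian $\sigma_d(z)^{-1}$ of the hemispherical chart, and (c) the obliqueness factor from the fact that $\mathfrak p(z)$ parametrizes $\partial\Omega$ rather than a sphere, so that the area form on $\partial\Omega$ is recovered by dividing the ambient area form by $|\cos|$ of the angle between $\sigma(z)$ and $n(\mathfrak p(z))$. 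Keeping the signs straight (using $\sigma(z)\cdot n(\mathfrak p(z))<0$ on $\mathcal O_x$, so $|\sigma(z)\cdot n(\mathfrak p(z))|=-\sigma(z)\cdot n(\mathfrak p(z))$, which then disappears upon squaring) is where a careless computation would go wrong.
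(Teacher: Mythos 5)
Your setup is correct and, once finished, actually yields a cleaner computation than the paper's. Step~1 is fine: from the constraint $\partial_{z_i}\mathfrak p(z)\cdot n(\mathfrak p(z))=0$ you correctly obtain $\partial_{z_i}\mathfrak p=-t\,\partial_{z_i}\sigma+c_i\,\sigma$ with $c_i=t\,(\partial_{z_i}\sigma\cdot n)/(\sigma\cdot n)$, and the identity $J^\top J=t^2A^\top A+cc^\top$ (using $A^\top\sigma=0$) is right. (Incidentally, the sign of $c_i$ has no effect on $cc^\top$, so it cannot influence the Gram determinant.)

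The gap is in Step~2. You set up the matrix determinant lemma,
\[
\det\!\bigl(t^2A^\top A+cc^\top\bigr)=\det(t^2A^\top A)\Bigl(1+c^\top(t^2A^\top A)^{-1}c\Bigr),
\]
but then write ``one checks \dots'' and pivot to a geometric heuristic. That heuristic is not a proof: the claim that ``the component of $\partial_{z_i}\mathfrak p$ in the surface tangent directions is purely $-t\,\partial_{z_i}\sigma$'' is not meaningful as stated, since $\partial_{z_i}\mathfrak p$ already lies in $T_{\mathfrak p(z)}\partial\Omega=n(\mathfrak p(z))^\perp$ whereas $\partial_{z_i}\sigma$ lies in the \emph{different} hyperplane $\sigma(z)^\perp$; you cannot simply read off the dilation factor $t^{d-1}$ from a basis of $\sigma^\perp$ and then tack on an obliqueness factor without relating the two tangent planes precisely. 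The correct (and short) way to finish the computation you started is the observation that $c=\dfrac{t}{\sigma\cdot n}\,A^\top n$, so
\[
c^\top\bigl(A^\top A\bigr)^{-1}c=\frac{t^2}{(\sigma\cdot n)^2}\,n^\top A\bigl(A^\top A\bigr)^{-1}A^\top n
=\frac{t^2}{(\sigma\cdot n)^2}\bigl(1-(\sigma\cdot n)^2\bigr),
\]
because $A(A^\top A)^{-1}A^\top$ is the orthogonal projection onto $\mathrm{Ran}(A)=\sigma^\perp$ and $|n|=1$. Hence $1+c^\top(t^2A^\top A)^{-1}c=(\sigma\cdot n)^{-2}$ and, with $\det(A^\top A)=\sigma_d^{-2}$, one lands directly on $\det(J^\top J)=t^{2(d-1)}/\bigl(\sigma_d^2(\sigma\cdot n)^2\bigr)$. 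This is precisely where the paper instead introduces the coordinate vectors $p_i=z_i/\sigma_d$ and $u_i=(n\cdot\partial_i\sigma)/(\sigma\cdot n)$, applies the matrix determinant lemma twice to $\det(\mathbf I+p\otimes p+u\otimes u)$, and then simplifies the resulting scalar by hand; your factorization $t^2A^\top A+cc^\top$ with the projection identity bypasses that algebra entirely. So: the route is genuinely different and preferable, but as written the proposal skips the one step that makes it work.
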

\begin{proof} The fact that $\mathfrak{p}(\cdot)$ is differentiable on $\mathcal{O}_{x}$ is a consequence of \cite[Lemma A.4]{LMR}. We recall in particular here that the set $\widehat{\Gamma}_{+}(x)=\{\omega \in \S^{d-1}\;;\;(x,\omega) \in \Gamma_{+}\text{ and } \bm{\xi}(x,\omega)=(x-\tau_{-}(x,\omega)\omega,\omega) \in \Gamma_{-}\}$ is an open subset of $\{\omega \in \S^{d-1}\;;\;(x,\omega) \in \Gamma_{+}\}$ and $\tau_{-}(x,\cdot)$ is differentiable on $\widehat{\Gamma}_{+}(x)$ with
\begin{equation}\label{eq:difftau}
\partial_{\omega}\tau_{-}(x,\omega)=\frac{\tau_{-}(x,\omega)}{w\cdot n(\bm{\xi}(x,\omega))}n(\bm{\xi}_{s}(x,\omega)), \qquad \omega \in \widehat{\Gamma}_{+}(x),
\end{equation}
where $\bm{\xi}_{s}(x,\omega)=x-\tau_{-}(x,\omega)\omega$. Since, for any $z \in \mathcal{O}_{x}$, $\mathfrak{p}(z)=\bm{\xi}_{s}(x,\sigma(z))$, this translates in a straightforward way to the differentiability of $\mathfrak{p}$. Moreover, one deduces from \eqref{eq:difftau} that
\begin{equation*}\begin{split}
\partial_{i}\mathfrak{p}(z)&=-\left(\nabla_{\omega}\tau_{-}(x,\sigma(z))\cdot\partial_{i}\sigma(z)\right)\sigma(z)-\tau_{-}(x,\sigma(z))\partial_{i}\sigma(z)\\
&=-\frac{\tau_{-}(x,\sigma(z))}{\sigma(z)\cdot n(\mathfrak{p}(z))}\left[\left(n(\mathfrak{p}(z))\cdot\partial_{i}\sigma(z)\right)\sigma(z)+\left(n(\mathfrak{p}(z))\cdot\sigma(z)\right)\partial_{i}\sigma(z)\right],\end{split}\end{equation*}
where we denote for simplicity $\partial_{i}=\frac{\partial}{\partial z_{i}}$. We therefore get
\begin{multline*}
\left\langle \partial_{i}\mathfrak{p}(z)\,;\,\partial_{j}\mathfrak{p}(z)\right\rangle=\left(\frac{\tau_{-}(x,\sigma(z))}{\sigma(z)\cdot n(\mathfrak{p}(z))}\right)^{2}\bigg(\left(n(\mathfrak{p}(z))\cdot\partial_{i}\sigma(z)\right)\left(n(\mathfrak{p}(z))\cdot\partial_{j}\sigma(z)\right) \\
 +\left(n(\mathfrak{p}(z))\cdot\sigma(z)\right)^{2}\left(\partial_{i}\sigma(z)\cdot \partial_{j}\sigma(z)\right)\bigg).\end{multline*}
Let us fix $z \in \mathcal{O}_{x}$. We denote by $\mathfrak{P}(z)$ the matrix whose entries are $\mathfrak{P}_{ij}(z)=\left\langle \partial_{i}\mathfrak{p}(z)\,;\,\partial_{j}\mathfrak{p}(z)\right\rangle$, $1 \leq i,j \leq d-1$. Using that
$$\partial_{i}\sigma(z)\cdot\partial_{j}\sigma(z)=\delta_{ij}+\frac{z_{i}z_{j}}{\sigma_{d}^{2}(z)},$$
where $\sigma_{d}(z)$ is the last component of $\sigma(z)$, i.e. $\sigma_{d}(z)=\sqrt{1-|z|^{2}}$, one sees that
$$\mathfrak{P}_{ij}(z)=\tau_{-}^{2}(x,\sigma(z))\Bigg[\delta_{ij}+\frac{z_{i}z_{j}}{\sigma_{d}^{2}(z)}+\frac{1}{(\sigma(z)\cdot n(\mathfrak{p}(z)))^{2}}\left(n(\mathfrak{p}(z))\cdot\partial_{i}\sigma(z)\right)\left(n(\mathfrak{p}(z))\cdot\partial_{j}\sigma(z)\right)\Bigg].$$
For simplicity, we will simply denote by $n$ the unit vector $n(\mathfrak{p}(z))$ and $\sigma=\sigma(z)$. Introducing the vectors $u,p\in \R^{d-1}$ whose components are 
$$u_{i}:=\frac{n\cdot\partial_{i}\sigma}{\sigma\cdot n}, \qquad p_{i}:=\frac{z_{i}}{\sigma_{d}}, \qquad i=1,\ldots,d-1$$
we have $\mathfrak{P}_{ij}(z)=\tau_{-}(x,\sigma)^{2}\left[\delta_{ij}+p_{i}p_{j}+u_{i}u_{j}\right]$ so that
$$\mathrm{det}\left(\mathfrak{P}(z)\right)=\left(\tau_{-}(x,\sigma)\right)^{2(d-1)}\mathrm{det}\left(\mathbf{I}_{d-1}+p\otimes p +u \otimes u\right).$$
Recall that, for any invertible matrix $\bm{A}$ and any vectors $\bm{x},\bm{y} \in \R^{d-1}$, then
\begin{equation}\label{detA}
\mathrm{det}\left(\bm{A}+\bm{x}\otimes\bm{y}\right)=\mathrm{det}(A)\left(1+\langle \bm{y},\bm{A}^{-1}\bm{x}\rangle\right).\end{equation}
We apply this identity first by considering $\bm{A}=\mathbf{I}_{d-1}+p\otimes p$ and get
$$\mathrm{det}\left(\mathfrak{P}(z)\right)=\left(\tau_{-}(x,\sigma)\right)^{2(d-1)}\mathrm{det}(\bm{A})\left(1+\langle u,\bm{A}^{-1}u\rangle\right).$$
To compute $\mathrm{det}(\bm{A})$, one uses again \eqref{detA} to deduce
$$\mathrm{det}(\bm{A})=1+\langle p,p\rangle=1+\frac{|z|^{2}}{\sigma^{2}_{d}}=\sigma_{d}^{-2}.$$
One also can compute in a direct way the inverse of $\bm{A}$ given by $\bm{A}^{-1}=\mathbf{I}_{d-1}-\frac{1}{1+|p|^{2}}p\otimes p$ from which
$$\langle u,\bm{A}^{-1}u\rangle=|u|^{2}-\frac{\langle p,u\rangle^{2}}{1+|p|^{2}}.$$
This results in 
\begin{equation}\label{eq:detPz}
\mathrm{det}\left(\mathfrak{P}(z)\right)=\left(\tau_{-}(x,\sigma)\right)^{2(d-1)}\left(1+|u|^{2}-\frac{\langle p,u\rangle^{2}}{1+|p|^{2}}\right).\end{equation}
Let us make this more explicit. One easily checks that
$$u_{i}=\frac{1}{\sigma\cdot n}\left(n_{i}-\frac{n_{d}}{\sigma_{d}}z_{i}\right) \qquad \text{ and } \qquad \langle p,u\rangle=\frac{1}{(\sigma\cdot n)\sigma_{d}}\sum_{i=1}^{d-1}\left(n_{i}z_{i}-\frac{n_{d}}{\sigma_{d}}z_{i}^{2}\right).$$
Noticing that $\sum_{i=1}^{d-1}n_{i}z_{i}=(\sigma\cdot n)-\sigma_{d}n_{d}$, it holds
$$\langle p,u\rangle=\frac{1}{(\sigma\cdot n)\sigma_{d}}\left(\sigma\cdot n - \sigma_{d}n_{d}-\frac{n_{d}}{\sigma_{d}}|z|^{2}\right)=\frac{1}{\sigma_{d}^{2}}\left(\sigma_{d}-\frac{n_{d}}{\sigma\cdot n}\right),$$
where we use again that $\sigma_{d}^{2}+|z|^{2}=1$. Since one also has
$$|u|^{2}=\frac{1}{(\sigma\cdot n)^{2}}\sum_{i=1}^{d-1}\left(n_{i}-\frac{n_{d}}{\sigma_{d}}z_{i}\right)^{2}=\frac{1}{\sigma_{d}^{2}(\sigma\cdot n)^{2}}\sum_{i=1}^{d-1}\left(\sigma_{d}n_{i}-z_{i}n_{d}\right)^{2},$$
we get easily after expanding the square and using that $\sum_{i=1}^{d-1}n_{i}^{2}=1-n_{d}^{2}$, 
$$|u|^{2}=\frac{1}{\sigma_{d}^{2}(\sigma\cdot n)^{2}}\left(\sigma_{d}^{2}+n_{d}^{2}-2(\sigma\cdot n)\sigma_{d}n_{d}\right).$$
One finally obtains, using \eqref{eq:detPz},
$$\mathrm{det}\left(\mathfrak{P}(z)\right)=\frac{\tau_{-}(x,\sigma)^{2(d-1)}}{\sigma_{d}^{2}}\left(1+\frac{1}{\sigma_{d}^{2}(\sigma\cdot n)^{2}}\left(\sigma_{d}^{2}+n_{d}^{2}-2(\sigma\cdot n)\sigma_{d}n_{d}\right)-\frac{1}{\sigma_{d}^{2}}\left(\sigma_{d}-\frac{n_{d}}{\sigma\cdot n}\right)^{2}\right)$$
and little algebra gives
$$\mathrm{det}\left(\mathfrak{P}(z)\right)=\frac{\tau_{-}(x,\sigma)^{2(d-1)}}{\sigma_{d}^{2}(\sigma\cdot n)^{2}}$$
which is the desired result.\end{proof}

We complement the above with the following
\begin{lemme}\phantomsection\label{lem:sard}
For any $x \in \partial\Omega$, introduce
$$\mathcal{G}_{x}:=\{\omega \in \S^{d-1}\;;\;(x,\omega) \in \Gamma_{+}\;;\;\omega \cdot n(x-\tau_{-}(x,\omega)\omega)=0\}.$$
Then, 
$$|\mathcal{G}_{x}|=0,$$
where here $|\cdot|$ denotes the Lebesgue surface measure over $\S^{d-1}$. Moreover, with the notations of Lemma \ref{lem:jaco}, the set
$$\mathbf{G}_{x}=\left\{z \in \mathbf{B}_{d-1}\;;\;\sigma(z)=(z,\sqrt{1-|z|^{2}}) \in \mathcal{G}_{x}\right\}$$
has zero Lebesgue measure (in $\R^{d-1}$).
\end{lemme}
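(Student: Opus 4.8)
The plan is to reduce the statement about the sphere $\mathcal{G}_x$ to a Sard-type argument, and then to transfer the conclusion to $\mathbf{G}_x$ via the chart $\sigma$. First I would recall from Lemma \ref{lem:jaco} that on the open set $\widehat{\Gamma}_+(x)=\{\omega\in\S^{d-1}\;;\;(x,\omega)\in\Gamma_+\text{ and }\bm{\xi}(x,\omega)=(x-\tau_-(x,\omega)\omega,\omega)\in\Gamma_-\}$ the map $\omega\mapsto\bm{\xi}_s(x,\omega)=x-\tau_-(x,\omega)\omega$ is differentiable (indeed $\mathscr{C}^1$, using the $\mathscr{C}^1$ regularity of $\partial\Omega$). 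The set $\mathcal{G}_x$ consists precisely of those $\omega\in\{(x,\omega)\in\Gamma_+\}$ for which the outgoing ray from $x$ in direction $-\omega$ hits the boundary \emph{tangentially}, i.e.\ $\omega\cdot n(\bm{\xi}_s(x,\omega))=0$. Away from such grazing directions one has $\omega\in\widehat{\Gamma}_+(x)$, so $\mathcal{G}_x$ lies in the closure of $\widehat{\Gamma}_+(x)$; more precisely I would argue that $\mathcal{G}_x$ is contained in the \emph{critical set} of the $\mathscr{C}^1$ map $\bm{\xi}_s(x,\cdot)$ restricted to $\widehat{\Gamma}_+(x)$ together with a boundary piece of $\widehat{\Gamma}_+(x)$ inside $\S^{d-1}$, both of which are $|\cdot|$-negligible.

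The key step is the computation of the Jacobian. From the formula \eqref{eq:difftau} for $\partial_\omega\tau_-(x,\omega)$, one computes $\partial_i\bm{\xi}_s(x,\sigma(z))$ exactly as in the proof of Lemma \ref{lem:jaco}, and that proof already yields
$$\mathrm{det}\Bigg(\Bigg(\left\langle \frac{\partial \mathfrak{p}(z)}{\partial z_{i}},\frac{\partial \mathfrak{p}(z)}{\partial z_{j}}\right\rangle\Bigg)_{1\leq i,j\leq d-1}\Bigg)=\left(\frac{\tau_{-}(x,\sigma(z))^{d-1}}{\left(\sigma(z)\cdot n(\mathfrak{p}(z))\right)\sigma_{d}(z)}\right)^{2}$$
for $z\in\mathcal{O}_x$. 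Since $\tau_-(x,\sigma(z))>0$ and $\sigma_d(z)>0$ on the relevant set, this determinant blows up precisely as $\sigma(z)\cdot n(\mathfrak{p}(z))\to 0$; equivalently the map $\mathfrak{p}$ is a local $\mathscr{C}^1$ diffeomorphism onto its image at every point \emph{where it is differentiable and the grazing condition fails}, i.e.\ there are no critical points of $\mathfrak{p}$ in $\mathcal{O}_x$. Hence the set where the ray hits tangentially is exactly the set where differentiability degenerates, not an interior critical set; I would therefore instead argue directly that $\mathcal{G}_x$ is the preimage under $\bm{\xi}_s(x,\cdot)$ of the ``grazing'' subset of $\partial\Omega$, and use the $\mathscr{C}^1$ (hence locally Lipschitz) regularity of $\bm{\xi}_s(x,\cdot)$ on the open set where it is defined, combined with the coarea / Sard theorem applied to the $\mathscr{C}^1$ function $\omega\mapsto \omega\cdot n(\bm{\xi}_s(x,\omega))$: the zero set of a $\mathscr{C}^1$ function whose gradient is generically nonzero is negligible, and where the gradient vanishes one invokes Sard's theorem for the value $0$. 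This shows $|\mathcal{G}_x|=0$.

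To pass from $\mathcal{G}_x$ to $\mathbf{G}_x$, I would note that $\sigma\::\:\mathbf{B}_{d-1}\to\S^{d-1}$, $z\mapsto(z,\sqrt{1-|z|^2})$, is a $\mathscr{C}^\infty$ diffeomorphism from the open ball onto the open upper hemisphere, with Jacobian $\sigma_d(z)^{-1}$ bounded on compact subsets of $\{|z|<1\}$; the part of $\mathcal{G}_x$ hitting the equator $\{\sigma_d=0\}$ has zero Lebesgue measure on $\R^{d-1}$ for trivial dimensional reasons. Since $\mathbf{G}_x=\sigma^{-1}(\mathcal{G}_x)$ and $\sigma^{-1}$ maps $|\cdot|$-null sets of the hemisphere to Lebesgue-null sets of $\mathbf{B}_{d-1}$ (absolute continuity of $\sigma^{-1}$, locally), we conclude $|\mathbf{G}_x|=0$.

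The main obstacle is the first part: $\mathcal{G}_x$ is not simply an interior critical set of a globally $\mathscr{C}^1$ map on the sphere, because the grazing condition is exactly where the relevant map $\tau_-(x,\cdot)$ (and hence $\bm{\xi}_s(x,\cdot)$) fails to be differentiable. The delicate point is thus to handle the geometry near grazing rays carefully — one must either work on the open set $\widehat{\Gamma}_+(x)$ where everything is smooth and show the complement (within $\{(x,\omega)\in\Gamma_+\}$) is negligible, or exploit convexity/curvature hypotheses implicit in $\partial\Omega\in\mathscr{C}^1$ to control the boundary of $\widehat{\Gamma}_+(x)$. Once that reduction is in place, the Sard/coarea argument and the change of variables via $\sigma$ are routine.
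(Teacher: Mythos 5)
Your diagnosis of the difficulty is exactly right: on the set $\mathcal{G}_{x}$ the exit time $\tau_{-}(x,\cdot)$ (hence $\bm{\xi}_{s}(x,\cdot)$) fails to be differentiable, and you correctly note that $\mathfrak{p}$ has no critical points where it is differentiable, so a Sard argument applied to the map $\omega\mapsto\bm{\xi}_{s}(x,\omega)$ cannot work. Where the proposal breaks down is in the proposed fix. The function $\omega\mapsto\omega\cdot n(\bm{\xi}_{s}(x,\omega))$ is only defined and $\mathscr{C}^{1}$ on $\widehat{\Gamma}_{+}(x)$, and there it is strictly negative (one lands in $\Gamma_{-}$); so its zero set inside the open set where it is smooth is empty, and the set you need to control is precisely the complement of $\widehat{\Gamma}_{+}(x)$ inside $\{\omega:\omega\cdot n(x)>0\}$. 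Appealing to ``coarea/Sard for the zero set of a $\mathscr{C}^{1}$ function whose gradient is generically nonzero'' is circular here: Sard controls the measure of critical \emph{values}, not of the zero set of a function at a possibly critical level, and in any case you do not have a $\mathscr{C}^{1}$ extension across $\mathcal{G}_{x}$. So the argument as written reduces to showing exactly the statement to be proved, and you acknowledge as much in your final paragraph.

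The paper's trick is to reverse the direction of the map. Instead of going from the direction $\omega$ to the hit point $y_{\omega}=x-\tau_{-}(x,\omega)\omega$ (which is the map that degenerates on $\mathcal{G}_{x}$), one considers the $\mathscr{C}^{1}$ map
$$\Psi\::\:y \in \partial\Omega\setminus\{x\}\longmapsto \frac{x-y}{|x-y|} \in \S^{d-1},$$
which is smooth \emph{everywhere}, regardless of grazing, because it is the restriction of a function smooth on $\R^{d}\setminus\{x\}$. One has $\omega=\Psi(y_{\omega})$ for every admissible $\omega$. A short computation of $\d\Psi(y)$ as $-|x-y|^{-1}$ times the orthogonal projection onto $\{x-y\}^{\perp}$, restricted to the tangent plane $\mathcal{T}_{y}$, shows that whenever $\omega\in\mathcal{G}_{x}$ the vector $\omega$ lies in $\mathcal{T}_{y_{\omega}}$ and is killed by this projection, so $y_{\omega}$ is a critical \emph{point} of $\Psi$ and $\omega$ a critical \emph{value}. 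Thus $\mathcal{G}_{x}$ sits inside the set of critical values of the globally $\mathscr{C}^{1}$ map $\Psi$ on the $(d-1)$-manifold $\partial\Omega\setminus\{x\}$, and Sard's theorem applies cleanly; no convexity or extra curvature hypotheses are needed. The passage to $\mathbf{G}_{x}$ is then as you sketch (and as the paper does): compose with the smooth chart $\mathcal{P}=\sigma^{-1}$ on the open upper hemisphere and apply Sard to $\mathcal{P}\circ\Psi$, or equivalently use that $\sigma^{-1}$ is locally Lipschitz away from the equator, which $\mathcal{G}_{x}$ avoids since $\omega\cdot n(x)>0$ there. The decisive missing idea in your proposal is therefore the change of viewpoint from the (non-smooth) direction-to-point map to the (globally smooth) point-to-direction map $\Psi$.
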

\begin{proof} The proof is based on Sard's Theorem. Let $x \in \partial\Omega$ be fixed. Introducing the function
$$\Psi\::\:y \in \partial\Omega \setminus\{x\} \longmapsto \Psi(y)=\frac{x-y}{|x-y|},$$
it holds that $\Psi$ is a $\mathscr{C}^{1}$ function. For any $\omega \in \S^{d-1}$ setting $y_{\omega}=x-\tau_{-}(x,\omega)\omega \in \partial\Omega$, one has
$$\omega=\Psi(y_{\omega}).$$
Let us prove that $\mathcal{G}_{x}$ is included in the set of \emph{critical values} of $\Psi$. To do this, we show that, if $\omega \in \mathcal{G}_{x}$, then $y_{\omega}$ is a critical point of $\Psi$, i.e. the differential $\d\Psi(y_{\omega})$ is not injective. Since actually $\Psi$ is defined and smooth on $\R^{d}\setminus\{x\}$, its differential on $\partial\Omega \setminus \{x\}$ is nothing but the restriction of its differential on $\R^{d}\setminus\{x\}$ on the tangent hyperplane to $\partial\Omega$, i.e., for any $y \in \partial\Omega \setminus\{x\}$, one has
$$\d\Psi(y)\::\:h \in \mathcal{T}_{y} \longmapsto -\frac{1}{|x-y|}\mathds{P}_{z_{y}}(h),$$
where $\mathcal{T}_{y}$ is the tangent space of $\partial\Omega$ at $y\in \partial \Omega \setminus\{x\}$, $z_{y}=\frac{x-y}{|x-y|}$ and, for any $z \in \R^{d}$,  $\mathds{P}_{z}$ denotes the orthogonal projection onto the hyperplane orthogonal to $z$, 
$$\mathds{P}_{z}h=h_{z}^{\perp}:=h-\langle h,\,\bar{z}\rangle\,\bar{z}, \qquad \bar{z}=\frac{z}{|z|} \in \mathbb{S}^{d-1}, h \in \R^{d}.$$
Now, one notices that
$$\omega \in \mathcal{G}_{x} \implies \omega \cdot n(y_{\omega})=0, \qquad \omega=\Psi(y_{\omega}).$$
In particular, one has $\omega \in \mathcal{T}_{y_{\omega}}$ and $\mathds{P}_{z_{y_{\omega}}}(\omega)=0$ since $z_{y_{\omega}}=\frac{x-y_{\omega}}{|x-y_{\omega}|}=\omega$. In particular, $\d\Psi(y_{\omega})$ is not injective ($\omega \neq 0$ belongs to its kernel). We proved that, if $\omega \in \mathcal{G}_{x}$, then it is a \emph{critical value} of $\Psi$ and Sard's Theorem implies in particular that the measure of $\mathcal{G}_{x}$ is zero. 

Now, to prove that $\mathbf{G}_{x}$ is also of zero measure, one simply notices that $\mathbf{G}_{x}$ is  the image of $\mathcal{G}_{x}$ through the smooth function
$$\mathcal{P}\::\:\omega \in \S^{d-1}\setminus \{w_{d} =0\} \longmapsto \mathcal{P}(\omega)=(\omega_{1},\ldots,\omega_{d-1}) \in \mathbf{B}_{d-1}.$$
In particular, from the first part of the lemma, $\mathbf{G}_{x} $ is included in the set of critical values of the smooth function $\mathcal{P} \circ \Psi$, and we conclude again with Sard's Theorem.
\end{proof}

 \begin{lemme}\phantomsection\label{lem:ChVa} Assume that $\partial\Omega$ satisfies Assumption \ref{hypO}. For any $x \in \partial\Omega$, we set
 $$\S_{+}(x)=\left\{\sigma \in \S^{d-1}\;;\;\sigma \cdot n(x) >0\right\}.$$
Then, for any nonnegative measurable mapping $g\::\:\S^{d-1} \mapsto \R$, one has,
$$\int_{\S_{+}(x)}g(\sigma)\,|\sigma\cdot n(x)|\d\sigma=\int_{\partial\Omega}g\left(\frac{x-y}{|x-y|}\right)\mathcal{J}(x,y)\pi(\d y),$$
and 
\begin{equation}\label{eq:Jxy}
\mathcal{J}(x,y)=\ind_{\Sigma_{+}(x)}(y)\frac{|(x-y)\cdot n(x)|}{|x-y|^{d+1}}\,|(x-y)\cdot n(y)|, \qquad \forall y \in \Sigma_{+}(x)\end{equation}
with
$$\Sigma_{+}(x)=\left\{y \in \partial\Omega\::\:\left]x,y\right[ \subset \Omega\,;\,(x-y) \cdot n(x) > 0\;;\;n(x-y) \cdot n(y) < 0\right\}$$
where $\left]x,y\right[=\{tx+(1-t)y\,;\,0 < t < 1\}$ is the open segment joining $x$ and $y$  \footnote{Observe that, if $\Omega$ is convex, then 
$$\Sigma_{+}(x)=\left\{y \in \partial\Omega\;;\;\, (x-y) \cdot n(x) >0 \text{ and } (x-y)\cdot n(y) < 0\right\}$$
and $\partial\Omega=\bigcup_{x\in\partial\Omega}\Sigma_{+}(x)$ whereas, if $\Omega$ is not convex $\bigcup_{x \in \partial\Omega}\Sigma_{+}(x) \neq \partial\Omega$.}. 

In particular, for any nonnegative measurable $G\::\:\partial \Omega \to \R$ and any measurable $\varphi\::\:\R^{+}\to\C$,  we have
\begin{equation}\label{eq:CofV}
\int_{\S_{+}(x)}\left|\sigma \cdot n(x)\right|\,\varphi(\tau_{-}(x,\sigma)) \,G(x-\tau_{-}(x,\sigma)\sigma)\d\sigma=\int_{\Sigma_{+}(x)}G(y)\varphi(|x-y|)\mathcal{J}(x,y)\pi(\d y).\end{equation}
\end{lemme}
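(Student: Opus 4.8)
The plan is to obtain \eqref{eq:CofV} as a direct consequence of the change of variable formula for $g$, so the real work is to establish the formula $\int_{\S_{+}(x)}g(\sigma)|\sigma\cdot n(x)|\d\sigma=\int_{\partial\Omega}g\left(\tfrac{x-y}{|x-y|}\right)\mathcal J(x,y)\pi(\d y)$. First I would fix $x\in\partial\Omega$ and use Lemma \ref{lem:sard} to discard the set $\mathcal G_{x}$ (and its chart image $\mathbf G_{x}$), which has zero measure, so that all computations take place on the open set where $\mathfrak p$ is differentiable. Then I would pass to the chart $\sigma=\sigma(z)=(z,\sqrt{1-|z|^{2}})$ on the upper hemisphere $\{\sigma_{d}>0\}$, under which $\d\sigma=\sigma_{d}(z)^{-1}\d z$, and reduce the surface integral over $\S_{+}(x)$ to an integral over $\mathcal O_{x}\subset\mathbf B_{d-1}$. (A rotation reducing $n(x)$ to $e_{d}$, up to a null set, makes the hemisphere choice harmless; one only needs $\sigma\cdot n(x)>0$, which is exactly the first constraint defining $\mathcal O_{x}$.)

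Next I would invoke the area/coarea change of variable through the map $\mathfrak p\::\:z\in\mathcal O_{x}\mapsto y=x-\tau_{-}(x,\sigma(z))\sigma(z)\in\partial\Omega$: writing $\pi$ as the $(d-1)$-dimensional surface measure, the pushforward of $\d z$ under $\mathfrak p$ carries the Jacobian factor $\sqrt{\det\mathfrak P(z)}$, which Lemma \ref{lem:jaco} computes explicitly as $\tau_{-}(x,\sigma(z))^{d-1}\big/\big(|\sigma(z)\cdot n(\mathfrak p(z))|\,\sigma_{d}(z)\big)$. Combining this with the chart Jacobian $\sigma_{d}(z)^{-1}$ and the weight $|\sigma(z)\cdot n(x)|$ present in the left-hand integrand, and then re-expressing $\sigma(z)=\tfrac{x-y}{|x-y|}$, $\tau_{-}(x,\sigma(z))=|x-y|$, $\sigma(z)\cdot n(x)=\tfrac{(x-y)\cdot n(x)}{|x-y|}$, $\sigma(z)\cdot n(\mathfrak p(z))=\tfrac{(x-y)\cdot n(y)}{|x-y|}$, all the $\sigma_{d}$'s cancel and one is left precisely with the kernel $\mathcal J(x,y)=\dfrac{|(x-y)\cdot n(x)|}{|x-y|^{d+1}}\,|(x-y)\cdot n(y)|$ on the set $\Sigma_{+}(x)$ (the image of $\mathcal O_{x}$ under $\mathfrak p$, which, reading off the two sign constraints in $\mathcal O_{x}$ together with the visibility condition $]x,y[\subset\Omega$ inherent in $\tau_{-}$, is exactly $\Sigma_{+}(x)$). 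Injectivity of $\mathfrak p$ on $\mathcal O_{x}$ is what guarantees there is no multiplicity factor; this follows since for $y\in\Sigma_{+}(x)$ the segment $]x,y[$ lies in $\Omega$, so $\sigma(z)$ and $\tau_{-}$ are recovered from $y$ by $\sigma=\tfrac{x-y}{|x-y|}$, $\tau_{-}=|x-y|$.

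Once the formula for $g$ is in hand, \eqref{eq:CofV} is immediate: apply it with $g(\sigma)=|\sigma\cdot n(x)|^{-1}\cdot|\sigma\cdot n(x)|\,\varphi(\tau_{-}(x,\sigma))\,G(x-\tau_{-}(x,\sigma)\sigma)$ — more honestly, write the left-hand side of \eqref{eq:CofV} as $\int_{\S_{+}(x)}\widetilde g(\sigma)|\sigma\cdot n(x)|\d\sigma$ with $\widetilde g(\sigma)=\varphi(\tau_{-}(x,\sigma))G(x-\tau_{-}(x,\sigma)\sigma)$, which is a bona fide nonnegative (after splitting $\varphi$ into real/imaginary and positive/negative parts) measurable function of $\sigma$ because $\sigma\mapsto\tau_{-}(x,\sigma)$ and $\sigma\mapsto x-\tau_{-}(x,\sigma)\sigma$ are measurable. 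The change of variable then turns $\widetilde g\big(\tfrac{x-y}{|x-y|}\big)$ into $\varphi(|x-y|)G(y)$, giving \eqref{eq:CofV}.

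The main obstacle I anticipate is bookkeeping rather than conceptual: matching the domain $\mathcal O_{x}$ (defined by $\sigma\cdot n(x)>0$ and $\sigma\cdot n(\mathfrak p(z))<0$) with $\Sigma_{+}(x)$ (defined by $]x,y[\subset\Omega$, $(x-y)\cdot n(x)>0$, $n(x-y)\cdot n(y)<0$), and carefully justifying that $\mathfrak p$ is a $\mathscr C^{1}$-diffeomorphism of $\mathcal O_{x}$ onto (a full-measure subset of) $\Sigma_{+}(x)$, so that the classical surface-integral change of variable applies. The $\mathscr C^{1}$-regularity on $\mathcal O_{x}$ and the Jacobian identity are supplied by Lemmas \ref{lem:jaco}–\ref{lem:sard}; what remains is to handle the null sets (points where $\sigma_{d}=0$, where $\sigma\cdot n(x)=0$, or where $\sigma\cdot n(\mathfrak p(z))=0$) uniformly in $x$ and to check injectivity, both of which reduce to elementary geometry of the visibility of boundary points through $\Omega$.
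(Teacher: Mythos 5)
Your proposal follows the paper's proof essentially line for line: rotate so $n(x)=e_d$, pass to the hemisphere chart $\sigma(z)=(z,\sqrt{1-|z|^{2}})$ so the weight $|\sigma\cdot n(x)|$ cancels the chart Jacobian $\sigma_d(z)^{-1}$, discard null sets via Lemma \ref{lem:sard}, push forward through $\mathfrak p$ with the Gram-determinant Jacobian from Lemma \ref{lem:jaco}, and rewrite in terms of $x,y$ to obtain $\mathcal J(x,y)$, then specialize $g$ to get \eqref{eq:CofV}. The only (welcome) additions are your explicit remarks on injectivity of $\mathfrak p$ on $\mathcal O_x$ and on splitting $\varphi$ into sign/real-imaginary parts, which the paper leaves tacit; otherwise the argument is the same.
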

\begin{proof} Let now $x \in \partial\Omega$ be given. We can assume without generality that the system of coordinates in $\R^{d}$ is such that $n(x)=(0,\ldots,1)$. For a given $f\::\:\S_{+}(x)\to \R^{+}$, it holds
$$\int_{\S_{+}(x)}f(\sigma)\d \sigma=\int_{B_{d-1}}f(z,\sqrt{1-|z|^{2}})\frac{\d z}{\sqrt{1-|z|^{2}}},$$
where $B_{d-1}=\{z \in \R^{d-1}\;;\;|z| < 1\}.$ Moreover, according to Lemma \ref{lem:sard},
$$\int_{\S_{+}(x)}f(\sigma)\d\sigma=\int_{\S_{+}(x)\setminus\mathcal{G}_{x}}f(\sigma)\d\sigma,$$
while
$$\int_{B_{d-1}}f(z,\sqrt{1-|z|^{2}})\frac{\d z}{\sqrt{1-|z|^{2}}}=\int_{B_{d-1}\setminus \mathbf{G}_{x}}f(z,\sqrt{1-|z|^{2}})\frac{\d z}{\sqrt{1-|z|^{2}}}.$$
Then, for the special choice of $f(\sigma)=g(\sigma)\,|\sigma \cdot n(x)|$,
we get
\begin{equation}\label{eq:G+x}
\int_{\S_{+}(x)}|\sigma\cdot n(x)|\,g(\sigma)\d\sigma=\\
\int_{B_{d-1}} g(\sigma(z)))\ind_{B_{d-1}\setminus\mathbf{G}_{x}}(z)\d z\end{equation}
with $\sigma(z) =(z_{1},\ldots,z_{d-1},\sqrt{1-|z|^{2}})$ for $|z| <1.$ 

Notice that, with the notations of Lemma \ref{lem:jaco}, one has $B_{d-1}\setminus \mathbf{G}_{x}=\mathcal{O}_{x}$. Still using the notations of Lemma \ref{lem:jaco}, we introduce the mapping$$\mathfrak{p}\::\:z \in \mathcal{O}_{x} \mapsto y=\mathfrak{p}(z)=x-\tau_{-}(x,\sigma(z))\sigma(z)$$
which is such that $\mathfrak{p}(\mathcal{O}_{x})=\Sigma_{+}(x)$.
With this change of variable, notice that
$$\tau_{-}(x,\sigma(z))=|x-y|,$$
since $\sigma(z) \in \S^{d-1}$, and therefore
$$\sigma(z)=\frac{x-\mathfrak{p}(z)}{\tau_{-}(x,\sigma(z))}=\frac{x-y}{|x-y|}.$$
According to \cite[Lemma 5.2.11 \& Theorem 5.2.16, pp. 128-131]{stroock}, from this parametrization, the Lebesgue surface measure $\pi(\d y)$ on $\partial\Omega$ is given by
$$\pi(\d y)=\sqrt{\mathrm{det}\Bigg(\Bigg(\left\langle \frac{\partial \mathfrak{p}(z)}{\partial z_{i}},\frac{\partial \mathfrak{p}(z)}{\partial z_{j}}\right\rangle\Bigg)_{1\leq i,j\leq d-1}\Bigg)}\d z_{1}\ldots \d z_{d-1}=\sqrt{\mathrm{det}(\mathfrak{P}(z))}\d z,$$
from which we deduce directly that
$$\int_{B_{d-1}} \ind_{B_{d-1}\setminus\mathbf{G}_{x}}(z)g(\sigma(z))\d z=\int_{\partial\Omega}g\left(\frac{x-y}{|x-y|}\right)\mathcal{J}(x,y)\pi(\d y),$$
where
$$\mathcal{J}(x,y)=\frac{1}{\sqrt{\mathrm{det}(\mathfrak{P}(z))}}\ind_{\mathcal{O}_{x}}(z)$$
has to be expressed in terms of $x$ and $y$. Using Lemma \ref{lem:jaco}, one has
$$\mathcal{J}(x,y)=\left|\frac{(\sigma(z)\cdot n(\mathfrak{p}(z)))\sigma_{d}(z)}{\tau_{-}(x,\sigma(z))^{d-1}}\right|\ind_{\mathcal{O}_{x}}(z)$$
with, as mentioned, $\tau_{-}(x,\sigma(z))=|x-y|,$ $\mathfrak{p}(z)=y$ and $\sigma(z)=\frac{x-y}{|x-y|}$. Notice that 
$$\sigma_{d}(z)=\sigma(z)\cdot n(z)=\frac{(x-y)}{|x-y|}\cdot n(x)$$ 
which gives the desired expression \eqref{eq:Jxy} of $\mathcal{J}(x,y)$. Now, if $g(\sigma)=\varphi(\tau_{-}(x,\sigma))G(x-\tau_{-}(x,\sigma)\sigma)$, we get \eqref{eq:CofV}.\end{proof}

We end this section with a useful technical result.
\begin{lemme}\label{lem:convede}
 Assume that $\partial\Omega$ satisfies Assumption \ref{hypO}. Then
$$\lim_{\delta\to0^{+}}\sup_{y \in \partial\Omega}\int_{|x-y| \leq \delta}\mathcal{J}(x,y)\pi(\d x)=0.$$
\end{lemme}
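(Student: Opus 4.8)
The plan is to estimate the integral $\int_{|x-y|\leq\delta}\mathcal{J}(x,y)\pi(\d x)$ uniformly in $y\in\partial\Omega$ by going back to the change of variables of Lemma \ref{lem:ChVa}. Recall from \eqref{eq:CofV} with $G\equiv1$ and $\varphi\equiv1$ that, for any fixed $y\in\partial\Omega$,
\begin{equation*}
\int_{\Sigma_{+}(y)}\mathcal{J}(y,x)\pi(\d x)=\int_{\S_{+}(y)}|\sigma\cdot n(y)|\,\d\sigma\leq |\S^{d-1}|<\infty,
\end{equation*}
so that $\mathcal{J}(y,\cdot)$ is an integrable density on $\partial\Omega$ with bound independent of $y$. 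The difficulty is that here the roles of $x$ and $y$ in $\mathcal{J}(x,y)$ are \emph{not} symmetric (the change of variable parametrizes the first argument by directions seen from the second), so I cannot directly apply this to $\int \mathcal{J}(x,y)\pi(\d x)$. Nevertheless, from the explicit expression \eqref{eq:Jxy},
\begin{equation*}
\mathcal{J}(x,y)=\ind_{\Sigma_{+}(x)}(y)\,\frac{|(x-y)\cdot n(x)|\,|(x-y)\cdot n(y)|}{|x-y|^{d+1}},
\end{equation*}
and the $\mathscr{C}^{1}$ regularity of $\partial\Omega$ gives a constant $L>0$ (depending only on $\Omega$) such that $|(x-y)\cdot n(x)|\leq L|x-y|^{2}$ for all $x,y\in\partial\Omega$; indeed $n$ is continuous and near the diagonal $\partial\Omega$ is locally a $\mathscr{C}^{1}$ graph over its tangent plane, which yields this quadratic bound on the "height". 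Hence $\mathcal{J}(x,y)\leq L\,|x-y|^{1-d}$, and in fact symmetrically $\mathcal{J}(x,y)\leq L^{1/2}\,|(x-y)\cdot n(y)|^{1/2}\,|x-y|^{(1-d)/2}\cdot(\text{bounded})$ is not what I need; the clean route is:

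First I would use the quadratic bound on \emph{one} of the two factors only, say on $|(x-y)\cdot n(x)|\leq L|x-y|^{2}$, keeping the other factor $|(x-y)\cdot n(y)|$ untouched. This gives
\begin{equation*}
\int_{|x-y|\leq\delta}\mathcal{J}(x,y)\pi(\d x)\leq L\int_{|x-y|\leq\delta}\frac{|(x-y)\cdot n(y)|}{|x-y|^{d-1}}\,\pi(\d x).
\end{equation*}
Now parametrize $x\in\partial\Omega$ near $y$ by its orthogonal projection $z\in\R^{d-1}$ onto the tangent hyperplane $\mathcal{T}_{y}$; since $\partial\Omega$ is $\mathscr{C}^{1}$, there is $r_{0}>0$ (uniform in $y$ by compactness) and a bi-Lipschitz chart with $|x-y|\asymp|z|$, $|(x-y)\cdot n(y)|=o(|z|)$ as $|z|\to0$, and $\pi(\d x)\leq C\,\d z$. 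More precisely, uniform $\mathscr{C}^{1}$-ness yields a modulus $\varpi$ with $\varpi(0^{+})=0$ such that $|(x-y)\cdot n(y)|\leq|z|\,\varpi(|z|)$ and $\tfrac12|z|\leq|x-y|\leq 2|z|$ whenever $|z|\leq r_{0}$, for every $y\in\partial\Omega$. Substituting and passing to polar coordinates $z=\rho\theta$ in $\R^{d-1}$,
\begin{equation*}
\int_{|x-y|\leq\delta}\mathcal{J}(x,y)\pi(\d x)\leq C'\int_{0}^{2\delta}\frac{\rho\,\varpi(2\rho)}{\rho^{d-1}}\,\rho^{d-2}\,\d\rho=C'\int_{0}^{2\delta}\varpi(2\rho)\,\d\rho\leq 2C'\delta\,\varpi(4\delta),
\end{equation*}
valid for all $\delta\leq r_{0}/2$ and uniformly in $y$, where $C'$ depends only on $\Omega$ and $d$.

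Taking the supremum over $y\in\partial\Omega$ and letting $\delta\to0^{+}$ gives the claim, since $\delta\,\varpi(4\delta)\to0$. The main obstacle is the uniformity of the local $\mathscr{C}^{1}$ chart and of the modulus $\varpi$: one must invoke compactness of $\partial\Omega$ together with the (uniform) continuity of the normal field $n$ to produce a single radius $r_{0}$ and a single modulus $\varpi$ working at every boundary point. This is exactly the point where Assumption \ref{hypO}(1) — boundedness and $\mathscr{C}^{1}$ regularity of $\partial\Omega$ — is used; everything else is the elementary polar-coordinate estimate above. I would also note, for completeness, that the indicator $\ind_{\Sigma_{+}(x)}(y)$ and the factor $|(x-y)\cdot n(x)|/|x-y|$ only help (they are bounded), so discarding them as done above loses nothing essential.
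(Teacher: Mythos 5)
Your proof has a genuine gap, and it lies exactly where you appeal to a quadratic height estimate.

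You assert that $\mathscr{C}^{1}$ regularity of $\partial\Omega$ yields a constant $L$ with $|(x-y)\cdot n(x)|\leq L|x-y|^{2}$ for all $x,y\in\partial\Omega$. This is false for merely $\mathscr{C}^{1}$ boundaries: the quadratic bound requires $\mathscr{C}^{1,1}$ regularity (equivalently, $\nabla\Phi$ Lipschitz in a local chart). The paper's own Lemma~\ref{lem:1} makes this precise: for a $\mathscr{C}^{1,\alpha}$ boundary one only has $|(x-y)\cdot n(x)|\leq C|x-y|^{1+\alpha}$, and only $\alpha=1$ gives the quadratic bound. Under Assumption~\ref{hypO} the boundary is merely $\mathscr{C}^{1}$, so all one can say is $|(x-y)\cdot n(x)|\leq |x-y|\,\varpi(|x-y|)$ for some modulus $\varpi$ with $\varpi(0^{+})=0$ but no rate. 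A one-dimensional example such as a local graph $\Phi(u)=|u|^{4/3}$ shows that $|(x-y)\cdot n(x)|/|x-y|^{2}$ can blow up. Moreover, if you repair this by using the modulus bound on \emph{both} factors (which is what $\mathscr{C}^{1}$ actually gives), you obtain $\mathcal{J}(x,y)\leq C\,\varpi(|x-y|)^{2}|x-y|^{1-d}$, and the resulting radial integral $\int_{0}^{\delta}\varpi(\rho)^{2}\,\rho^{-1}\,\d\rho$ may diverge for a general modulus $\varpi$. In other words, the pointwise estimate on $\mathcal{J}$ followed by a polar-coordinate integration — the strategy of your proof — is fundamentally too crude at $\mathscr{C}^{1}$ regularity, a point the paper itself flags by observing that even the bound $\mathcal{J}(x,y)\leq|x-y|^{1-d}$ "is not strong enough."

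There is a second, and more constructive, remark: the observation you make and then discard at the start is in fact the key. You write that $\mathcal{J}(y,\cdot)$ has an integral over $\partial\Omega$ controlled by $|\S^{d-1}|$ via the change of variables of Lemma~\ref{lem:ChVa}, but you dismiss it because "the roles of $x$ and $y$ are not symmetric." Actually $\mathcal{J}$ \emph{is} symmetric: the numerator $|(x-y)\cdot n(x)|\,|(x-y)\cdot n(y)|$ is manifestly symmetric, and from the definition of $\Sigma_{+}$ one checks that $y\in\Sigma_{+}(x)$ iff $x\in\Sigma_{+}(y)$ (all three defining conditions swap). Once you have $\mathcal{J}(x,y)=\mathcal{J}(y,x)$, the change of variables with $\varphi=\mathbf{1}_{[0,\delta]}$ and $G\equiv 1$ gives
$$\int_{|x-y|\leq\delta}\mathcal{J}(x,y)\,\pi(\d x)=\int_{\S_{+}(y)}|\sigma\cdot n(y)|\,\mathbf{1}_{(0,\delta]}\bigl(\tau_{-}(y,\sigma)\bigr)\,\d\sigma,$$
which is bounded by $|\S^{d-1}|$ and tends to $0$ for each fixed $y$ by dominated convergence. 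The remaining issue is uniformity in $y$; this is where the paper argues by contradiction, using compactness of $\partial\Omega$ together with the lower semicontinuity of $\tau_{-}$ and Fatou's lemma — no pointwise bound on $\mathcal{J}$ and no rate-of-continuity for the normal are needed at all. So: the key lemma you need is the symmetry of $\mathcal{J}$ plus a soft compactness argument, not a H\"older estimate on the boundary.
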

\begin{proof} First, one notices that the straightforward estimate 
\begin{equation}\label{eq:boundJ}
\mathcal{J}(x,y) \leq |x-y|^{1-d}\end{equation}
is not strong enough to derive the result (see the subsequent Lemma \ref{lem:1} for more details on this point). We need to proceed in a different way. Observe that, thanks to Remark \ref{eq:sym}, for any $y \in \partial\Omega$, it holds
$$\int_{|x-y|\leq\delta}\mathcal{J}(x,y)\pi(\d x)=\int_{|x-y|\leq\delta}\mathcal{J}(y,x)\pi(\d x)=\int_{\S_{+}(y)}\mathbf{1}_{(0,\delta]}(\tau_{-}(y,\sigma))\d\sigma,$$
where we used Lemma \ref{lem:ChVa} with the functions $\varphi(r)=\mathbf{1}_{[0,\delta]}(r)$ and $G\equiv 1$. Clearly, for any \emph{fixed} $y \in \partial\Omega$
\begin{equation}\label{eq:lim}
\lim_{\delta\to0^{+}}\int_{\S_{+}(y)}\mathbf{1}_{(0,\delta]}(\tau_{-}(y,\sigma))\d\sigma=0\end{equation}
according to the dominated   convergence theorem, so one needs  to check that the convergence \eqref{eq:lim} is \emph{uniform} with respect to $y\in \partial\Omega$. Assume it is not the case so that there exist $c >0$, a sequence $\{y_{n}\}_{n} \subset\partial\Omega$ and a sequence $(\delta_{n})_{n} \subset (0,\infty)$ converging to $0$ such that
$$\int_{\S_{+}(y_{n})}\mathbf{1}_{(0,\delta_{n}]}(\tau_{-}(y_{n},\sigma))\d\sigma \geq c \qquad \forall n \in \N.$$
First, one deduces from Fatou's lemma that
\begin{multline}\label{fatou}
0 < c \leq \limsup_{n}\int_{\S^{d-1}}\ind_{\S_{+}(y_{n})}(\sigma)\ind_{(0,\delta_{n}]}(\tau_{-}(y_{n},\sigma))\d\sigma\\
 \leq \int_{\S^{d-1}}\limsup_{n}\ind_{\S_{+}(y_{n})}(\sigma)\ind_{(0,\delta_{n}]}(\tau_{-}(y_{n},\sigma))\d\sigma.\end{multline}
Of course, there is no loss of generality in assuming that $\{y_{n}\}_{n}$ converges to some $y \in \partial\Omega.$ Now,  $\partial\Omega$ being of class $\mathscr{C}^{1}$, it holds that $\lim_{n}n(y_{n})=n(y)$ and therefore there is $n_{0}\in \N$ such that
$$\S_{+}(y) \subset \S_{+}(y_{n}) \qquad \forall n \geq n_{0}.$$
Moreover, for $\sigma \in \S_{+}(y)$, $\tau_{+}(y,\sigma) >0$, and, since $\tau_{+}$ is lower-semicontinuous on $\partial\Omega\times V$ (see \cite[Lemma 1.5]{voigt}), it holds that
$$\liminf_{n\to\infty}\tau_{-}(y_{n},\sigma) \geq \tau_{-}(y,\sigma) >0 \qquad \forall \sigma \in \S_{+}(y).$$
As a consequence, one has
$$\limsup_{n\to\infty}\ind_{(0,\delta]}(\tau_{-}(y_{n},\sigma))=0 \qquad \forall \sigma \in \S_{+}(y).$$
Since $\{\sigma \in \S^{d-1}\;;\;\sigma \cdot n(x)=0\}$ is a subset of $\S^{d-1}$ of zero Lebesgue measure, we see that
$$\limsup_{n}\ind_{\S_{+}(y_{n})}(\sigma)\ind_{(0,\delta]}(\tau_{-}(y_{n},\sigma))=0$$
for almost every $\sigma \in \S^{d-1}$ which contradicts \eqref{fatou}. This proves the result.
\end{proof}

Whenever the boundary $\partial\Omega$ is more regular than merely $\mathscr{C}^{1}$ one can strengthen the estimate \eqref{eq:boundJ}. Namely, one has the following result (see \cite[Lemma 2]{guo03} for a similar result for $\partial\Omega$ of class $\mathscr{C}^{2}$)
\begin{lemme}\label{lem:1}
Assume that $\partial\Omega$ is of class $\mathscr{C}^{1,\alpha}$, $\alpha \in (0,1)$ then, there exists a positive constant $C_{\Omega} >0$ such that
$$\left|(x-y)\,\cdot \,n(x)\right| \leq C_{\Omega}\,|x-y|^{1+\alpha}, \qquad \forall x,y \in \partial \Omega.$$
Consequently, with the notations of Lemma \ref{lem:ChVa}, there is a positive constant $C >0$ such that
$$\mathcal{J}(x,y) \leq \frac{C}{|x-y|^{d-1-2\alpha}}, \qquad \forall x,y \in \partial \Omega, x\neq y.$$
\end{lemme}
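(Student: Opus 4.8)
The plan is to establish first the pointwise \emph{near-tangency} estimate $|(x-y)\cdot n(x)|\leq C_{\Omega}|x-y|^{1+\alpha}$ and then simply substitute it into the explicit formula for $\mathcal{J}(x,y)$ provided by Lemma \ref{lem:ChVa}. Since $\partial\Omega$ is compact and of class $\mathscr{C}^{1,\alpha}$, there is $\rho_{0}>0$ such that whenever $x,y\in\partial\Omega$ with $|x-y|<\rho_{0}$, both points lie in a common coordinate chart in which, after a rigid motion placing $x$ at the origin with $n(x)=(0,\ldots,0,1)$, the boundary is the graph of a function $\varphi\::\:U\subset\R^{d-1}\to\R$ of class $\mathscr{C}^{1,\alpha}$ with $\varphi(0)=0$, $\nabla\varphi(0)=0$ and Hölder seminorm $[\nabla\varphi]_{\alpha}\leq L$, the constants $\rho_{0}$ and $L$ being uniform over $\partial\Omega$ by a finite covering argument. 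Writing $x=(0,0)$, $y=(y',\varphi(y'))$ and $n(x)=(0,\ldots,0,1)$ in these coordinates (so that $\nabla\varphi$ is evaluated at the origin), one gets
$$(x-y)\cdot n(x)=-\varphi(y')=\varphi(0)-\varphi(y')-\nabla\varphi(0)\cdot(0-y')=\int_{0}^{1}\big[\nabla\varphi(0)-\nabla\varphi(t(-y'))\big]\cdot(-y')\,\d t,$$
whence $|(x-y)\cdot n(x)|\leq\int_{0}^{1}L\,t^{\alpha}|y'|^{\alpha}\,|y'|\,\d t=\tfrac{L}{1+\alpha}|y'|^{1+\alpha}\leq\tfrac{L}{1+\alpha}|x-y|^{1+\alpha}$, using $|y'|\leq|x-y|$. (More invariantly, one keeps $n(x)$ written as $(-\nabla\varphi(x'),1)/\sqrt{1+|\nabla\varphi(x')|^{2}}$ for a general chart point and performs the same one-dimensional Taylor expansion; the Hölder bound on $\nabla\varphi$ is what produces the exponent $1+\alpha$.)

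For the remaining case $|x-y|\geq\rho_{0}$ one has the trivial bound $|(x-y)\cdot n(x)|\leq|x-y|=|x-y|^{1+\alpha}\,|x-y|^{-\alpha}\leq\rho_{0}^{-\alpha}\,|x-y|^{1+\alpha}$. Combining the two cases yields the first assertion with $C_{\Omega}=\max\!\left(\tfrac{L}{1+\alpha},\rho_{0}^{-\alpha}\right)$.

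It then remains to plug this into \eqref{eq:Jxy}. Recalling
$$\mathcal{J}(x,y)=\ind_{\Sigma_{+}(x)}(y)\,\frac{|(x-y)\cdot n(x)|}{|x-y|^{d+1}}\,|(x-y)\cdot n(y)|,$$
we bound the first numerator factor by $C_{\Omega}|x-y|^{1+\alpha}$ and, applying the first estimate with the roles of $x$ and $y$ exchanged (so $|(x-y)\cdot n(y)|=|(y-x)\cdot n(y)|\leq C_{\Omega}|x-y|^{1+\alpha}$), we bound the second one likewise. Therefore
$$\mathcal{J}(x,y)\leq\frac{C_{\Omega}^{2}\,|x-y|^{2(1+\alpha)}}{|x-y|^{d+1}}=\frac{C_{\Omega}^{2}}{|x-y|^{d-1-2\alpha}},\qquad x,y\in\partial\Omega,\ x\neq y,$$
which is the claim with $C=C_{\Omega}^{2}$. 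The only genuinely delicate point is the uniformity of the chart parameters $\rho_{0}$ and $L$ across $\partial\Omega$, which is routine given compactness and the $\mathscr{C}^{1,\alpha}$ hypothesis; everything else is the elementary Taylor computation above.
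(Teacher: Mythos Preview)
Your proof is correct and follows essentially the same route as the paper: a local graph parametrization of $\partial\Omega$, the first-order Taylor expansion with integral remainder, the $\mathscr{C}^{0,\alpha}$ bound on $\nabla\varphi$ to produce the exponent $1+\alpha$, and compactness for uniformity, followed by direct substitution into \eqref{eq:Jxy}. Your normalization $\nabla\varphi(0)=0$ and your explicit treatment of the regime $|x-y|\geq\rho_{0}$ make the argument slightly tidier, but there is no substantive difference.
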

\begin{proof} The intuition behind the estimate is that, from the smoothness of $\partial \Omega$,  for any $x\neq y \in \partial\Omega$, if $\bm{e}_{x}(y)=\frac{x-y}{|x-y|}$ denotes the unit vector with direction $x-y$, then
$$\lim_{y \to x}\bm{e}_{x}(y)\,\cdot \,n(x)=0,$$
since $\bm{e}_{x}(y)$ tends to be tangent to $\partial \Omega.$ Then $( x-y)\,\cdot \,n(x)$ is of the order $|x-y|^{1+\alpha}$ for $x \simeq y$. Let us make this rigorous. For a given $x \in \partial \Omega$, one can find a local parametrization of a neighbourhood $\mathcal{O}_{x} \subset \partial\Omega$, containing $x$ as 
$$\mathcal{O}_{x}=\left\{(u,\Phi(u))\;;\;u \in U\right\},$$
where $U$ is an open subset of $\R^{d-1}$ and $\Phi\,:\,U \to \mathcal{O}_{x}$ is a $\mathscr{C}^{1,\alpha}$-diffeomorphism. Denoting by $|\cdot|$ the euclidian norm of $\R^{d-1}$ and by $\nabla \Phi$ the gradient of $\Phi$ (in $\R^{d-1}$), we get, with $x=(u_{0},\Phi(u_{0})) \in \partial \Omega$, 
$$n(x)=\frac{1}{\sqrt{1+|\nabla \Phi(u_{0})|^{2}}}\left(\nabla \Phi(u_{0}),-1\right),$$
so that
$$(x-y)\,\cdot n(x)=\frac{1}{\sqrt{1+|\nabla \Phi(u_{0})|^{2}}} \bigg(\langle u-u_{0}\,;\,\nabla \Phi(u_{0})\rangle_{d-1}-\left(\Phi(u)-\Phi(u_{0})\right)\bigg),$$
where $\langle \cdot,\cdot\rangle_{d-1}$ is the inner product in $\R^{d-1}.$ Since, 
\begin{equation*}\begin{split}
\Phi(u)-\Phi(u_{0})&=\int_{0}^{1}\langle u-u_{0},\nabla \Phi(tu+(1-t)u_{0}) \rangle_{d-1}\d t\\
&=\langle u-u_{0}\,;\,\nabla \Phi(u_{0})\rangle_{d-1}
+\langle u-u_{0},\int_{0}^{1}\left(\nabla \Phi(tu+(1-t)u_{0})-\nabla \Phi(u_{0})\right)\d t\rangle_{d-1}\,,
\end{split}\end{equation*}
we see that
$$\left|\Phi(u)-\Phi(u_{0}-\langle u-u_{0}\,;\,\nabla \Phi(u_{0})\rangle_{d-1}\right| \leq |u-u_{0}|\int_{0}^{1}\left|\nabla \Phi(tu+(1-t)u_{0})-\nabla \Phi(u_{0})\right|\d t\,.$$
Since $\nabla \Phi \in \mathscr{C}^{0,\alpha}$, denoting  by
$$C_{\Phi}=\sup_{u_{1},u_{2}\in U}\frac{|\nabla \Phi(u_{1})-\nabla \Phi(u_{2})|}{|u_{1}-u_{2}|^{\alpha}}$$ the H\"older semi-norm of $\nabla \Phi$, we get 
$$\left|\Phi(u)-\Phi(u_{0}-\langle u-u_{0}\,;\,\nabla \Phi(u_{0})\rangle_{d-1}\right| \leq C_{\Phi}|u-u_{0}|^{1+\alpha}\int_{0}^{1}t^{\alpha}\d t=\frac{C_{\Phi}}{\alpha+1}|u-u_{0}|^{1+\alpha}, \qquad u \in U.$$
We deduce then that
$$|(x-y)\,\cdot n(x)| \leq \frac{C_{\Phi}}{\alpha+1}|u-u_{0}|^{1+\alpha} \leq \frac{C_{\Phi}}{\alpha+1}|x-y|^{1+\alpha}, \qquad \text{ for } u \simeq u_{0}.$$
 Since $\partial \Omega$ is compact and $\Omega$ bounded, this easily yields the conclusion. Now, from \eqref{eq:Jxy}, we get
$$\mathcal{J}(x,y) \leq C_{\Omega}^{2}|x-y|^{d-1-2\alpha}  \qquad \forall y \in \Gamma_{+}(x),$$
which achieves the proof.
\end{proof}
\begin{nb} The above result is still true for $\partial\Omega$ of class $\mathscr{C}^{2}$ which would correspond to $\alpha=1$. In such a case, one has
$$|(x-y)\,\cdot n(x)| \leq C_{\Omega}|x-y|^{2}, \qquad x,y \in \partial\Omega$$
and 
$$\mathcal{J}(x,y) \leq C_{\Omega}^{2}|x-y|^{d-3}, \qquad \forall y \in \Gamma_{+}(x).$$
Notice in particular, that, in such a case one has $\mathcal{J}(x,y) \leq C_{\Omega}^{2}D^{3-d}$ for $d=2,3$, i.e. $\mathcal{J}$ is bounded. In such a case, applying Lemma \ref{lem:ChVa} with $\varphi=1$, one sees that in dimension $d=2,3$, the boundedness of $\mathcal{J}(x,y)$ implies  the existence of a positive constant $C >0$ such that
$$\int_{\S_{+}(x)}G(x-\tau_{-}(x,\sigma)\sigma)\,|\sigma \cdot n(x)|\d\sigma \leq C \int_{\Sigma_{+}(x)}G(y)\pi(\d y) \qquad \forall G \geq 0.$$
This  easily allows us to recover  \cite[Lemma 2.3, Eq. (2.6)]{EGKM}. 
\end{nb}

\subsection{Practical criterion ensuring Assumption \ref{hypH} \textit{4)}}

We provide here some practical Assumptions under which \eqref{eq:power} will hold.  
 We recall first the following  generalization of the polar decomposition theorem (see \cite[Lemma 6.13, p.113]{voigt}):
\begin{lemme}\phantomsection\label{lem:polar}
Let $\bm{m}_{0}$ be the image of the measure $\bm{m}$ under the transformation $v \in \R^{d} \mapsto |v| \in [0,\infty),$ i.e. $\bm{m}_{0}(I)=\bm{m}\left(\{v \in \R^{d}\;;\;|v| \in I\}\right)$ for any Borel subset $I \subset \R^{+}.$ Then, for any $\psi \in L^{1}(\R^{d},\bm{m})$ it holds
$$\int_{\R^{d}}\psi(v)\bm{m}(\d v)=\frac{1}{|\S^{d-1}|}\int_{0}^{\infty}\bm{m}_{0}(\d\varrho)\int_{\S^{d-1}}\psi(\varrho\,\sigma)\d\sigma$$
where $\d\sigma$ denotes the Lebesgue measure on $\S^{d-1}$ with surface $|\S^{d-1}|.$ 
\end{lemme}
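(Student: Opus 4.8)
The statement to prove is the generalized polar decomposition, Lemma~\ref{lem:polar}. The plan is to reduce the claim to the classical polar coordinate formula on $\R^d$ together with a monotone class / density argument, exploiting crucially the orthogonal invariance of $\bm{m}$ built into Assumption~\ref{hypO}~(2).

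First I would observe that it suffices to prove the identity for $\psi$ of the special product form $\psi(v)=h(|v|)g(v/|v|)$ with $h\in L^1(\R^+,\bm{m}_0)$ nonnegative and $g\in L^1(\S^{d-1},\d\sigma)$ nonnegative (extended arbitrarily, say by $0$, at $v=0$, which is harmless since $\bm{m}(\{0\})=0$). Indeed, finite linear combinations of such products are dense in $L^1(\R^d,\bm m)$ — this follows from a standard monotone class argument since products of a Borel set of radii with a Borel set of directions generate the Borel $\sigma$-algebra of $\R^d\setminus\{0\}$ — and both sides of the claimed identity are continuous linear functionals of $\psi\in L^1(\R^d,\bm m)$: the left side trivially, and the right side because by Tonelli the map $\psi\mapsto \frac{1}{|\S^{d-1}|}\int_0^\infty\bm m_0(\d\varrho)\int_{\S^{d-1}}|\psi(\varrho\sigma)|\,\d\sigma$ is well-defined and, once the identity is known for nonnegative $\psi$, bounds the right side by $\|\psi\|_{L^1(\bm m)}$. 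So I reduce to checking the formula on the product functions, and then on indicator functions $\psi=\mathbf 1_{A}$ with $A=\{v: |v|\in I,\ v/|v|\in E\}$ for Borel $I\subset\R^+$ and $E\subset\S^{d-1}$.

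Next, for such $A$, the right-hand side equals $\frac{1}{|\S^{d-1}|}\bm m_0(I)\,|E|$ by direct computation (Tonelli again). The task is therefore to show
\begin{equation*}
\bm m(A)=\frac{1}{|\S^{d-1}|}\,\bm m_0(I)\,|E|.
\end{equation*}
Fix $I$ and consider the set function $E\mapsto \bm m(\{v:|v|\in I,\ v/|v|\in E\})=:\nu_I(E)$, a finite Borel measure on $\S^{d-1}$ with total mass $\nu_I(\S^{d-1})=\bm m(\{v:|v|\in I\})=\bm m_0(I)$ by definition of $\bm m_0$. The orthogonal invariance of $\bm m$ forces $\nu_I$ to be invariant under the action of $O(d)$ on $\S^{d-1}$: for $R\in O(d)$, $\{v:|v|\in I,\ v/|v|\in RE\}=R\{v:|v|\in I,\ v/|v|\in E\}$, so $\nu_I(RE)=\bm m(R(\cdot))=\bm m(\cdot)=\nu_I(E)$. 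By uniqueness of the rotation-invariant probability measure on the sphere, $\nu_I=\frac{\bm m_0(I)}{|\S^{d-1}|}\,\d\sigma$, which is exactly the desired identity. This handles indicators, hence simple functions, hence (monotone convergence) all nonnegative measurable $\psi$, and finally (linearity and the integrability bound above) all $\psi\in L^1(\R^d,\bm m)$.

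The only genuine point requiring care — and the step I expect to be the main obstacle, though a mild one — is the justification that rotation invariance of a finite Borel measure on $\S^{d-1}$ pins it down up to a multiplicative constant as the surface measure $\d\sigma$; this is the classical uniqueness of Haar measure on the compact homogeneous space $\S^{d-1}=O(d)/O(d-1)$, and I would simply invoke it (or cite \cite{voigt}, since the statement is quoted from there). Everything else is bookkeeping: the monotone class reduction, Tonelli's theorem to evaluate the iterated integral, and the density of product-type functions in $L^1(\R^d,\bm m)$. I would present the argument in the order: (i) reduce to nonnegative $\psi$ and then to indicators of radial$\times$directional sets; (ii) identify $\nu_I$ via $O(d)$-invariance; (iii) extend back up by monotone convergence and linearity.
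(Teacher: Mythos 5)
Your proof is correct. The paper does not actually prove this lemma; it is quoted directly from Voigt's habilitation thesis (\cite[Lemma 6.13]{voigt}), so there is no in-paper argument to compare against. Your route — reduce to indicators of product sets $\{v : |v|\in I,\ v/|v|\in E\}$, observe that for fixed $I$ the pushforward angular measure $\nu_I$ is a finite $O(d)$-invariant Borel measure on $\S^{d-1}$ of total mass $\bm m_0(I)$, invoke uniqueness of the rotation-invariant probability measure on $\S^{d-1}$ to identify $\nu_I=\bm m_0(I)\,|\S^{d-1}|^{-1}\d\sigma$, and climb back up via monotone convergence and linearity — is the standard and, I expect, essentially the same argument as Voigt's. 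Two small expository remarks: the opening detour through density of product functions $h(|v|)g(v/|v|)$ is not needed once you commit to the indicator/monotone-class route you ultimately use, and the "continuity of the right-hand side" remark is mildly circular as you acknowledge; it is cleaner to simply state the scheme as indicators $\to$ simple functions $\to$ nonnegative measurable $\to$ $L^1$ by linearity. You also correctly note that $\bm m(\{0\})=0$ (Assumption \ref{hypO}(3)) disposes of the only issue at the origin.
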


We can deduce from the above change of variables the following useful expression for $\H\mathsf{M}_{\l}\H$. Recall that $\H$ is assumed to be given by \eqref{eq:Hhkernel}
\begin{propo}\label{lemHLH} For any $\l \in \overline{\C}_{+}$, it holds
\begin{equation}\label{eq:hmH}
\mathsf{HM_{\l}H}\varphi(x,v)=\int_{\Gamma_{+}}\mathscr{J}_{\l}(x,v,y,w)\varphi(y,w)\,|w\cdot n(y)|\bm{m}(\d w)\pi(\d y)\end{equation}
where
\begin{multline}\label{eq:Jlam}
\mathscr{J}_{\l}(x,v,y,w)=\mathcal{J}(x,y)\int_{0}^{\infty}\varrho\,\bm{k}\left(x,v,\varrho\frac{x-y}{|x-y|}\right)\times\\
\times\bm{k}\left(y,\varrho\frac{x-y}{|x-y|},|w|\right)\exp\left(-\lambda\frac{|x-y|}{\varrho}\right)\frac{\bm{m}_{0}(\d\varrho)}{|\S^{d-1}|}\end{multline}
for any $(x,v) \in \Gamma_{-},$ $(y,w) \in \Gamma_{+}.$
\end{propo}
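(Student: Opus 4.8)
The formula \eqref{eq:hmH}--\eqref{eq:Jlam} is obtained by composing the three elementary operators $\mathsf{H}$, $\mathsf{M}_{\l}$ and $\mathsf{H}$ explicitly and then applying the polar change of variables of Lemma \ref{lem:polar} followed by the surface change of variables of Lemma \ref{lem:ChVa}. First I would write down $\mathsf{M}_{\l}\mathsf{H}\varphi$ for $\varphi \in \lp$: since $\mathsf{H}\varphi$ is given by \eqref{eq:Hhkernel} and $\mathsf{M}_{\l}u(x,v)=e^{-\l\tau_{-}(x,v)}u(x-\tau_{-}(x,v)v,v)$ for $(x,v) \in \Gamma_{+}$, one gets, setting $z:=x-\tau_{-}(x,v)v \in \partial\Omega$,
$$\left[\mathsf{M}_{\l}\mathsf{H}\varphi\right](x,v)=e^{-\l\tau_{-}(x,v)}\int_{v'\cdot n(z)>0}\bm{k}(z,v,v')\varphi(z,v')\,|v'\cdot n(z)|\,\bm{m}(\d v'),\qquad (x,v)\in\Gamma_{+}.$$
Applying $\mathsf{H}$ once more (this time the outer kernel is evaluated at $(x,v)\in\Gamma_{-}$, integrating the variable $v$ which becomes the intermediate velocity) produces a double velocity integral; the key geometric point is that the point $z=x-\tau_{-}(x,v)v$ depends on the intermediate velocity $v$, so this is exactly the situation Lemma \ref{lem:ChVa} is designed for.

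Second, I would rename the intermediate velocity as the generic $V$-variable and split it as $V=\varrho\,\sigma$, $\varrho=|V|>0$, $\sigma\in\S^{d-1}$, using Lemma \ref{lem:polar} applied in the velocity variable. Then $\tau_{-}(x,V)=\tau_{-}(x,\sigma)/\varrho$ by the scaling \eqref{eq:scale}, and $z=x-\tau_{-}(x,V)V=x-\tau_{-}(x,\sigma)\sigma$ depends only on the \emph{direction} $\sigma$. The integral over $\sigma\in\S_{+}(x)$ with weight $|\sigma\cdot n(x)|$ (which appears because of the factor $|v'\cdot n(x)|$ in the outer $\mathsf{H}$ together with the orthogonal invariance of $\bm{m}$, i.e. $\bm{m}(\d v)=\tfrac{1}{|\S^{d-1}|}\bm{m}_0(\d\varrho)\,\d\sigma$) is then transformed by \eqref{eq:CofV} of Lemma \ref{lem:ChVa} with $G(z)=\bm{k}(z,\varrho\sigma,|w|)$ read off at $z$, with $\varphi(|x-z|)=\exp(-\l|x-z|/\varrho)$ coming from $e^{-\l\tau_{-}(x,\sigma)/\varrho}=e^{-\l|x-z|/\varrho}$. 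Under this transformation $\sigma$ becomes $\tfrac{x-y}{|x-y|}$ (after relabelling $z$ as $y$), $\tau_{-}(x,\sigma)$ becomes $|x-y|$, and the Jacobian weight is precisely $\mathcal{J}(x,y)$ of \eqref{eq:Jxy}. Collecting the factors $\varrho$ from the weight $|V\cdot n(z)| = \varrho\,|\sigma\cdot n(z)|$ in the inner $\mathsf{H}$ and reconciling powers, one arrives at the stated kernel $\mathscr{J}_{\l}(x,v,y,w)$.

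The main technical obstacle is bookkeeping of the measure-theoretic weights: one has to track carefully (i) the factor $|v'\cdot n(x)|$ coming from the outer $\mathsf{H}$ in \eqref{eq:Hhkernel}, which after the polar split contributes $\varrho\,|\sigma\cdot n(x)|$ and is what licenses the use of \eqref{eq:CofV} with its $|\sigma\cdot n(x)|$ weight; (ii) the factor $|V\cdot n(z)|$ inside the \emph{second} application of $\mathsf{H}$, which contributes another power of $\varrho$; and (iii) the fact that $\bm{m}$ being only orthogonally invariant means one must write $\bm{m}(\d V)=\tfrac{1}{|\S^{d-1}|}\bm{m}_{0}(\d\varrho)\d\sigma$ via Lemma \ref{lem:polar} rather than pass to spherical coordinates for Lebesgue measure. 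A secondary point is to justify that all integrals are absolutely convergent and the interchanges of order of integration (Fubini) are legitimate; this follows from $\varphi\in\lp$, the boundedness of $\mathsf{H}$ and $\mathsf{M}_{\l}$ on the trace spaces (Remark \ref{nb:lift} and the estimates $\|\mathsf{M}_{i\eta}\|_{\mathscr{B}(\lm,\lp)}\leq1$, $\|\mathsf{M}_{\l}\|_{\mathscr{B}(\lm,\lp)}\leq1$ for $\mathrm{Re}\l\geq0$), together with $\mathcal{J}(x,y)$ being locally integrable by Lemma \ref{lem:convede}. Once the weights are matched the identity \eqref{eq:hmH}--\eqref{eq:Jlam} follows by direct substitution, and the case $\mathrm{Re}\l=0$ is included since all the estimates used extend to $\overline{\C}_{+}$.
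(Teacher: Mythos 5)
Your overall plan is the same as the paper's: write out the composition $\H\mathsf{M}_{\l}\H\varphi$ explicitly, pass to polar coordinates $v'=\varrho\sigma$ via Lemma \ref{lem:polar} (so that $\tau_{-}(x,v')=\tau_{-}(x,\sigma)/\varrho$ and $z=x-\tau_{-}(x,v')v'$ depends only on $\sigma$), and then convert the $\sigma$-integral over $\S_{+}(x)$ into a surface integral over $\Sigma_{+}(x)$ by \eqref{eq:CofV}, absorbing the weight $|\sigma\cdot n(x)|$ into the Jacobian $\mathcal{J}(x,y)$. That is exactly how the paper proceeds.

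There is, however, a concrete error in your weight bookkeeping at point (ii). You claim that the inner application of $\mathsf{H}$ carries a factor $|V\cdot n(z)|=\varrho\,|\sigma\cdot n(z)|$ (with $V$ the intermediate velocity) and that this contributes ``another power of $\varrho$.'' This is not so. Writing out the inner factor $\H\varphi(z,V)$ with $z=x-\tau_{-}(x,V)V$, the weight appearing there is $|w\cdot n(z)|\,\bm{m}(\d w)$ in the variable $w$ against which $\varphi$ is integrated — it is the measure weight attached to the \emph{final} velocity $w$, not to the intermediate velocity $V$, and it does not depend on $\varrho$ at all. The unique power of $\varrho$ in $\mathscr{J}_{\l}$ comes solely from the outer weight $|V\cdot n(x)|=\varrho|\sigma\cdot n(x)|$, whose angular part $|\sigma\cdot n(x)|$ is then consumed by \eqref{eq:CofV}. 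Had you carried your accounting through literally you would have produced $\varrho^{2}$ in place of the single $\varrho$ in \eqref{eq:Jlam}. Minor related remark: the function $G$ fed into \eqref{eq:CofV} must also include the factor $\bm{k}(x,v,\varrho\sigma_z)$ with $\sigma_z=(x-z)/|x-z|$, since that kernel also depends on $z$ through the direction; you only listed $\bm{k}(z,\varrho\sigma,|w|)$.
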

\begin{proof} The proof follows by direct inspection. Indeed,  for any $\varphi \in \lp$ and $(x,v) \in \Gamma_{-}$: 
\begin{multline*}
\mathsf{H}\mathsf{M}_{\lambda}\mathsf{H}\varphi(x,v)=\int_{\Gamma_{+}(x)}\exp(-\lambda\,\tau_{-}(x,v'))\bm{k}(x,v,v')|v'\cdot n(x)|\bm{m}(\d v')\\
\int_{w \cdot n(x-\tau_{-}(x,v')v')>0}\bm{k}(x-\tau_{-}(x,v')v',v',w)\varphi(x-\tau_{-}(x,v')v',w)|w\cdot n(x-\tau_{-}(x,v')v')|\bm{m}(\d w).\end{multline*}
Then, using polar coordinates $v'=\varrho\,\sigma$ and the fact that $\tau_{-}(x,v')=\varrho^{-1}\tau_{-}(x,\sigma)$, we can use Proposition \ref{lem:ChVa}, Eq. \eqref{eq:CofV} (with $h(s)=\exp(-\lambda\,\varrho^{-1}s))$ and Lemma \ref{lem:polar} to get
\begin{multline*}
\mathsf{H}\mathsf{M}_{\lambda}\mathsf{H}\varphi(x,v)= \frac{1}{|\S^{d-1}|} 
\int_{\Sigma_{+}(x)} \mathcal{J}(x,y)\pi(\d y)\int_{\Gamma_{+}(y)}\varphi(y,w)|w\cdot n(y)|\bm{m}(\d w)\times
\\
\times\int_{0}^{\infty}\varrho\,\bm{k}\left(x,v,\varrho\frac{x-y}{|x-y|}\right) \exp\left(-\lambda\frac{|x-y|}{\varrho}\right)\pi(\d y)
\bm{k}\left(y,\varrho\frac{x-y}{|x-y|},|w|\right) \bm{m}_{0}(\d\varrho)
\end{multline*}
which gives the result.\end{proof}
\begin{nb}\phantomsection\label{nb:JlG} In the special case of Example \ref{exe:gener}, one checks readily that
\begin{multline*}
\mathscr{J}_{\l}(x,v,y,w)=\gamma(x)^{-1}\G(x,v)\mathcal{J}(x,y)\\
\times\gamma^{-1}(y) \int_{0}^{\infty}\varrho\, \G(y,\varrho)\exp\left(-\lambda\frac{|x-y|}{\varrho}\right)\frac{\bm{m}_{0}(\d\varrho)}{|\S^{d-1}|}\end{multline*}
for any $(x,v) \in \Gamma_{-}$, $y \in \partial\Omega.$ In particular, $\mathscr{J}_{\l}(x,v,y,w)$ does not depend on $w.$\end{nb}
 
Thanks to this representation of $\H\mathsf{M}_{\l}\H$, we can make the following set of assumptions ensuring \eqref{eq:power} to hold true.

\begin{hyp}\label{hypr0} Let $\H \in \mathscr{B}(\lp,\lm)$ be given by \eqref{eq:Hhkernel}  where the kernel $\bm{k}(x,v,v')$ is nonnegative, measurable and satisfies \eqref{eq:normalise}. Assume that $\bm{m}_{0}$ is given by \footnote{Notice that this amounts to a measure $\bm{m}$ which is absolutely continuous with respect to the Lebesgue measure over $\R^{d}$, namely $\bm{m}(\d v)=\varpi(|v|)\d v$.}
$$\bm{m}_{0}(\d \varrho)=|\S^{d-1}|\varrho^{d-1}\varpi(\varrho)\d\varrho$$
for some positive $\varpi(\varrho) >0$ with
\begin{equation}\label{eq:limkvarpi}
\lim_{\varrho\to\infty}\varrho^{d+2}\bm{k}\left(y,v,\varrho\frac{x-y}{|x-y|}\right)\bm{k}\left(y,\varrho\frac{x-y}{|x-y|},w\right)\varpi(\varrho)=0, \qquad \forall (x,v)  \in \Gamma_{-}, (y,w) \in \Gamma_{+};\end{equation}
\begin{equation}\label{eq:inftyintK}%
\underset{\sigma\in \S^{d-1}}{\sup_{(y,w) \in \Gamma_{+}}}
\int_{0}^{\infty}\varrho^{d+1}\left[
\varrho\,\varpi(\varrho)\left|\nabla_{2}\bm{k}(y,\varrho\sigma,w)\right| + \bm{k}(y,\varrho\sigma,w)\left(\varrho\, \left|\varpi'(\varrho)\right| + \varpi(\varrho)\right)\right]\d\varrho < \infty;\end{equation}
 and \begin{equation}\label{eq:inftyderK}
\sup_{x \in \partial\Omega}\underset{\sigma\in \S^{d-1}}{\sup_{(y,w) \in \Gamma_{+}}}\int_{{0}}^{\infty}\varrho^{d+2}\varpi(\varrho)\bm{k}(y,\varrho\sigma,w)\d\varrho\int_{\Gamma_{-}(x)}\left|\nabla_{3}\bm{k}(x,v,\varrho\sigma)\right|\bm{\mu}_{x}(\d v) < \infty.
\end{equation}
where we adopted the notations
$$\nabla_{2}\bm{k}(x,v,w)=\nabla_{v}\bm{k}(x,v,w), \qquad \nabla_{3}\bm{k}(x,v,w)=\nabla_{w}\bm{k}(x,v,w),$$
for $(x,v) \in \Gamma_{-},$ $w \in \Gamma_{+}(x)$. 
\end{hyp}

We can then prove the following:
\begin{lemme}\label{lem:Jl} Under Assumption \ref{hypr0} and if $\partial\Omega$ is of class $\mathcal{C}^{1,\alpha}$ with $\alpha >\frac{1}{2}$, then for any $\lambda \in \overline{\C}_{+}$, $\l \neq 0$, it holds
$$\sup_{(y,w) \in \Gamma_{+}}\int_{\Gamma_{+}}\left|\mathscr{J}_{\l}(x,v,y,w)\right|\d\mu_{-}(x,v) \leq \frac{C}{|\l|}$$
for some positive $C >0$ depending only on $\bm{k}$ and $\partial\Omega$.\end{lemme}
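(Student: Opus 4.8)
The plan is to start from the explicit formula \eqref{eq:Jlam} for $\mathscr{J}_{\l}(x,v,y,w)$ and integrate it against $\d\mu_{-}(x,v)=|v\cdot n(x)|\bm{m}(\d v)\pi(\d x)$, reducing everything to $\partial\Omega$-integrals via the change of variables of Lemma \ref{lem:ChVa} and to radial integrals via the polar decomposition of Lemma \ref{lem:polar}. First I would substitute $\bm{m}_{0}(\d\varrho)=|\S^{d-1}|\varrho^{d-1}\varpi(\varrho)\d\varrho$ into \eqref{eq:Jlam}, obtaining
\begin{equation*}
\mathscr{J}_{\l}(x,v,y,w)=\mathcal{J}(x,y)\int_{0}^{\infty}\varrho^{d}\,\bm{k}\!\left(x,v,\varrho\tfrac{x-y}{|x-y|}\right)\bm{k}\!\left(y,\varrho\tfrac{x-y}{|x-y|},|w|\right)e^{-\lambda|x-y|/\varrho}\,\varpi(\varrho)\,\d\varrho,
\end{equation*}
so that the $|v\cdot n(x)|\bm{m}(\d v)$–integral over $\Gamma_{-}(x)$ hits only the first kernel $\bm{k}(x,v,\cdot)$; by the stochasticity normalisation \eqref{eq:normalise}, $\int_{\Gamma_{-}(x)}\bm{k}(x,v,\varrho\sigma)|v\cdot n(x)|\bm{m}(\d v)=1$ for every direction $\sigma$ (after rescaling, since $\bm{k}$ is integrated against the normalised incoming measure). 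This collapses the $v$-dependence entirely, leaving $\sup_{(y,w)}\int_{\partial\Omega}\mathcal{J}(x,y)\int_{0}^{\infty}\varrho^{d}\bm{k}(y,\varrho\sigma_{xy},|w|)e^{-\lambda|x-y|/\varrho}\varpi(\varrho)\,\d\varrho\,\pi(\d x)$ with $\sigma_{xy}=\frac{x-y}{|x-y|}$.

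Next I would apply Lemma \ref{lem:ChVa} with $x\leftrightarrow y$ interchanged (using the symmetry noted as \eqref{eq:sym}), turning $\int_{\partial\Omega}\mathcal{J}(x,y)\,g(|x-y|,\sigma_{xy})\,\pi(\d x)$ into the angular integral $\int_{\S_{+}(y)}g(\tau_{-}(y,\sigma),\sigma)|\sigma\cdot n(y)|\,\d\sigma$. After this step the quantity to bound becomes
\begin{equation*}
\sup_{(y,w)\in\Gamma_{+}}\int_{\S_{+}(y)}|\sigma\cdot n(y)|\,\d\sigma\int_{0}^{\infty}\varrho^{d}\,\bm{k}(y,\varrho\sigma,|w|)\,e^{-\lambda\tau_{-}(y,\sigma)/\varrho}\,\varpi(\varrho)\,\d\varrho .
\end{equation*}
The factor $e^{-\lambda\tau_{-}(y,\sigma)/\varrho}$ must now produce the $|\l|^{-1}$ gain. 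Here the natural idea is an integration by parts in $\varrho$ (or a direct pointwise estimate $|e^{-\l\tau/\varrho}|\le 1$ is \emph{not} enough — we genuinely need the decay in $|\l|$): writing $\frac{\d}{\d\varrho}e^{-\lambda\tau/\varrho}=\frac{\lambda\tau}{\varrho^{2}}e^{-\lambda\tau/\varrho}$, one integrates by parts against $\varrho^{d}\bm{k}(y,\varrho\sigma,|w|)\varpi(\varrho)$. Dividing by $\lambda\tau$ yields the prefactor $\frac{1}{|\l|}$ (using $\tau_{-}(y,\sigma)\ge \tau_{-}(y,\sigma)$, bounded below only a.e. — this is where $\alpha>\tfrac12$ enters, see below), and the resulting boundary and bulk terms involve exactly $\varrho\varpi(\varrho)|\nabla_{2}\bm{k}|$, $\bm{k}(\varrho|\varpi'|+\varpi)$ and the extra powers of $\varrho$, which are finite by \eqref{eq:inftyintK} and \eqref{eq:limkvarpi} (the latter kills the boundary term at $\varrho=\infty$, the former at $\varrho=0^{+}$). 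The hypothesis \eqref{eq:inftyderK} together with the regularity $\partial\Omega\in\mathscr{C}^{1,\alpha}$ supplies the control of the $\nabla_{3}\bm{k}$–contribution coming from differentiating the normalisation or from the $\sigma$-dependence of $\tau_{-}$.

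The main obstacle I anticipate is controlling the factor $\tau_{-}(y,\sigma)^{-1}$ that appears after the integration by parts: $\tau_{-}(y,\sigma)$ vanishes as $\sigma$ approaches the tangent cone $\{\sigma\cdot n(y)=0\}$, so $\frac{1}{\l\tau_{-}(y,\sigma)}$ is not uniformly bounded on $\S_{+}(y)$. This is precisely why $\partial\Omega$ of class $\mathscr{C}^{1,\alpha}$ with $\alpha>\tfrac12$ is assumed: by Lemma \ref{lem:1}, $|\sigma\cdot n(y)|=|(y-(y-\tau_{-}\sigma))\cdot n(y)|/\tau_{-}\lesssim \tau_{-}^{\alpha}$, i.e. $\tau_{-}(y,\sigma)\gtrsim|\sigma\cdot n(y)|^{1/\alpha}$, so the dangerous factor is $\tau_{-}^{-1}\lesssim |\sigma\cdot n(y)|^{-1/\alpha}$, and one needs $\int_{\S_{+}(y)}|\sigma\cdot n(y)|\cdot|\sigma\cdot n(y)|^{-1/\alpha}\d\sigma<\infty$, which holds iff $1/\alpha<2$, i.e. $\alpha>\tfrac12$. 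Making this integrability rigorous — splitting $\S_{+}(y)$ into a region where $\sigma\cdot n(y)$ is bounded below (handled trivially) and a boundary layer where the $\mathscr{C}^{1,\alpha}$ estimate is invoked, and checking uniformity in $y\in\partial\Omega$ by compactness of $\partial\Omega$ — is the technical heart of the argument; the rest is bookkeeping of the finiteness conditions in Assumption \ref{hypr0}.
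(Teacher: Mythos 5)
Your overall plan is in the right spirit (integration by parts in $\varrho$ to extract the $|\l|^{-1}$, use the normalisation \eqref{eq:normalise} and Assumption \ref{hypr0} to absorb the remaining integrals, and exploit $\partial\Omega\in\mathscr{C}^{1,\alpha}$ with $\alpha>\frac12$ for the final surface integrability), but the \emph{order} of operations you propose breaks the argument. You integrate over $v$ first, collapsing $\bm{k}(x,v,\cdot)$ by stochasticity, and only afterwards integrate by parts in $\varrho$. This is only valid for $\int_{\Gamma_-(x)}\mathscr{J}_{\l}(x,v,y,w)\,|v\cdot n(x)|\bm{m}(\d v)$, \emph{without} absolute value. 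Since $\mathscr{J}_\l$ is complex (the factor $e^{-\l|x-y|/\varrho}$ has nontrivial phase for $\mathrm{Im}\,\l\neq 0$), what the lemma requires — $\int_{\Gamma_-(x)}\left|\mathscr{J}_\l(x,v,y,w)\right||v\cdot n(x)|\bm{m}(\d v)$ — strictly dominates the modulus of what you compute, and you cannot pass the modulus through the $\varrho$-integral without destroying the $|\l|^{-1}$ gain (you correctly note that $|e^{-\l\tau/\varrho}|\le 1$ alone is useless). The $|\l|^{-1}$ decay and the pointwise bound on $|\mathscr{J}_\l|$ must be established \emph{before} any $v$-integration.

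The symptom of this error is visible in your term accounting: after your early $v$-integration the kernel $\bm{k}(x,v,\cdot)$ is gone, so your integration by parts produces only $\nabla_2\bm{k}(y,\cdot,\cdot)$, $\varpi'$ and the power of $\varrho$ — exactly the terms you list — and \emph{no} $\nabla_3\bm{k}(x,v,\cdot)$ term. You then vaguely invoke \eqref{eq:inftyderK} as controlling "the $\nabla_3\bm{k}$-contribution coming from differentiating the normalisation", but no such contribution appears in your computation; this should have been a warning sign. The correct order is: integrate by parts in $\varrho$ first, rewriting $e^{-\l|x-y|/\varrho}$ as $\frac{\varrho^2}{\l|x-y|}\frac{\d}{\d\varrho}e^{-\l|x-y|/\varrho}$, which yields $|\mathscr{J}_\l(x,v,y,w)|\le\frac{\mathcal{J}(x,y)}{|\l||x-y|}\sum_{j=1}^4|I_j|$, where one of the $I_j$ carries $\sigma_{xy}\cdot\nabla_3\bm{k}(x,v,\varrho\sigma_{xy})$. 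Then take absolute values inside each $\varrho$-integral (using $|e^{-\l|x-y|/\varrho}|\le 1$, now harmless since the $|\l|^{-1}$ has already been extracted), then integrate over $v$ — stochasticity handles the $I_j$'s containing $\bm{k}(x,v,\cdot)$, and hypothesis \eqref{eq:inftyderK} is exactly what controls the one containing $\nabla_3\bm{k}(x,v,\cdot)$. Finally integrate over $\partial\Omega$: combining Lemma \ref{lem:1} with the extra $|x-y|^{-1}$ from the integration by parts gives $|x-y|^{2\alpha-d}$, whose surface integral is uniformly bounded iff $\alpha>\frac12$. Your angular reformulation (pulling out $\frac{1}{\l\tau_-(y,\sigma)}$ and showing $\int_{\S_+(y)}|\sigma\cdot n(y)|^{1-1/\alpha}\d\sigma<\infty$) is an equivalent way to see the role of $\alpha>\frac12$ via Lemma \ref{lem:ChVa}, but it is not needed and does not repair the ordering gap.
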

\begin{proof}  
From \eqref{eq:Jlam} and Lemma \ref{lem:1}, one has for all $(x,v) \in \Gamma_{-},$ $(y,w) \in \Gamma_{+}$
\begin{multline*}
\left|\mathscr{J}_{\l}(x,v,y,w)\right| \leq \frac{C_{\Omega}}{|x-y|^{d-1-2\alpha}}\bigg|\int_{{0}}^{\infty}\varrho^{d}\, \bm{k}\left(x,v,\varrho\frac{x-y}{|x-y|}\right)\times\\
\times\bm{k}\left(y,\varrho\frac{x-y}{|x-y|},w\right)\exp\left(-\lambda\frac{|x-y|}{\varrho}\right)\varpi(\varrho)\d\varrho\bigg|.\end{multline*}
for some positive constant $C_{\Omega}$. We observe that the last integral can be written as:
$$\frac{1}{\l|x-y|}\int_{{0}}^{\infty}\varrho^{d+2}\, \bm{k}\left(x,v,\varrho\frac{x-y}{|x-y|}\right)
\bm{k}\left(y,\varrho\frac{x-y}{|x-y|},w\right)\underset{=\frac{\d}{\d\varrho}\exp\left(-\l|x-y|\varrho^{-1}\right)}{\underbrace{\left[\frac{\l|x-y|}{\varrho^{2}}\exp\left(-\lambda\frac{|x-y|}{\varrho}\right)\right]}}\varpi(\varrho)\d\varrho$$
which, after integration by parts and using \eqref{eq:limkvarpi}, is equal to
$$
-\frac{1}{\l|x-y|}\int_{0}^{\infty}\exp\left(-\l|x-y|\varrho^{-1}\right)
\frac{\d}{\d\varrho}\left[\varrho^{d+2}\, \bm{k}\left(x,v,\varrho\frac{x-y}{|x-y|}\right)
\bm{k}\left(y,\varrho\frac{x-y}{|x-y|},|w|\right)\right] \d\varrho.$$
This results in the following estimate for the kernel $\mathscr{J}_{\l}(x,v,y,w)$:
$$
\left|\mathscr{J}_{\l}(x,v,y,w)\right| \leq \frac{C_{\Omega}}{|\l|\,|x-y|^{d-2\alpha}}|I(\l,x,y,v,w)|$$
with 
$$I(\l,x,v,y,w)=\int_{0}^{\infty}\exp\left(-\l|x-y|\varrho^{-1}\right)
\frac{\d}{\d\varrho}\left[\varrho^{d+2}\, \bm{k}\left(x,v,\varrho\frac{x-y}{|x-y|}\right)
\bm{k}\left(y,\varrho\frac{x-y}{|x-y|},|w|\right)\right] \d\varrho$$
for any $\l \neq 0,$ $(x,v) \in \Gamma_{-},$ $(y,w) \in \Gamma_{+}.$ 
Distributing the derivative with respect to $\varrho$ thanks to Leibniz rule, one writes 
$$I(\l,x,v,y,w)=\sum_{j=1}^{4}I_{j}(\l,x,v,y,w)$$
with  
\begin{equation*}\begin{cases}
I_{1}(\l,x,v,y,w)&=\ds \, \int_{{0}}^{\infty}\sigma_{x,y}\cdot\nabla_{2}\bm{k}\left(y,\varrho\sigma_{x,y},w\right)\varrho^{d+2}\varpi(\varrho)\bm{k}\left(x,v,\varrho\sigma_{x,y}\right)\exp\left(-\l|x-y|\varrho^{-1}\right)\d\varrho\\
\\
I_{2}(\l,x,v,y,w)&=\ds\int_{{0}}^{\infty}\varrho^{d+2}\varpi(\varrho)\sigma_{x,y}\cdot\nabla_{3}\bm{k}\left(x,v,\varrho\sigma_{x,y}\right)\, \bm{k}\left(y,\varrho\sigma_{x,y},w\right)\exp\left(-\l|x-y|\varrho^{-1}\right)\d\varrho\\
\\
I_{3}(\l,x,v,y,w)&=\ds\int_{{0}}^{\infty}\varrho^{d+2}\varpi'(\varrho)\bm{k}(x,v,\varrho\sigma_{x,y})\,\bm{k}(y,\varrho\sigma_{x,y},w)\exp\left(-\l|x-y|\varrho^{-1}\right)\d\varrho\\
\\
I_{4}(\l,x,v,y,w)&=(d+2)\ds\int_{{0}}^{\infty}\varrho^{d+1}\varpi(\varrho)\bm{k}(x,v,\varrho\sigma_{x,y})\, \bm{k}(y,\varrho\sigma_{x,y},w)\exp\left(-\l|x-y|\varrho^{-1}\right)\d\varrho\end{cases}\end{equation*}
where we adopt the short-hand notation $\sigma_{x,y}=\frac{x-y}{|x-y|},$ $(x\neq y)$. Using the normalisation condition \eqref{eq:normalise}, one has
\begin{multline*}
\int_{\Gamma_{-}(x)} \left|I_{1}(\l,x,v,y,w)\right||v \cdot n(x)|\bm{m}(\d v) 
\leq \ds\int_{{0}}^{\infty}\varrho^{d+2}\varpi(\varrho)\,\left|\sigma_{x,y}\cdot\nabla_{2}\bm{k}\left(y,\varrho\sigma_{x,y},w\right)\right|\d\varrho\\
\leq \int_{0}^{\infty}\varrho^{d+2}\varpi(\varrho)\,\left|\nabla_{2}\bm{k}\left(y,\varrho\sigma_{x,y},w\right)\right|\d\varrho
.
\end{multline*}
Thus, assumption \eqref{eq:inftyintK} yields
$$\sup_{(y,w)\in \Gamma_{+}}\int_{\Gamma_{-}(x)} \left|I_{1}(\l,x,v,y,w)\right||v \cdot n(x)|\bm{m}(\d v)| \leq C.$$
In the same way, one sees easily that \eqref{eq:inftyintK} implies that
$$\sup_{(y,w)\in \Gamma_{+}}\int_{\Gamma_{-}(x)} \left(\left|I_{3}(\l,x,v,y,w)\right| + \left|I_{4}(\l,x,v,y,w)\right|\right)\,| v \cdot n(x)|\bm{m}(\d v) \leq C\,.$$
Finally, one checks easily that \eqref{eq:inftyderK} implies
$$\sup_{x \in \partial\Omega}\sup_{(y,w) \in \Gamma_{+}}\int_{\Gamma_{-}(x)} \left|I_{2}(\l,x,v,y,w)\right||v \cdot n(x)|\bm{m}(\d v)| \leq C\,.$$
Combining all these estimates, we finally obtain that there exists some positive constant $C$  such that
$$
\int_{\Gamma_{-}(x)}\left|\mathscr{J}_{\l}(x,v,y,w)\right|\,|v\cdot n(x)|\bm{m}(\d v) 
\leq \frac{C}{|\l||x-y|^{d-2\alpha}}  \qquad \forall x \in \partial\Omega, \qquad \forall (y,w) \in \Gamma_{+}.$$
Since, for $\alpha > \frac{1}{2}$, 
$$\sup_{y\in \partial\Omega}\int_{\partial\Omega}\frac{\pi(\d x)}{|x-y|^{d-2\alpha}} < \infty$$
we get the desired result.
\end{proof}

The above, combined with Proposition \ref{lemHLH} yields the following
\begin{propo}\label{lem:norm2} Assume that Assumption \ref{hypr0} are in force and $\partial\Omega$ is of class $\mathcal{C}^{1,\alpha}$ with $\alpha >\frac{1}{2}$. There exists a positive constant $C$ such that 
$$\left\|(\mathsf{M}_{\l}\H)^{2}\right\|_{\mathscr{B}(\lp)} \leq \frac{C}{|\l|} $$
holds for any $\l \in \overline{\C}_{+}$, $\l\neq0$. In particular, \eqref{eq:power} holds true with $\mathsf{p}=4$.
\end{propo}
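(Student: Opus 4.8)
The plan is to bound $\left\|(\mathsf{M}_{\l}\H)^{2}\right\|_{\mathscr{B}(\lp)}$ by the kernel estimate of Lemma~\ref{lem:Jl}. First I would observe that, by Proposition~\ref{lemHLH}, for $\l \in \overline{\C}_{+}$ the operator $\mathsf{HM}_{\l}\mathsf{H}$ acts on $\varphi \in \lp$ via the kernel $\mathscr{J}_{\l}(x,v,y,w)$ against the measure $|w \cdot n(y)|\bm{m}(\d w)\pi(\d y) = \d\mu_{+}(y,w)$ on $\Gamma_{+}$, and then estimate the operator norm of an integral operator on an $L^1$-space by the $L^{\infty}$-bound on the ``column'' integral. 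Concretely,
\begin{equation*}
\left\|\mathsf{HM}_{\l}\mathsf{H}\right\|_{\mathscr{B}(\lp)} \leq \esup_{(y,w)\in\Gamma_{+}}\int_{\Gamma_{+}}\left|\mathscr{J}_{\l}(x,v,y,w)\right|\d\mu_{-}(x,v),
\end{equation*}
which, thanks to Lemma~\ref{lem:Jl} (valid since $\partial\Omega$ is $\mathcal{C}^{1,\alpha}$ with $\alpha>\tfrac12$ and Assumption~\ref{hypr0} holds), is bounded by $C/|\l|$ for all $\l\neq0$, $\l\in\overline{\C}_{+}$.

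Next I would transfer this bound from $\mathsf{HM}_{\l}\mathsf{H}$ to $(\mathsf{M}_{\l}\H)^2$. The point is that $\mathsf{M}_{\l}$ is bounded uniformly in $\l$: from Remark~\ref{nb:lift} and the observations in Section~\ref{sec:prelim}, $\|\mathsf{M}_{\l}\|_{\mathscr{B}(\lm,\lp)}\leq 1$ for $\mathrm{Re}\,\l\geq0$, and likewise $\|\mathsf{M}_{i\eta}\|_{\mathscr{B}(\lm,\lp)}\leq1$. Writing $(\mathsf{M}_{\l}\H)^2 = \mathsf{M}_{\l}\,(\mathsf{HM}_{\l}\mathsf{H})\,\mathsf{M}_{\l}^{-1}\cdots$ is not quite right since $\mathsf{M}_{\l}$ is not invertible; instead I would simply factor $(\mathsf{M}_{\l}\H)^2 = \mathsf{M}_{\l}\,(\H\mathsf{M}_{\l}\H)$, so that
\begin{equation*}
\left\|(\mathsf{M}_{\l}\H)^2\right\|_{\mathscr{B}(\lp)} = \left\|\mathsf{M}_{\l}\,(\H\mathsf{M}_{\l}\H)\right\|_{\mathscr{B}(\lp)} \leq \|\mathsf{M}_{\l}\|_{\mathscr{B}(\lm,\lp)}\,\left\|\H\mathsf{M}_{\l}\H\right\|_{\mathscr{B}(\lp,\lm)} \leq \frac{C}{|\l|},
\end{equation*}
where I use that $\H\mathsf{M}_{\l}\H\in\mathscr{B}(\lp,\lm)$ with the norm bound just obtained (the $L^1$-operator-norm estimate via $\mathscr{J}_{\l}$ gives exactly a bound in $\mathscr{B}(\lp,\lm)$). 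This proves the first assertion.

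For the final claim that \eqref{eq:power} holds with $\mathsf{p}=4$, I would argue as follows. From the bound just established, $\left\|(\mathsf{M}_{i\eta}\H)^{2}\right\|_{\mathscr{B}(\lp)}\leq C/|\eta|$ for $\eta\neq0$ (using Remark~\ref{cor:exten}, the function $\l\mapsto(\mathsf{M}_{\l}\H)^2$ extends continuously to $\overline{\C}_{+}$, so the bound persists on the imaginary axis). Hence $\left\|(\mathsf{M}_{i\eta}\H)^{4}\right\|_{\mathscr{B}(\lp)}\leq \left\|(\mathsf{M}_{i\eta}\H)^{2}\right\|_{\mathscr{B}(\lp)}^2 \leq C^2/\eta^2$, and likewise $\left\|(\mathsf{M}_{\e+i\eta}\H)^{4}\right\|_{\mathscr{B}(\lp)}\leq C^2/(\e^2+\eta^2)\leq C^2/\eta^2$ uniformly in $\e\geq0$. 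This is integrable at infinity; for the behaviour near $\eta=0$ one uses the crude uniform bound $\|\mathsf{M}_{\e+i\eta}\H\|_{\mathscr{B}(\lp)}\leq\|\mathsf{M}_0\H\|_{\mathscr{B}(\lp)}$ (so the integrand is bounded on $|\eta|\leq1$), and combining the two ranges gives $\int_{\R}\left\|(\mathsf{M}_{\e+i\eta}\H)^{4}\right\|_{\mathscr{B}(\lp)}\d\eta\leq C'$ for all $\e\geq0$, which is precisely \eqref{eq:power} with $\mathsf{p}=4$. Strictly speaking one should also check that $\left\|(\mathsf{M}_{\e+i\eta}\H)^2\right\|_{\mathscr{B}(\lp,\lm)}\leq C/|\e+i\eta|$ holds with $\e>0$ as well, which follows the same way from Proposition~\ref{lemHLH} and Lemma~\ref{lem:Jl} since both are stated for $\l\in\overline{\C}_{+}$. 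The main obstacle is really the bookkeeping in checking that the $L^1$-operator-norm reduction to the column integral of $\mathscr{J}_{\l}$ is legitimate — i.e. verifying Fubini/Tonelli applies so that the kernel bound of Lemma~\ref{lem:Jl} indeed controls $\|\H\mathsf{M}_{\l}\H\|_{\mathscr{B}(\lp,\lm)}$; everything after that is elementary.
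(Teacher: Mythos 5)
Your proposal is correct and follows essentially the same route as the paper: factor $(\mathsf{M}_{\l}\H)^2 = \mathsf{M}_{\l}\,(\H\mathsf{M}_{\l}\H)$, use the $L^1$-Schur bound on the kernel $\mathscr{J}_{\l}$ from Proposition~\ref{lemHLH} together with Lemma~\ref{lem:Jl} to control $\|\H\mathsf{M}_{\l}\H\|_{\mathscr{B}(\lp,\lm)}$, absorb $\|\mathsf{M}_{\l}\|_{\mathscr{B}(\lm,\lp)}\leq1$, then take the fourth power to get $C^2/\eta^2$ integrability at infinity and the trivial bound $\leq1$ near $\eta=0$. The paper writes the same column-integral bound (applied to $\psi$ directly rather than as an operator-norm statement), so there is no meaningful difference.
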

\begin{proof} It is clear from Proposition \ref{lemHLH} that, for any $\psi \in \lp$,
\begin{multline*}
\|(\mathsf{M}_{\l}\H)^{2}\psi\|_{\lp} \leq \|\mathsf{M}_{\l}\|_{\mathscr{B}(\lm,\lp)}\left\|\H\mathsf{M}_{\l}\H\psi\right\|_{\lm}\\
\leq \|\mathsf{M}_{\l}\|_{\mathscr{B}(\lm,\lp)}\int_{\Gamma_{+}}|\psi(y,w)|\d\mu_{+}(y,w)\int_{\Gamma_{-}}\left|\mathscr{J}_{\l}(x,v,y,w)\right|\d\mu_{-}(x,v)\end{multline*}
so that, using that $\|\mathsf{M}_{\l}\|_{\mathscr{B}(\lm,\lp)} \leq 1$ we get
$$\|(\mathsf{M}_{\l}\H)^{2}\psi\|_{\lp} \leq  \sup_{(y,w) \in \Gamma_{+}}\int_{\Gamma_{-}}\left|\mathscr{J}_{\l}(x,v,y,w)\right|\d\mu_{-}(x,v)$$
and we conclude then with Lemma \ref{lem:Jl}. Since then, for any $\e \geq0$ and $\eta >0$,
$$\left\|\left(\mathsf{M}_{\e+i\eta}\H\right)^{4}\right\|_{\mathscr{B}(\lp)}\leq \frac{C^{2}}{\left|\e+i\eta\right|^{2}} \leq \frac{C^{2}}{|\eta|^{2}}$$
we deduce that
$$\sup_{\e >0}\int_{|\eta| >1}\left\|\left(\mathsf{M}_{\e+i\eta}\H\right)^{4}\right\|_{\mathscr{B}(\lp)}\d\eta \leq C^{2}\int_{|\eta|>1}\frac{\d\eta}{\eta^{2}} < \infty$$
which proves \eqref{eq:power} with $\mathsf{p}=4.$
\end{proof}

\subsection{Examples}\label{sec:exam} We revisit here the examples of practical application in the kinetic theory of gases introduced in the Introduction. We focus here on the case on which
$$V=\R^{d}, \qquad \bm{m}(\d v)=\d v$$
for simplicity but of course the case of measure $\bm{m}$ absolutely continuous with respect to the Lebesgue measure $\bm{m}(\d v)=\varpi(|v|)\d v$ is easily deduced from our analysis.

We give full details for the Example \ref{exe:maxw} in the Introduction which is the most studied model in the framework we are dealing with here (see \cite{aoki,bernou1,kim}.

We recall that, here
$$\bm{k}(x,v,v')=\gamma^{-1}(x)\mathcal{M}_{\theta(x)}(v)$$
with
$$\mathcal{M}_{\theta}(v)=(2\pi\theta)^{-d/2}\exp\left(-\frac{|v|^{2}}{2\theta}\right), \qquad x \in \partial \Omega, \:\:v \in \R^{d}.$$ 
and
$$\gamma(x)=\bm{\kappa}_{d}\sqrt{\theta(x)}\int_{\R^{d}}|w|\M_{1}(w)\d w, \qquad x \in \partial\Omega$$ 
for some positive constant $\bm{\kappa}_{d}$ depending only on the dimension ensuring in particular \eqref{eq:normalise}. We assume here that the temperature mapping $x \in \partial\Omega \mapsto \theta(x)$ is bounded away from zero and continuous and denote
$$\theta_{0}:=\inf_{x\in \partial\Omega}\theta(x) >0.$$
In this case, one sees that $\gamma(x) \geq c_{d}\sqrt{\theta_{0}}$ for some explicit $c_{d} >0$ and
$$\bm{k}(x,v,v') \leq c_{d}^{-1}\mathcal{M}_{\theta_{0}}(v).$$
With this, it is easy to deduce that the boundary operator $\H$ associated to $\bm{k}(\cdot,\cdot,\cdot)$ is dominated by a rank-one operator on $\lp$ and as such is a \emph{regular diffuse boundary operator} (see \cite[Remark 3.6]{LMR}). In particular, from \cite[Theorem 5.1]{LMR}, 
$$\H\mathsf{M}_{0}\H \in \mathscr{B}(\lp,\lm) \qquad \text{ is weakly compact}$$
which means that Assumption \ref{hypH} \textit{2)} is met. Moreover, since $\bm{k}(x,v,v') > 0$ for any $(x,v) \in \Gamma_{+}$ and any $v' \in \Gamma_{-}(x)$, we deduce from \cite[Remark 4.5]{LMR} that $\mathsf{M}_{0}\H$ is irreducible, i.e. Assumption \ref{hypH} \textit{1)} is met. For this model, recalling here that the measure $\bm{m}_{0}$ over $\R^{+}$ is given by
$$\bm{m}_{0}(\d \varrho)=|\S^{d-1}|\varrho^{d-1}\d \varrho$$
we see easily that $\bm{k}(x,v,v')$ satisfy Assumptions \ref{hypr0} (with $\varpi \equiv 1$). Indeed, notice that $\bm{k}(x,v,v')$ is independent of $v'$ and depends only on $|v|$ and we denote simply
$$\bm{k}(x,v,v')=\bm{G}(x,|v|).$$
Then, \eqref{eq:limkvarpi} simply reads
$$\lim_{\varrho\to\infty}\varrho^{d+2}\bm{G}\left(y,\left|\varrho\frac{x-y}{|x-y|}\right|\right)=\lim_{\varrho\to\infty}\varrho^{d+2}\bm{G}\left(y,\varrho\right)=0, \qquad \forall (x,v)  \in \Gamma_{-}, (y,w) \in \Gamma_{+}$$
which obviously hold true since $\bm{G}(y,\varrho)=\frac{1}{\gamma(y)(2\pi\theta(y))^{\frac{d}{2}}}\exp\left(-\frac{\varrho^{2}}{2\theta(y)}\right).$ In the same way, \eqref{eq:inftyintK} reads simply
$$\underset{\sigma\in \S^{d-1}}{\sup_{(y,w) \in \Gamma_{+}}}
\int_{0}^{\infty}\varrho^{d+1}\left[\frac{\varrho^{2}}{\theta(y)}\bm{G}(y,\varrho) + \bm{G}(y,\varrho)\right]\d\varrho < \infty;$$
since $\nabla_{2}\bm{k}(y,v,w)=-\frac{v}{\theta(y)\gamma(y)}\mathcal{M}_{\theta(y)}(v)=-\frac{v}{\theta(y)}\bm{G}(y,|v|)$. Since $\gamma(y)$ and $\theta(\cdot)$ are bounded from below, the result follows easily. Finally, \eqref{eq:inftyderK} is obviously satisfied since $\nabla_{3}\bm{k}(x,v,w)=0$. Therefore, Assumption \ref{hypr0} is met and one deduces from Proposition \ref{lem:norm2} that, if $\partial\Omega$ is of class $\mathcal{C}^{1,\alpha}$ with $\alpha >\frac{1}{2}$, then \eqref{eq:power} holds true with $\mathsf{p}=3$ and Assumption \ref{hypH} \textit{4)} is met. Let us now determine $N_{\H}$ for which Assumption \ref{hypH} is met. Direct computations show that, for any integer $k$, 
\begin{equation}\label{eq:1H}\H \in \mathscr{B}(\lp,\Y_{k+1}^{-}) \Longleftrightarrow  \int_{0}^{\infty}|v|^{-k}\mathcal{M}_{\theta(x)} (v)\d v < \infty \qquad \forall x \in \partial\Omega \Longleftrightarrow  k < d,\end{equation}
which means that $N_{\H}=d-1$ (since $N_{\H}$ needs to be an integer). Therefore, for this model, we can reformulate Theorem \ref{theo:maindec} as follows, if  $\partial\Omega$ is of class $\mathcal{C}^{1,\alpha}$ with $\alpha > \frac{1}{2}$, then the following holds: for any $f \in \X_{d}$ 
$$\left\|U_{\H}(t)f-\varrho_{f}\Psi_{H}\right\|_{\X_{0}}=\mathbf{o}\left((1+t)^{d-1}\right).$$
Of course, one can be more explicit about the form of $\mathbf{o}\left((1+t)^{d-1}\right)$ and, in this case, one sees also that \eqref{eq:decay-power} holds true for any choice of $\mathsf{p}$ and $\beta >0$ since, according to Proposition \ref{lem:norm2}, for any $n > 2$
$$\left\|(\mathsf{M}_{\l}\H)^{n}\right\|_{\mathscr{B}(\lp)} \leq \frac{C_{n}}{|\l|^{\frac{n}{2}}} \qquad \forall |\l| >1.$$ 
Therefore, with the notations of Theorem \ref{theo:maindec}, the decay of 
$$\left\|\int_{-\infty}^{\infty}\exp\left(i\eta\,t\right)\mathsf{\Theta}_{f}(\eta)\d \eta\right\|_{\X_{0}}$$
can be made as close as desired from $\left(\omega_{f}\left(\frac{\pi}{t}\right)\right)$
where $\omega_{f}\::\:\R^{+} \to \R^{+}$ denotes the \emph{minimal modulus of continuity} of the uniformly continuous mapping $\mathsf{\Theta}_{f}$. Since moreover 
$$\H \in \mathscr{B}(\lp,\Y^{-}_{d+\alpha}) \qquad \forall \alpha \in (0,1)$$
thanks to \eqref{eq:1H}, one sees that, if the conjecture \eqref{eq:conj} holds true, then one would have
$$\left\|U_{\H}(t)f-\varrho_{f}\Psi_{H}\right\|_{\X_{0}}=\mathbf{O}\left(t^{-(d-\delta)}\right), \qquad \forall f \in \X_{d}, \qquad \forall \delta >0.$$
thanks to Remark \ref{nb:conj}.


 \medskip

We see that, here above, we fully exploit the fact that the model in Example \ref{exe:maxw}, the kernel was radially symmetric with respect to $v$ and did not depend on $w$. Such properties are still shared by the Example \ref{exe:gener} given in the Introduction and henceforth, it can be treated exactly along the same lines as those described here (under \emph{ad hoc} explicit condition ensuring Assumption \ref{hypr0} to hold true). For such a model, one sees also that $N_{\H}$ is the maximal $n \in \N$ for which  $\gamma(n,d) < \infty$ where, for all $s \geq0$,
$$\gamma(s,d):=\sup_{x \in \partial\Omega}\gamma^{-1}(x)\int_{v \cdot n(x)<0}|v|^{-s-1}\G(x,v)|v \cdot n(x)|\bm{m}(\d v)\in (0,\infty].$$
Clearly, the precise value of $N_{\H}$ depends on the explicit expression of $\bm{G}$.
 \appendix

\section{Proof of several technical results}\label{app:technA}

\subsection{Fine properties of $\mathsf{G}_{\l},$ $\mathsf{M}_{\l}\H$ and $\mathsf{\Xi}_{\l}\H$} We collect here all the technical details useful for the proof of Proposition \ref{propo:convK}. The main step is the following which corresponds to Prop. \ref{propo:convK} for $k=0$:
\begin{lemme}\label{lem:Meis} For any $f \in \X_{0}$, the limit
\begin{equation}\label{eq:unifGeis}
\lim_{\e\to0^{+}}\left\|\mathsf{G}_{\e+i\eta}f-\mathsf{G}_{i\eta}f\right\|_{\lp}=0\end{equation}
uniformly with respect to $\eta \in \R$. 
For any $\eta \in \R$, it holds
\begin{equation}\label{eq:Meis}
\left\|\mathsf{M}_{\varepsilon+i\eta}-\mathsf{M}_{i\eta}\right\|_{\mathscr{B}(\Y^{-}_{1},\lp)} \leq \varepsilon\,D, \qquad \left\|\mathsf{\Xi}_{\e+i\eta}-\mathsf{\Xi}_{i\eta}\right\|_{\mathscr{B}(\Y_{1}^{-},\X_{0})} \leq \e\,D\end{equation}
where $D$ is the diameter of $\Omega$. Consequently, 
\begin{equation}\label{eq:MeisH}
\left\|\mathsf{M}_{\varepsilon+i\eta}\H-\mathsf{M}_{i\eta}\H\right\|_{\mathscr{B}(\lp)} \leq  \varepsilon\,D\,\|\H\|_{\mathscr{B}(\lp,\Y_{1}^{-})} \qquad \forall \eta \in \R\end{equation}
and, $\left\|\mathsf{\Xi}_{\e+i\eta}\H-\mathsf{\Xi}_{i\eta}\H\right\|_{\mathscr{B}(\lp,\X_{0})} \leq \e\,D \,\|\H\|_{\mathscr{B}(\lp,\Y_{1}^{-})}$ for any $\eta \in \R.$
\end{lemme}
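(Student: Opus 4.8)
\textbf{Proof plan for Lemma \ref{lem:Meis}.}

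The plan is to treat the three families of operators separately, starting with the two elementary Lipschitz-type estimates \eqref{eq:Meis} (and their corollaries) and then dealing with the genuinely analytic statement \eqref{eq:unifGeis}. For \eqref{eq:Meis}, the key observation is that for $u \in \Y_1^-$ and $(x,v) \in \Gamma_+$,
$$\mathsf{M}_{\e+i\eta}u(x,v)-\mathsf{M}_{i\eta}u(x,v)=\left(e^{-\e\tau_-(x,v)}-1\right)e^{-i\eta\tau_-(x,v)}u(x-\tau_-(x,v)v,v),$$
so that, using $|e^{-\e s}-1|\le \e s$ for $s\ge 0$ and $\tau_-(x,v)\le D/|v|$, one bounds $\|\mathsf{M}_{\e+i\eta}u-\mathsf{M}_{i\eta}u\|_{\lp}$ by $\e D$ times $\int_{\Gamma_+}|v|^{-1}|u(x-\tau_-v,v)|\d\mu_+$, and the change of variables \eqref{10.51} identifies this last integral with $\|u\|_{\Y_1^-}$. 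The same computation applied to $\mathsf{\Xi}_{\e+i\eta}$ via the integration formula \eqref{10.47} and the bound $t_-(x,v)\le D/|v|$ gives the second inequality in \eqref{eq:Meis}; I would in fact quote Lemma \ref{lem:motau} directly for the $\X_0$-norm of $\mathsf{\Xi}_0$. The estimates \eqref{eq:MeisH} and the analogous bound for $\mathsf{\Xi}_{\e+i\eta}\H$ then follow immediately by composing with $\H\in\mathscr{B}(\lp,\Y_1^-)$ (Assumption \ref{hypH} \textit{1)}).

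For the uniform convergence \eqref{eq:unifGeis}, the plan is first to reduce to a dense subclass. Writing
$$\mathsf{G}_{\e+i\eta}f(x,v)-\mathsf{G}_{i\eta}f(x,v)=\int_0^{\tau_-(x,v)}\left(e^{-\e s}-1\right)e^{-i\eta s}f(x-sv,v)\,\d s,$$
a crude bound gives $\|\mathsf{G}_{\e+i\eta}f-\mathsf{G}_{i\eta}f\|_{\lp}\le \int_{\Gamma_+}\d\mu_+\int_0^{\tau_-}|e^{-\e s}-1||f(x-sv,v)|\d s$, and again \eqref{10.47} rewrites the right-hand side as $\int_{\Omega\times V}|e^{-\e t_-(x,v)}-1||f(x,v)|\d x\,\bm m(\d v)$, which by dominated convergence tends to $0$ as $\e\to0^+$ — but this is only pointwise in the crude sense and, importantly, \emph{already} uniform in $\eta$ since $\eta$ has disappeared from the bound. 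So in fact the naive estimate suffices: the quantity controlling $\|\mathsf{G}_{\e+i\eta}f-\mathsf{G}_{i\eta}f\|_{\lp}$ is $\omega_f(\e):=\int_{\Omega\times V}|e^{-\e t_-(x,v)}-1|\,|f|\,\d x\,\bm m(\d v)$, which depends only on $\e$ and goes to $0$ by the dominated convergence theorem (dominant $2|f|\in L^1$). This gives \eqref{eq:unifGeis} directly, with no need to pass through a dense subclass.

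The main obstacle is essentially bookkeeping rather than any deep difficulty: one must be careful that all the change-of-variable identities \eqref{10.47}--\eqref{10.51} are applied with the correct orientation ($\Gamma_+$ versus $\Gamma_-$) and that the weighted norms $\Y_1^\pm$, $\X_0$ are the right ones, since the whole point of working in $\Y_1^-$ rather than $\lp$ is precisely to absorb the factor $\tau_-\sim |v|^{-1}$ appearing from differentiating (or finite-differencing) the exponential $e^{-\l\tau_-}$. I would therefore present the $\mathsf{M}_\l$ computation in full detail and then simply indicate that $\mathsf{\Xi}_\l$ is handled identically with $\tau_-$ replaced by $t_-$ and \eqref{10.51} replaced by \eqref{10.47}, and that the $\mathsf{G}_\l$ bound uses only $2|f|$ as an $L^1$-dominant so the convergence is automatically uniform in $\eta$.
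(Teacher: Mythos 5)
Your proposal is correct and follows essentially the same route as the paper: the same factorization of $\mathsf{M}_{\e+i\eta}-\mathsf{M}_{i\eta}$ and $\mathsf{\Xi}_{\e+i\eta}-\mathsf{\Xi}_{i\eta}$ producing the factor $|e^{-\e\tau_-}-1|$, the same Lipschitz bound $|e^{-\e s}-1|\le\e s$ combined with $\tau_-\le D/|v|$ and the isometry of $\mathsf{M}_0$ to absorb the weight into the $\Y_1^-$-norm, the same trivial composition argument for $\mathsf{M}_{\l}\H$ and $\mathsf{\Xi}_{\l}\H$, and for $\mathsf{G}_\l$ the same observation that the $\eta$-dependence disappears in modulus so the naive dominated-convergence bound is already uniform in $\eta$. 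One minor slip: in your change of variables for the $\mathsf{G}_\l$ estimate, the volume integrand should involve $t_+(x,v)$ rather than $t_-(x,v)$ (for $(z,v)\in\Gamma_+$ and $x=z-sv$ one has $s=t_+(x,v)$), though this does not affect the argument since both $t_\pm$ are bounded and the dominant needed for DCT is just $|f|$ itself.
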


\begin{proof} Let us prove first \eqref{eq:unifGeis}. Given $f \in \X_{0}$ and $(x,v) \in \Omega \times V$, 
\begin{multline*}
\left|\mathsf{G}_{\e+i\eta}f(x,v)-\mathsf{G}_{i\eta}f(x,v)\right|=\left|\int_{0}^{\tau_{-}(x,v)}\left(e^{-\e\,t}-1\right)e^{-i\eta\,t}f(x-tv,v)\d t\right|\\
\leq \int_{0}^{\tau_{-}(x,v)}\left(1-e^{-\e\,t}\right)|f(x-tv,v)|\d t,\end{multline*}
so that
$$\sup_{\eta\in \R}\left\|\mathsf{G}_{\e+i\eta}f-\mathsf{G}_{i\eta}f\right\|_{\lp} \leq \int_{\Gamma_{+}}\d\mu_{+}(x,v)\int_{0}^{\tau_{-}(x,v)}\left(1-e^{-\e\,t}\right)|f(x-tv,v)|\d t.$$
Since $1-e^{-\e\,t} \leq 1$ for any $\e >0,$ $t\geq0$, the dominated convergence theorem combined with \eqref{Eq:G0} gives the result. Let now consider \eqref{eq:Meis} and \eqref{eq:MeisH}. We give the proof for $\mathsf{M}_{i\eta}$, the proof for $\mathsf{\Xi}_{i\eta}$ being exactly the same. Let $\eta \in \R$ be fixed. Let $\varphi \in \Y_{1}^{-}$ be given and $\varepsilon>0$. One has
\begin{multline*}
\left\|\mathsf{M}_{\e+i\eta}\varphi-\mathsf{M}_{i\eta}\varphi\right\|_{\lp}
=\int_{\Gamma_{+}}\left|e^{-(\e+i\eta)\tau_{-}(x,v)}-e^{-i\eta\tau_{-}(x,v)}\right|
\,|\mathsf{M}_{0}\varphi(x,v)|\d\mu_{+}(x,v)\\
=\int_{\Gamma_{+}}\left|\exp(-\e\tau_{-}(x,v))-1\right|\,|\mathsf{M}_{0}\varphi(x,v)|\d\mu_{+}(x,v)\\
\leq C_{0}\e\int_{\Gamma_{+}}\tau_{-}(x,v)\,|\mathsf{M}_{0}\varphi(x,v)|\d\mu_{+}(x,v)\end{multline*}
where 
$C_{0}:=\sup_{s >0}\frac{|\exp(-s)-1|}{s}=1.$
Now, because $\tau_{-}(x,v) \leq D|v|^{-1}$  we get
$$\left\|\mathsf{M}_{\e+i\eta}\varphi-\mathsf{M}_{i\eta}\varphi\right\|_{\lp} \leq \e\,D\int_{\Gamma_{+}}|v|^{-1}\,|\mathsf{M}_{0}\varphi(x,v)|\d\mu_{+}(x,v)=D\,C_{0}\e\|\mathsf{M}_{0}\psi\|_{\lp}$$
where $\psi(x,v)=|v|^{-1}\varphi(x,v)$. Because $\|\mathsf{M}_{0}\psi\|_{\lp}=\|\psi\|_{\lm}=\|\varphi\|_{\Y^{-}_{1}}$ we obtain
$$\left\|\mathsf{M}_{\e+i\eta}\varphi-\mathsf{M}_{i\eta}\varphi\right\|_{\lp} \leq \e\,D\,\|\varphi\|_{\Y^{-}_{1}}$$
which proves \eqref{eq:Meis}. Now,  since $\mathrm{Range}(\H) \subset \Y^{-}_{1}$, one deduces \eqref{eq:MeisH} directly from \eqref{eq:Meis}.\end{proof}

\begin{proof}[Proof of Proposition \ref{propo:convK}] Inequalities \eqref{eq:Meisk} and \eqref{eq:MeisHk} are true for $k=0$ (recall then that $\Y_{0}^{+}=\lp$ thanks to Lemma \ref{lem:Meis}. The proof for general $k$ is exactly the same and is omitted here. Let us focus on \eqref{cor:uniformpower}. The proof is  done by induction on $j \in \N.$ For $j=1$, the result is true, see \eqref{eq:MeisH}. Noticing that, for any $j \in \N$
\begin{multline*}
\left\|\left(\mathsf{M}_{\e+i\eta}\H\right)^{j+1}-\left(\mathsf{M}_{i\eta}\H\right)^{j+1}\right\|_{\mathscr{B}(\lp)}
\leq \left\|\left(\mathsf{M}_{\e+i\eta}\H\right)^{j}-\left(\mathsf{M}_{i\eta}\H\right)^{j}\right\|_{\mathscr{B}(\lp)}\|\mathsf{M}_{\e+i\eta}\H\|_{\mathscr{B}(\lp)} \\
+ \left\|\left(\mathsf{M}_{i\eta}\H\right)^{j}\right\|_{\mathscr{B}(\lp)}\,\left\| \mathsf{M}_{\e+i\eta}\H-\mathsf{M}_{i\eta}\H\right\|_{\mathscr{B}(\lp)}
\end{multline*}
we easily get the result since $\|\mathsf{M}_{i\eta}\H\|_{\mathscr{B}(\lp)}\leq1$. It remains only to prove \eqref{eq:RiemLeb}, i.e.
$$\lim_{|\eta|\to\infty}\sup_{\e \in [0,1]}\left\|\mathsf{G}_{\e+i\eta}f\right\|_{\lp}=0.$$
The proof resorts from Riemann-Lebesgue Theorem, the only slightly delicate point being to make the Riemann-Lebesgue argument uniform with respect to $\e$. We write
\begin{multline*}
\mathsf{G}_{\e+i\eta}f(x,v)=\int_{\R}e^{-i\eta s}h_{x,v}^{\e}(s)\d s, \\
\qquad h_{x,v}^{\e}(s)=\ind_{[0,t_{-}(x,v)]}(s)f(x-sv,v)e^{-\e s}, \qquad s \in \R, (x,v) \in \Gamma_{+}.\end{multline*}
Notice that, since
$$\int_{\Gamma_{+}}\d\mu_{+}(x,v)\int_{\R}\left|h^{\e}_{x,v}(s)\right|\d s \leq \int_{\Gamma_{+}}\d\mu_{+}(x,v)\int_{\R}\left|h^{0}_{x,v}(s)\right|\d s=\|f\|_{\X_{0}} <\infty$$
we deduce from Fubini's Theorem that, for any $\e \geq0$ and $\mu_{+}$-a. e. $(x,v) \in \Gamma_{+}$, the mapping 
$$s \in \R \longmapsto h_{x,v}^{\e}(s)$$
belongs to $L^{1}(\R)$. Thus, according to Riemman-Lebesgue Theorem,
$$\lim_{|\eta|\to\infty}\mathsf{G}_{\e+i\eta}f(x,v)=0 \qquad \text{ for $\mu_{+}$-a. e. $(x,v) \in \Gamma_{+}$}$$
Actually, this convergence can be made uniform with respect to $\e$. Indeed, recalling the classical proof of the Riemann-Lebesgue Theorem, we write
$$\int_{\R}e^{-i\eta s}h^{\e}_{x,v}(s)\d s=-\int_{\R}e^{-i\eta\left(s+\frac{\pi\eta}{|\eta|^{2}}\right)}h_{x,v}^{\e}(s)\d s=-\int_{\R}e^{-i\eta s}h_{x,v}^{\e}\left(s-\frac{\pi\eta}{|\eta|^{2}}\right)\d s$$
where we used that $e^{-i\pi}=-1$. Thus,
$$\int_{\R}e^{-i\eta s}h^{\e}_{x,v}(s)\d s=\frac{1}{2}\int_{\R}e^{-i\eta s}\left[h_{x,v}^{\e}(s)-h_{x,v}^{\e}\left(s-\frac{\pi\,\eta}{|\eta|^{2}}\right)\right]\d s$$
and
$$|\mathsf{G}_{\e+i\eta}f(x,v)| \leq \frac{1}{2}\left\|h_{x,v}^{\e}(\cdot)-h_{x,v}^{\e}\left(\cdot-\frac{\pi\eta}{|\eta|^{2}}\right)\right\|_{L^{1}(\R)}.$$
Now, writing $h_{x,v}^{\e}(s)=e^{-\e s}h_{x,v}^{0}(s)$
we see that
\begin{multline*}
\left\|h_{x,v}^{\e}(\cdot)-h_{x,v}^{\e}\left(\cdot-\frac{\pi\eta}{|\eta|^{2}}\right)\right\|_{L^{1}(\R)}
\leq \int_{\R}e^{-\e s}\left|h_{x,v}^{0}(s)-h_{x,v}^{0}\left(s-\frac{\pi\eta}{|\eta|^{2}}\right)\right|\d s\\
+ \int_{\R}\left|e^{-\e s}-e^{-\e\left(s-\frac{\pi\eta}{|\eta|^{2}}\right)}\right|\,\left|h_{x,v}^{0}\left(s-\frac{\pi\eta}{|\eta|^{2}}\right)\right|\d s\\
\leq \left\|h_{x,v}^{0}(\cdot)-h_{x,v}^{0}\left(\cdot-\frac{\pi\eta}{|\eta|^{2}}\right)\right\|_{L^{1}(\R)}
+\left|1-e^{\e\frac{\pi\eta}{|\eta|^{2}}}\right|\,\|h_{x,v}^{0}\|_{L^{1}(\R)}\,.\end{multline*}
The first term is independent of $\e$ and goes to zero as $|\eta| \to \infty$ owing to the continuity of translation. For the second term, there is $C_{R} >0$ such that 
$$\sup_{\e \in [0,1]}\left|1-e^{\e\frac{\pi\eta}{|\eta|^{2}}}\right| \leq \frac{C_{R}}{|\eta|} \qquad \forall |\eta| >R.$$
This proves that
$$\lim_{|\eta|\to\infty}\sup_{\e \in [0,1]}\left|\mathsf{G}_{\e+i\eta}f(x,v)\right|=0 \qquad \text{ for $\mu_{+}$-a. e. $(x,v) \in \Gamma_{+}$}.$$
Now, since 
$$\sup_{\eta\in \R}\sup_{\e\in [0,1]}\left|\mathsf{G}_{\e+i\eta}f(x,v)\right| \leq \|h^{0}_{x,v}(\cdot)\|_{L^{1}(\R)}$$
and $\ds\int_{\Gamma_{+}}\|h^{0}_{x,v}(\cdot)\|_{L^{1}(\R)}\d \mu_{+}(x,v)=\|f\|_{\X_{0}} <\infty$, we deduce the result from the dominated convergence theorem.\end{proof}

\subsection{Differentiability properties of $\mathsf{M}_{\l}\H,$ $\mathsf{\Xi}_{\l}\H$ and $\mathsf{G}_{\l}$.}
We give here the full proof of Proposition \ref{prop:DerivG}
\begin{proof}[Proof of Proposition \ref{prop:DerivG}] We prove the various points of the Proposition. \\
\noindent \textit{(1)} For $\l \in \C_{+}$ and $f \in \X_{1}$, one has 
$$\dfrac{\d}{\d\l}\mathsf{G}_{\l}f(x,v)=-\int_{0}^{\tau_{-}(x,v)}tf(x-tv,v)e^{-\l t}\d t, \qquad \text{ for a. e.} (x,v)\in \Gamma_{+}.$$
Notice that, for any $f \in \X_{1}$ and any $(x,v) \in \Gamma_{+}$
$$\int_{0}^{\tau_{-}(x,v)}tf(x-tv,v)\d t=\int_{0}^{\tau_{-}(x,v)}t_{+}(x-tv,v)f(x-tv,v)\d t=\mathsf{G}_{0}(t_{+}f)(x,v)$$
since $t_{+}(x-tv,v)=t$. In particular, $\mathsf{G}_{0}(t_{+}f) \in \lp$ since $f \in \X_{1}$ and  we can invoke the dominated convergence theorem to get the conclusion. The result for higher-order derivatives proceed along the same lines. Let us now prove \eqref{lem:G1}. For $f \in \X_{k}$, it holds for $\mu$-a. e. $(x,v) \in \Gamma_{+}$
$$\dfrac{\d^{j}}{\d \lambda^{j}}\mathsf{\mathsf{G_{\lambda}}}f(x,v)=(-1)^{j}\int_{0}^{\tau_{-}(x,v)}s^{j}f(x-sv,v)\exp(-\lambda s)\d s.$$
Introducing $\varphi(x,v)=|f(x,v)|\,t_{+}(x,v)^{j}$, $(x,v) \in \Omega\times \R^{d}$, we get easily that
$$\left|\dfrac{\d^{j}}{\d \lambda^{j}}\mathsf{\mathsf{G_{\lambda}}}f(x,v)\right| \leq \int_{0}^{\tau_{-}(x,v)}\varphi(x-sv,v)\d s=\mathsf{G}_{0}\varphi(x,v).$$
Then, according to \eqref{Eq:G0},
$$\left\|\dfrac{\d^{j}}{\d \lambda^{j}}\mathsf{\mathsf{G_{\lambda}}}f\right\|_{\lp} \leq \|\mathsf{G}_{0}\varphi\|_{\lp} \leq \|\varphi\|_{\X_{0}}$$
For $j \leq k$, it is clear that $\|\varphi\|_{\X_{0}} \leq D^{j}\|f\|_{\X_{j}}\leq D^{j}\|f\|_{\X_{k}}$ and the conclusion follows.\\

\noindent \textit{(2)} For $\varphi \in \Y_{k+1}^{-}$, $\e >0$, $\eta \in\R$ one checks easily that
\begin{multline*}
\dfrac{\d^{k}}{\d\eta^{k}}\mathsf{M}_{\varepsilon+i\eta}\varphi(x,v)-\dfrac{\d^{k}}{\d\eta^{k}}\mathsf{M}_{i\eta}\varphi(x,v)=(-i)^{k}\tau_{-}(x,v)^{k}\\
\left(\exp\left(-(\e+i\eta)\tau_{-}(x,v)\right)-\exp\left(-i\eta\tau_{-}(x,v)\right)\right)\mathsf{M}_{0}\varphi(x,v), 
\end{multline*}
for any $(x,v) \in \Gamma_{-}.$ Therefore
$$
\left|\dfrac{\d^{k}}{\d\eta^{k}}\mathsf{M}_{\varepsilon+i\eta}\varphi(x,v)-\dfrac{\d^{k}}{\d\eta^{k}}\mathsf{M}_{i\eta}\varphi(x,v)\right|
=\tau_{-}(x,v)^{k}|\mathsf{M}_{0}\varphi(x,v)|\left|\exp(-\e(\tau_{-}(x,v))-1\right|
$$
and, reasoning as in Lemma \ref{lem:Meis}, we get
$$\left|\dfrac{\d^{k}}{\d\eta^{k}}\mathsf{M}_{\varepsilon+i\eta}\varphi(x,v)-\dfrac{\d^{k}}{\d\eta^{k}}\mathsf{M}_{i\eta}\varphi(x,v)\right| \leq \e\,D^{j+1}|v|^{-k-1}\left|\mathsf{M}_{0}\varphi(x,v)\right|$$
and the result follows.\end{proof}

We give the proof of Corollary \ref{cor:MHclass}
\begin{proof}[Proof of Corollary \ref{cor:MHclass}] Since the mapping $\l \in \C_{+} \mapsto \mathsf{M}_{\l}\H \in \mathscr{B}(\lp)$ is holomorphic, for any $\e >0$, the mapping
$$\eta \in\R \mapsto \frac{\d^{j}}{\d \eta^{j}}\mathsf{M}_{\e+i\eta}\H \in \mathscr{B}(\lp)$$
is continuous for any $0 \leq j \leq k$. Thanks to \eqref{prop:derMeis}, we can let $\e \to 0$ and conclude that the derivatives exist and are continuous on $\R$. Now, for any $\l \in \overline{\C}_{+}$, one has
$$\dfrac{\d}{\d\l}\mathsf{M}_{\l}\varphi(x,v)=-\tau_{-}(x,v)\exp(-\lambda\tau_{-}(x,v))\varphi(x-\tau_{-}(x,v)v,v)=-\tau_{-}\mathsf{M}_{\l}\varphi$$
so that, by the dominated convergence theorem, 
$\lim_{\l\to0}\frac{\d}{\d\l}\mathsf{M}_{\l}\varphi=-\tau_{-}\mathsf{M}_{0}\varphi$
provided $\varphi \in \Y_{1}^{-}.$ The conclusion follows easily. The proof for $\mathsf{\Xi}_{\l}\H$ proceeds along the same line. For $\varphi \in \Y_{k+1}^{-}$, $\e >0$, $\eta \in\R$ one checks easily that
\begin{multline*}
\dfrac{\d^{k}}{\d\eta^{k}}\mathsf{\Xi}_{\varepsilon+i\eta}\varphi(x,v)-\dfrac{\d^{k}}{\d\eta^{k}}\mathsf{\Xi}_{i\eta}\varphi(x,v)=(-i)^{k}t_{-}(x,v)^{k}\\
\left(\exp\left(-(\e+i\eta)t_{-}(x,v)\right)-\exp\left(-i\eta t_{-}(x,v)\right)\right)\mathsf{\Xi}_{0}\varphi(x,v), 
\end{multline*}
for any $(x,v) \in \Omega\times V.$ So, as in \eqref{prop:derMeis},
$$\left\|\frac{\d^{k}}{\d \eta^{k}}\mathsf{\Xi}_{\varepsilon+i\eta}\H-\frac{\d^{k}}{\d \eta^{k}}\mathsf{\Xi}_{i\eta}\H\right\|_{\mathscr{B}(\lp,\X_{0})} \leq \varepsilon\,D\,\|\H\|_{\mathscr{B}(\lp,\Y_{k+1}^{-})} \qquad \forall \eta \in \R, \qquad \varepsilon >0.$$
We deduce the result as in the previous point.\end{proof}

With the notations of the above proof, we have also the following technical Lemma regarding derivatives of $\mathsf{L}_{N}(i\eta)$ which was used in the proof of Proposition \ref{prop:reguPsif}.
\begin{lemme}\label{lem:estJ} For any $j \in \{1,\ldots,N_{\H}\}$, there exists $\bar{C}_{j} >0$ such that
\begin{equation}\label{eq:estJ}
\left\|\mathsf{L}^{(j)}_{N}(\eta)\right\|_{\mathscr{B}(\lp)} \leq \bar{C}_{j}\left(N+1\right)^{j}\,\|\mathsf{L}_{\floor{\frac{N}{2^{j}}}}(i\eta)\|_{\mathscr{B}(\lp)} \qquad \forall N \geq 2^{j}.\end{equation}
\end{lemme}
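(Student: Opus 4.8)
The statement to prove is the estimate \eqref{eq:estJ} for the $j$-th derivative $\mathsf{L}_{N}^{(j)}(i\eta)=\frac{\d^{j}}{\d\eta^{j}}\left(\mathsf{M}_{i\eta}\H\right)^{N}$ in $\mathscr{B}(\lp)$. The natural route is induction on $j$. For $j=1$ the derivative is, by the Leibniz/product rule applied to the product of $N$ factors,
\[
\mathsf{L}_{N}^{(1)}(i\eta)=\sum_{r=0}^{N-1}\left(\mathsf{M}_{i\eta}\H\right)^{r}\left(\dfrac{\d}{\d\eta}\mathsf{M}_{i\eta}\H\right)\left(\mathsf{M}_{i\eta}\H\right)^{N-1-r},
\]
and here one uses $\frac{\d}{\d\eta}\mathsf{M}_{i\eta}\H=-i\,\tau_{-}\mathsf{M}_{i\eta}\H$ (Corollary \ref{cor:MHclass}) to absorb the derivative: since $\tau_{-}(x,v)\le D/|v|$, the operator $\tau_{-}\mathsf{M}_{i\eta}\H$ maps $\lp$ into $\Y_{1}^{-}\hookrightarrow\lp$ boundedly, and more importantly one groups the $N$ (now "weighted") factors $\mathsf{M}_{i\eta}\H$ so as to extract a power $\mathsf{L}_{\lfloor N/2\rfloor}(i\eta)$ acting on a fixed vector, with all remaining factors contributing a uniformly bounded operator norm $\le 1$. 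Summing over the $N$ terms gives the factor $N+1$ in \eqref{eq:estJ} with $j=1$.

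The inductive step differentiates once more the expression above: each term $\left(\mathsf{M}_{i\eta}\H\right)^{r}\left(\tau_{-}\mathsf{M}_{i\eta}\H\right)\left(\mathsf{M}_{i\eta}\H\right)^{N-1-r}$ is itself a product of $N$ bounded operators, so the induction hypothesis (applied with exponent essentially $N$ and order $j-1$, after peeling off the explicit $\tau_{-}$-weight into the boundary operator $\H\in\mathscr{B}(\lp,\Y_{k+1}^{-})$ which is licit for $j\le N_{\H}$) produces a bound of the form $\bar C_{j-1}(N+1)^{j-1}\,\|\mathsf{L}_{\lfloor N/2^{j-1}\rfloor}(i\eta)\|_{\mathscr{B}(\lp)}$ for each of the $\le N$ summands, hence $\bar C_{j-1}(N+1)^{j}\,\|\mathsf{L}_{\lfloor N/2^{j-1}\rfloor}(i\eta)\|$ after summing. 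The last manipulation is to pass from $\lfloor N/2^{j-1}\rfloor$ to $\lfloor N/2^{j}\rfloor$: writing $\lfloor N/2^{j-1}\rfloor\ge 2\lfloor N/2^{j}\rfloor$ (valid for $N\ge 2^{j}$), one has $\mathsf{L}_{\lfloor N/2^{j-1}\rfloor}(i\eta)=\mathsf{L}_{\lfloor N/2^{j-1}\rfloor-\lfloor N/2^{j}\rfloor}(i\eta)\,\mathsf{L}_{\lfloor N/2^{j}\rfloor}(i\eta)$ and the first factor has norm $\le 1$ since $\|\mathsf{M}_{i\eta}\H\|_{\mathscr{B}(\lp)}\le 1$; this yields the claimed exponent $\lfloor N/2^{j}\rfloor$ and a new constant $\bar C_{j}$.

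The bookkeeping point that must be handled with care — and which I expect to be the genuine obstacle — is that after $j$ differentiations one has accumulated a multiplicative weight $t_{-}^{k}$ (resp.\ $\tau_{-}^{k}$) of total order up to $j$ spread across the factors, and one needs $j\le N_{\H}$ precisely so that $\H$ maps $\lp$ into $\Y_{j+1}^{-}$ and the weighted operator $\tau_{-}^{m}\mathsf{M}_{i\eta}\H$ remains bounded on $\lp$ (this is exactly the regularity encoded in Assumption \ref{hypH} \textit{1)} and exploited via \eqref{eq:M0+Y}, Corollary \ref{cor:boundMX} and the bounds $\|\mathsf{M}_{i\eta}\|_{\mathscr{B}(\Y_{k+1}^{-},\Y_{k})}\le 1$). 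One should also make explicit, when regrouping the $N$ factors, that the "block" $\mathsf{L}_{\lfloor N/2^{j}\rfloor}(i\eta)$ can always be split off as the innermost (rightmost) factor, so that everything to its left is a composition of at most $N$ operators of norm $\le 1$; this is where the choice of the floor $\lfloor N/2^{j}\rfloor$ rather than $N$ is used, and it is what guarantees the estimate is uniform in $\eta\in\R$ and, in particular, survives when later restricted to $|\eta|>R$ in the proof of Lemma \ref{prop:reguPsif}. The remaining estimates are elementary and I would not spell them out beyond the structure above.
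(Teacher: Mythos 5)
Your plan follows the same route as the paper's: Leibniz expansion of $\mathsf{L}_N^{(j)}$, the uniform bound $\|\mathsf{M}_{i\eta}\H\|_{\mathscr{B}(\lp)}\le 1$, absorption of the accumulated weight $\tau_-^m$ into the regularizing property $\H\in\mathscr{B}(\lp,\Y_{m+1}^-)$ (which is exactly what requires $j\le N_{\H}$), and extraction of a rightmost block $\mathsf{L}_{\lfloor N/2^j\rfloor}(i\eta)$. The base case $j=1$ is handled the same way in both, and your observation $\lfloor N/2^{j-1}\rfloor\ge 2\lfloor N/2^{j}\rfloor$ (so that $\mathsf{L}_{\lfloor N/2^{j-1}\rfloor}=\mathsf{L}_{\lfloor N/2^{j-1}\rfloor-\lfloor N/2^{j}\rfloor}\mathsf{L}_{\lfloor N/2^{j}\rfloor}$ with the first factor of norm $\le 1$) is correct, though the paper reaches $\lfloor N/2^{j}\rfloor$ more directly by inserting the reduced exponent $N-r$ into the lower-order estimate inside the Leibniz sum.

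The one place where your argument as written does not quite close is the inductive step. Your induction hypothesis is a bound on $\mathsf{L}_N^{(j-1)}(i\eta)=\tfrac{\d^{j-1}}{\d\eta^{j-1}}\left(\mathsf{M}_{i\eta}\H\right)^N$, but after one differentiation you face $\tfrac{\d^{j-1}}{\d\eta^{j-1}}$ of a product in which one interior factor is $\tau_-\mathsf{M}_{i\eta}\H$ rather than $\mathsf{M}_{i\eta}\H$. That modified product is not $\mathsf{L}_N(i\eta)$, so the stated hypothesis does not formally apply; "peeling off the $\tau_-$-weight into $\H$" is the right intuition but does not by itself restore the hypothesis. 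Two ways to repair this: either strengthen the inductive statement to cover all products $\prod_{k=1}^N\left(\tau_-^{m_k}\mathsf{M}_{i\eta}\H\right)$ with $\sum_k m_k\le j$ and each $m_k\le N_\H$ (so that every derivative stays inside the class), or expand $\mathsf{L}_N^{(j)}$ fully by Leibniz into a multi-indexed sum whose coefficients are the constants $C_m=\|\mathsf{M}_0\|_{\mathscr{B}(\Y_m^-,\Y_m^+)}\|\H\|_{\mathscr{B}(\lp,\Y_m^-)}$, as the paper does explicitly for $j=1,2$ before asserting the tedious-but-straightforward general case. Either way the estimate and the constants $\bar C_j$ come out as you expect.
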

\begin{proof} The proof is based upon elementary but tedious computations. For simplicity of notations, we will simply here denote $\|\cdot\|$ for the norm $\|\cdot\|_{\mathscr{B}(\lp)}$ . We notice first that,  since $\mathsf{L}_{N}(i\eta)=\left(\mathsf{L}_{1}(i\eta)\right)^{N}$, one has for the first derivative:
$$\mathsf{L}_{N}^{(1)}(i\eta)=\sum_{r=0}^{N}\mathsf{L}_{r}(i\eta)\mathsf{L}_{1}^{(1)}(i\eta)\mathsf{L}_{N-r}(i\eta)$$
We also denote
Since $\|\mathsf{L}_{1}(i\eta)\| \leq 1$ and 
$$\|\mathsf{L}_{1}^{(1)}(i\eta)\|=\left\|\dfrac{\d}{\d\eta}\mathsf{M}_{i\eta}\H\right\|=\|\tau_{-}\mathsf{L}_{1}(i\eta)\|  \leq \|\mathsf{M}_{0}\|_{\mathscr{B}(\Y_{1}^{-},\Y^{+}_{1})}\|\H\|_{\mathscr{B}(\lp,\Y_{1}^{-})}:=C_{1}$$
and
$$\left\|\mathsf{L}_{N}^{(1)}(i\eta)\right\| \leq C_{1}\sum_{r=0}^{N}\left\|\mathsf{L}_{r}(i\eta)\right\|\,\|\mathsf{L}_{N-r}(i\eta)\| \leq 2C_{1}\sum_{r=0}^{\floor{\frac{N}{2}}}
\left\|\mathsf{L}_{r}(i\eta)\right\|\,\left\|\mathsf{L}_{N-r}(i\eta)\right\|.$$
Since $N-r \geq \floor{\frac{N}{2}}$ for any $0\leq r \leq \floor{\frac{N}{2}}$, we get
\begin{equation}\label{eq:j=1}
\left\|\mathsf{L}_{N}^{(1)}(i\eta)\right\| \leq 2C_{1}\|\mathsf{L}_{\floor{\frac{N}{2}}}(i\eta)\|\sum_{r=0}^{\floor{\frac{N}{2}}}\left\|\mathsf{L}_{r}(i\eta)\right\|, \qquad N \geq 2\mathsf{mp}\end{equation}
which results in 
$$\left\|\mathsf{L}_{N}^{(1)}(i\eta)\right\| \leq C_{1}(N+1)\|\mathsf{L}_{\floor{\frac{N}{2}}}(i\eta)\|$$
and proves the result for $j=1$. Now, for $j=2$, one has
\begin{multline*}
\mathsf{L}^{(2)}_{N}(i\eta)=\sum_{r=0}^{N}\mathsf{L}^{(1)}_{r}(i\eta)\mathsf{L}_{1}^{(1)}(i\eta)\mathsf{L}_{N-r}(i\eta)
+\sum_{r=0}^{N}\mathsf{L}_{r}(i\eta)\mathsf{L}_{1}^{(2)}(i\eta)\mathsf{L}_{N-r}(i\eta)\\
+\sum_{r=0}^{N}\mathsf{L}_{r}(i\eta)\mathsf{L}_{1}^{(1)}(i\eta)\mathsf{L}_{N-r}^{(1)}(i\eta).\end{multline*}
One has again, 
$$\|\mathsf{L}_{1}^{(1)}(i\eta)\| \leq C_{1}, \qquad \|\mathsf{L}_{1}^{(2)}(i\eta)\| \leq C_{2}=\|\mathsf{M}_{0}\|_{\mathscr{B}(\Y^{-}_{2},\Y^{+}_{2})}\|\H\|_{\mathscr{B}(\lp,\Y^{-}_{2})}$$
so that, as before
\begin{equation*}\begin{split}
\left\|\mathsf{L}_{N}^{(2)}(i\eta)\right\| &\leq 2C_{1}\sum_{r=0}^{N}\|\mathsf{L}_{r}(i\eta)\|\,\|\mathsf{L}^{(1)}_{N-r}(i\eta)\|+C_{2}\sum_{r=0}^{N}\|\mathsf{L}_{r}(i\eta)\|\,\|\mathsf{L}_{N-r}(i\eta)\|\\
&\leq 4C_{1}\sum_{r=0}^{\floor{\frac{N}{2}}}\|\mathsf{L}_{r}(i\eta)\|\,\|\mathsf{L}_{N-r}^{(1)}(i\eta)\|
+2C_{2}\sum_{r=0}^{\floor{\frac{N}{2}}}\|\mathsf{L}_{r}(i\eta)\|\,\|\mathsf{L}_{N-r}(i\eta)\|\end{split}\end{equation*}
The last sum is bounded like in the previous step while, for the first sum, we apply \eqref{eq:j=1} to $N-r$ so that
$$\|\mathsf{L}_{N-r}^{(1)}(i\eta)\| \leq 2C_{1}\|\mathsf{L}_{\floor{\frac{N-r}{2}}}(i\eta)\|\sum_{r_{1}=0}^{\floor{\frac{N-r}{2}}}\left\|\mathsf{L}_{r_{1}}(i\eta)\right\|\leq 2C_{1}\|\mathsf{L}_{\floor{\frac{N}{4}}}(i\eta)\|\sum_{r_{1}=0}^{\floor{\frac{N-r}{2}}}\left\|\mathsf{L}_{r_{1}}(i\eta)\right\|$$
and
$$2C_{1}\sum_{r=0}^{\floor{\frac{N}{2}}}\|\mathsf{L}_{r}(i\eta)\|\,\|\mathsf{L}_{N-r}^{(1)}(i\eta)\| \leq (2C_{1})^{2}\|\mathsf{L}_{\floor{\frac{N}{2}}}(i\eta)\|\sum_{r=0}^{\floor{\frac{N}{2}}}\sum_{r_{1}=0}^{\floor{\frac{N-r}{2}}}\|\mathsf{L}_{r_{1}}(i\eta)\|$$
so that
\begin{equation*}\label{eq:j=2}
\left\|\mathsf{L}_{N}^{(2)}(i\eta)\right\| \leq (2C_{1})^{2}\|\mathsf{L}_{\floor{\frac{N}{4}}}(i\eta)\|\sum_{r=0}^{\floor{\frac{N}{2}}}\sum_{r_{1}=0}^{\floor{\frac{N-r}{2}}}\|\mathsf{L}_{r_{1}}(i\eta)\|
+2C_{2}\|\mathsf{L}_{\floor{\frac{N}{4}}}(i\eta)\|\sum_{r=0}^{\floor{\frac{N}{2}}}\|\mathsf{L}_{r}(i\eta)\|.\end{equation*}
This clearly gives the rough estimate (using $\floor{\frac{N-r}{2}}+1 \leq \floor{\frac{N}{2}}+1 \leq \frac{N+1}{2}$), 
\begin{equation}
\label{eq:j=2}
\left\|\mathsf{L}_{N}^{(2)}(i\eta)\right\| \leq \left\|\mathsf{L}_{\floor{\frac{N}{4}}}(i\eta)\right\|\left(\left(C_{1}(N+1)\right)^{2}+C_{2}(N+1)\right)\end{equation}
and proves the result for $j=2$. By a tedious but simple induction argument, we deduce then the result for any $j \in \{0,\ldots,N_{\H}-1\}$. Recall that, for $j \in \N$,
$$\|\mathsf{L}_{1}^{(j)}(i\eta)\|=\|\tau_{-}^{j}\mathsf{M}_{i\eta}\H\|\leq \|\mathsf{M}_{0}\|_{\mathscr{B}(\Y_{j}^{-},\Y_{j}^{+})}\|\H\|_{\mathscr{B}(\lp,\Y_{j}^{+})}:=C_{j}$$
which is finite as long as $j \leq N_{\H}.$
\end{proof}

\subsection{Additional properties of the Dyson-Phillips iterates}\label{sec:appDP}
For technical reasons we need to   introduce a slightly different expression for the iterates $\bm{U}_{k}(t)$ defined in Section \ref{sec:DP} where we allow various boundary operator to enter the construction. More precisely, let us consider a sequence $\left(\H_{n}\right)_{n\in\N}$ of boundary operators 
$$\left(\H_{n}\right)_{n} \subset \mathscr{B}(\lp,\lm), \qquad \|\H_{n}\|_{\mathscr{B}(\lp,\lm)} \leq 1, \qquad \forall n \in \N.$$
We mimic then the construction of \cite{luisa} and define, for any $t\geq0$, $\bm{V}_{0}(t)=\bm{U}_{0}(t)=U_{0}(t)$ and
\begin{defi}\label{defi:Vk}
Let $ t \geq 0$, $k \geq 1$ and $f \in \D_{0}$ be given. For $(x,v) \in \overline{\Omega} \times V$ with $t_{-}(x,v) < t$, there exists a unique $y \in \partial\Omega$ with $(y,v) \in \Gamma_{-}$ and a unique $0 < s < \min(t,\tau_{+}(y,v))$ such that $x=y+sv$ 
and then one sets
$$[\bm{V}_{k}(t)f](x,v)=\left[\H_{k}\B^{+}\bm{V}_{k-1}(t-s)f\right](y,v),$$
We set $[\bm{V}_{k}(t)f](x,v)=0$ if $t_{-}(x,v) \geq t$ and  $\bm{U}_{k}(0)f=0$.
\end{defi}

We establish here the properties of such a sequence of Dyson-Phillips  operators. We first recall the following, taken from \cite[Proposition 3, Corollary 2]{luisa} (see also \cite[Proposition 3.6]{ALMJM}):
\begin{propo}\label{prop:B+U0}
For any $f \in \D_{0}$, any $t \geq 0$, $U_{0}(t)f \in \D(\T_{0})$ and the traces $\B^{\pm}U_{0}(t)f \in L^{1}_{\pm}$ and the mappings $t \geq 0 \mapsto \B^{\pm}U_{0}(t)f \in L^{1}_{\pm}$ 
are continuous with 
$$\int_{0}^{t}\|\B^{+}U_{0}(s)f\|_{\lp} \d s=\|f\|_{\X_{0}}-\|U_{0}(t)f\|_{\X_{0}}, \qquad \forall t \geq 0.$$
\end{propo}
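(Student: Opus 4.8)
\textbf{Plan of proof for Proposition \ref{prop:B+U0}.} The plan is to follow the explicit expression of the free semigroup $U_{0}(t)$ and exploit the integration formula \eqref{10.47} to track the traces. First I would fix $f\in\D_{0}$ and recall that $U_{0}(t)f(x,v)=f(x-tv,v)\ind_{\{t<t_{-}(x,v)\}}$, which is the restriction to $\Omega\times V$ of the globally defined function obtained by extending $f$ by zero outside $\Omega\times V$ and translating along the characteristic flow. Since $f\in\D_{0}$, in particular $f\in\D(\T_{\mathrm{max}})$, it is standard that $U_{0}(t)f\in\D(\T_{\mathrm{max}})=W_{1}$; to see that $U_{0}(t)f\in\D(\T_{0})$ one checks that its incoming trace $\B^{-}U_{0}(t)f$ vanishes, which follows directly from the condition $\B^{-}f=0$ together with the fact that for a point $(x,v)\in\Gamma_{-}$ one has $t_{-}(x,v)=0$, so that the indicator $\ind_{\{t<t_{-}(x,v)\}}$ forces $U_{0}(t)f$ to equal $f$ evaluated at the incoming boundary point whenever the characteristic has not yet exited — and $\B^{-}f=0$ kills exactly that contribution.

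Next I would establish the trace regularity. The outgoing trace $\B^{+}U_{0}(t)f(x,v)$, for $(x,v)\in\Gamma_{+}$, is $f(x-tv,v)\ind_{\{t<\tau_{-}(x,v)\}}$ (recall $\tau_{-}(x,v)=t_{-}(x,v)$ on the boundary), and similarly $\B^{-}U_{0}(t)f=0$. Integrability $\B^{\pm}U_{0}(t)f\in L^{1}_{\pm}$ and the continuity in $t$ of $t\mapsto\B^{\pm}U_{0}(t)f\in L^{1}_{\pm}$ can be obtained by parametrizing $\Gamma_{+}$ via \eqref{10.51} and using that $f_{|\Gamma_{+}}\in\lp$ (as $f\in W$) together with dominated convergence; the key point is that the map $(x,v)\mapsto\tau_{-}(x,v)$ is measurable and finite $\mu_{+}$-a.e., so the indicator sets vary continuously (up to null sets) in $t$. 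Alternatively one invokes the trace theory of \cite{ces1,ces2} applied to the curve $s\mapsto U_{0}(t+s)f$.

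For the identity $\int_{0}^{t}\|\B^{+}U_{0}(s)f\|_{\lp}\,\d s=\|f\|_{\X_{0}}-\|U_{0}(t)f\|_{\X_{0}}$, the cleanest route is Green's (mass-balance) formula: for $\psi\in W$ one has $\int_{\Omega\times V}(-v\cdot\nabla_{x}\psi)\,\d x\otimes\bm{m}(\d v)=\int_{\Gamma_{-}}\psi\,\d\mu_{-}-\int_{\Gamma_{+}}\psi\,\d\mu_{+}$. Apply this with $\psi=|U_{0}(s)f|$ (which lies in $W$ and, being nonnegative, satisfies $\T_{0}|U_{0}(s)f|\le 0$ in the appropriate sense, with equality away from the zero set), or more simply differentiate $s\mapsto\|U_{0}(s)f\|_{\X_{0}}$ directly: since $f\in\D_{0}$ and hence has zero incoming trace, $\tfrac{\d}{\d s}\|U_{0}(s)f\|_{\X_{0}}=-\|\B^{+}U_{0}(s)f\|_{\lp}$, because the only boundary flux is the outgoing one. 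Integrating this ODE from $0$ to $t$ and using $U_{0}(0)f=f$ gives the stated formula. The main obstacle is the rigorous justification that $s\mapsto\|U_{0}(s)f\|_{\X_{0}}$ is absolutely continuous with the claimed derivative; this is handled by first proving it for $f$ in a dense subclass of very regular functions (e.g. $\mathscr{C}^{1}_{c}$ supported away from $\Gamma_{0}$) where the computation is a direct change of variables via \eqref{10.47}, and then passing to the limit using the contractivity $\|U_{0}(s)\|_{\mathscr{B}(\X_{0})}\le 1$ and the $L^{1}$-continuity of the traces established in the previous step.
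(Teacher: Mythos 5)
The proposition is not proved in the paper: it is quoted verbatim from Arlotti's work (\cite[Proposition 3, Corollary 2]{luisa}, see also \cite[Proposition 3.6]{ALMJM}), so there is no ``paper's proof'' to match against; you are reconstructing an external argument. Your overall plan (explicit translation formula, zero incoming trace, mass balance via Green's formula / direct computation, density) is the right one, but the middle step contains a genuine flaw.

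The problematic point is the way you try to obtain $\B^{+}U_{0}(t)f\in L^{1}_{+}$ and its $t$-continuity. You propose to ``parametrize $\Gamma_{+}$ via \eqref{10.51} and use that $f_{|\Gamma_{+}}\in L^{1}_{+}$ (as $f\in W$) together with dominated convergence.'' This cannot work: for $f\in\D_{0}$ one has $f_{|\Gamma_{+}}=0$ by definition, and more to the point the trace $\B^{+}U_{0}(t)f(x,v)=f(x-tv,v)\ind_{\{t<\tau_{-}(x,v)\}}$ for $t>0$ involves the values of $f$ on a hypersurface strictly inside $\Omega\times V$, not on $\Gamma_{+}$. Neither \eqref{10.51} (which relates integrals over $\Gamma_{-}$ and $\Gamma_{+}$, not over interior slices) nor the fact that $f_{|\Gamma_{+}}\in L^1_{+}$ gives any control on that interior slice. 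The measurability of $\tau_{-}$ does not help either: a generic $L^{1}$ function has no summable trace on a codimension-one hypersurface, and one really needs to use that $f$ belongs to $W_{1}$. Moreover your density argument at the end is circular, since you invoke ``the $L^{1}$-continuity of the traces established in the previous step'' precisely to justify that previous step.

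The clean way to close this is already available in the paper. Theorem \ref{Theo4.2} (applied with $u=0$) gives $\|\B^{+}g\|_{L^1_{+}}\leq\|(\lambda-\T_{0})g\|_{\X_{0}}$ for all $g\in\D(\T_{0})$ and $\lambda>0$, i.e.\ $\B^{+}\colon\D(\T_{0})\to L^{1}_{+}$ is bounded for the graph norm. Since $f\in\D_{0}\subset\D(\T_{0})$, the orbit $t\mapsto U_{0}(t)f$ stays in $\D(\T_{0})$ and is continuous for the graph norm (as $\frac{\d}{\d t}U_{0}(t)f=U_{0}(t)\T_{0}f$ is continuous in $\X_{0}$); composing with $\B^{+}$ immediately gives $\B^{+}U_{0}(t)f\in L^1_{+}$ and the continuity of $t\mapsto\B^{+}U_{0}(t)f$ with no explicit trace formula needed. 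Once this is in place, the mass identity follows exactly as you indicate, either from Green's formula applied to $|U_{0}(s)f|=U_{0}(s)|f|$ (using $\B^{-}U_{0}(s)f=0$), or by the direct change of variables using \eqref{10.47} together with $t_{-}(z-sv,v)=\tau_{-}(z,v)-s$ for $(z,v)\in\Gamma_{+}$ and $0<s<\tau_{-}(z,v)$; both yield $\|f\|_{\X_{0}}-\|U_{0}(t)f\|_{\X_{0}}=\int_{\Gamma_{+}}\d\mu_{+}(z,v)\int_{0}^{\min(t,\tau_{-}(z,v))}|f(z-\sigma v,v)|\,\d\sigma=\int_{0}^{t}\|\B^{+}U_{0}(s)f\|_{L^1_{+}}\d s$.
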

One then proves by induction,  \emph{exactly} as in \cite[Theorem 3.2]{luisa} (see also \cite[Theorem 3.9]{ALMJM}), the following
\begin{theo}\label{theo:VKT} For any $k \geq 1$, $f \in \D_{0}$ one has $\bm{U}_{k}(t)f \in \X_{0}$ for any $t \geq 0$ with
\begin{equation}\label{eq:Vk1}
\|\bm{V}_{k}(t)f\|_{\X_{0}} \leq \,\|f\|_{\X_{0}}.\end{equation}
In particular, $\bm{U}_{k}(t)$ can be extended to be a bounded linear operator, still denoted $\bm{U}_{k}(t) \in\mathscr{B}(\X_{0})$ with
$$\|\bm{V}_{k}(t)\|_{\mathscr{B}(\X_{0})} \leq 1\qquad \forall t \geq 0, k \geq 1.$$
Moreover, the following holds for any $k \geq 1$
\begin{enumerate}
\item $(\bm{V}_{k}(t))_{t \geq 0}$ is a strongly continuous family of $\mathscr{B}(\X_{0})$.
\item For any $f \in \D_{0}$, one has $\bm{V}_{k}(t)f \in \D(\T_{\mathrm{max}})$ for all $t \geq 0$ with 
$$\T_{\mathrm{max}}\bm{V}_{k}(t)f=\bm{V}_{k}(t)\T_{\mathrm{max}}f=\bm{V}_{k}(t)\T_{0}f.$$
\item For any $f \in \D_{0}$ and any $t \geq 0$, the traces $\B^{\pm}\bm{V}_{k}(t)f \in L^{1}_{\pm}$ and the mappings $t \geq 0\mapsto \B^{\pm}\bm{V}_{k}(t)f \in L^{1}_{\pm}$ are continuous. 
\item For any $f \in \D_{0}$, it holds
\begin{equation}\label{eq:B+Vn}
\int_{0}^{t}\|\B^{+}\bm{V}_{k}(s)f\|_{\lp}\d s \leq  \,\int_{0}^{t}\|\H_{k}\B^{+}\bm{V}_{k-1}(s)f\|_{\lp} \d s, \qquad \forall t \geq 0.\end{equation}
%
\end{enumerate}
\end{theo}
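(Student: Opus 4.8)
The statement is the analogue of Theorem~\ref{theo:UKT} for the modified iterates $\bm{V}_k(t)$ built from a sequence $(\H_n)_n$ of contractions rather than from a single operator $\H$, so the strategy is to reproduce the induction of \cite{luisa} (and \cite{ALMJM}) \emph{verbatim}, checking at each step that nothing uses the specific form $\H_n=\H$ beyond the contraction bound $\|\H_n\|_{\mathscr{B}(\lp,\lm)}\le1$. I would first record the base case: $\bm{V}_0(t)=U_0(t)$ is the $C_0$-semigroup generated by $\T_0$, and all the needed properties (strong continuity, $U_0(t)\D_0\subset\D(\T_0)$, continuity of $t\mapsto\B^\pm U_0(t)f$, and the identity $\int_0^t\|\B^+U_0(s)f\|_{\lp}\d s=\|f\|_{\X_0}-\|U_0(t)f\|_{\X_0}$) are exactly Proposition~\ref{prop:B+U0}. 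Then I would carry out the induction on $k\ge1$, assuming the full list of properties holds for $\bm{V}_{k-1}$.

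\textbf{Key steps.} Fix $k\ge1$ and $f\in\D_0$. The first step is the trace identity and the bound \eqref{eq:Vk1}: by Definition~\ref{defi:Vk}, for $(x,v)\in\Omega\times V$ with $t_-(x,v)<t$ one has $[\bm{V}_k(t)f](x,v)=[\H_k\B^+\bm{V}_{k-1}(t-t_-(x,v))f](x-t_-(x,v)v,v)$, i.e. $\bm{V}_k(t)f=\mathsf{\Xi}$-type lifting of $\H_k\B^+\bm{V}_{k-1}(\cdot)f$ along characteristics. Using the change of variables \eqref{10.47} with $\Gamma_-$, one computes
$$\|\bm{V}_k(t)f\|_{\X_0}=\int_{\Gamma_-}\d\mu_-(z,v)\int_0^{\min(t,\tau_+(z,v))}\bigl|[\H_k\B^+\bm{V}_{k-1}(t-s)f](z,v)\bigr|\,\d s\le\int_0^t\|\H_k\B^+\bm{V}_{k-1}(s)f\|_{\lm}\,\d s,$$
and since $\|\H_k\|\le1$ and, by the induction hypothesis applied to \eqref{eq:B+Vn}, $\int_0^t\|\B^+\bm{V}_{k-1}(s)f\|_{\lp}\d s\le\|f\|_{\X_0}$, this gives $\|\bm{V}_k(t)f\|_{\X_0}\le\|f\|_{\X_0}$; one then extends $\bm{V}_k(t)$ to $\mathscr{B}(\X_0)$ by density of $\D_0$ and obtains $\|\bm{V}_k(t)\|_{\mathscr{B}(\X_0)}\le1$. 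The second step is strong continuity of $(\bm{V}_k(t))_{t\ge0}$: this follows, as in \cite[Theorem 3.2]{luisa}, from the continuity of $t\mapsto\H_k\B^+\bm{V}_{k-1}(t)f$ in $\lm$ (induction hypothesis, part (3) composed with the bounded operator $\H_k$) together with the continuity of the characteristic flow; the estimate just established controls the error uniformly on $\D_0$ and then on all of $\X_0$. The third step is the regularity $\bm{V}_k(t)f\in\D(\T_{\mathrm{max}})$ with $\T_{\mathrm{max}}\bm{V}_k(t)f=\bm{V}_k(t)\T_0f$: again the argument of \cite{luisa} applies, differentiating the explicit representation along characteristics in the transport direction, using that $\B^+\bm{V}_{k-1}(t)f$ is differentiable in $t$ in the appropriate sense (induction hypothesis). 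The fourth step is the existence and continuity of the traces $\B^\pm\bm{V}_k(t)f\in L^1_\pm$ with $t\mapsto\B^\pm\bm{V}_k(t)f$ continuous: the incoming trace is $\B^-\bm{V}_k(t)f=\H_k\B^+\bm{V}_{k-1}(t)f$ by construction (hence continuous, as $\H_k$ is bounded and $t\mapsto\B^+\bm{V}_{k-1}(t)f$ is continuous), and the outgoing trace is handled as for $U_0$, using the trace theory of \cite{ces1,ces2} and the analogue of Proposition~\ref{prop:B+U0} for $\bm{V}_k$ in place of $U_0$, which holds once $\bm{V}_k(t)f\in\D(\T_{\mathrm{max}})$. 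The fifth step is \eqref{eq:B+Vn}: integrating the Green-type identity $\frac{\d}{\d t}\|\bm{V}_k(t)f\|_{\X_0}=\|\B^-\bm{V}_k(t)f\|_{\lm}-\|\B^+\bm{V}_k(t)f\|_{\lp}$ (valid for $f\in\D_0$ by the transport structure and nonnegativity of characteristics) over $[0,t]$, using $\bm{V}_k(0)f=0$ and dropping the nonnegative term $\|\bm{V}_k(t)f\|_{\X_0}$, yields $\int_0^t\|\B^+\bm{V}_k(s)f\|_{\lp}\d s\le\int_0^t\|\B^-\bm{V}_k(s)f\|_{\lm}\d s=\int_0^t\|\H_k\B^+\bm{V}_{k-1}(s)f\|_{\lp}\d s$, which is precisely the claimed estimate and which in turn closes the induction by feeding back into step one at level $k+1$.

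\textbf{Main obstacle.} The genuinely delicate point — as in the original construction — is the circular dependence between the trace regularity (step four) and the differentiability in $\D(\T_{\mathrm{max}})$ (step three): one needs $\B^+\bm{V}_{k-1}(t)f$ to be a well-behaved (continuous, and integrable-in-time) $\lp$-valued function before one can define $\bm{V}_k(t)f$ and establish its membership in $\D(\T_{\mathrm{max}})$, and only then does the outgoing trace $\B^+\bm{V}_k(t)f$ make sense and inherit the same regularity. The resolution is exactly the one used in \cite{luisa,ALMJM}: perform the induction with the \emph{entire} bundle of properties (1)–(4) as the induction hypothesis, so that at level $k-1$ one already has everything needed to run all of steps one through five at level $k$; the contraction bound $\|\H_k\|_{\mathscr{B}(\lp,\lm)}\le1$ is the only place the sequence $(\H_n)_n$ enters, and it enters precisely where $\|\H\|\le1$ was used in Theorem~\ref{theo:UKT}. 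Consequently no new analysis is required beyond transcribing the proof of \cite[Theorems 3.2]{luisa} with $\H$ replaced by $\H_k$ at the $k$-th step; I would make this explicit rather than reproduce the computations.
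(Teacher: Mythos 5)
Your proposal is correct and tracks the paper's own treatment: the paper itself proves Theorem~\ref{theo:VKT} merely by remarking that "one then proves by induction, \emph{exactly} as in \cite[Theorem 3.2]{luisa} (see also \cite[Theorem 3.9]{ALMJM})," and your sketch fleshes out precisely what that deferral means — the whole bundle of properties (1)–(4) is carried by induction, the base case is Proposition~\ref{prop:B+U0}, and the only place the boundary operator enters the estimates is through the contraction bound $\|\H_k\|_{\mathscr{B}(\lp,\lm)}\le1$, so one may freely replace the fixed $\H$ by $\H_k$ at the $k$-th step. Your key-step accounting (trace identity via \eqref{10.47}, Green/Gauss identity for the $L^1$-dissipativity estimate closing the induction at \eqref{eq:B+Vn}, strong continuity and $\D(\T_{\mathrm{max}})$-membership inherited from the induction hypothesis) matches the structure of the cited proof, and your observation about the circularity between trace regularity and $\T_{\mathrm{max}}$-regularity — resolved by inducting on the full package — is exactly the right thing to flag.
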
 

One can actually sharpen estimate \eqref{eq:Vk1} 
\begin{propo}\label{prop:Vnt}
For any $n \geq 1$, $f \in \D_{0}$, one has $\bm{V}_{n}(t)f \in \X_{0}$ for any $t\geq 0$ with
\begin{equation}\label{eq:Vnt}
\left\|\bm{V}_{n}(t)f\right\|_{\X_{0}} \leq \prod_{k=1}^{n}\left\|\H_{k}\right\|_{\mathscr{B}(\lp,\lm)}\,\|f\|_{\X_{0}}.\end{equation}
In particular, $\bm{V}_{n}(t)$ can be extended to a bounded linear operator, still denoted $\bm{V}_{n}(t) \in \mathscr{B}(\X_{0})$ with
$$\left\|\bm{V}_{n}(t)\right\|_{\mathscr{B}(\X_{0})} \leq \prod_{j=1}^{n}\left\|\H_{k}\right\|_{\mathscr{B}(\lp,\lm)}, \qquad n \geq 1,\qquad t\geq0.$$
\end{propo}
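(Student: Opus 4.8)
\textbf{Proof plan for Proposition \ref{prop:Vnt}.} The plan is to proceed by induction on $n \geq 1$, following closely the strategy already used to prove Theorem \ref{theo:VKT} but keeping track of the operator norms $\|\H_k\|_{\mathscr{B}(\lp,\lm)}$ rather than merely bounding them by $1$. The base case $n=1$ and the inductive step both rely crucially on the trace estimate \eqref{eq:B+Vn} from Theorem \ref{theo:VKT}, together with Proposition \ref{prop:B+U0}, which controls the outgoing trace flux of $U_0(t)$.

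\emph{Base case.} For $n=1$ and $f \in \D_0$, Definition \ref{defi:Vk} gives $[\bm{V}_1(t)f](x,v) = [\H_1\B^+ U_0(t-s)f](y,v)$ with $y = x - t_-(x,v)v$, $s = t_-(x,v)$. Using the integration formula \eqref{10.47} over $\Gamma_-$ exactly as in the proof of Theorem \ref{theo:UKT}, one expresses $\|\bm{V}_1(t)f\|_{\X_0}$ as an integral over $\Gamma_-$ of $|\H_1\B^+ U_0(t-s)f|$ along characteristics, which after the usual change of variables is bounded by $\int_0^\infty \|\H_1\B^+ U_0(\tau)f\|_{\lm}\,\d\tau \leq \|\H_1\|_{\mathscr{B}(\lp,\lm)}\int_0^\infty \|\B^+ U_0(\tau)f\|_{\lp}\,\d\tau$. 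By Proposition \ref{prop:B+U0} the last integral equals $\|f\|_{\X_0} - \lim_{t\to\infty}\|U_0(t)f\|_{\X_0} \leq \|f\|_{\X_0}$, which yields \eqref{eq:Vnt} for $n=1$.

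\emph{Inductive step.} Assume \eqref{eq:Vnt} holds for $n-1$ (for \emph{every} admissible sequence of boundary operators). Applying the same characteristic representation and change of variables to $\bm{V}_n(t)f$, one finds
$$\|\bm{V}_n(t)f\|_{\X_0} \leq \int_0^\infty \|\H_n\B^+\bm{V}_{n-1}(\tau)f\|_{\lm}\,\d\tau \leq \|\H_n\|_{\mathscr{B}(\lp,\lm)}\int_0^\infty \|\B^+\bm{V}_{n-1}(\tau)f\|_{\lp}\,\d\tau.$$
The remaining ingredient is to bound $\int_0^\infty \|\B^+\bm{V}_{n-1}(\tau)f\|_{\lp}\,\d\tau$ by $\prod_{k=1}^{n-1}\|\H_k\|_{\mathscr{B}(\lp,\lm)}\|f\|_{\X_0}$; iterating \eqref{eq:B+Vn} gives $\int_0^t\|\B^+\bm{V}_{n-1}(s)f\|_{\lp}\d s \leq \prod_{k=1}^{n-1}\|\H_k\|_{\mathscr{B}(\lp,\lm)}\int_0^t\|\B^+U_0(s)f\|_{\lp}\d s$, and Proposition \ref{prop:B+U0} closes the bound uniformly in $t$. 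Combining the two displays gives \eqref{eq:Vnt} for $n$. Finally, density of $\D_0$ in $\X_0$ and the a priori bound allow $\bm{V}_n(t)$ to be extended to all of $\X_0$ with the stated operator norm.

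\emph{Main obstacle.} The only delicate point is making the flux bookkeeping rigorous, i.e.\ justifying that the $\X_0$-norm of $\bm{V}_n(t)f$ is genuinely dominated by the time-integral of the outgoing trace norm of $\bm{V}_{n-1}$ composed with $\H_n$; this is exactly the kind of Fubini-type argument on $\Gamma_-$ already carried out in \cite{luisa} and in the proofs of Theorems \ref{theo:UKT}--\ref{theo:VKT}, so it amounts to checking that those manipulations survive when the boundary operators are allowed to vary with the index and their norms are not discarded. Everything else is a direct iteration of inequality \eqref{eq:B+Vn} together with Proposition \ref{prop:B+U0}; no new analytic input is required.
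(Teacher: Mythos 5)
Your plan is correct and is essentially the paper's own proof: apply the integration formula \eqref{10.47} along $\Gamma_-$, factor out $\|\H_k\|_{\mathscr{B}(\lp,\lm)}$ at each collision, iterate the trace inequality \eqref{eq:B+Vn} down to $U_0$, and close with Proposition \ref{prop:B+U0}. The only cosmetic difference is that you write the time integral as $\int_0^\infty$ whereas the paper keeps $\int_0^t$; this is harmless because Proposition \ref{prop:B+U0} gives $\int_0^t\|\B^+U_0(s)f\|_{\lp}\,\d s \leq \|f\|_{\X_0}$ uniformly in $t$.
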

 \begin{proof} The proof is made by induction.  Let $f \in \D_{0}$ and $t\geq0$ be fixed. For $n=1$, one deduce from \eqref{10.47} that
$$\|\bm{V}_{1}(t)f\|_{\X_{0}}=\int_{\Gamma_{-}}\d\mu_{-}(z,v)\int_{0}^{\tau_{+}(z,v)}\left|[\bm{V}_{1}(t)f](z+sv,v)\right|\d s.$$
From the definition of $\bm{V}_{1}(t)$, for $\mu_{-}$-a. e. $(z,v) \in \Gamma_{-}$ and $s \in (0,\tau_{+}(z,v))$,  one has
$$\left[\bm{V}_{1}(t)f\right](z+sv,v)=\begin{cases}\left[\H_{1}\B^{+}U_{0}(t-s)f\right](z,v) \qquad &\text{ if } t  > s\\
0 \qquad &\text{ if } t \leq s.
\,,\end{cases}$$
so that
\begin{equation*}\begin{split}
\|\bm{V}_{1}(t)f\|_{\X_{0}}&=\int_{\Gamma_{-}}\d\mu_{-}(z,v)\int_{0}^{\min(t,\tau_{+}(z,v))}\left|\left[\H_{1}(\B^{+}U_{0}(t-s)f)\right](z,v)\right|\d s\\
&\leq \int_{0}^{t} \|\H_{1}(\B^{+}U_{0}(t-s)f)\|_{\lm}\d s 
\end{split}
\end{equation*}
Therefore, 
$$\|\bm{V}_{1}(t)f\|_{\X_{0}} \leq \|\H_{1}\|_{\mathscr{B}(\lp,\lm)}\int_{0}^{t}\|\B^{+}U_{0}(t-s)f\|_{\lp}\d s$$
and one deduces from Prop. \ref{prop:B+U0} that
$$\|\bm{V}_{1}(t)f\|_{\X_{0}}\leq \|\H_{1}\|_{\mathscr{B}(\lp,\lm)}\left(\|f\|_{\X_{0}}-\|U_{0}(t)f\|_{\X_{0}}\right).$$
This proves \eqref{eq:Vnt} for $n=1$. Assume then the result to be true for $n \geq 1$ and let us prove for $n+1$. Using \eqref{10.47} one has, a before,
\begin{equation*}\begin{split}
\left\|\bm{V}_{n+1}(t)f\right\|_{\X_{0}}&=\int_{\Gamma_{-}}\d\mu_{-}(z,v)\int_{0}^{\tau_{+}(z,v)}\left|\left[\bm{V}_{n+1}(t)f\right](z+sv,v)\right|\d s\\
&=\int_{\Gamma_{-}}\d\mu_{-}(z,v)\int_{0}^{\min(t,\tau_{+}(z,v))}\left|\left[\H_{n+1}\B^{+}\bm{V}_{n}(t-s)f\right](z,v)\right|\d s\\
&\leq \int_{0}^{t}\left\|\H_{n+1}\left(\B^{+}\bm{V}_{n}(t-s)f\right)\right\|_{\lm}\d s\\
&\leq \left\|\H_{n+1}\right\|_{\mathscr{B}(\lp,\lm)}\int_{0}^{t}\left\|\B^{+}\bm{V}_{n}(t-s)f\right\|_{\lp}\d s.\end{split}\end{equation*}
Using then \eqref{eq:B+Vn}, we deduce that
\begin{equation*}\begin{split}\left\|\bm{V}_{n+1}(t)f\right\|_{\X_{0}} 
&\leq \left\|\H_{n+1}\right\|_{\mathscr{B}(\lp,\lm)}\int_{0}^{t}\|\H_{n}\B^{+}\bm{V}_{n-1}(s)f\|_{\lm} \d s\\
&\leq \left\|\H_{n+1}\right\|_{\mathscr{B}(\lp,\lm)}\,\|\H_{n}\|_{\mathscr{B}(\lp,\lm)}\int_{0}^{t}\|\B^{+}\bm{V}_{n-1}(s)f\|_{\lp}\d s.\end{split}\end{equation*}
Using repeatedly  \eqref{eq:B+Vn} we deduce
$$\int_{0}^{t}\|\B^{+}\bm{V}_{n-1}(s)f\|_{\lp} \d s \leq \prod_{k=1}^{n-1}\|\H_{k}\|_{\mathscr{B}(\lp,\lm)}\int_{0}^{t}\|\B^{+}U_{0}(s)f\|_{\lp}\d s$$
and Prop. \ref{prop:B+U0} yields 
$$\int_{0}^{t}\|\B^{+}\bm{V}_{n-1}(s)f\|_{\lp} \d s \leq \prod_{k=1}^{n-1}\|\H_{k}\|_{\mathscr{B}(\lp,\lm)}\|f\|_{\X_{0}}$$
and the result follows.
\end{proof}

\end{document}